\theoremstyle{plain}
\newtheorem{teo}{Theorem}[section]
\newtheorem{prop}[teo]{Proposition}
\newtheorem{cor}[teo]{Corollary}
\newtheorem{lem}[teo]{Lemma}
\theoremstyle{definition}
\newtheorem{defin}[teo]{Definition}
\newtheorem{oss}[teo]{Remark}
\newtheorem{exam}[teo]{Example}
\DeclareMathOperator*{\argmin}{arg\,min}
\renewcommand{\eqref}[1]{\textnormal{(\ref{#1})}}
\numberwithin{equation}{section}
\newcommand{\rme}{\mathrm{e}}
\newcommand{\rmd}{\mathrm{d}}
\title[Multiscale for Image Registration and Inverse Problems]{A Multiscale Theory for Image Registration and Nonlinear Inverse Problems}
\author[K.~Modin]{Klas Modin}
\author[A.~Nachman]{Adrian Nachman}
\author[L.~Rondi]{Luca Rondi}
\address[K.~Modin]{Mathematical Sciences, Chalmers and University of Gothenburg, Sweden}
\email{klas.modin@chalmers.se}
\address[A.~Nachman]{Department of Mathematics, University of Toronto, Canada}
\email{nachman@math.toronto.edu}
\address[L.~Rondi]{Dipartimento di Matematica, Universit\`a di Milano, Italy}
\email{luca.rondi@unimi.it}
\begin{document}

\setcounter{section}{0}
\setcounter{secnumdepth}{2}

\begin{abstract}In an influential paper, Tadmor, Nezzar and Vese (Multiscale Model. Simul. (2004)) introduced a hierarchical decomposition of an image as a sum of constituents of different scales. Here we construct analogous hierarchical expansions for diffeomorphisms, in the context of image registration, with the sum replaced by composition of maps. We treat this as a special case of a general framework for multiscale decompositions, applicable to a wide range of imaging and nonlinear inverse problems. 
As a paradigmatic example of the latter, we consider the Calder\'on inverse conductivity problem. We prove that we can simultaneously perform a numerical reconstruction and a multiscale decomposition of the unknown conductivity, driven by the inverse problem itself. 
We provide novel convergence proofs which work in the general abstract settings, yet are 
sharp enough to prove that the hierarchical decomposition of Tadmor,
Nezzar and Vese converges for arbitrary functions in $L^2$, a problem left open
in their paper.
We also give counterexamples that show the optimality of our general results.

\medskip

\noindent\textbf{AMS 2010 Mathematics Subject Classification:} 68U10 (primary); 58D05 35R30 (secondary)

\medskip

\noindent \textbf{Keywords:} multiscale decomposition, imaging, image  registration, diffeomorphisms, LDDMM, inverse problems, Calder\'on problem 
\end{abstract}
\maketitle
\tableofcontents

\section{Introduction}\label{introsec}

In a beautiful and influential paper, Tadmor, Nezzar and Vese \cite{T-N-V} introduced a multiscale hierarchical representation of an image, and proved corresponding convergence and energy decomposition results.
 Their starting point is the Rudin-Osher-Fatemi model for image restoration: given a (possibly noisy) image $f\in L^2(\mathbb{R}^2)$
 and a positive parameter $\lambda_0$, one seeks the solution $u_0$ of 
\begin{equation}\label{firstminpbm0}
\min \left\{\lambda_0\|f-u\|^2_{L^2(\mathbb{R}^2)}+\|u\|_{BV (\mathbb{R}^2)}:\ u\in L^2(\mathbb{R}^2)\right\}.
\end{equation}
Here $BV (\mathbb{R}^2)$ is the homogeneous $BV(\mathbb{R}^2)$ space and for any $u\in BV (\mathbb{R}^2)$, the norm $\|u\|_{BV (\mathbb{R}^2)}$ denotes the total variation of its distributional gradient $Du$; see the beginning of Subsection~\ref{L2BVsec} for details.
The variational problem \eqref{firstminpbm0} is uniquely solvable, and yields a decomposition of $f$ as $f=u_0+v_0$, where $v_0$ is the residual. The solution $u_0$ is expected to keep the most relevant features of the image while the residual $v_0$ contains the noisy part. The fidelity parameter  $\lambda_0$ determines the amount of features preserved and the noise at that scale. Indeed, for higher $\lambda_0$ the solution $u_0$ is closer to $f$ and less noise is removed. The idea in \cite{T-N-V} is to start with a relatively low value of $\lambda_0$ and then iterate the procedure by replacing $\lambda_0$ with a larger parameter $\lambda_1$ and $f$ with $v_0$. Then $f=u_0+u_1+v_1. $ Continuing in this manner, given an increasing sequence of positive parameters $\lambda_n$, $n=0,1,2,\ldots$, for any $j\in \mathbb{N}$ one obtains
$$f=u_0+u_1+\ldots+u_j+v_j.$$
If  $v_j$ converges to $0$ as $j$ goes to infinity, then this method provides a multiscale representation $f=\sum_{j=0}^{\infty}u_j$ of the image $f$.
The result proved in \cite[Theorem~2.2]{T-N-V} is the following:
\begin{teo}[\textbf{Tadmor-Nezzar-Vese}]
\label{hierarchythmTNV}
%Let $\Omega\subset\mathbb{R}^2$ be a bounded Lipschitz domain.
Let $f\in BV(\mathbb{R}^2)$. Let $\lambda_n=\lambda_02^n$ for some $\lambda_0>0$ and any $n\in\mathbb{N}$.
Then $f$ admits the following $(BV,L^2)$ hierarchical decomposition\textnormal{:}
\begin{equation}\label{hierarchdecomp}
f=\sum_{j=0}^{+\infty}u_j
\end{equation}
where the convergence is in the strong sense in $L^2(\mathbb{R}^2)$. Furthermore, the following energy identity holds\textnormal{:}
\begin{equation}\label{energyeq}
\|f\|_{L^2(\mathbb{R}^2)}^2=\sum_{j=0}^{+\infty}\left[\frac{1}{\lambda_j}\|u_j\|_{BV(\mathbb{R}^2)}+\|u_j\|_{L^2(\mathbb{R}^2)}^2\right].
\end{equation}
\end{teo}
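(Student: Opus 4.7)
The plan is to reduce everything to Meyer's duality characterization of the ROF minimizer, a short algebraic identity that telescopes, and a final convergence estimate for the residual.

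First I would recall the following characterization of the solution $u^{*}=\argmin_{u}\{\lambda\|f-u\|_{L^{2}}^{2}+\|u\|_{BV}\}$ of the single-scale ROF problem. Writing $v^{*}=f-u^{*}$ and denoting by $\|\cdot\|_{*}$ the norm dual to $\|\cdot\|_{BV}$, the Euler--Lagrange condition $2\lambda v^{*}\in\partial\|\cdot\|_{BV}(u^{*})$ is equivalent to the pair of relations
\begin{equation*}
\|v^{*}\|_{*}\leq\frac{1}{2\lambda},\qquad\langle u^{*},v^{*}\rangle=\frac{\|u^{*}\|_{BV}}{2\lambda}.
\end{equation*}
Applied at every scale to the iteration $u_{j}=\argmin_{u}\{\lambda_{j}\|v_{j-1}-u\|_{L^{2}}^{2}+\|u\|_{BV}\}$, with $v_{-1}=f$ and $v_{j}=v_{j-1}-u_{j}$, this yields $\|v_{j}\|_{*}\leq 1/(2\lambda_{j})$ and $\langle u_{j},v_{j}\rangle=\|u_{j}\|_{BV}/(2\lambda_{j})$ for every $j\geq 0$.

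The second step is purely algebraic. Expanding
\begin{equation*}
\|v_{j-1}\|_{L^{2}}^{2}=\|v_{j}+u_{j}\|_{L^{2}}^{2}=\|v_{j}\|_{L^{2}}^{2}+2\langle v_{j},u_{j}\rangle+\|u_{j}\|_{L^{2}}^{2}
\end{equation*}
and substituting the extremality identity gives the one-step energy relation $\|v_{j-1}\|_{L^{2}}^{2}=\|v_{j}\|_{L^{2}}^{2}+\|u_{j}\|_{L^{2}}^{2}+\|u_{j}\|_{BV}/\lambda_{j}$. Telescoping from $j=0$ to $J$ yields
\begin{equation*}
\|f\|_{L^{2}}^{2}=\|v_{J}\|_{L^{2}}^{2}+\sum_{j=0}^{J}\left[\|u_{j}\|_{L^{2}}^{2}+\frac{\|u_{j}\|_{BV}}{\lambda_{j}}\right],
\end{equation*}
so that both \eqref{energyeq} and the strong $L^{2}$ convergence \eqref{hierarchdecomp} reduce to showing $\|v_{J}\|_{L^{2}}\to 0$.

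The main obstacle is this last convergence. The dual bound $\|v_{J}\|_{*}\leq 1/(\lambda_{0}2^{J+1})$ decays geometrically, and by definition of the dual norm one has the pairing inequality $\|v_{J}\|_{L^{2}}^{2}\leq\|v_{J}\|_{BV}\,\|v_{J}\|_{*}$; the real difficulty is to control $\|v_{J}\|_{BV}$. A naive triangle bound $\|v_{J}\|_{BV}\leq 2\|v_{J-1}\|_{BV}$ only yields $\|v_{J}\|_{BV}\leq 2^{J+1}\|f\|_{BV}$, which exactly cancels the decay of $\|v_{J}\|_{*}$ and is therefore useless. The refinement I would use exploits the partial energy identity itself: since $\sum_{j}\|u_{j}\|_{BV}/\lambda_{j}\leq\|f\|_{L^{2}}^{2}<\infty$, summability forces $\|u_{j}\|_{BV}=o(\lambda_{j})=o(2^{j})$, and a standard geometric-tail argument then gives $\sum_{j\leq J}\|u_{j}\|_{BV}=o(2^{J})$. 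Combining this with $\|v_{J}\|_{BV}\leq\|f\|_{BV}+\sum_{j\leq J}\|u_{j}\|_{BV}$ (here one uses the hypothesis $f\in BV$) gives $\|v_{J}\|_{BV}=o(2^{J})$, and inserting into the duality inequality yields $\|v_{J}\|_{L^{2}}^{2}\leq o(2^{J})\cdot(2\lambda_{0}2^{J})^{-1}=o(1)$, which closes the argument.
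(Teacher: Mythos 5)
Your proof is correct. The energy-identity half coincides with what the paper itself does (Meyer's duality characterization of the ROF minimizer plus the telescoping identity \eqref{normsquare} in the proof of Theorem~\ref{hierarchythmbdd}), but the convergence half takes a genuinely different route. You prove $\|v_J\|_{L^2}\to 0$ via the duality estimate $\|v_J\|_{L^2}^2\le\|v_J\|_{BV}\,\|v_J\|_{*}$ combined with the summability refinement $\|u_j\|_{BV}=o(\lambda_j)$ extracted from the partial energy identity; this is essentially the original Tadmor--Nezzar--Vese argument, and it leans on two features that the paper deliberately avoids: the hypothesis $f\in BV(\mathbb{R}^2)$ (indispensable for the bound $\|v_J\|_{BV}\le\|f\|_{BV}+\sum_{j\le J}\|u_j\|_{BV}$) and the Hilbert-space/identity-operator structure (needed for the pairing inequality and the exact one-step energy balance). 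The paper instead obtains \eqref{hierarchdecomp} from the abstract Theorem~\ref{mainthm0}: a contradiction argument in which one approximates the data by some $\overline{\sigma}$ with $\|\overline{\sigma}\|_{BV}<\infty$ and small $L^2$-distance to $f$, establishes the doubling bound $\|\overline{\sigma}-\tilde{\sigma}_n\|_{BV}\le 2\|\overline{\sigma}-\tilde{\sigma}_{n-1}\|_{BV}$, and plays the resulting $O(2^n)$ growth against $\lambda_n$ (with an extra bootstrapping step because for $\lambda_n=\lambda_0 2^n$ the ratio $2^n/\lambda_n$ is only bounded, not vanishing). What your route buys is a short, self-contained, quantitative proof of the theorem exactly as stated; what the paper's route buys is generality: it needs no regularity of $f$ beyond $f\in L^2$ --- which is precisely how the paper settles the previously open case $f\in L^2\setminus BV$ in Theorem~\ref{ourhierarchythm}, where your control of $\|v_J\|_{BV}$ is unavailable --- and it extends verbatim to nonlinear operators $\mathcal{N}$, general exponents $\alpha,\beta$ and metric target spaces, where neither Meyer's characterization nor an exact energy balance exists.
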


This result was extended in \cite[Corollary~2.3]{T-N-V} to $f$ belonging to a class of intermediate spaces between $L^2$ and $BV$. The question of whether it holds for any $f\in L^2$ was left open.\footnote{We wish to thank the anonymous referee for bringing to
our attention reference \cite{Jaw-Mil} where such a result was announced in the
context of a more general interpolation theory approach. A complete proof
does not appear to have been published.}
 We will show in Theorem~\ref{ourhierarchythm} that the above theorem extends to arbitrary $f\in L^2$, as a special case of a more general result, Theorem~\ref{mainthm0}.

One of the main aims of our paper is to construct analogous hierarchical expansions for \emph{diffeomorphisms} in the context of image registration, with the sum above being replaced by \emph{composition} of maps. In image registration, one seeks an optimal diffeomorphism between two given images. This is an important problem in medical imaging, when one needs to align two images obtained at different times or with different instrumentation by transforming one to the other. Mathematically, given two images $I_0$ and $I_1$ as $L^2$-functions on a domain $\Omega$, one wishes to find a diffeomorphism $g$ of $\Omega$ which solves the minimization problem: 
\begin{equation}\label{minpbmquater0}
\min\{\|I_0\circ g^{-1} -I_1\|_{L^2(\Omega)}:\ g\in G_{\mathcal{H}}\}.
\end{equation}Here $G_{\mathcal{H}}$ is a Banach manifold of diffeomorphisms (depending on a choice of Hilbert space $\mathcal{H}$) which will be defined in  Subsection ~\ref{LDDMMintrosub}. This problem is sometimes referred to as ``greedy matching". The standard approach to its solution is via a gradient flow. This approach often leads to serious difficulties, both theoretically and practically. (See ~\cite{You} where these issues are explained in detail). The Large Deformation Diffeomorphic Metric Mapping (LDDMM) theory of image registration provides a beautiful geometric regularization of  \eqref{minpbmquater0} by introducing a Riemannian metric on  $G_{\mathcal{H}}$ and penalizing the geodesic distance of the diffeomorphism $g$ to the identity map. This will be described in more in detail in Subsection ~\ref{LDDMMintrosub}. Starting from this regularized problem, we develop a geometric multiscale framework and use it to prove that if a solution to \eqref{minpbmquater0} exists then it has a hierarchical expansion (analogous to  \eqref{hierarchdecomp}) as the composition of infinitely many deformations of increasingly finer scales.   The multiscale approach to image registration developed in this paper can thus be seen as a bridge between greedy matching and LDDMM: the multiscale construction consists of a series of ``LDDMM steps'', and we show that it yields a convergent decomposition of an optimal solution, provided a solution to \eqref{minpbmquater0} exists. In this context, an optimal solution 
is one with minimal distance to the identity. 
In Subsection ~\ref{LDDMMintrosub} below we briefly describe the LDDMM approach to image registration and illustrate our main results in this direction, in particular Theorem~\ref{diffeoconv2} which we believe to be the best result of the paper. The detailed proofs and further results are given in Section ~\ref{registrationsec}. For other multiscale approaches to image registration, completely different from the one presented here, we refer the reader to \cite{Risser-et-al0,Risser-et-al}, \cite{Sommer1,Sommer2}, \cite{Paquin-Levy-Schreibmann-Xing,Paquin-Levy-Xing1,Paquin-Levy-Xing2}, \cite {AXRNG}
and \cite{Gris-et-al}.

 A second aim of our paper is to develop an analogous multiscale framework suitable for nonlinear inverse problems. To illustrate the main ideas in this direction, we focus on one particular inverse problem which has been extensively investigated, namely the Calder\'on inverse conductivity problem. Initially motivated by geophysical prospection, and more recently by medical imaging, this concerns the determination of the conductivity $\sigma$ of a body $\Omega$ from voltage and current measurements at the surface $\partial \Omega$. In particular, here we allow for possibly nonsmooth conductivities for which uniqueness results may not be available. The given data is encoded in a nonlinear operator $\mathcal{N}(\sigma)$, the Neumann-to-Dirichlet (or current-to-voltage) map on $\partial \Omega$. 

Specifically, for a given input current $i$ on the boundary (assumed to have zero mean), $\mathcal{N}(\sigma)(i)$ is defined as: $$\mathcal{N}(\sigma)(i)=v|_{\partial\Omega}$$
with the potential $v$ the solution to
 the electrostatic boundary value-problem:\begin{equation}\label{Neumannproblem0}
\left\{\begin{array}{ll}
\mathrm{div}(\sigma\nabla v)=0&\text{in }\Omega\\
\sigma\nabla v\cdot \nu=i&\text{on }\partial\Omega\\
\int_{\partial \Omega}v=0.&
\end{array}\right. 
\end{equation}
Typical of many inverse problems, the problem of determining $\sigma$ from knowledge of $\mathcal{N}(\sigma)$ is severely ill-posed. To overcome this difficulty 
a regularization method is often used. Our multiscale procedure starts with a relatively well-posed problem, corresponding to a low value of the regularization parameter $\lambda$, that allows us to recover stably the main features of the unknown conductivity. Subsequent steps involve higher values of $\lambda$ to recover finer details; we then have to deal with ill-posed problems but have a very good initial guess at every stage. (In this respect, our method shares some of the advantages of homotopy continuation algorithms for inverse problems which rely, for instance, on multifrequency data; see e.g. \cite{Chen}). The resulting iterative procedure allows us to numerically solve the inverse problem and simultaneously obtain a multiscale representation of its solution. Significantly, this multiscale representation is driven by the inverse problem itself rather than some post-processing of the solution. We present our main multiscale results for the Calder\'on problem in Subsection ~\ref{Calderonsub}, in particular Theorem ~\ref{mainthmcond20}, along with a brief review of the relevant background. The proofs and further results are given in Section~\ref{inversesec}.

% Starting from 
The above apparently completely different problems (LDDMM and Calder\'on) from two distinct fields led us to a general multiscale theory relevant to a wide range of applications that involve the minimization of the sum of a fidelity term and a regularization term. Our general abstract results for nonlinear inverse problems are introduced in Subsections~\ref{abstract1subsec} and \ref{abstractsecondsub} with the details and proofs given in Section~\ref{abstractsec}. The extension of the general framework, replacing addition by other group actions so as to be able to handle composition of maps, is developed in Section~\ref{topgroupsubs}.

Finally, in Appendix~\ref{counterexsub} we give several counterexamples showing the optimality of our abstract results.

\section{Background and main results}\label{mainresults}

We begin with a simple general formulation motivated by nonlinear inverse problems. This will serve to introduce some of the main ideas, and it will already provide a result sufficiently sharp to include the extension of Theorem~\ref{hierarchythmTNV} to $f\in L^2$. Subsequently, to obtain the convergence properties we seek, we will need to introduce a tighter multiscale algorithm.

\subsection{A multiscale framework for nonlinear inverse problems (first version)}\label{abstract1subsec}
Let $X$ be a real Banach space with norm $\|\cdot\|=\|\cdot\|_X$. Let $E$ be a
closed nonempty subset of $X$.

Let $Y$ be a metric space with distance $d=d_Y$. Let $\mathcal{N}:E\to Y$ be a possibly nonlinear map and let
$\hat{\mathcal{N}}\in Y$.
 (We think of $\hat{\mathcal{N}}$ as the given data). We assume that the function $E\ni \sigma\mapsto d(\hat{\mathcal{N}},\mathcal{N}(\sigma))$  is continuous with respect to the (strong) convergence in $X$.

\newcounter{bvassum}

We also assume that there exists a function $|\cdot|:X\to [0,+\infty]$ such that:
\begin{enumerate}[1)]
\item\label{BVass1} $|0|=0$ and $|-x|=|x|$ for any $x\in X$; 
\item\label{BVass2} $|x+y|\leq |x|+|y|$ for any $x$, $y\in X$;
\item\label{BVass3} $\{x\in E:\ |x|<+\infty\}$ is dense in $E$.
\setcounter{bvassum}{\theenumi}
\end{enumerate}

We note that condition \ref{BVass3}) is satisfied if $\{x\in X:\ |x|<+\infty\}$ is dense in $X$ and $E$ is a suitable closed subset of $X$, for example $E$ is the closure of an open set.

We fix two positive constants
 $\alpha$ and $\beta$ and we assume that the following regularized minimization problem admits a solution for $\hat{\sigma}=0$ and for any $\hat{\sigma}\in E$ and any $\lambda>0$
\begin{equation}\label{existenceminimizereq}
\min\left\{\left(\lambda[d(\hat{\mathcal{N}},\mathcal{N}(\hat{\sigma}+\sigma))^{\alpha}]+|\sigma|^{\beta}\right):\ \hat{\sigma}+\sigma\in E\right\}.
\end{equation}

Let us note here that existence of a solution to \eqref{existenceminimizereq}
may be guaranteed, under the above assumptions, if $|\cdot|$ also satisfies the following:
\begin{enumerate}[1)]
\setcounter{enumi}{\thebvassum}
\item\label{BVass4} $\{x\in X:\ |x|\leq b\}$ is relatively sequentially compact in $X$ for any $b\in \mathbb{R}$;
\item\label{BVass5} $|\cdot|$ is sequentially lower semicontinuous on $X$, with respect to the (strong) convergence in $X$.
\end{enumerate}

Inspired by the procedure in \cite{T-N-V}, we consider the following construction.
Let us fix positive parameters $\lambda_n$, $n\in\mathbb{N}$, and 
let $\sigma_0$ be a solution to
\begin{equation}\label{regularizedpbm}
\min\left\{\left(\lambda_0[d(\hat{\mathcal{N}},\mathcal{N}(\sigma))^{\alpha}]+|\sigma|^{\beta}\right):\ \sigma\in E\right\}.
\end{equation}
The multiscale algorithm then constructs $\sigma_n$, $n\geq 1$, inductively as a solution to
 \begin{equation}\label{inductiveconstr}\min\left\{\left(\lambda_n[d(\hat{\mathcal{N}},\mathcal{N}(\tilde{\sigma}_{n-1}+\sigma))^{\alpha}]+|\sigma|^{\beta}\right):\ \tilde{\sigma}_{n-1}+\sigma\in E\right\}\end{equation}
where we denote
 by $\tilde{\sigma}_n$ the partial sum:
 \begin{equation}\label{partialsum1}
 \tilde{\sigma}_n=\sum_{j=0}^n\sigma_j\quad\text{for any }n\in\mathbb{N}.
 \end{equation}
 
Our assumptions guarantee that the sequence $\{\sigma_n\}_{n\in\mathbb{N}}$ exists, however in general it need not be uniquely determined.

Note that by taking $\sigma=0$, we have
$$\lambda_n[d(\hat{\mathcal{N}},\mathcal{N}(\tilde{\sigma}_n))^{\alpha}]+|\sigma_n|^{\beta}\leq \lambda_n[d(\hat{\mathcal{N}},\mathcal{N}(\tilde{\sigma}_{n-1}))^{\alpha}]+|0|^{\beta},$$
hence
\begin{equation}
d(\hat{\mathcal{N}},\mathcal{N}(\tilde{\sigma}_n))\leq d(\hat{\mathcal{N}},\mathcal{N}(\tilde{\sigma}_{n-1}))\quad\text{for any }n\geq 1.
\end{equation}

Let us denote
$$\varepsilon_0=\lim_nd(\hat{\mathcal{N}},\mathcal{N}(\tilde{\sigma}_n))$$
and
$$\delta_0=\inf\{d(\hat{\mathcal{N}},\mathcal{N}(\sigma)):\ \sigma\in E\}.$$Our first general result is the following. See Subsection~\ref{abstractresultproofsec} for a proof.

\begin{teo}\label{mainthm0}
Under the assumptions listed above, if\begin{equation}\label{firstcondcoeff}
\limsup_n \frac{2^{\beta n}}{\lambda_n}<+\infty,\end{equation}
then for the multiscale sequence $\{\tilde{\sigma}_n\}_{n\in\mathbb{N}}$ given by \eqref{regularizedpbm}, \eqref{inductiveconstr} and \eqref{partialsum1}
we have $\varepsilon_0=\delta_0$, that is: 
$$\lim_nd(\hat{\mathcal{N}},\mathcal{N}(\tilde{\sigma}_n))=\inf\{d(\hat{\mathcal{N}},\mathcal{N}(\sigma)):\ \sigma\in E\}.$$
\end{teo}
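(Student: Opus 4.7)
Writing a proof by direct limit argument, I would start with the immediate observation that choosing $\sigma=0$ as a competitor in the $n$-th minimization \eqref{inductiveconstr} (admissible because $\tilde{\sigma}_{n-1}\in E$) shows that $D_n:=d(\hat{\mathcal{N}},\mathcal{N}(\tilde{\sigma}_n))$ is nonincreasing and yields the fundamental estimate
\[
|\sigma_n|^{\beta}\;\le\;\lambda_n A_n,\qquad A_n:=D_{n-1}^{\alpha}-D_n^{\alpha}\ge 0,
\]
for $n\ge 1$. The telescoping $\sum_{n\ge 1}A_n\le D_0^{\alpha}<+\infty$ immediately gives $A_n\to 0$. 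The inequality $\varepsilon_0\ge\delta_0$ is trivial, so the core of the proof is the reverse bound $\varepsilon_0\le\delta_0$.

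To establish this, I would fix $\epsilon>0$ and first select some $\sigma^{*}\in E$ with $|\sigma^{*}|<+\infty$ and $d(\hat{\mathcal{N}},\mathcal{N}(\sigma^{*}))\le\delta_0+\epsilon$; its existence should follow by combining the density hypothesis \eqref{BVass3} with the strong continuity of $\sigma\mapsto d(\hat{\mathcal{N}},\mathcal{N}(\sigma))$ on $E$. The choice $\sigma:=\sigma^{*}-\tilde{\sigma}_{n-1}$ is then admissible in \eqref{inductiveconstr} because $\tilde{\sigma}_{n-1}+\sigma=\sigma^{*}\in E$, and the minimality of $\sigma_n$ forces
\[
D_n^{\alpha}\;\le\;(\delta_0+\epsilon)^{\alpha}+\frac{|\sigma^{*}-\tilde{\sigma}_{n-1}|^{\beta}}{\lambda_n},
\]
so everything reduces to showing that the remainder on the right tends to $0$ as $n\to\infty$.

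For the remainder I would use the triangle inequality and symmetry \eqref{BVass1}-\eqref{BVass2} combined with the first-paragraph bound on $|\sigma_j|$:
\[
|\sigma^{*}-\tilde{\sigma}_{n-1}|\;\le\;|\sigma^{*}|+\sum_{j=0}^{n-1}|\sigma_j|\;\le\;|\sigma^{*}|+\sum_{j=0}^{n-1}\lambda_j^{1/\beta}A_j^{1/\beta}
\]
(with the $j=0$ term controlled via the analogous estimate coming from $\sigma^{*}$ as competitor in the initial step). The hypothesis $\limsup 2^{\beta n}/\lambda_n<+\infty$ reads $\lambda_n\gtrsim 2^{\beta n}$ and lets me compare $\lambda_j/\lambda_n$ to $2^{\beta(j-n)}$ for $j<n$. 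After dividing by $\lambda_n^{1/\beta}$, the decisive contribution takes the form of a Toeplitz-type sum $\sum_{j<n}2^{j-n}A_j^{1/\beta}$. Since $\{A_j^{1/\beta}\}$ is a bounded null sequence and the geometric tail $\sum_{j=N}^{n-1}2^{j-n}$ is uniformly bounded, splitting the sum at a fixed large index $N$ drives the whole quantity to $0$. Letting $\epsilon\downarrow 0$ then yields $\varepsilon_0\le\delta_0$.

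The main obstacle I anticipate lies in the first step: \eqref{BVass3} only provides density in $X$, not inside $E$, so producing a nearly-optimal $\sigma^{*}\in E$ with $|\sigma^{*}|<+\infty$ requires a delicate use of the continuity of the fidelity on $E$ to transfer a dense $X$-approximant into $E$ without losing much in $d(\hat{\mathcal{N}},\mathcal{N}(\cdot))$. A secondary subtlety is the passage from the one-sided growth condition $\limsup 2^{\beta n}/\lambda_n<+\infty$ to the useful ratio bound $\lambda_j/\lambda_n\lesssim 2^{\beta(j-n)}$ that drives the Toeplitz argument: this seems to demand either implicit monotonicity of $\{\lambda_n\}$ or a sharper reformulation depending only on the doubling rate itself.
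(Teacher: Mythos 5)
Your overall architecture (competitor $\sigma=0$ for monotonicity, competitor $\sigma^{*}-\tilde{\sigma}_{n-1}$ for the upper bound, then kill the remainder $|\sigma^{*}-\tilde{\sigma}_{n-1}|^{\beta}/\lambda_n$) is sound, and the paper's proof is in fact a contradiction-style version of the same scheme. But there is a genuine gap in the step you use to control the remainder. Your increment bound $|\sigma_j|^{\beta}\le\lambda_j A_j$ puts $\lambda_j$ into the numerator, so after dividing by $\lambda_n^{1/\beta}$ your Toeplitz sum needs $\lambda_j/\lambda_n\lesssim 2^{\beta(j-n)}$ for $j<n$. The hypothesis \eqref{firstcondcoeff} is only the one-sided bound $\lambda_n\gtrsim 2^{\beta n}$; it gives no upper bound on $\lambda_j$ and no monotonicity, so the ratio bound fails (take $\lambda_n=2^{\beta n}$ except $\lambda_{n_k}=2^{2\beta n_k}$ along a sparse subsequence: then $\lambda_{n_k}/\lambda_{n_k+1}\to+\infty$ and your sum does not tend to zero). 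You flag this yourself as a ``secondary subtlety,'' but it is the crux, and neither of your proposed escapes is available under the stated hypotheses.

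The repair, which is what the paper does, is to extract the increment bound from the \emph{near-optimal} competitor rather than from $\sigma=0$: as long as $D_n\ge\delta_0+\epsilon$, the inequality $\lambda_nD_n^{\alpha}+|\sigma_n|^{\beta}\le\lambda_n(\delta_0+\epsilon)^{\alpha}+|\sigma^{*}-\tilde{\sigma}_{n-1}|^{\beta}$ yields the $\lambda$-free estimate $|\sigma_n|\le|\sigma^{*}-\tilde{\sigma}_{n-1}|$, hence the doubling bound $|\sigma^{*}-\tilde{\sigma}_{n}|\le 2|\sigma^{*}-\tilde{\sigma}_{n-1}|\le C2^{n}$, which is exactly what the $2^{\beta n}$ in \eqref{firstcondcoeff} is calibrated against. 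Even then, note that when $\limsup_n 2^{\beta n}/\lambda_n$ is a finite \emph{positive} constant this only gives boundedness of the remainder, not convergence to zero; the paper needs a further refinement showing that the effective doubling factor is some $b_0<2$ (otherwise one only contradicts a sufficiently large gap $\varepsilon_0-\delta_0$). Your proposal does not engage with this second case at all. The difficulty you call the ``main obstacle'' (producing $\sigma^{*}\in E$ with $|\sigma^{*}|<+\infty$ nearly attaining $\delta_0$) is real but is shared with the paper, which asserts it from assumption \ref{BVass3}) and continuity of $\mathcal{N}$ without further comment.
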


In particular, this result improves on Theorem~\ref{hierarchythmTNV}, namely we have the following.
\begin{teo}\label{ourhierarchythm}The conclusions of Theorem~\textnormal{\ref{hierarchythmTNV}} are valid for any $f \in L^2(\mathbb{R}^2)$.
\end{teo}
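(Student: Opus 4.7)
The plan is to obtain the result as a direct application of Theorem~\ref{mainthm0}. Take $X=Y=L^2(\mathbb{R}^2)$, $E=X$, $\mathcal{N}(\sigma)=\sigma$, $\hat{\mathcal{N}}=f$, $d$ the $L^2$-distance, $|\sigma|=\|\sigma\|_{BV(\mathbb{R}^2)}$, $\alpha=2$ and $\beta=1$. Under these choices the iterative construction \eqref{regularizedpbm}--\eqref{inductiveconstr} reduces exactly to the Tadmor-Nezzar-Vese hierarchical algorithm. Properties \ref{BVass1})--\ref{BVass3}) of the homogeneous $BV$-seminorm are classical; density in $L^2$ follows from $C^{\infty}_c(\mathbb{R}^2)\subset BV(\mathbb{R}^2)$. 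Properties \ref{BVass4})--\ref{BVass5}), combined with the coercive $L^2$-fidelity term, give via the direct method the required existence of a minimizer for the ROF problem \eqref{existenceminimizereq} on $\mathbb{R}^2$.

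With $\lambda_n=\lambda_0 2^n$ and $\beta=1$, the growth condition \eqref{firstcondcoeff} is trivially satisfied: $2^n/\lambda_n\equiv 1/\lambda_0$. Theorem~\ref{mainthm0} therefore yields
$$\lim_n\|f-\tilde{\sigma}_n\|_{L^2(\mathbb{R}^2)}=\inf\{\|f-\sigma\|_{L^2(\mathbb{R}^2)}:\sigma\in L^2(\mathbb{R}^2)\}=0,$$
the infimum being zero since $\sigma=f$ is admissible. Writing $u_j=\sigma_j$, this is precisely the strong $L^2$-convergence of the hierarchical decomposition \eqref{hierarchdecomp} to $f$.

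To obtain the energy identity \eqref{energyeq}, I would reproduce the Euler-Lagrange/telescoping computation of \cite[Theorem~2.2]{T-N-V}, which is insensitive to any extra regularity of $f$. Setting $v_n=f-\tilde{\sigma}_n$, the optimality condition for the subproblem defining $u_n$ reads $2\lambda_n(v_{n-1}-u_n)\in\partial\|u_n\|_{BV}$; since $\|\cdot\|_{BV}$ is positively $1$-homogeneous, testing the subgradient inequality at $0$ and at $2u_n$ forces
$$\langle 2\lambda_n(v_{n-1}-u_n),\,u_n\rangle_{L^2}=\|u_n\|_{BV},$$
and expanding $\|v_n\|_{L^2}^2=\|v_{n-1}-u_n\|_{L^2}^2$ with this identity produces the telescoping relation
$$\|v_{n-1}\|_{L^2}^2-\|v_n\|_{L^2}^2=\|u_n\|_{L^2}^2+\tfrac{1}{\lambda_n}\|u_n\|_{BV}.$$
Summing in $n$ and invoking $v_n\to 0$ in $L^2$ delivers \eqref{energyeq}. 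The real improvement over \cite{T-N-V} is packaged entirely inside the abstract Theorem~\ref{mainthm0}, which trades the $BV$-assumption on $f$ for the mere denseness of $BV$ in $L^2$; once that theorem is in hand, no further obstacle arises, as the ROF existence and the Euler-Lagrange energy bookkeeping never use any regularity of $f$ beyond $f\in L^2$.
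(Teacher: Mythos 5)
Your proposal follows the same route as the paper: Theorem~\ref{ourhierarchythm} is obtained by applying Theorem~\ref{mainthm0} with exactly the choices you list ($X=E=Y=L^2(\mathbb{R}^2)$, $\mathcal{N}=\mathrm{Id}$, $\hat{\mathcal{N}}=f$, $|\cdot|=\|\cdot\|_{BV(\mathbb{R}^2)}$, $\alpha=2$, $\beta=1$, $\delta_0=0$), and the energy identity is obtained by the Tadmor--Nezzar--Vese telescoping, which rests on the identity $\langle v_n,u_n\rangle_{L^2}=\tfrac{1}{2\lambda_n}\|u_n\|_{BV}$. You derive that identity from the subgradient optimality condition tested at $0$ and $2u_n$; the paper invokes Meyer's dual-norm characterization of the ROF minimizer, but the two are essentially the same computation, and your version correctly never uses regularity of $f$ beyond $f\in L^2$.

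One step in your justification is wrong as stated and should be repaired. You claim that properties~\ref{BVass4})--\ref{BVass5}) hold in this setting and yield existence of the ROF minimizer. Property~\ref{BVass5}) (lower semicontinuity) is fine, but property~\ref{BVass4}) fails on the whole plane: the set $\{u\in L^2(\mathbb{R}^2):\ |Du|(\mathbb{R}^2)\leq b\}$ is not relatively sequentially compact in $L^2(\mathbb{R}^2)$, as the translates $u_k(\cdot)=u_0(\cdot-k e_1)$ of a fixed nonzero $u_0$ show --- mass escapes to infinity. Note that the paper lists \ref{BVass4})--\ref{BVass5}) only as \emph{sufficient} conditions for the existence required in \eqref{existenceminimizereq}; the actual hypothesis of Theorem~\ref{mainthm0} is solvability of that problem, which on $\mathbb{R}^2$ the paper gets from Lemma~1 of \cite[Section~1.12]{YMey}. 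Your parenthetical remark about the coercive $L^2$ fidelity term already points to the correct fix: a minimizing sequence is bounded in $L^2$, hence weakly precompact, and both the quadratic term and the total variation are convex and strongly lower semicontinuous, hence weakly lower semicontinuous, so the direct method applies with \emph{weak} $L^2$ compactness in place of property~\ref{BVass4}). With that replacement (or simply with the citation to Meyer), the rest of your argument goes through.
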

Specifically, to obtain Theorem~\ref{ourhierarchythm} we apply Theorem~\ref{mainthm0} to the following setting: let $X=E=L(\mathbb{R}^2)$,
$Y=L(\mathbb{R}^2)$, with the distance induced by the $L^2$ norm, $\mathcal{N}=\mathrm{Id}$ and $\hat{\mathcal{N}}=f\in L^2(\mathbb{R}^2)$. Also let $|\cdot|=\|\cdot\|_{BV(\mathbb{R}^2)}$, $\alpha=2$ and $\beta=1$. We refer to Subsection~\ref{L2BVsec} for further details. Indeed, in Theorems~\ref{hierarchythm} and \ref{hierarchythmbdd} we shall state and prove a generalization of Theorem~\ref{ourhierarchythm} to any dimension as well as to bounded and Lipschitz open subsets of $\mathbb{R}^N$, $N\geq 1$. We point out, however, that the proof of the energy equality \eqref{energyeq} is only valid for $N=2$.
%We shall also sketch a similar result for functions defined on the whole of $\mathbb{R}^N$, $N\geq1$, see Theorem~\ref{wholespacehierarchy}.

To show the versatility of our abstract framework we list below some simple examples which satisfy the required assumptions. A much more elaborate and interesting application, to the inverse problem of Calder\'on, will be described in Subsection  ~\ref{Calderonsub}.\begin{exam}
Let $\Omega$ be an open and bounded subset of $\mathbb{R}^N$, $N\geq 1$, and assume that $\Omega$ has a Lipschitz boundary. Then our assumptions \ref{BVass1})---\ref{BVass5}) are verified in the following cases.
\begin{itemize}
\item $X=L^1(\Omega)$, with its usual norm, $E$ the closure of a nonempty open subset of $X$, and
$|u|=\|u\|_{BV(\Omega)}$ for any $u\in L^1(\Omega)$;
\item $X=L^2(\Omega)$, with its usual norm, $E$ the closure of a nonempty open subset of $X$, and $|u|=\|u\|_{W^{1,2}(\Omega)}=\|u\|_{L^2(\Omega)}+\|\nabla u\|_{L^2(\Omega)}$ for any $u\in L^2(\Omega)$;
\item $X=C^0(\overline{\Omega})$, with its usual sup norm, $E$ the closure of a nonempty open subset of $X$, and, for some $\alpha$, $0<\alpha\leq 1$,
$|u|=\|u\|_{C^{0,\alpha}(\Omega)}=\|u\|_{L^{\infty}(\Omega)}+|u|_{C^{0,\alpha}(\Omega)}$
for any $u\in C^0(\overline{\Omega})$. Here, as usual,
$$|u|_{C^{0,\alpha}(\Omega)}=\sup\left\{\frac{|u(x)-u(y)|}{\|x-y\|^{\alpha}}: x,\, y\in \Omega,\ x\neq y\right\}.$$
\end{itemize}
We note that $E$ need not to be the closure of an open subset, provided it satisfies the density assumption \ref{BVass3}).
\end{exam}

 We recall that for any bounded open set $\Omega\subset\mathbb{R}^N$, $N\geq 1$, a function $u\in L^1(\Omega)$ belongs to $BV(\Omega)$ if $Du$, its gradient in the distributional sense, is a bounded vector valued Radon measure on $\Omega$. We equip $BV(\Omega)$ with the usual norm
$$\|u\|_{BV(\Omega)}=\|u\|_{L^1(\Omega)}+|u|_{BV(\Omega)}$$
where the seminorm $|\cdot|_{BV(\Omega)}$ is defined as the total variation of $Du$ on $\Omega$ that is
$$|u|_{BV(\Omega)}=|Du|(\Omega).$$

\begin{oss}\label{iterativeTikhonov}
We point out that the algorithm \eqref{inductiveconstr}, as well as the one of Tadmor, Nezzar and Vese, is formally the same as the so-called Iterative Tikhonov Regularisation, see for instance the book \cite[Section~7.1]{Sch-et-al}. 
A crucial difference is in the choice of the sequence of parameters $\lambda_n$ as in \eqref{firstcondcoeff} (for Iterative Tikhonov we would have $\lambda_n\to 0$). This corresponds to the completely different objective of the multiscale approach and of the convergence results proved here.
\end{oss}

While Theorem ~\ref{mainthm0} shows that the construction in \eqref{regularizedpbm}, \eqref{inductiveconstr} and \eqref{partialsum1} yields a minimizing sequence $\{\mathcal{N}(\tilde{\sigma}_n)\}_{n\in\mathbb{N}}$, much of the work in the paper will be to go beyond this and also prove convergence results for $\{\tilde{\sigma}_n\}_{n\in\mathbb{N}}$ or one of its subsequences. This is of course automatic in the case of Theorem  ~\ref{ourhierarchythm} where $\mathcal{N}=\mathrm{Id}$, and also easy to show if  $\mathcal{N}(\tilde{\sigma})$ satisfies a mild coercivity condition (see Proposition~\ref{coerciveprop}), but for general nonlinear ill-posed problems it will become clear that we need a tighter multiscale construction. Such a construction is presented in the next subsection. 
\subsection{A tighter multiscale construction for nonlinear problems}\label{abstractsecondsub}

We keep the assumptions of the previous subsection, in particular we suppose that $|\cdot|$ satisfies assumptions \ref{BVass1})---\ref{BVass5}).
 We fix positive constants 
 $\alpha$, $\beta$, $\gamma$ and let $\lambda_n>0$ and $a_n\geq 0$ for any $n\in \mathbb{N}$. We assume that 
$a_n\leq a_{n-1}$ for any $n\geq 1$.

Let now $\sigma_0$ be a solution to
\begin{equation}\label{regularizedpbmbis}
\min\left\{\left(\lambda_0[d(\hat{\mathcal{N}},\mathcal{N}(\sigma))^{\alpha}+a_0|\sigma|^{\gamma}]+|\sigma|^{\beta}\right):\ \sigma\in E\right\}.
\end{equation}
Our assumptions guarantee that at least one minimizer $\sigma_0$ does exist. We then construct $\sigma_n$, $n\geq 1$, inductively by solving \begin{equation}\label{tightconstr}\min\left\{\left(\lambda_n[d(\hat{\mathcal{N}},\mathcal{N}(\tilde{\sigma}_{n-1}+\sigma))^{\alpha}+a_n|\tilde{\sigma}_{n-1}+\sigma|^{\gamma}]+|\sigma|^{\beta}\right): \tilde{\sigma}_{n-1}+\sigma\in E\right\},
\end{equation}
where for any $n\geq 1$ we denote as before \begin{equation}\label{tildetight}\tilde{\sigma}_{n-1}=\sum_{j=0}^{n-1}\sigma_j.\end{equation}Again our assumptions guarantee that the sequence $\{\sigma_n\}_{n\in\mathbb{N}}$ exists, however we cannot guarantee that it is uniquely determined. 

We point out that when $a_n$ is $0$ for all $n\in\mathbb{N}$, we are exactly in the case described in the previous subsection. On the other hand, for nonzero $a_n$ we not only penalize the value of $|\cdot|$ of the increment $\sigma_n$ but also that of the partial sum $\tilde{\sigma}_n$. By taking $\sigma=0$, one immediately finds that
\begin{equation}
d(\hat{\mathcal{N}},\mathcal{N}(\tilde{\sigma}_n))^{\alpha}\leq
d(\hat{\mathcal{N}},\mathcal{N}(\tilde{\sigma}_n))^{\alpha}+a_n|\tilde{\sigma}_n|^{\gamma}
\leq d(\hat{\mathcal{N}},\mathcal{N}(\tilde{\sigma}_{n-1}))^{\alpha}+a_{n-1}|\tilde{\sigma}_{n-1}|^{\gamma}
\end{equation}
for any $n\geq 1$.
Let
$$\varepsilon_0=\lim_n\left(d(\hat{\mathcal{N}},\mathcal{N}(\tilde{\sigma}_n))^{\alpha}+a_n|\tilde{\sigma}_n|^{\gamma}\right)^{1/\alpha}.
$$
Clearly we have that $\varepsilon_0\geq \delta_0=\inf\{d(\hat{\mathcal{N}},\mathcal{N}(\sigma)):\ \sigma\in E\}$.

 We first show that the conclusion of Theorem~\ref{mainthm0} still holds in this more general case.

 \begin{teo}\label{mainthmbis0}
We assume that 
\begin{equation}\label{secondcondcoeff}
a_n\leq a_{n-1}\text{ for any }n\geq 1,\quad
\lim_n a_n=0\quad\text{and}\quad\limsup_n \frac{2^{\beta n}}{\lambda_n}<+\infty.
\end{equation}
Then for the sequence $\{\tilde{\sigma}_n\}_{n\in\mathbb{N}}$ defined by \eqref{tildetight} from the sequence $\{\sigma_n\}_{n\in\mathbb{N}}$ obtained from \eqref{regularizedpbmbis} and \eqref{tightconstr} we have $\varepsilon_0= \delta_0$ and we also have
$$\lim_nd(\hat{\mathcal{N}},\mathcal{N}(\tilde{\sigma}_n))=\delta_0.$$
\end{teo}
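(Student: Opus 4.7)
The plan is to mimic the strategy of Theorem~\ref{mainthm0}, treating the extra $a_n|\cdot|^\gamma$ contributions as lower-order perturbations via the assumption $a_n \to 0$. I argue by contradiction: suppose $\varepsilon_0 > \delta_0$ and set $\eta := (\varepsilon_0^\alpha - \delta_0^\alpha)/4 > 0$. Write $D_n := d(\hat{\mathcal{N}},\mathcal{N}(\tilde{\sigma}_n))^\alpha + a_n|\tilde{\sigma}_n|^\gamma$, so that $\{D_n\}$ is monotonically nonincreasing (by the displayed inequality just before the theorem) and $D_n \geq \varepsilon_0^\alpha = \delta_0^\alpha + 4\eta$ for all $n$.

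First I would produce a good comparison element: using assumption \ref{BVass3}) together with the continuity of $\sigma \mapsto d(\hat{\mathcal{N}},\mathcal{N}(\sigma))$, select $\sigma_* \in E$ with $|\sigma_*| < +\infty$ and $d(\hat{\mathcal{N}},\mathcal{N}(\sigma_*))^\alpha < \delta_0^\alpha + \eta$. Substituting $\sigma = \sigma_* - \tilde{\sigma}_{n-1}$ as a test in \eqref{tightconstr} (so that $\tilde{\sigma}_{n-1}+\sigma = \sigma_*\in E$), dividing by $\lambda_n$, and using $a_n|\sigma_*|^\gamma < \eta$ for $n$ large yields
\begin{equation*}
2\eta\, \lambda_n \;\leq\; |\sigma_* - \tilde{\sigma}_{n-1}|^\beta.
\end{equation*}
On the other hand, the test $\sigma = 0$ in \eqref{tightconstr}, together with $a_n \leq a_{n-1}$, gives $|\sigma_n|^\beta \leq \lambda_n(D_{n-1}-D_n)$; since $D_n$ is monotone and bounded below, $D_{n-1}-D_n \to 0$, so in particular $|\sigma_n|/\lambda_n^{1/\beta} \to 0$.

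The crux is then to show $|\tilde{\sigma}_{n-1}|^\beta/\lambda_n \to 0$: once this holds, subadditivity of $|\cdot|$ and finiteness of $|\sigma_*|$ give $|\sigma_* - \tilde{\sigma}_{n-1}|^\beta/\lambda_n \to 0$, contradicting the lower bound $2\eta$ above. To obtain the vanishing ratio I would write $|\tilde{\sigma}_{n-1}| \leq \sum_{j=0}^{n-1}|\sigma_j|$ and split at $k=\lfloor n/2 \rfloor$. For the tail $k\leq j<n$, the refined bound $|\sigma_j|^\beta \leq \lambda_j(D_{j-1}-D_j)$ combined with $\sup_{j\geq k}(D_{j-1}-D_j)\to 0$ and the geometric lower bound $\lambda_n \gtrsim 2^{\beta n}$ from \eqref{secondcondcoeff} controls the contribution. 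For the head $j<k$, the crude bound $|\sigma_j|^\beta \leq \lambda_j D_0$ suffices, because after raising to the $\beta$-power the head contribution is dominated by $\lambda_k$, which is exponentially smaller than $\lambda_n$.

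Finally $\varepsilon_0 = \delta_0$ immediately gives $\lim_n d(\hat{\mathcal{N}},\mathcal{N}(\tilde{\sigma}_n)) = \delta_0$ via the squeeze $\delta_0^\alpha \leq d(\hat{\mathcal{N}},\mathcal{N}(\tilde{\sigma}_n))^\alpha \leq D_n \to \delta_0^\alpha$. The main obstacle is the splitting estimate: the naive bound $|\sigma_j|\leq (\lambda_j D_0)^{1/\beta}$ alone would only render $|\tilde{\sigma}_{n-1}|^\beta/\lambda_n$ bounded, not vanishing, so extracting the decay of $D_{j-1}-D_j$ is essential. Relative to Theorem~\ref{mainthm0} the only new complication is the need to absorb the $a_n|\cdot|^\gamma$ terms, which is routine thanks to $a_n \to 0$ and the fixed finiteness of $|\sigma_*|$.
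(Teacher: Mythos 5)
Your setup---the contradiction hypothesis, the choice of a comparison element $\sigma_*\in E$ with $|\sigma_*|<+\infty$ and $d(\hat{\mathcal{N}},\mathcal{N}(\sigma_*))^\alpha<\delta_0^\alpha+\eta$, the absorption of $a_n|\sigma_*|^\gamma$ for large $n$ using $a_n\to 0$, and the resulting inequality $2\eta\,\lambda_n\leq|\sigma_*-\tilde{\sigma}_{n-1}|^\beta$---is exactly the paper's. The proof breaks at the step you yourself identify as the crux. The hypothesis $\limsup_n 2^{\beta n}/\lambda_n<+\infty$ is only a \emph{lower} bound $\lambda_n\geq c\,2^{\beta n}$; it gives no upper bound on any $\lambda_j$ and no control of the ratios $\lambda_j/\lambda_n$ for $j<n$ (monotonicity of $\lambda_n$ is not assumed either). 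Both halves of your splitting need such control: the tail requires $\sum_{j=k}^{n-1}\lambda_j^{1/\beta}\lesssim\lambda_n^{1/\beta}$ and the head requires $\sum_{j<k}\lambda_j^{1/\beta}\lesssim\lambda_k^{1/\beta}$ together with $\lambda_k=o(\lambda_n)$. None of these follow. Concretely, take $\lambda_n=2^{\beta n}$ for $n$ odd and $\lambda_n=2^{3\beta n}$ for $n$ even: \eqref{secondcondcoeff} holds, yet for odd $n$ the single tail term $j=n-1$ contributes $\bigl(\lambda_{n-1}(D_{n-2}-D_{n-1})\bigr)^{1/\beta}=2^{3(n-1)}(D_{n-2}-D_{n-1})^{1/\beta}$ against $\lambda_n^{1/\beta}=2^{n}$, and since telescoping only gives $D_{j-1}-D_j\to 0$ (possibly as slowly as $1/j^2$), the ratio can diverge. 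So $|\tilde{\sigma}_{n-1}|^\beta/\lambda_n\to 0$ is not established; your argument is complete only under an additional assumption such as $\lambda_n=\lambda_0 2^{\beta n}$ exactly.

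The repair is the one the paper uses in Theorem~\ref{mainthm0} and transplants here: do not sum the increments via the telescoping bound $|\sigma_j|^\beta\leq\lambda_j(D_{j-1}-D_j)$. Instead, in the minimization inequality against $\sigma_*-\tilde{\sigma}_{n-1}$ drop the nonnegative term $\lambda_n(1-C_2)[\,\cdots]$ to get $|\sigma_n|\leq|\sigma_*-\tilde{\sigma}_{n-1}|$ directly (see \eqref{firstcrucialestimatebis}); this yields the doubling recursion $|\sigma_*-\tilde{\sigma}_n|\leq 2|\sigma_*-\tilde{\sigma}_{n-1}|$, hence $|\sigma_*-\tilde{\sigma}_{n-1}|^\beta\leq \tilde{C}_0 2^{\beta n}$, which contradicts $2\eta\lambda_n\leq|\sigma_*-\tilde{\sigma}_{n-1}|^\beta$ outright when $2^{\beta n}/\lambda_n\to 0$. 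In the borderline case $\limsup_n 2^{\beta n}/\lambda_n=\tilde{C}\in(0,+\infty)$ a further dichotomy shows the doubling constant can be improved to some $b_0<2$, so that $|\sigma_*-\tilde{\sigma}_{n-1}|^\beta/2^{\beta n}\to 0$ and the product with the bounded quantity $2^{\beta n}/\lambda_n$ still vanishes. This recursion in the single quantity $|\sigma_*-\tilde{\sigma}_{n-1}|$ is what replaces, and cannot be recovered from, your term-by-term summation.
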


For a proof we again refer to Subsection~\ref{abstractresultproofsec}. We now turn to the main point of the new construction, which is to find conditions for the convergence of $\{\tilde{\sigma}_n\}_{n\in\mathbb{N}}$.  We begin by observing that, if this sequence (or one of its subsequences) converges to some $\tilde{\sigma}_{\infty}$, then
$\tilde{\sigma}_{\infty}$ is a solution to 
\begin{equation}\label{minpbm0}
\min\{d(\hat{\mathcal{N}},\mathcal{N}(\sigma)):\ \sigma\in E\}.
\end{equation}Therefore, an immediate necessary condition for the convergence of $\{\tilde{\sigma}_n\}_{n\in\mathbb{N}}$, or of one of its subsequences,
is that a solution to \eqref{minpbm0} does exist. A sufficient condition is guaranteed by the following stronger assumption. Suppose that there exists $\hat{\sigma}\in E$ such that
\begin{equation}\label{anothermin00}
d(\hat{\mathcal{N}},\mathcal{N}(\hat{\sigma}))
=\delta_0=\min\{d(\hat{\mathcal{N}},\mathcal{N}(\sigma)):\ \sigma\in E\}\quad\text{and}\quad |\hat{\sigma}|<+\infty.
\end{equation} 
Without loss of generality we may then assume that $\hat{\sigma}$ solves the following minimization problem
\begin{equation}\label{minimal00}
\min\{ |\sigma|:\ \sigma\in E\text{ and }d(\hat{\mathcal{N}},\mathcal{N}(\sigma))=\delta_0\}<+\infty.
\end{equation}
This condition may seem rather restrictive. However, in Appendix~\ref{counterexsub}, we show through several examples that our general abstract results are optimal in several respects. In particular, the two cases given in Example~\ref{L2minexample} suggest that a condition such as  \eqref{anothermin00} might not be removed or even relaxed if we wish to have convergence of $\{\tilde{\sigma}_n\}_{n\in\mathbb{N}}$.  Let us call $\hat{E}$ the set of solutions of \eqref{minimal00}. We note that $\hat{E}$ is sequentially compact in $X$.

 In the result below we also need a stronger assumption on the parameters, namely
\begin{equation}\label{crucialcondition0}
a_n\leq a_{n-1}\text{ for any }n\geq 1,\quad
\lim_n a_n=0\quad\text{and}\quad
\limsup_n \frac{2^{\beta n}}{\lambda_na_n}=0.
\end{equation}
Note that in particular we are assuming $a_n>0$ for any $n\geq 0$ and that \eqref{crucialcondition0} implies that $\limsup_n 2^{\beta n}/\lambda_n=0$.

We have the following convergence result, which will be proved in Subsection~\ref{abstractresultproofsec}.

\begin{teo}\label{minimizercor0}
 Assume that
\eqref{crucialcondition0} holds 
and that there exists a solution $\hat{\sigma}$ of \eqref{minimal00}. Consider the sequence $\{\tilde{\sigma}_n\}_{n\in\mathbb{N}}$ defined by \eqref{tildetight} from the sequence $\{\sigma_n\}_{n\in\mathbb{N}}$ obtained from \eqref{regularizedpbmbis} and \eqref{tightconstr}.

Then $\tilde{\sigma}_n$ converges, up to a subsequence, to $\tilde{\sigma}_{\infty}$ where $\tilde{\sigma}_{\infty}$ is a \textnormal{(}possibly different from $\hat{\sigma}$\textnormal{)} solution to \eqref{minimal00}, that is, $d(\hat{\mathcal{N}},\mathcal{N}(\tilde{\sigma}_{\infty}))=\delta_0$ and $|\tilde{\sigma}_{\infty}|=|\hat{\sigma}|$.
Furthermore, we have that
$$\lim_n|\tilde{\sigma}_n|= |\hat{\sigma}|$$
and
\begin{equation}\label{distance}
\lim_n \mathrm{dist}(\tilde{\sigma}_n,\hat{E})=0
,\end{equation}
where for any $\sigma\in X$, $\mathrm{dist}(\sigma,\hat{E})=\inf\{\|\sigma-\hat{\sigma}\|:\ \hat{\sigma}\in \hat{E}\}$.
\end{teo}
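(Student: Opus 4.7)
The plan is to use Theorem~\ref{mainthmbis0} (whose hypothesis \eqref{secondcondcoeff} is strictly weaker than \eqref{crucialcondition0}) to obtain $d(\hat{\mathcal{N}},\mathcal{N}(\tilde{\sigma}_n))\to\delta_0$, and to derive a uniform $|\cdot|$-bound on $\tilde{\sigma}_n$, after which compactness and lower semicontinuity of $|\cdot|$ extract the limit and the minimality defining $\hat{E}$ identifies it. The starting point is to test the $n$-th minimization \eqref{tightconstr} with the admissible competitor $\sigma=\hat{\sigma}-\tilde{\sigma}_{n-1}$: since $\tilde{\sigma}_{n-1}+\sigma=\hat{\sigma}\in E$ and $|\hat{\sigma}|<+\infty$, using $d(\hat{\mathcal{N}},\mathcal{N}(\tilde{\sigma}_n))\geq\delta_0=d(\hat{\mathcal{N}},\mathcal{N}(\hat{\sigma}))$ to drop the fidelity terms on the left and properties~\ref{BVass1})--\ref{BVass2}) on the right, one arrives at the key recursion
$$
|\tilde{\sigma}_n|^\gamma \;\leq\; |\hat{\sigma}|^\gamma + \frac{(|\hat{\sigma}|+|\tilde{\sigma}_{n-1}|)^\beta}{\lambda_n a_n}.
$$

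The main obstacle, which I expect to be the hardest step, is converting this recursion into a uniform bound on $|\tilde{\sigma}_n|$. Hypothesis~\eqref{crucialcondition0} yields $\epsilon_n:=1/(\lambda_n a_n)\lesssim 2^{-\beta n}$ for $n$ large, in particular $\sum_n\epsilon_n<+\infty$. The bootstrap idea is: for any $M>|\hat{\sigma}|$, the quantity $\epsilon_n(|\hat{\sigma}|+M)^\beta$ eventually drops below $M^\gamma-|\hat{\sigma}|^\gamma$, and as soon as that happens the estimate self-propagates $|\tilde{\sigma}_n|\leq M$. The finitely many pre-bootstrap values $|\tilde{\sigma}_0|,\ldots,|\tilde{\sigma}_{n_0}|$ are each individually finite (from \eqref{regularizedpbmbis}, \eqref{tightconstr} and the triangle inequality applied to finitely many summands), so they are absorbed into the choice of $M$. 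Sending $M\downarrow|\hat{\sigma}|$ along with $n\to\infty$ then yields
$$
\limsup_n |\tilde{\sigma}_n| \;\leq\; |\hat{\sigma}|.
$$

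With the uniform bound secured, assumption~\ref{BVass4}) delivers, along some subsequence $\{n_k\}$, a strong $X$-limit $\tilde{\sigma}_\infty$. The continuity of $\sigma\mapsto d(\hat{\mathcal{N}},\mathcal{N}(\sigma))$ combined with $d(\hat{\mathcal{N}},\mathcal{N}(\tilde{\sigma}_n))\to\delta_0$ (Theorem~\ref{mainthmbis0}) forces $d(\hat{\mathcal{N}},\mathcal{N}(\tilde{\sigma}_\infty))=\delta_0$. Lower semicontinuity~\ref{BVass5}), together with the $\limsup$ bound just obtained, gives $|\tilde{\sigma}_\infty|\leq|\hat{\sigma}|$; the reverse inequality is forced by the minimality defining $\hat{E}$. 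Hence $\tilde{\sigma}_\infty\in\hat{E}$ with $|\tilde{\sigma}_\infty|=|\hat{\sigma}|$.

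The full statements $\lim_n|\tilde{\sigma}_n|=|\hat{\sigma}|$ and \eqref{distance} follow by a standard subsequence-of-subsequences argument: every subsequence of $\{\tilde{\sigma}_n\}$ has, by the same uniform bound together with assumptions~\ref{BVass4})--\ref{BVass5}), a further $X$-convergent subsequence whose limit again lies in $\hat{E}$ by the previous paragraph; along any such further subsequence, lower semicontinuity and the $\limsup$ inequality force $|\tilde{\sigma}_{n_{k_j}}|\to|\hat{\sigma}|$, while strong $X$-convergence to an element of $\hat{E}$ forces $\mathrm{dist}(\tilde{\sigma}_{n_{k_j}},\hat{E})\to 0$. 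Since every subsequence has such a further subsequence with the common limiting value, the entire sequences converge as stated.
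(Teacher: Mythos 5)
Your overall strategy is the same as the paper's (test \eqref{tightconstr} with the competitor $\sigma=\hat{\sigma}-\tilde{\sigma}_{n-1}$, prove $\limsup_n|\tilde{\sigma}_n|\leq|\hat{\sigma}|$, then use compactness, lower semicontinuity and the minimality defining $\hat{E}$), and the final compactness/identification/subsequence-of-subsequences part is fine. The gap is in the step you yourself flag as the hardest: converting the recursion into a uniform bound. By discarding the term $|\sigma_n|^{\beta}$ on the left-hand side of the competitor inequality, you retain only
$|\tilde{\sigma}_n|^{\gamma}\leq|\hat{\sigma}|^{\gamma}+\epsilon_n(|\hat{\sigma}|+|\tilde{\sigma}_{n-1}|)^{\beta}$ with $\epsilon_n=1/(\lambda_na_n)=o(2^{-\beta n})$, and this inequality alone does not imply boundedness. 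The hypotheses place no constraint on $\epsilon_0,\dots,\epsilon_N$ for any finite $N$, so in the regime $\beta>\gamma$ the recursion permits $|\tilde{\sigma}_n|$ to grow like an iterated power $|\tilde{\sigma}_{n-1}|^{\beta/\gamma}$ over an initial segment, i.e.\ doubly exponentially; once that has happened, the decay $\epsilon_n=o(2^{-\beta n})$ is too slow to bring the sequence back down, since $x_{n-1}^{\beta/\gamma-1}$ eventually dominates $2^{\beta n/\gamma}$. Concretely, your bootstrap is circular: the threshold index $n_0$ at which $\epsilon_n(|\hat{\sigma}|+M)^{\beta}<M^{\gamma}-|\hat{\sigma}|^{\gamma}$ depends on $M$ (and, when $\gamma<\beta$, $n_0(M)\to\infty$ as $M\to\infty$ because the threshold $\sim M^{\gamma-\beta}$ shrinks), while enlarging $M$ to dominate $|\tilde{\sigma}_0|,\dots,|\tilde{\sigma}_{n_0}|$ pushes $n_0$ further out and brings in new uncontrolled pre-bootstrap values. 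Your argument does close when $\gamma\geq\beta$ (there $n_0(M)$ stays bounded as $M\to\infty$), but the theorem is stated for arbitrary positive $\beta,\gamma$.

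The missing ingredient is exactly the paper's intermediate estimate \eqref{growth}, namely $|\hat{\sigma}-\tilde{\sigma}_n|\leq 2^{n+1}|\hat{\sigma}|$, which is obtained by \emph{not} throwing away $|\sigma_n|^{\beta}$: from \eqref{cruciale} with $\overline{\sigma}=\hat{\sigma}$ one gets the dichotomy that either $|\tilde{\sigma}_n|\leq|\hat{\sigma}|$ (so $|\hat{\sigma}-\tilde{\sigma}_n|\leq 2|\hat{\sigma}|$) or $|\sigma_n|^{\beta}\leq|\hat{\sigma}-\tilde{\sigma}_{n-1}|^{\beta}$ (so $|\hat{\sigma}-\tilde{\sigma}_n|\leq 2|\hat{\sigma}-\tilde{\sigma}_{n-1}|$), and tracking the last index of the first type yields at most geometric growth with ratio $2$. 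This is precisely the growth rate that the hypothesis $2^{\beta n}/(\lambda_na_n)\to 0$ in \eqref{crucialcondition0} is calibrated to beat: plugging $|\hat{\sigma}-\tilde{\sigma}_{n-1}|^{\beta}\leq 2^{\beta n}|\hat{\sigma}|^{\beta}$ back into the competitor inequality gives $\lambda_na_n\left(|\tilde{\sigma}_n|^{\gamma}-|\hat{\sigma}|^{\gamma}\right)\leq 2^{\beta n}|\hat{\sigma}|^{\beta}$ and hence directly $\limsup_n|\tilde{\sigma}_n|^{\gamma}\leq|\hat{\sigma}|^{\gamma}$, with no bootstrap needed. If you insert \eqref{growth} as a preliminary lemma (it also supplies the base case for your bootstrap uniformly in $\beta,\gamma$), the rest of your write-up goes through.
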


It is still possible, however, that two different subsequences converge to two different solutions of \eqref{minimal00}, as the counterexample in Subsection~\ref{wholesequenceex} shows.

On the other hand, if \eqref{minimal00} has a unique solution $\hat{\sigma}$, then the above construction yields the multiscale decomposition $$\hat{\sigma}=\lim_n\tilde{\sigma}_n=\sum_{j=0}^{\infty}\sigma_j$$
in the sense of strong convergence in $X$.

The proof of Theorem ~\ref{minimizercor0} will be given in Subsection~\ref{abstractresultproofsec}  along with further details on the abstract multiscale framework. In the next subsection we will describe the application of this general framework to multiscale results for the inverse conductivity problem.
In the subsequent subsection we will introduce our multiscale approach to the registration problem.

\subsection{The inverse problem of Calder\'on}\label{Calderonsub}
The inverse problem proposed by Cal\-de\-r\'on in 1980 concerns the determination of the conductivity of an object from electrostatic measurements of current and voltage type at the boundary. 

In the case of scalar (i.e. isotropic) conductivities, uniqueness was proved, in dimension $3$ and higher, first in \cite{Koh e Vog84:1,Koh e Vog85} for the determination of the conductivity at the boundary and for the
 analytic case, then in \cite{Syl e Uhl87} for $C^2$ smooth conductivities. The two dimensional case was first solved in \cite{Nac} for conductivities in $W^{1,p}(\Omega)$ with $p>1$.

Recently these uniqueness results have been considerably sharpened. For $N=2$ uniqueness was proved for bounded conductivities, without any regularity assumptions, in \cite{Ast e Pai} and even for certain classes of conductivities which need not be bounded from above or below, in \cite{ALP2} and \cite{NRT}. For $N\geq 3$, uniqueness has been shown for $C^1$ conductivities, as well as for Lipschitz conductivities close to the identity in \cite{Hab-Tat}; this smallness condition was removed in \cite{Car-Rog}. For dimensions  $N=3,4$ uniqueness has been proved in \cite{Hab} for conductivities in $W^{1,N}(\Omega)$. 
In the case of anisotropic conductivities, since boundary measurements are invariant under suitable changes of coordinates that keep the boundary fixed, uniqueness does not hold.
However, at least in dimension $2$ for symmetric conductivity tensors, this is the only obstruction as shown first in \cite{Syl} and  \cite{Nac} in the smooth case and then in \cite{Ast e Pai e Las} in the general case.

We need some notation in order to describe the classes of conductivities we will be working with.
 Let us fix
positive constants $a$ and $b$, with $0<a\leq  b$. For $N\geq 2$, we call $\mathbb{M}^{N\times N}(\mathbb{R})$ the space of real valued $N\times N$ matrices.
We shall use the following ellipticity condition for a given $\sigma\in \mathbb{M}^{N\times N}(\mathbb{R})$
\begin{equation}\label{ell2}
\left\{\begin{array}{ll}
\sigma\xi\cdot\xi\geq a\|\xi\|^2&\text{ for any }\xi\in\mathbb{R}^N\\
\sigma^{-1}\xi\cdot\xi\geq b^{-1}\|\xi\|^2&\text{ for any }\xi\in\mathbb{R}^N.\end{array}\right.
\end{equation}
If $\sigma$ is symmetric then \eqref{ell2} is equivalent to the condition
\begin{equation}\label{ell2sym}
a\|\xi\|^2\leq \sigma\xi\cdot\xi\leq b\|\xi\|^2\quad \text{ for any }\xi\in\mathbb{R}^N.
\end{equation}
Finally, if $\sigma=s I_N$, where $I_N$ is the $N\times N$ identity matrix and $s$ is a real number, the condition further reduces to
$$a\leq s\leq b.$$

We define the following classes of conductivity tensors in $\Omega$,
$\Omega\subset\mathbb{R}^N$ being a bounded open set. We call
$\mathcal{M}(a,b)$ the set of $\sigma\in L^{\infty}(\Omega,\mathbb{M}^{N\times N}(\mathbb{R}))$ such that, for almost any $x\in\Omega$, $\sigma(x)$ satisfies \eqref{ell2}. We call $\mathcal{M}_{sym}(a,b)$, respectively $\mathcal{M}_{scal}(a,b)$, the set of $\sigma\in \mathcal{M}(a,b)$ such that, for almost any $x\in\Omega$, $\sigma(x)$ is symmetric, respectively  $\sigma(x)=s(x) I_N$ with $s(x)$ a real number.
By a conductivity tensor $\sigma$ in $\Omega$, respectively symmetric conductivity tensor or scalar conductivity, we mean $\sigma\in \mathcal{M}(a,b)$, respectively
$\mathcal{M}_{sym}(a,b)$ or $\mathcal{M}_{scal}(a,b)$, for some constants $0<a\leq b$.

Let $\Omega\subset \mathbb{R}^N$, $N\geq 2$, be a bounded domain with Lipschitz boundary. 
We will use the notation $W^{1/2,2}(\partial \Omega)$ for the Sobolev space of traces of
$W^{1,2}(\Omega)$ functions on $\partial \Omega$ and 
$W^{-1/2,2}(\partial \Omega)$ for its dual. We recall that $W^{1/2,2}(\partial\Omega)\subset L^2(\partial\Omega)$ with continuous immersion. 

We denote by $L^2_{\ast}(\partial\Omega)$ the subspace of functions $f\in L^2(\partial\Omega)$ such that $\int_{\partial\Omega}f=0$.
 Correspondingly, we write 
$W^{-1/2,2}_{\ast}(\partial\Omega)$ for the subspace of $g\in W^{-1/2,2}(\partial\Omega)$ such that
$$\langle g,1\rangle_{(W^{-1/2,2}(\partial \Omega),W^{1/2,2}(\partial \Omega))}=0.$$
Note that $L^2_{\ast}(\partial\Omega)\subset W^{-1/2,2}_{\ast}(\partial\Omega)$, with continuous immersion. 
We also denote with $W^{1/2,2}_{\ast}(\partial \Omega)$ the subspace of $\psi\in W^{1/2,2}(\partial\Omega)$ such that
$\int_{\partial\Omega}\psi=0$. Clearly we have $W^{1/2,2}_{\ast}(\partial \Omega)\subset L^2_{\ast}(\partial\Omega)$ with continuous immersion.
 
For any two Banach spaces $B_1$, $B_2$, $\mathcal{L}(B_1,B_2)$
will denote the Banach space of bounded linear operators from $B_1$ to $B_2$ with the usual operator norm.

 For a conductivity tensor $\sigma\in \mathcal{M}(a,b)$, the corresponding Neumann-to-Dirichlet map
 $\mathcal{N}(\sigma)$ is defined for each $g\in W^{-1/2,2}_{\ast}(\partial\Omega)$, as 
$$\mathcal{N}(\sigma)(g)=v|_{\partial\Omega}$$
with $v$ the solution to 
\begin{equation}\label{Neumannproblem1}
\left\{\begin{array}{ll}
-\mathrm{div}(\sigma\nabla v)=0&\text{in }\Omega\\
\sigma\nabla v\cdot \nu=g&\text{on }\partial\Omega\\
\int_{\partial \Omega}v=0.&
\end{array}\right. 
\end{equation}
Then $\mathcal{N}(\sigma)$ is bounded linear operator $$\mathcal{N}(\sigma):W^{-1/2,2}_{\ast}(\partial\Omega)\to W^{1/2,2}_{\ast}(\partial\Omega)$$ with norm bounded by a constant depending only on $N$, $\Omega$ and $a$.

 The inverse conductivity problem thus consists in inverting the operator 
$$\mathcal{N}:\mathcal{M}(a,b)
\to \mathcal{L}(W^{-1/2,2}_{\ast}(\partial \Omega),W^{1/2,2}_{\ast}(\partial \Omega)),$$
i.e. in determining an unknown conductivity $\sigma$ from knowledge of  the corresponding Neumann-to-Dirichlet map $\mathcal{N}(\sigma)$.

 To apply our general multiscale approach to this problem we proceed as follows.
 Let $X=L^1(\Omega,\mathbb{M}^{N\times N}(\mathbb{R}))$, with its natural norm, namely
$$\|\sigma\|_{L^1(\Omega)}=\|(\|\sigma\|)\|_{L^1(\Omega)},$$
where for any $N\times N$ matrix $\sigma$, $\|\sigma\|$ denotes its norm as a linear operator of $\mathbb{R}^N$ into itself.
We may take as subset $E$ any of the following classes $\mathcal{M}(a,b)$,
$\mathcal{M}_{sym}(a,b)$ or $\mathcal{M}_{scal}(a,b)$. We need continuity of the nonlinear operator $\mathcal{N}: E\to Y$.
This is guaranteed, for example, if we choose 
$Y=\mathcal{L}(L^2_{\ast}(\partial\Omega),L^2_{\ast}(\partial\Omega))$, with the distance $d$ induced by its norm. Such a choice for $Y$ is a particularly convenient one, see for instance the discussion in \cite{Ron15}.
 For the continuity of $\mathcal{N}$ with respect to the strong convergence in $X$ and the distance $d$ on $Y$, 
see Proposition~\ref{Hconvcont} and Remark~\ref{L2L2rem}.

 Here $\hat{\mathcal{N}}\in Y$ will be the measured 
Neumann-to-Dirichlet map. The nonnegative number $\delta_0=\inf\{\|\hat{\mathcal{N}}-\mathcal{N}(\sigma)\|_Y:\ \sigma\in E\}$ corresponds to the noise level of the measurements.

There are several possible choices for $|\cdot|$. A particularly interesting one, already widely used in applications, is the total variation regularization. Namely,
we define, for any $\sigma\in X$,
$TV(\sigma)$ as the matrix such that
$TV(\sigma)_{ij}=TV(\sigma_{ij})=|D\sigma_{ij}|(\Omega)$ and set
$|\sigma|_{BV(\Omega)}=\| TV(\sigma) \| $ for any $\sigma\in X$. We also define for any $\sigma\in X$
$$\|\sigma\|_{BV(\Omega)}=\|\sigma\|_{L^1(\Omega)}+|\sigma|_{BV(\Omega)}.$$
Then we may choose as $|\cdot|$ either $|\cdot|_{BV(\Omega)}$ or $\|\cdot\|_{BV(\Omega)}$.

We note that the use of total variation regularizations for the inverse conductivity problem has been shown to be effective from a numerical point of view in several papers, see for instance \cite{Dob e San94,Ron e San,Chan e Tai,Chu e Chan e Tai}. Analytical evidence, through a convergence analysis, of the efficacy of these regularization methods was proved in \cite{Ron08}; see also \cite{Ron16} for further developments in this direction.

 In the setting described above, all the assumptions \ref{BVass1})---\ref{BVass5}) are verified. Therefore, Theorem~\ref{mainthm0}, with the same notation, reads as follows.

\begin{teo}\label{mainthmcond0}
If \eqref{firstcondcoeff} is satisfied, then 
for the multiscale sequence $\{\tilde{\sigma}_n\}_{n\in\mathbb{N}}$ given by \eqref{regularizedpbm}, \eqref{inductiveconstr} and \eqref{partialsum1}
we have that
$$\lim_n\|\hat{\mathcal{N}}-\mathcal{N}(\tilde{\sigma}_n)\|_Y=\inf\{\|\hat{\mathcal{N}}-\mathcal{N}(\sigma)\|_Y:\ \sigma\in E\}.$$
\end{teo}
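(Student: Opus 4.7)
The plan is to apply Theorem~\ref{mainthm0} directly in the Calder\'on setting; the entire work reduces to verifying the abstract hypotheses \ref{BVass1})---\ref{BVass5}), the continuity of $\mathcal{N}$, and the existence of minimizers in the regularized problem \eqref{existenceminimizereq} for the specific choices $X=L^1(\Omega,\mathbb{M}^{N\times N}(\mathbb{R}))$, $E\in\{\mathcal{M}(a,b),\mathcal{M}_{sym}(a,b),\mathcal{M}_{scal}(a,b)\}$, $Y=\mathcal{L}(L^2_\ast(\partial\Omega),L^2_\ast(\partial\Omega))$, and $|\cdot|\in\{|\cdot|_{BV(\Omega)},\|\cdot\|_{BV(\Omega)}\}$ extended componentwise to matrix-valued maps, with $\alpha=2$ and $\beta=1$.

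First I would verify that $E$ is a closed, nonempty subset of $X$. Nonemptiness is clear (a suitable constant multiple of $I_N$ lies in all three classes), and closedness follows by extracting an a.e.\ convergent subsequence from any $L^1$-convergent sequence in $E$ and passing the pointwise ellipticity bounds \eqref{ell2} to the limit. The five properties of $|\cdot|$ are then standard facts about total variation: \ref{BVass1}) and \ref{BVass2}) are immediate from the definitions; \ref{BVass3}) is the density of smooth matrix fields of bounded total variation in $L^1$; \ref{BVass4}) is the componentwise Rellich--Kondrachov compactness for $BV$ on Lipschitz domains; and \ref{BVass5}) is the classical $L^1$-lower semicontinuity of the total variation, applied componentwise and combined with the fact that matrix norms are continuous sublinear functions of the entries.

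The continuity of $\mathcal{N}:E\to Y$ with respect to strong $L^1$ convergence on $X$ and the operator norm on $\mathcal{L}(L^2_\ast(\partial\Omega),L^2_\ast(\partial\Omega))$ is the content of Proposition~\ref{Hconvcont} together with Remark~\ref{L2L2rem}, and I would invoke it without reproof. With these ingredients, existence of minimizers for \eqref{regularizedpbm} and \eqref{inductiveconstr} (and more generally \eqref{existenceminimizereq}) follows from the direct method: by testing with $\sigma=0$ a minimizing sequence has $|\cdot|$ uniformly bounded; by \ref{BVass4}) it has an $L^1$-convergent subsequence whose limit lies in $E$ (or in the appropriate translate of $E$) by closedness; the fidelity term passes to the limit by continuity of $\mathcal{N}$, and the regularizer is weakly sequentially lower semicontinuous along $L^1$ convergence by \ref{BVass5}).

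All hypotheses of Theorem~\ref{mainthm0} now being verified under the growth assumption \eqref{firstcondcoeff}, the conclusion of Theorem~\ref{mainthmcond0} is simply its conclusion rewritten in the Calder\'on notation, since $d_Y(\hat{\mathcal{N}},\mathcal{N}(\sigma))=\|\hat{\mathcal{N}}-\mathcal{N}(\sigma)\|_Y$. The one substantive obstacle that must be overcome somewhere in the paper (and which is dispatched by the cited proposition rather than by this proof) is the strong-$L^1$-to-operator-norm continuity of $\mathcal{N}$: this is by no means a soft fact, since mere weak-$\ast$ convergence of conductivities does not pass through the Neumann-to-Dirichlet map, and one must exploit the full strength of $L^1$ convergence to obtain $H$-convergence of the coefficients and hence norm convergence of the associated resolvent operators on the boundary data space.
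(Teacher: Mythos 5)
Your proposal follows essentially the same route as the paper: the paper likewise obtains Theorem~\ref{mainthmcond0} by verifying assumptions \ref{BVass1})---\ref{BVass5}) for the $BV$-type functional, invoking Proposition~\ref{Hconvcont} together with Remark~\ref{L2L2rem} for the continuity of $\mathcal{N}$, and then applying Theorem~\ref{mainthm0} (equivalently, Theorem~\ref{mainthmcond} with $a_n=0$). The only cosmetic difference is that you fix $\alpha=2$, $\beta=1$, whereas the statement holds for the arbitrary positive constants $\alpha$, $\beta$ of the abstract framework; nothing in your argument actually uses that choice.
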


 For the inverse conductivity problem, we can actually obtain from this a very weak convergence result on $\tilde{\sigma}_n$ as well, if we make use of the notion of $H$-convergence, introduced in the context of homogenization.

\begin{cor}\label{corollary1}
Under the assumptions of Theorem~\textnormal{\ref{mainthmcond0}}, if $E=\mathcal{M}(a,b)$ or 
$E=\mathcal{M}_{sym}(a,b)$, then, up to a subsequence, $\tilde{\sigma}_n$ $H$-converges to $\tilde{\sigma}_{\infty}\in E$, where $\tilde{\sigma}_{\infty}$ is a solution of 
\begin{equation}\label{minpbmcond0}
\min\{\|\hat{\mathcal{N}}-\mathcal{N}(\sigma)\|_Y:\ \sigma\in E\}.
\end{equation}
\end{cor}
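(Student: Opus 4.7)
The plan is to combine Theorem~\ref{mainthmcond0} with the two central facts about $H$-convergence on $E$: sequential $H$-compactness of $E$, and continuity of $\mathcal{N}$ with respect to $H$-convergence.

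First, I would invoke Theorem~\ref{mainthmcond0} to obtain that $\|\hat{\mathcal{N}}-\mathcal{N}(\tilde{\sigma}_n)\|_Y\to\delta_0$. By the very construction \eqref{regularizedpbm}--\eqref{inductiveconstr} each partial sum $\tilde{\sigma}_n$ lies in $E$. The classical Murat-Tartar compactness theorem states that $\mathcal{M}(a,b)$ is sequentially compact with respect to $H$-convergence, and symmetry is preserved under $H$-limits, so $\mathcal{M}_{sym}(a,b)$ is also $H$-closed. Hence I can extract a subsequence $\tilde{\sigma}_{n_k}$ and find $\tilde{\sigma}_{\infty}\in E$ with $\tilde{\sigma}_{n_k}\xrightarrow{H}\tilde{\sigma}_{\infty}$.

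Next, I would appeal to the continuity statement already announced in the paper (Proposition~\ref{Hconvcont} together with Remark~\ref{L2L2rem}): $\mathcal{N}$ is continuous from $E$ endowed with $H$-convergence into $Y=\mathcal{L}(L^2_{\ast}(\partial\Omega),L^2_{\ast}(\partial\Omega))$. Passing to the limit along the subsequence yields
\[
\|\hat{\mathcal{N}}-\mathcal{N}(\tilde{\sigma}_{\infty})\|_Y=\lim_k\|\hat{\mathcal{N}}-\mathcal{N}(\tilde{\sigma}_{n_k})\|_Y=\delta_0,
\]
so $\tilde{\sigma}_{\infty}$ attains the infimum in \eqref{minpbmcond0}.

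The argument is not deep once Theorem~\ref{mainthmcond0}, Murat-Tartar compactness, and $H$-continuity of $\mathcal{N}$ are in hand; the genuine point is that $H$-convergence is precisely the natural topology on $E$ in which both compactness of $E$ and continuity of the forward map $\mathcal{N}$ hold simultaneously. No strong convergence in $L^1$ of $\tilde{\sigma}_n$ should be expected for a severely ill-posed problem like Calder\'on's, which is why the weaker $H$-convergence is the right notion here. Finally, the restriction to $E=\mathcal{M}(a,b)$ or $E=\mathcal{M}_{sym}(a,b)$ in the statement is forced by the fact that $\mathcal{M}_{scal}(a,b)$ is not $H$-closed: $H$-limits of scalar conductivities are in general only symmetric matrix-valued, so one cannot hope to keep $\tilde{\sigma}_{\infty}$ in $\mathcal{M}_{scal}(a,b)$ without enlarging the ambient class.
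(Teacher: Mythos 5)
Your overall strategy is exactly the paper's: Theorem~\ref{mainthmcond0} gives $\|\hat{\mathcal{N}}-\mathcal{N}(\tilde{\sigma}_n)\|_Y\to\delta_0$, sequential $H$-compactness of $\mathcal{M}(a,b)$ and $\mathcal{M}_{sym}(a,b)$ extracts an $H$-convergent subsequence with limit in $E$, and one then passes to the limit (the paper proves this as part of Theorem~\ref{mainthmcond} in Section~\ref{inversesec}).

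There is, however, one slip in the final step. You invoke ``continuity of $\mathcal{N}$ with respect to $H$-convergence'' and attribute it to Proposition~\ref{Hconvcont} together with Remark~\ref{L2L2rem}; but Proposition~\ref{Hconvcont} only asserts continuity of $\mathcal{N}$ with respect to \emph{strong convergence in} $X=L^1$, not with respect to $H$-convergence, so as written the identity $\|\hat{\mathcal{N}}-\mathcal{N}(\tilde{\sigma}_{\infty})\|_Y=\lim_k\|\hat{\mathcal{N}}-\mathcal{N}(\tilde{\sigma}_{n_k})\|_Y$ is not justified by anything stated in the paper. The tool the paper actually provides for $H$-convergence is Proposition~\ref{Hconv}, which gives only sequential \emph{lower semicontinuity} of $\sigma\mapsto\|\hat{\mathcal{N}}-\mathcal{N}(\sigma)\|_Y$. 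That is all you need: it yields $\|\hat{\mathcal{N}}-\mathcal{N}(\tilde{\sigma}_{\infty})\|_Y\leq\liminf_k\|\hat{\mathcal{N}}-\mathcal{N}(\tilde{\sigma}_{n_k})\|_Y=\delta_0$, and the reverse inequality is automatic since $\delta_0$ is the infimum over $E$ and $\tilde{\sigma}_{\infty}\in E$. With that substitution your argument is complete and coincides with the paper's; your closing remark about $\mathcal{M}_{scal}(a,b)$ failing to be $H$-closed is also the reason the paper gives for excluding the scalar class here.
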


Theorem~\ref{mainthmcond0} and Corollary~\ref{corollary1} are special cases of Theorem~\ref{mainthmcond}, that will be stated and proved in Section~\ref{inversesec}.

We note that a solution to \eqref{minpbmcond0} corresponds to a solution to our inverse conductivity problem. Therefore, we have found a numerical algorithm to obtain the solution $\tilde{\sigma}_{\infty}$ to the inverse problem and, simultaneously, a multiscale representation of  $\tilde{\sigma}_{\infty}$, namely
\begin{equation}\label{multiscalerepresolution}
\tilde{\sigma}_{\infty}=\lim_k\tilde{\sigma}_{n_k}=\lim_k\sum_{i=0}^{n_k}\sigma_i,
\end{equation}
where the limit has to be understood in the sense of $H$-convergence. For a definition of $H$-convergence and its basic properties we refer to \cite{All,Mur e Tar1,Mur e Tar2}. Here we just remark that for 
symmetric conductivity tensors $H$-convergence reduces to the more usual $G$-convergence and that 
$\mathcal{M}(a,b)$ and $\mathcal{M}_{sym}(a,b)$ are compact with respect to $H$-convergence.
We also recall that $G$- or $H$-convergence has already been shown to be a useful tool in the context of the inverse conductivity problem, see for instance \cite{Koh e Vog87,Ale e Cab,Far e Kur e Rui,Ron15,Ron16}.

On the other hand, the result in Corollary~\ref{corollary1} has some drawbacks. The first one is that the convergence is in an extremely weak sense and that we exploit in a crucial way the compactness of $E$ with respect to this kind of convergence. This is a particular feature of the problem we are considering but it might not occur in a more general case, like the one used in our abstract setting. The second one is that it does not hold for scalar conductivities. In fact, if we restrict ourselves to scalar conductivities, that is, we choose $E=\mathcal{M}_{scal}(a,b)$, several difficulties arise.
First of all, existence of a solution to \eqref{minpbmcond0} may fail, see for instance Example~3.4 in \cite{Ron15}, and also Example~2.5 in \cite{Ron16}.
Secondly, and more importantly, by compactness of $H$-convergence, it is still true that $\tilde{\sigma}_n$, up to a subsequence, $H$-converges to $\tilde{\sigma}_{\infty}$, but we can not assure that the limit $\tilde{\sigma}_{\infty}$ is a scalar conductivity. 

If we wish to have a stronger convergence than $H$-convergence, and to have a convergence result for scalar conductivities as well, we need to use the tighter multiscale construction from  Subsection~\ref{abstractsecondsub}. Thus, keeping the setting above, we now assume in addition that there exists $\hat{\sigma}\in E$
solving the following minimization problem
\begin{equation}\label{minimalcond0}
\min\{ |\sigma|:\ \sigma\in E\text{ and }\|\hat{\mathcal{N}}-\mathcal{N}(\sigma)\|_Y=\delta_0\}<+\infty.
\end{equation}
We call $\hat{E}$ the set of solutions of \eqref{minimalcond0}. We note that $\hat{E}$ is compact in $X$ and that corresponds to the set of (numerical) solutions of our inverse problem which have minimal value of $|\cdot|$, that is, that have minimal total variation among all possible solutions, if $|\cdot|=|\cdot|_{BV}$. We now construct $\sigma_n$, and using \eqref{tildetight} $\tilde{\sigma}_n$ as well, for $n\geq 1$ inductively by solving the minimization problems \eqref{regularizedpbmbis} and \eqref{tightconstr}. The convergence result then reads as follows.

\begin{teo}\label{mainthmcond20}
Assume that
\eqref{crucialcondition0} holds 
and that there exists a solution $\hat{\sigma}$ of \eqref{minimalcond0}.

 Consider the sequence $\{\tilde{\sigma}_n\}_{n\in\mathbb{N}}$ defined by \eqref{tildetight} from the sequence $\{\sigma_n\}_{n\in\mathbb{N}}$ obtained from \eqref{regularizedpbmbis} and  \eqref{tightconstr}.

Then a subsequence $\tilde{\sigma}_{n_k}$ converges to $\tilde{\sigma}_{\infty}$ strongly in $X$, where $\tilde{\sigma}_{\infty}$ is a \textnormal{(}possibly different from $\hat{\sigma}$\textnormal{)} solution to \eqref{minimalcond0}, that is, $d(\hat{\mathcal{N}},\mathcal{N}(\tilde{\sigma}_{\infty}))=\delta_0$ and $|\tilde{\sigma}_{\infty}|=|\hat{\sigma}|$. We thus have a multiscale decomposition of $\tilde{\sigma}_{\infty}$\textnormal{:} 
\begin{equation}\label{multiscalerepresolution2}
\tilde{\sigma}_{\infty}=\lim_k\sum_{i=0}^{n_k}\sigma_i,
\end{equation}
which is convergent in the norm of  $X$. Moreover, we have that 
$$\lim_n|\tilde{\sigma}_n|= |\hat{\sigma}|$$
and
\begin{equation}\label{distancecond}
\lim_n \mathrm{dist}(\tilde{\sigma}_n,\hat{E})=0.
\end{equation}
\end{teo}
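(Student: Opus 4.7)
The plan is to derive Theorem \ref{mainthmcond20} as a direct application of the abstract convergence result Theorem \ref{minimizercor0}. In the Calder\'on setting I work with $X = L^1(\Omega, \mathbb{M}^{N\times N}(\mathbb{R}))$ with its natural norm, the subset $E$ being one of $\mathcal{M}(a,b)$, $\mathcal{M}_{sym}(a,b)$ or $\mathcal{M}_{scal}(a,b)$, and $Y = \mathcal{L}(L^2_\ast(\partial\Omega), L^2_\ast(\partial\Omega))$ equipped with the operator-norm distance. The functional $|\cdot|$ is taken to be either $|\cdot|_{BV(\Omega)}$ or $\|\cdot\|_{BV(\Omega)}$. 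My task reduces to checking that in this concrete setting every hypothesis of Theorem \ref{minimizercor0} is satisfied; once that is done, the conclusions of the abstract theorem translate verbatim into the statement being proved.

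The verification breaks into three items. First, the seminorm axioms \ref{BVass1})--\ref{BVass3}) for $|\cdot|_{BV}$ are standard, while the compactness axiom \ref{BVass4}) follows from Rellich's compact embedding of $BV(\Omega)$ into $L^1(\Omega)$, and the lower semicontinuity axiom \ref{BVass5}) is the classical lower semicontinuity of the total variation with respect to $L^1$ convergence. Second, each of the three admissible classes of conductivities is closed in $X$: the pointwise ellipticity conditions \eqref{ell2}--\eqref{ell2sym}, as well as the symmetry and scalar structure, all survive passage to almost-everywhere limits, hence also to $L^1$ limits along a subsequence. Third, existence of minimizers for \eqref{regularizedpbmbis} and \eqref{tightconstr} follows by the direct method once the continuity of $\mathcal{N}$ and the compactness/semicontinuity properties of $|\cdot|$ are in place; the coercivity needed to bound minimizing sequences in $BV$ is supplied by the $|\sigma|^{\beta}$ term.

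The real analytic content sits in the continuity of the nonlinear map $\mathcal{N}\colon E \to Y$ with respect to strong $L^1$ convergence in $X$. This is exactly what Proposition \ref{Hconvcont} together with Remark \ref{L2L2rem} will provide: $L^1$ convergence of a sequence in $\mathcal{M}(a,b)$, combined with the uniform ellipticity, forces (up to subsequence) $H$-convergence to the same $L^1$ limit, and on $\mathcal{M}(a,b)$ the Neumann-to-Dirichlet maps converge in the operator norm of $\mathcal{L}(L^2_\ast(\partial\Omega), L^2_\ast(\partial\Omega))$ along an $H$-convergent sequence. I anticipate that this continuity statement is the main obstacle, since it requires combining the variational convergence in the $H$-sense with the improvement in topology to norm convergence of the Neumann-to-Dirichlet operators acting on the larger space $L^2_\ast(\partial\Omega)$. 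With continuity established, and using the hypothesis that a solution $\hat{\sigma}$ of \eqref{minimalcond0} exists, Theorem \ref{minimizercor0} applies and yields at once the strong $L^1$ subsequential convergence \eqref{multiscalerepresolution2} to some $\tilde{\sigma}_\infty \in \hat{E}$, together with $\lim_n |\tilde{\sigma}_n| = |\hat{\sigma}|$ and \eqref{distancecond}.
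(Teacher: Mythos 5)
Your proposal is correct and follows essentially the same route as the paper: the authors also obtain Theorem~\ref{mainthmcond20} as a special case of the abstract convergence result (Theorem~\ref{minimizercor0}, via Theorem~\ref{mainthmcond2}), with the only problem-specific work being the verification of assumptions \ref{BVass1})--\ref{BVass5}) for the $BV$ functionals on $\mathcal{M}(a,b)$, $\mathcal{M}_{sym}(a,b)$, $\mathcal{M}_{scal}(a,b)$ and the continuity of $\mathcal{N}$ with respect to strong $L^1$ convergence, which is exactly Proposition~\ref{Hconvcont} together with Remark~\ref{L2L2rem}. You correctly identify that continuity statement as the main analytic ingredient, just as the paper does by importing it from \cite{Ron15}.
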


Theorem~\ref{mainthmcond20} is a special case of Theorem~\ref{mainthmcond2}, which will be stated and proved in Section~\ref{inversesec}.
 Here we make a few remarks. If $E=\mathcal{M}(a,b)$ or
$E=\mathcal{M}_{sym}(a,b)$, then we can guarantee existence of a solution of \eqref{minpbmcond0}, see Proposition~\ref{Hconv}. Uniqueness, however, is not guaranteed. (For example, if the noise level is zero, that is 
$\hat{\mathcal{N}}=\mathcal{N}(\sigma)$ for some $\sigma\in E$, the nonuniquenes of the inverse conductivity problem for anisotropic conductivities implies that uniqueness indeed fails).

 If the measured $\hat{\mathcal{N}}$ is admissible, i.e. $\hat{\mathcal{N}}=\mathcal{N}(\sigma)$ for some $\sigma\in E$ with $|\sigma|<+\infty$ then we also have existence for \eqref{minimalcond0}. In the case of non-zero noise level, existence of a solution of \eqref{minimalcond0} is not easy to prove. 

Further details and complete proofs for our results on the Calder\'on problem are in Section~\ref{inversesec}.

\medskip

 We next address a very different problem, namely that of image registration. We seek to extend our multiscale framework to obtain hierarchical decompositions of diffeomorphisms arising in image registration problems.  

\subsection{Multiscale algorithm for diffeomorphic image registration}\label{LDDMMintrosub}

We review the Large Deformation Diffeomorphic Metric Mapping (LDDMM) approach to image registration, mainly following 
Chapter~8 of \cite{You} and Section~3 of \cite{B-H}.  
See also \cite {B-M-T-Y} and earlier references therein. To begin with, we define $G_{\mathcal{H}}$, the group of diffeomorphisms we will be working with, first introduced by Trouv\'e \cite{Trouve95,Trouve98}, along with a distance function on $G_{\mathcal{H}}$. This will make it possible to quantify the size of a deformation by its distance to the identity map.  Let $\Omega$ be an open subset of $\mathbb{R}^N$, $N\geq 1$.
We say that
 $\mathcal{H}$, a Hilbert space of vector fields on $\Omega$, is \emph{admissible} if it is contained and continuously embedded in $C^1_0(\Omega,\mathbb{R}^N)$, the space of $C^1$ vector fields $u$ on $\Omega$ such that $u$ and $Du$ vanish on $\partial\Omega$ and at infinity. An example of an admissible Hilbert space $\mathcal{H}$ is the Sobolev space $H^s_0(\Omega,\mathbb{R}^N)=W^{s,2}_0(\Omega,\mathbb{R}^N)$ for any $s>N/2+1.$ Having chosen an admissible $\mathcal{H}$, we consider the Hilbert space $L^2([0,1],\mathcal{H})$ of time-dependent vector fields with the usual scalar product
$$\langle u,v\rangle_{L^2([0,1],\mathcal{H})}=\int_0^1\langle u(t),v(t)\rangle_{\mathcal{H}} \rmd t\quad\text{for any }u,v\in
L^2([0,1],\mathcal{H}).$$

We let
\begin{equation}
G_{\mathcal{H}}=\{g=\varphi^u(1):\ 
u\in L^2([0,1],\mathcal{H})\}
\end{equation}
where $\varphi^u$ is the solution of $\partial_t\varphi=u\circ\varphi$ with initial condition $\varphi(0)=e$,  $e$ denoting the identity map.

For any $t\in[0,1]$, in particular for $t=1$, the map $\varphi^u(t)$ is a $C^1$ diffeomorphism of $\Omega$ onto itself. Actually, $\varphi^u(t)$ (and its inverse as well) may be extended to all of $\mathbb{R}^N$ by letting it be equal to the identity outside $\Omega$ and this extension is a $C^1$ diffeomorphism of $\mathbb{R}^N$ onto itself.

 The set $G_{\mathcal{H}}$ thus defined is a group with respect to the composition of maps and a complete metric space endowed with the distance
$$d_{\mathcal{H}}(g_0,g_1)=\min_{u\in L^2([0,1],\mathcal{H})}\left\{\|u\|:\ g_1=g_0\circ\varphi^u(1)
\right\}\quad\text{for any }g_0,g_1\in
G_{\mathcal{H}}.$$

Here and in the sequel $\|u\|=\|u\|_{L^2([0,1],\mathcal{H})}$. The distance satisfies the following left invariance property
$$d_{\mathcal{H}}(g_0,g_1)=d_{\mathcal{H}}(g\circ g_0,g\circ g_1)\quad\text{for any }g\in G_{\mathcal{H}}.$$

 In particular, for any $g_1\in G_{\mathcal{H}}$ 
$$d_{\mathcal{H}}(e,g_1)=\min_{u\in L^2([0,1],\mathcal{H})}\left\{\|u\|:\ g_1=\varphi^u(1)
\right\}$$
and, by left invariance, we have $d_{\mathcal{H}}(e,g_1)=d_{\mathcal{H}}(e,g_1^{-1})$.

	\begin{oss}\label{rightinvariantoss}
		It is often helpful to think of $G_{\mathcal{H}}$ as a manifold (to make this precise, one has to work in the category of Banach manifolds, see~\cite{B-H}).
		The inner product on $\mathcal{H}$ induces a right-invariant Riemannian metric on $G_{\mathcal{H}}$ by
		\begin{equation}
			T_g G_{\mathcal{H}}\times T_g G_{\mathcal{H}} \ni (U,V) \mapsto \langle U\circ g^{-1},V\circ g^{-1}\rangle_{\mathcal{H}}.
		\end{equation}
		Let $d_R$ denote the right-invariant Riemannian distance associated with this metric.
		Then the relation between $d_R$ and $d_{\mathcal{H}}$ is
		\begin{equation}
			d_{\mathcal{H}}(g_1,g_2) = d_R(g_1^{-1},g_2^{-1}).
		\end{equation}
		Consequently, due to the invariance properties, the two distances coincide when $g_2 = e$.
	\end{oss}

The LDDMM approach to image registration consists of the following.
 We are given two images $I_0$ and $I_1$ belonging to $L^2(\Omega)$.
For any $g\in G_{\mathcal{H}}$, write $\psi=g^{-1}$ and define 
\begin{equation}\label{Udefin}U_{I_0,I_1}(g)=\|I_0\circ g^{-1} -I_1\|_{L^2(\Omega)}\quad\text{and}\quad 
\tilde{U}_{I_0,I_1}(\psi)=\|I_0\circ \psi -I_1\|_{L^2(\Omega)}.
\end{equation}
Then, for some parameter $\lambda>0$, and positive constants $\alpha$ and $\beta$, we seek a diffeomorphism $$g\in G_{\mathcal{H}}$$ which is a solution to the following minimization problem
\begin{equation}\label{000}
\min\left\{\left(\lambda\|I_0\circ g^{-1} -I_1\|_{L^2(\Omega)}^{\alpha}+d_{\mathcal{H}}(g,e)^{\beta}\right):\ g\in G_{\mathcal{H}}\right\}.
\end{equation}
We note that problem \eqref{000} is equivalent to solving\begin{equation}\label{001}
\min\left\{\left(\lambda\|I_0\circ g^{-1} -I_1\|_{L^2(\Omega)}^{\alpha}+d_{\mathcal{H}}(g^{-1},e)^{\beta}\right):\ g\in G_{\mathcal{H}}\right\}
\end{equation}
that is, with the above notation $\psi=g^{-1}$, 
\begin{equation}\label{002}
\min\left\{\left(\lambda\|I_0\circ \psi -I_1\|_{L^2(\Omega)}^{\alpha}+d_{\mathcal{H}}(\psi,e)^{\beta}\right):\ \psi\in G_{\mathcal{H}}\right\}.
\end{equation}

The minimization problem \eqref{000} admits a solution. This follows from 
the proof of Theorem~21 in 
\cite{B-H} and will be outlined in Section~\ref{registrationsec}, see Theorem~\ref{minsolutionthm0}. It is essentially based on compactness and semicontinuity properties with respect to the following kind of convergence.

\begin{defin}\label{weakconv}
Given a sequence $\{g_n\}_{n\in\mathbb{N}}\subset G_{\mathcal{H}}$, we say that $g_n$ weakly converges in $G_{\mathcal{H}}$ to $g$, as $n\to \infty$, if and only if there exists a constant $C>0$ such that 
$\|Dg_n\|_{L^{\infty}}\leq C$ and $\|Dg^{-1}_n\|_{L^{\infty}}\leq C$ for any $n\in\mathbb{N}$, and
$g_n\to g$ and $(g_n)^{-1}\to g^{-1}$ uniformly on compact subsets of $\overline{\Omega}$.
\end{defin} 

It is easy to see that a weak limit to $\{g_n\}_{n\in\mathbb{N}}\subset G_{\mathcal{H}}$, if it exists, is unique and that $g_n$ weakly converges to $g$ if and only if
$g^{-1}_n$ weakly converges to $g^{-1}$.

We are now ready to describe our multiscale construction for the registration problem. Let us fix, as before, positive constants $\alpha$, $\beta$ and $\gamma$ and let $\lambda_n>0$ and $a_n\geq 0$ for any $n\in \mathbb{N}$. Again we assume that $a_n\leq a_{n-1}$ for any $n\geq 1$.

We let $g_0$ be a  minimizer of
\begin{equation}\label{0001}
\min\left\{\left(\lambda_0[\|I_0\circ g^{-1} -I_1\|_{L^2(\Omega)}^{\alpha}+a_0d_{\mathcal{H}}(g,e)^{\gamma}]+d_{\mathcal{H}}(g,e)^{\beta}\right):\ g\in G_{\mathcal{H}}\right\}.
\end{equation}
or, equivalently, let $\psi_0=g_0^{-1}$ be a minimizer of 
\begin{equation}\label{0001bis}
\min\left\{\left(\lambda_0[\|I_0\circ \psi -I_1\|_{L^2(\Omega)}^{\alpha}+a_0d_{\mathcal{H}}(\psi,e)^{\gamma}]+d_{\mathcal{H}}(\psi,e)^{\beta}\right):\ \psi\in G_{\mathcal{H}}\right\}.
\end{equation}
For the proofs, and to relate to the general framework in Section~\ref{topgroupsubs}, it
will be helpful to work with both minimization problems throughout.

Existence of minimizers will be proved in Section~\ref{registrationsec}. By induction, there exists a minimizer $g_n$, $n\geq 1$, of \begin{equation}\label{111}
\min_{g\in G_{\mathcal{H}}}\left(\lambda_n[\|I_0\circ \tilde{g}^{-1}_{n-1}\circ g^{-1} -I_1\|_{L^2(\Omega)}^{\alpha}+a_nd_{\mathcal{H}}(g\circ\tilde{g}_{n-1},e)^{\gamma}]+d_{\mathcal{H}}(g,e)^{\beta}\right)
\end{equation}
where $\tilde{g}_0=g_0$ and
for any $n\geq 1$ we denote by $\tilde{g}_n$ the composition 
$$\tilde{g}_n=g_n\circ\cdots\circ g_0.$$
 We also let $\psi_n=g_n^{-1}$ be solution of \begin{equation}\label{111bis}
\min_{\psi\in G_{\mathcal{H}}}\left(\lambda_n[\|I_0\circ \tilde{\psi}_{n-1}\circ \psi -I_1\|_{L^2(\Omega)}^{\alpha}+a_nd_{\mathcal{H}}(\tilde{\psi}_{n-1}\circ \psi,e)^{\gamma}]+d_{\mathcal{H}}(\psi,e)^{\beta}\right)
\end{equation}
where $\tilde{\psi}_0=\psi_0$ and
for any $n\geq 1$ we denote
$$\tilde{\psi}_n=\psi_0\circ\cdots\circ \psi_n=\tilde{g}_n^{-1}.$$

We have that the sequence $\{g_n\}_{n\in\mathbb{N}}$ exists, however we can not guarantee that it is uniquely determined.
By taking $g=e$, we have
\begin{multline}
\|I_0\circ \tilde{g}^{-1}_n -I_1\|_{L^2(\Omega)}^{\alpha}\leq
\|I_0\circ \tilde{g}^{-1}_n -I_1\|_{L^2(\Omega)}^{\alpha}+a_nd_{\mathcal{H}}(\tilde{g}_n,e)^{\gamma}\leq\\
\|I_0\circ \tilde{g}^{-1}_{n-1}-I_1\|_{L^2(\Omega)}^{\alpha}+a_{n-1}d_{\mathcal{H}}(\tilde{g}_{n-1},e)^{\gamma}\quad\text{for any }n\geq 1,
\end{multline}
so we can denote
$$\varepsilon_0=\lim_n\left(\|I_0\circ \tilde{g}^{-1}_n -I_1\|_{L^2(\Omega)}^{\alpha}+a_nd_{\mathcal{H}}(\tilde{g}_n,e)^{\gamma}\right)^{1/\alpha}
$$
and
$$\delta_0=
\inf\{\|I_0\circ g^{-1} -I_1\|_{L^2(\Omega)}
:\ g\in G_{\mathcal{H}}\}
.$$
Clearly we have that $\varepsilon_0\geq \delta_0$.

The first result we have is a convergence of the corresponding images.
\begin{teo}\label{mainthmter}
Assuming \eqref{secondcondcoeff} holds as before, 
then for the sequence of diffeomorphims $\tilde{g}_n$ constructed above we have $\varepsilon_0= \delta_0$ and we also have
$$\lim_n \|I_0\circ \tilde{g}^{-1}_n -I_1\|_{L^2(\Omega)}=\inf\{\|I_0\circ g^{-1} -I_1\|_{L^2(\Omega)}
:\ g\in G_{\mathcal{H}}\}=\delta_0
.$$
\end{teo}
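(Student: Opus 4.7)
The approach is to recognize Theorem~\ref{mainthmter} as the multiplicative (group-theoretic) counterpart of the abstract additive Theorem~\ref{mainthmbis0}. The dictionary is: replace the Banach space $X$ by the group $G_{\mathcal{H}}$, addition by composition of diffeomorphisms, the seminorm $|\cdot|$ by $d_{\mathcal{H}}(\cdot,e)$, and the set $E$ by all of $G_{\mathcal{H}}$. The required properties of $d_{\mathcal{H}}(\cdot,e)$ --- vanishing at the identity, invariance under inversion, and a subadditivity $d_{\mathcal{H}}(g_1\circ g_2,e)\le d_{\mathcal{H}}(g_1,e)+d_{\mathcal{H}}(g_2,e)$ that follows from the triangle inequality together with left-invariance of $d_{\mathcal{H}}$, plus weak-compactness of sublevel sets and weak lower semicontinuity with respect to Definition~\ref{weakconv} --- mirror axioms \ref{BVass1})--\ref{BVass5}). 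Continuity of the fidelity $g\mapsto\|I_0\circ g^{-1}-I_1\|_{L^2(\Omega)}$ under the same weak convergence underlies existence of minimizers in \eqref{0001} and \eqref{111} (cf.~Theorem~\ref{minsolutionthm0}), so the scheme is well-defined. One then invokes the general multiscale result for group actions developed in Section~\ref{topgroupsubs}, of which Theorem~\ref{mainthmter} is a direct instance.

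To make the mechanism concrete, set $E_n=\|I_0\circ\tilde g_n^{-1}-I_1\|_{L^2(\Omega)}$, $T_n=d_{\mathcal{H}}(\tilde g_n,e)$ and $f_n=E_n^{\alpha}+a_n T_n^{\gamma}$. Testing \eqref{111} with $g=e$ and using $a_n\le a_{n-1}$ gives $\lambda_n f_n+d_{\mathcal{H}}(g_n,e)^{\beta}\le\lambda_n f_{n-1}$, so $(f_n)$ is non-increasing, the limit $\varepsilon_0^{\alpha}$ exists and satisfies $\varepsilon_0^{\alpha}\ge\delta_0^{\alpha}$, and $d_{\mathcal{H}}(g_n,e)^{\beta}\le\lambda_n(f_{n-1}-f_n)$. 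For the reverse inequality, given $\eta>0$ I would pick $\hat g\in G_{\mathcal{H}}$ with $\|I_0\circ\hat g^{-1}-I_1\|_{L^2(\Omega)}^{\alpha}\le\delta_0^{\alpha}+\eta$ and $D:=d_{\mathcal{H}}(\hat g,e)<+\infty$, and test \eqref{111} with the competitor $g=\hat g\circ\tilde g_{n-1}^{-1}$. By the subadditivity of $d_{\mathcal{H}}(\cdot,e)$ and its invariance under inversion, $d_{\mathcal{H}}(g,e)\le D+T_{n-1}$, so that
\[
f_n\le\delta_0^{\alpha}+\eta+a_n D^{\gamma}+\lambda_n^{-1}(D+T_{n-1})^{\beta}.
\]

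The main obstacle, and the very point of the growth hypothesis $\limsup 2^{\beta n}/\lambda_n<+\infty$, is to show that $\lambda_n^{-1}T_{n-1}^{\beta}\to 0$. In parallel with the proof of the abstract Theorem~\ref{mainthmbis0}, this is done by combining the recursion $T_n\le T_{n-1}+d_{\mathcal{H}}(g_n,e)\le T_{n-1}+\lambda_n^{1/\beta}(f_{n-1}-f_n)^{1/\beta}$ with the telescoping summability $\sum_n(f_{n-1}-f_n)\le f_0$ and the geometric lower bound $\lambda_n\ge c\,2^{\beta n}$ to control the growth of $T_n$ relative to $\lambda_n$. Once this is established, letting first $n\to\infty$ and then $\eta\to 0$ in the competitor bound yields $\varepsilon_0^{\alpha}\le\delta_0^{\alpha}$, hence $\varepsilon_0=\delta_0$; since $E_n^{\alpha}\le f_n\to\varepsilon_0^{\alpha}=\delta_0^{\alpha}$, we conclude $E_n\to\delta_0$, which is the claimed convergence. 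The adaptation from the additive seminorm $|\cdot|$ to the distance-to-identity $d_{\mathcal{H}}(\cdot,e)$ is essentially formal once left-invariance is used systematically; the only structurally new input relative to Theorem~\ref{mainthmbis0} is the use of the weak convergence of Definition~\ref{weakconv} in place of strong convergence in $X$ for the relevant semicontinuity and compactness arguments.
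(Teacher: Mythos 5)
Your high-level reduction is exactly the paper's: Theorem~\ref{mainthmter} is deduced there as an immediate instance of the abstract group-theoretic Theorem~\ref{mainthmter1}, with precisely the dictionary you describe (composition for addition, $d_{\mathcal{H}}(\cdot,e)$ for $|\cdot|$, left-invariance giving subadditivity, and the weak convergence of Definition~\ref{weakconv} supplying compactness and semicontinuity). Your competitor $g=\hat g\circ\tilde g_{n-1}^{-1}$ with $d_{\mathcal{H}}(g,e)\le D+T_{n-1}$ is also the right one.

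The gap is in the mechanism you propose for the crucial estimate, namely showing $\lambda_n^{-1}(D+T_{n-1})^{\beta}\to 0$. First, the bound $d_{\mathcal{H}}(g_n,e)\le \lambda_n^{1/\beta}(f_{n-1}-f_n)^{1/\beta}$ obtained from testing with $g=e$, combined with $\sum_n(f_{n-1}-f_n)\le f_0$, cannot control $T_n$ relative to $\lambda_{n+1}$: hypothesis \eqref{secondcondcoeff} gives only a \emph{lower} bound $\lambda_n\ge c\,2^{\beta n}$, and if $\lambda_n$ has sparse spikes (say $\lambda_k=2^{2\beta k}$ along a subsequence) the partial sums $\sum_{k\le n}\lambda_k^{1/\beta}(f_{k-1}-f_k)^{1/\beta}$ can exceed $\lambda_{n+1}^{1/\beta}$ by an arbitrarily large factor. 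Second, the direct ($\eta$-approximate minimizer) structure cannot produce the needed doubling recursion either: the competitor inequality only gives $d_{\mathcal{H}}(g_n,e)^{\beta}\le \lambda_n(\eta+a_nD^{\gamma})+(D+T_{n-1})^{\beta}$, and the term $\lambda_n\eta$ blows up for fixed $\eta>0$. This is precisely why the paper's proofs of Theorems~\ref{mainthm0}, \ref{mainthmbis0} and \ref{mainthmter1} argue by contradiction and use no telescoping: assuming $\varepsilon_0>\delta_0$, one chooses the competitor with fidelity below $C_1\varepsilon_0^{\alpha}$ and uses $f_n\ge\varepsilon_0^{\alpha}$ to absorb the competitor's fidelity term completely, which simultaneously yields $d_{\mathcal{H}}(g_n,e)\le R_{n-1}$ for $R_{n-1}:=d_{\mathcal{H}}(\overline g\circ\tilde g_{n-1}^{-1},e)$ (hence $R_n\le 2R_{n-1}\le C2^n$) and the quantitative inequality $\lambda_n(1-C_2)\varepsilon_0^{\alpha}\le R_{n-1}^{\beta}$ that produces the contradiction. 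Third, even granting $R_n\le C2^n$, one only gets $\lambda_n^{-1}R_{n-1}^{\beta}$ \emph{bounded}, not vanishing, since \eqref{secondcondcoeff} allows $\limsup_n 2^{\beta n}/\lambda_n=\tilde C>0$; the borderline case needs the additional dichotomy argument in the proof of Theorem~\ref{mainthm0} which improves the growth factor to some $b_0<2$, and your sketch omits this entirely. So the reduction is correct, but the argument you give for the abstract estimate would not close.
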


This result will be an immediate consequence of Theorem~\ref{mainthmter1} in Section~\ref{topgroupsubs}, where we will develop an extension of our generalized abstract formulation to a topological group setting. 

We next adress the question of convergence of the sequence $\{\tilde{g}_n\}_{n\in\mathbb{N}}$. Let
$\{\tilde{u}_n\}_{n\in\mathbb{N}}$ be a sequence in $L^2([0,1],\mathcal{H})$ such that
$\tilde{g}_n=\varphi^{\tilde{u}_n}(1)$ and
$d_{\mathcal{H}}(\tilde{g}_n,e)=\|\tilde{u}_n\|$.
We will show in Section~\ref{registrationsec} that if the sequence $\{\tilde{g}_n\}_{n\in\mathbb{N}}$, or one of its subsequences, converges weakly to some $\tilde{g}_{\infty}$ in $G_{\mathcal{H}}$, then
$\tilde{g}_{\infty}$ is a solution to 
the following minimization problem
\begin{equation}\label{minpbmquater1}
\min\{\|I_0\circ g^{-1} -I_1\|_{L^2(\Omega)}:\ g\in G_{\mathcal{H}}\}=\min\{U_{I_0,I_1}(g):\ g\in G_{\mathcal{H}}\}.
\end{equation}
%or equivalently, setting as usual $\psi=g^{-1}$,
%\begin{equation}\label{minpbmquaterbis0}
%\min\{\|I_0\circ \psi -I_1\|_{L^2(\Omega)}:\ \psi\in G_{\mathcal{H}}\}=\min\{\tilde{U}_{I_0,I_1}(\psi):\ \psi\in G_{\mathcal{H}}\}.
%\end{equation}

In fact, the following crucial lemma holds true.

\begin{lem}\label{limitproprem1}
If $\{\tilde{g}_n\}_{n\in\mathbb{N}}$ has a bounded subsequence, then
\eqref{minpbmquater1} admits a solution. In particular,
if $\{\tilde{g}_n\}_{n\in\mathbb{N}}$ has a bounded subsequence, then there exists a further subsequence $\{\tilde{g}_{n_k}\}_{k\in\mathbb{N}}$
such that,
as $k\to\infty$, $\tilde{g}_{n_k}$ converges to $\tilde{g}_{\infty}$ weakly, with $\tilde{g}_{\infty}$ a solution to 
\eqref{minpbmquater1}.
\end{lem}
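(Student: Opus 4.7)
The plan is to mimic the existence argument for the minimization problem \eqref{000} (i.e.\ Theorem~\ref{minsolutionthm0} referenced above): extract a weakly convergent subsequence via the Hilbert-space weak compactness of the velocity fields, pass to the limit in the fidelity term using the uniform convergence and Jacobian bounds afforded by Definition~\ref{weakconv}, and finally invoke Theorem~\ref{mainthmter} to identify the resulting weak limit as a minimizer of \eqref{minpbmquater1}.

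First I would set up the compactness. Suppose $\{\tilde{g}_{n_j}\}$ is a bounded subsequence, so $d_{\mathcal{H}}(\tilde{g}_{n_j},e)\le M<+\infty$ for some $M$. For each $j$ pick $\tilde{u}_{n_j}\in L^2([0,1],\mathcal{H})$ attaining the minimum in the definition of the distance, so that $\tilde{g}_{n_j}=\varphi^{\tilde{u}_{n_j}}(1)$ and $\|\tilde{u}_{n_j}\|\le M$. Since $L^2([0,1],\mathcal{H})$ is a Hilbert space, Banach--Alaoglu yields a further subsequence (not relabelled) with $\tilde{u}_{n_j}\rightharpoonup \tilde{u}_{\infty}$ weakly. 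By the classical LDDMM compactness argument of Trouvé (recalled in \cite{You,B-H}), this weak convergence together with the uniform bound on $\|\tilde{u}_{n_j}\|$ passes, up to a further subsequence $\{\tilde{g}_{n_{j_k}}\}$, to weak convergence of the induced diffeomorphisms: $\tilde{g}_{n_{j_k}}\to \tilde{g}_{\infty}:=\varphi^{\tilde{u}_{\infty}}(1)$ in the sense of Definition~\ref{weakconv}, with $\tilde{g}_{\infty}\in G_{\mathcal{H}}$ and $d_{\mathcal{H}}(\tilde{g}_{\infty},e)\le \|\tilde{u}_{\infty}\|\le \liminf_k\|\tilde{u}_{n_{j_k}}\|\le M$.

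Next I would pass to the limit in the fidelity functional. By Definition~\ref{weakconv}, $\tilde{g}_{n_{j_k}}^{-1}\to \tilde{g}_{\infty}^{-1}$ uniformly on compact subsets of $\overline{\Omega}$ with a uniform $L^{\infty}$ bound on the Jacobians. A change of variables combined with an approximation of $I_0\in L^2(\Omega)$ by continuous functions (with the Jacobian bound used to dominate the error on the approximation side) yields
\[
\|I_0\circ \tilde{g}_{n_{j_k}}^{-1}-I_1\|_{L^2(\Omega)}\longrightarrow \|I_0\circ \tilde{g}_{\infty}^{-1}-I_1\|_{L^2(\Omega)}=U_{I_0,I_1}(\tilde{g}_{\infty}).
\]
But Theorem~\ref{mainthmter} asserts that the left-hand side converges to $\delta_0=\inf\{U_{I_0,I_1}(g):g\in G_{\mathcal{H}}\}$ along the entire sequence. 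Hence $U_{I_0,I_1}(\tilde{g}_{\infty})=\delta_0$, so $\tilde{g}_{\infty}$ realises the infimum in \eqref{minpbmquater1}. This simultaneously establishes existence of a minimizer and produces the desired weakly convergent subsequence.

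The main obstacle is the continuity of the fidelity term $g\mapsto\|I_0\circ g^{-1}-I_1\|_{L^2(\Omega)}$ under weak convergence when $I_0$ is only in $L^2(\Omega)$, since composition with diffeomorphisms does not interact smoothly with $L^2$ functions in any pointwise manner. The resolution is exactly the one used in proving existence for \eqref{000}: exploit the uniform $L^{\infty}$ Jacobian estimates built into Definition~\ref{weakconv} together with density of continuous functions in $L^2(\Omega)$. Once this continuity is in hand, the argument above is essentially a short matching of limits.
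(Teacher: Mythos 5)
Your proposal is correct and follows essentially the same route as the paper: the paper obtains the weakly convergent subsequence from Lemma~\ref{distanceimpliespointwise} (whose proof is exactly your Banach--Alaoglu argument on the velocity fields combined with Proposition~\ref{contprop}), identifies the limit of the fidelity term via Proposition~\ref{continuityweak} (the continuity property you re-derive by approximation in $L^2$ and the Jacobian bounds), and then invokes Theorem~\ref{mainthmter} to conclude that the weak limit attains $\delta_0$. You have merely unfolded the two auxiliary results the paper cites, so there is nothing to add.
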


Lemma~\ref{limitproprem1} will be proved as part of Lemma~\ref{distanceimpliespointwise}. Here we point out
that a solution to \eqref{minpbmquater1} is a registration map between the two images $I_0$ and
$I_1$. Existence and uniqueness of such a solution will be further discussed in Section~\ref{registrationsec}. Existence is equivalent to saying that there exists $\hat{g}\in G_{\mathcal{H}}$ such that
$$\|I_0\circ \hat{g}^{-1} -I_1\|_{L^2(\Omega)}
=\delta_0=\min\{\|I_0\circ g^{-1} -I_1\|_{L^2(\Omega)}:\ g\in G_{\mathcal{H}}\}.
$$
or, equivalently, that there exists $\hat{g}\in G_{\mathcal{H}}$
solving the following minimization problem
\begin{equation}\label{minimalnew0}
\min\{d_{\mathcal{H}}(g,e):\ g\in G_{\mathcal{H}}\text{ and }
\|I_0\circ g^{-1} -I_1\|_{L^2(\Omega)}=\delta_0\}<+\infty.
\end{equation}
We can think of $\hat{g}$ as an optimal registration, since it is a registration whose distance from the identity is minimal.

We call $\hat{G}$ the set of solutions to \eqref{minimalnew0}, which is the set of optimal registrations. We have
that $\hat{G}$
is closed and bounded with respect to the topology induced by the distance in $G_{\mathcal{H}}$
and it is
sequentially compact with respect to the weak convergence in $G_{\mathcal{H}}$.
The same topological properties are shared by $\hat{G}^{-1}=\{g^{-1}:\ g\in\hat{G}\}$.

Indeed, a stronger statement holds.
If $\{\tilde{g}_n\}_{n\in\mathbb{N}}$, or one of its subsequences, converges weakly to some $\tilde{g}_{\infty}$ in $G_{\mathcal{H}}$, then $\tilde{g}_{\infty}\in \hat{G}$, thus it is an optimal registration between the two images $I_0$ and
$I_1$ and we also have a multiscale decomposition of the diffeomorphism $\tilde{g}_{\infty}$: $$\tilde{g}_{\infty}=\lim_k\tilde{g}_{n_k}=\lim_kg_{n_k}\circ\cdots\circ g_0,$$
where the limit is in the sense of Definition~\ref{weakconv}.

In view of Lemma~\ref{limitproprem1},
we need to find conditions under which we have boundedness of $\{\tilde{g}_n\}_{n\in\mathbb{N}}$ or of one of its subsequences.
Clearly a necessary condition is that a solution to 
\eqref{minpbmquater1} does exist. Our main and surprising result is that, in the setting above, this is also a sufficient condition, as shown in the theorem below.
\begin{teo}\label{diffeoconv2}
Assume that
\eqref{crucialcondition0}
holds.

Then the sequence of diffeomorphisms 
$\{\tilde{g}_n\}_{n\in\mathbb{N}}$ constructed above is bounded if and only if a solution to \eqref{minpbmquater1} exists.

In this case, there exists a subsequence
$\{\tilde{g}_{n_k}\}_{k\in\mathbb{N}}$ and $\tilde{g}_{\infty}\in \hat{G}$
such that,
as $k\to\infty$, $\tilde{g}_{n_k}$ converges to $\tilde{g}_{\infty}$ weakly, that is, in particular, $\tilde{g}_{n_k}\to \tilde{g}_{\infty}$ and $(\tilde{g}_{n_k})^{-1}\to (\tilde{g}_{\infty})^{-1}$ uniformly on compact 
subsets of $\overline{\Omega}$.
\end{teo}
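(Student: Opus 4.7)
Proof plan.

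The "only if" direction is immediate from Lemma~\ref{limitproprem1}: if $\{\tilde g_n\}_{n\in\mathbb N}$ admits a bounded subsequence, the lemma yields a further subsequence converging weakly to a solution of \eqref{minpbmquater1}.

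For the "if" direction, assume \eqref{minpbmquater1} admits a solution. Since $\hat G$ is sequentially compact for the weak convergence of Definition~\ref{weakconv} and $d_{\mathcal{H}}(\cdot,e)$ is lower semicontinuous along such sequences (a standard property flowing from the $L^2([0,1],\mathcal{H})$-representation $\tilde g_n=\varphi^{\tilde u_n}(1)$ together with the weak compactness of $L^2([0,1],\mathcal H)$), the variational problem \eqref{minimalnew0} admits a minimizer $\hat g$; set $R:=d_{\mathcal{H}}(\hat g,e)<+\infty$. I will prove that $|\tilde g_n|:=d_{\mathcal{H}}(\tilde g_n,e)$ is uniformly bounded.

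The key step is to test \eqref{111bis} with $\psi_{\mathrm{test}}:=\tilde\psi_{n-1}^{-1}\circ\hat g^{-1}$, for which $\tilde\psi_{n-1}\circ\psi_{\mathrm{test}}=\hat g^{-1}$. By left-invariance of $d_{\mathcal{H}}$ and the identity $d_{\mathcal{H}}(g,e)=d_{\mathcal{H}}(g^{-1},e)$,
\[
d_{\mathcal{H}}(\psi_{\mathrm{test}},e)=d_{\mathcal{H}}(\hat g^{-1},\tilde g_{n-1}^{-1})=:r_{n-1}\leq R+|\tilde g_{n-1}|.
\]
Substituting $\psi_{\mathrm{test}}$ into \eqref{111bis} and using $\|I_0\circ\tilde\psi_n-I_1\|_{L^2(\Omega)}\geq\delta_0=\|I_0\circ\hat g^{-1}-I_1\|_{L^2(\Omega)}$, the contributions of the fidelity term cancel and one obtains the main recursive estimate
\[
\lambda_n a_n\,|\tilde g_n|^{\gamma}+d_{\mathcal{H}}(\psi_n,e)^{\beta}\leq\lambda_n a_n R^{\gamma}+(R+|\tilde g_{n-1}|)^{\beta},
\]
which, upon dividing by $\lambda_n a_n>0$ (positivity ensured by \eqref{crucialcondition0}), becomes
\[
|\tilde g_n|^{\gamma}\leq R^{\gamma}+\frac{(R+|\tilde g_{n-1}|)^{\beta}}{\lambda_n a_n}.
\]
The same test function applied to the base problem \eqref{0001bis} yields $|\tilde g_0|^{\gamma}\leq R^{\gamma}+R^{\beta}/(\lambda_0 a_0)$, so in particular every $|\tilde g_n|$ is finite.

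To pass from per-step to uniform boundedness I invoke \eqref{crucialcondition0}. Pick any $A>R$. Since $1/(\lambda_n a_n)\to 0$, there exists $N_0$ such that $(R+A)^{\beta}/(\lambda_n a_n)\leq A^{\gamma}-R^{\gamma}$ for all $n\geq N_0$. Enlarging $A$ if necessary so that $|\tilde g_n|\leq A$ for $0\leq n\leq N_0$ (only finitely many finite values to dominate), the recursive inequality then propagates $|\tilde g_n|\leq A$ for all $n$ by induction. Lemma~\ref{limitproprem1} now produces a subsequence $\tilde g_{n_k}$ converging weakly to some $\tilde g_{\infty}$ solving \eqref{minpbmquater1}. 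Finally, the same recursive inequality gives $\limsup_n|\tilde g_n|^{\gamma}\leq R^{\gamma}$, and lower semicontinuity of $d_{\mathcal{H}}(\cdot,e)$ combined with minimality in \eqref{minimalnew0} forces $|\tilde g_{\infty}|=R$, i.e.\ $\tilde g_{\infty}\in\hat G$.

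The main obstacle is the uniform boundedness step: the naive bound $r_{n-1}\leq R+|\tilde g_{n-1}|$ allows $|\tilde g_n|$ to feed back into itself, and without control on $\lambda_n a_n$ this can blow up. What rescues the argument is precisely that \eqref{crucialcondition0} forces $\lambda_n a_n$ to grow \emph{faster} than $2^{\beta n}$, making the recursion an eventual self-map of any ball of radius $A>R$; only the finitely many initial iterates need to be absorbed into $A$. The rest of the proof is essentially the adaptation of the abstract argument of Theorem~\ref{minimizercor0} to the non-abelian group $G_{\mathcal H}$, with left-invariance of $d_{\mathcal H}$ playing the role of the triangle inequality for the seminorm in the abstract case.
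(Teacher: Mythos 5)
Your reduction to the variational inequality obtained by testing \eqref{111bis} with $\psi_{\mathrm{test}}=\tilde\psi_{n-1}^{-1}\circ\hat g^{-1}$ is exactly the paper's starting point (it is \eqref{crucialenew1} with $\overline\psi$ replaced by $\hat\psi$), and the "only if" direction and the identification of the weak limit as an element of $\hat G$ are fine. The gap is in the uniform boundedness step. After dividing by $\lambda_n a_n$ and discarding $d_{\mathcal H}(\psi_n,e)^{\beta}$ you retain only
$$|\tilde g_n|^{\gamma}\leq R^{\gamma}+\frac{(R+|\tilde g_{n-1}|)^{\beta}}{\lambda_n a_n},$$
and this recursion alone does not preclude super-exponential growth of $|\tilde g_n|$ when $\beta>\gamma$ (it allows $|\tilde g_n|\sim |\tilde g_{n-1}|^{\beta/\gamma}$ up to the decaying factor, which an exponentially growing $\lambda_n a_n$ cannot beat). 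Concretely, your "enlarge $A$ if necessary" step is circular: enlarging $A$ to dominate the first $N_0$ iterates destroys the eventual-contraction condition $(R+A)^{\beta}/(\lambda_n a_n)\leq A^{\gamma}-R^{\gamma}$ when $\beta>\gamma$ (the left side grows like $A^{\beta}$, the right like $A^{\gamma}$), which forces a larger $N_0$, hence a larger initial segment to dominate, and the induction never closes. Since the theorem allows arbitrary positive $\beta,\gamma$, this is a genuine gap, not a presentational one.

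The missing ingredient is an a priori \emph{exponential} growth bound, which the paper extracts from the same inequality \emph{before} dividing by $\lambda_n a_n$: for each $n$ either $d_{\mathcal H}(\tilde\psi_n,e)\leq R$, or else $\lambda_n a_n d_{\mathcal H}(\tilde\psi_n,e)^{\gamma}>\lambda_n a_n R^{\gamma}$ and the inequality forces $d_{\mathcal H}(\psi_n,e)\leq d_{\mathcal H}(\tilde\psi_{n-1}^{-1}\cdot\hat\psi,e)$, whence $d_{\mathcal H}(\tilde\psi_n^{-1}\cdot\hat\psi,e)\leq 2\,d_{\mathcal H}(\tilde\psi_{n-1}^{-1}\cdot\hat\psi,e)$. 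Tracking the last index at which the first alternative occurs gives $d_{\mathcal H}(\tilde\psi_n^{-1}\cdot\hat\psi,e)\leq 2^{n+1}R$ (this is the group version of \eqref{growth} in the proof of Theorem~\ref{minimizercor0}, invoked via Theorem~\ref{mainthmter2}). Only then does one argue by contradiction: if $\limsup_n d_{\mathcal H}(\tilde\psi_n,e)^{\gamma}\geq R^{\gamma}+h$ along a subsequence, the inequality yields $\lambda_{n_k}a_{n_k}(h/2)\leq (2^{n_k+1}R)^{\beta}$, which contradicts $2^{\beta n}/(\lambda_n a_n)\to 0$ from \eqref{crucialcondition0}. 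Your argument can be repaired by inserting this two-stage structure; as written, it only closes when $\beta\leq\gamma$.
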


A more complete version of this result is stated in Theorem~\ref{diffeoconv}, which will be proved in Section~\ref{registrationsec}.

\section{The general abstract results in a Banach setting}\label{abstractsec}

In this section we prove the general results on the multiscale procedure in the Banach setting, in particular we prove
Theorems~\ref{mainthm0}, \ref{mainthmbis0} and \ref{minimizercor0}

We then consider the application of the abstract procedure to the $(BV,L^2)$ decomposition, in particular we prove Theorem~\ref{ourhierarchythm}.

\subsection{The multiscale approach: general abstract results}\label{abstractresultproofsec}

 We begin by proving
Theorem~\ref{mainthm0}.

\begin{proof}[Proof of Theorem~\textnormal{\ref{mainthm0}}]
Clearly we have that $\varepsilon_0\geq \delta_0$. We need to show that
$\varepsilon_0\leq \delta_0$.
By contradiction, let us assume that $\delta_0<\varepsilon_0$.
There exists $0<C_1<1$ 
such that
\begin{equation}\label{C1def}
\delta_0<C_1^{1/\alpha}\varepsilon_0<\varepsilon_0,
\end{equation}
thus there exists $\overline{\sigma}\in E$ such that
\begin{equation}\label{sigmabardef}
d(\hat{\mathcal{N}},\mathcal{N}(\overline{\sigma}))\leq C_1^{1/\alpha}\varepsilon_0\quad\text{and}\quad
|\overline{\sigma}|<+\infty.
\end{equation}
This is due to the density assumption \ref{BVass3}) on page~\pageref{BVass3} and to the continuity of $\mathcal{N}$.

Then, recalling that $\sigma_0=\tilde{\sigma}_0$,
$$\left(\lambda_0[d(\hat{\mathcal{N}},\mathcal{N}(\tilde{\sigma}_0))^{\alpha}]+|\sigma_0|^{\beta}\right)\leq \left(\lambda_0[d(\hat{\mathcal{N}},\mathcal{N}(\overline{\sigma}))^{\alpha}]+|\overline{\sigma}|^{\beta}\right).$$
Analogously, for any $n\geq 1$, choosing $\sigma=\overline{\sigma}-\tilde{\sigma}_{n-1}$, we have
\begin{equation}
\left(\lambda_n[d(\hat{\mathcal{N}},\mathcal{N}(\tilde{\sigma}_n))^{\alpha}]+|\sigma_n|^{\beta}\right)\leq
\left(\lambda_n[d(\hat{\mathcal{N}},\mathcal{N}(\overline{\sigma}))^{\alpha}]+|\overline{\sigma}-\tilde{\sigma}_{n-1}|^{\beta}\right).
\end{equation}

Hence, for any $n\geq 1$, 
\begin{equation}\label{firstcrucialestimate}
\lambda_n(1-C_1)[d(\hat{\mathcal{N}},\mathcal{N}(\tilde{\sigma}_n))^{\alpha}]+|\sigma_n|^{\beta}\leq\\
|\overline{\sigma}-\tilde{\sigma}_{n-1}|^{\beta}.
\end{equation}
From \eqref{firstcrucialestimate}, we obtain that for any $n\geq 1$,
$$|\overline{\sigma}-\tilde{\sigma}_n|\leq 
|\overline{\sigma}-\tilde{\sigma}_{n-1}|+|\sigma_n|\leq 2|\overline{\sigma}-\tilde{\sigma}_{n-1}|.$$

Let us call $C_0=|\overline{\sigma}-\tilde{\sigma}_0|$, then
$$|\overline{\sigma}-\tilde{\sigma}_n|\leq C_02^{n}\quad \text{for any }n\geq 0.$$
Therefore, again using \eqref{firstcrucialestimate},
$$\lambda_n(1-C_1)d(\hat{\mathcal{N}},\mathcal{N}(\tilde{\sigma}_n))^{\alpha}\leq 
\left(C_02^{n-1}\right)^{\beta}\quad \text{for any }n\geq 1,$$
that is
\begin{equation}
\lambda_n(1-C_1)d(\hat{\mathcal{N}},\mathcal{N}(\tilde{\sigma}_n))^{\alpha}\leq 
\tilde{C}_02^{\beta n}\quad \text{for any }n\geq 1,
\end{equation}
where
$\tilde{C}_0=\left(C_0/2\right)^{\beta}$.
Hence, for any $n\geq 1$,
$$(1-C_1)\varepsilon_0^{\alpha}\leq
(1-C_1)d(\hat{\mathcal{N}},\mathcal{N}(\tilde{\sigma}_n))^{\alpha}\leq
\tilde{C}_0\frac{2^{\beta n}}{\lambda _n}$$
which leads to a contradiction if $\limsup_n\frac{2^{\beta n}}{\lambda _n}=0$.

The case in which $\limsup_n\frac{2^{\beta n}}{\lambda _n}=\tilde{C}$, $0<\tilde{C}<+\infty$, requires the following argument. For some $\overline{n}_1\geq 1$ and for any $n\geq \overline{n}_1$ we have
$2^{\beta n}/\lambda _n\leq \tilde{C}+1$.

Then we note that, for any $n\geq \overline{n}_1$,
$$\lambda_n(1-C_1)\varepsilon_0^{\alpha}\leq |\overline{\sigma}-\tilde{\sigma}_{n-1}|^{\beta}
-|\sigma_n|^{\beta}.$$

Let $a$, $0<a<1$, to be fixed later. If, for some $n\geq \overline{n}_1$,
$|\sigma_n|^{\beta}\geq a |\overline{\sigma}-\tilde{\sigma}_{n-1}|^{\beta}$, then
$$\lambda_n(1-C_1)\varepsilon_0^{\alpha}\leq (1-a)|\overline{\sigma}-\tilde{\sigma}_{n-1}|^{\beta}\leq
(1-a)\tilde{C}_02^{\beta n},$$
therefore
$$(1-C_1)\varepsilon_0^{\alpha}\leq (1-a)\tilde{C}_0(\tilde{C}+1).$$
Hence we can find $a_0$, $0<a_0<1$, such that if $a\geq a_0$ then
$(1-a)\tilde{C}_0(\tilde{C}+1)<(1-C_1)\varepsilon_0^{\alpha}$.
This implies that for any $n\geq \overline{n}_1$ we have that
$$|\sigma_n|\leq a_0^{1/\beta} |\overline{\sigma}-\tilde{\sigma}_{n-1}|$$
or in other words there exists $b_0$, $1<b_0<2$, such that
$$|\overline{\sigma}-\tilde{\sigma}_n|\leq b_0|\overline{\sigma}-\tilde{\sigma}_{n-1}|\quad\text{for any }n\geq\overline{n}_1.$$

We conclude that
$$\lim_n \frac{|\overline{\sigma}-\tilde{\sigma}_{n-1}|^{\beta}}{\lambda_n}=
\lim_n \frac{2^{\beta n}}{\lambda_n}\frac{|\overline{\sigma}-\tilde{\sigma}_{n-1}|^{\beta}}{2^{\beta n}}=0$$
and this leads to a contradiction.
\end{proof}

In Subsection~\ref{abstractsecondsub} we slightly modified our multiscale approach and studied conditions that guarantee
convergence of $\{\tilde{\sigma}_n\}_{n\in\mathbb{N}}$, or of one of its subsequences. Before dealing with the proof of the 
second abstract formulation, we consider an easy although interesting consequence of Theorem~\ref{mainthm0}. In general, it is difficult to guarantee boundedness, in $X$, of the sequence $\{\tilde{\sigma}_n\}_{n\in\mathbb{N}}$. A related counterexample is given 
in the second version of Example~\ref{L2minexample}.
However this may be obtained through a mild coercivity condition as it is shown in the next result.

\begin{prop}\label{coerciveprop} 
Let the assumptions of Theorem~\textnormal{\ref{mainthm0}} be satisfied.
Let $d_X$ be any metric on $X$, possibly different from the one induced by the norm on $X$. We also fix $\overline{\sigma}\in E$. We assume that $\mathcal{N}$ satisfies the following mild coercivity condition. Assume that for any $\varepsilon>0$ there exists $R>0$ such that 
\begin{equation}\label{coercivity}
d(\hat{\mathcal{N}},\mathcal{N}(\sigma))\geq \delta_0+\varepsilon\quad\text{for any }\sigma\in E\text{ such that }d_X(\sigma,\overline{\sigma})\geq R.
\end{equation}

Then $\{\tilde{\sigma}_n\}_{n\in\mathbb{N}}$ is bounded with respect to the metric $d_X$, that is, there exists a constant $C$ such that $d_X(\tilde{\sigma}_n,\overline{\sigma})\leq C$ for any $n\in\mathbb{N}$.

If furthermore, for any $C\in \mathbb{R}$,
$\{\sigma\in E:\ d_X(\sigma,\overline{\sigma})\leq C\}$ is relatively sequentially compact in $X$, with respect to the metric induced by the norm on $X$, we conclude that a solution to 
\eqref{minpbm0} does exist and that $\{\tilde{\sigma}_n\}_{n\in\mathbb{N}}$ converges, up to a subsequence, to some $\tilde{\sigma}_{\infty}$, $\tilde{\sigma}_{\infty}$ solution to \eqref{minpbm0}.
\end{prop}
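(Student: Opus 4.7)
The plan is to prove this proposition as a direct application of Theorem~\ref{mainthm0}, which guarantees that $d(\hat{\mathcal{N}},\mathcal{N}(\tilde{\sigma}_n))\to\delta_0$. The coercivity condition \eqref{coercivity} provides exactly the control needed to translate this convergence of the fidelity into $d_X$-boundedness of the iterates.

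First I would establish boundedness by a clean contradiction. Suppose $\{\tilde{\sigma}_n\}_{n\in\mathbb{N}}$ is not bounded with respect to $d_X$: then one can extract a subsequence $\{\tilde{\sigma}_{n_k}\}_{k\in\mathbb{N}}$ with $d_X(\tilde{\sigma}_{n_k},\overline{\sigma})\to+\infty$. Pick any $\varepsilon>0$ and let $R=R(\varepsilon)$ be given by \eqref{coercivity}. Then for $k$ large enough, $d_X(\tilde{\sigma}_{n_k},\overline{\sigma})\geq R$; since $\tilde{\sigma}_{n_k}\in E$ by construction, \eqref{coercivity} yields $d(\hat{\mathcal{N}},\mathcal{N}(\tilde{\sigma}_{n_k}))\geq \delta_0+\varepsilon$ for all large $k$. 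But Theorem~\ref{mainthm0} (whose hypotheses we have assumed) gives $d(\hat{\mathcal{N}},\mathcal{N}(\tilde{\sigma}_n))\to \delta_0$, contradicting the lower bound. Hence there exists $C$ with $d_X(\tilde{\sigma}_n,\overline{\sigma})\leq C$ for every $n\in\mathbb{N}$, proving the first claim.

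For the second claim, I would combine this $d_X$-boundedness with the additional relative sequential compactness assumption to extract a subsequence $\{\tilde{\sigma}_{n_k}\}_{k\in\mathbb{N}}$ that converges strongly in $X$ to some $\tilde{\sigma}_{\infty}\in X$. Since $E$ is closed in $X$ by standing assumption, $\tilde{\sigma}_{\infty}\in E$. The continuity of $\sigma\mapsto d(\hat{\mathcal{N}},\mathcal{N}(\sigma))$ with respect to strong convergence in $X$, assumed at the outset of Subsection~\ref{abstract1subsec}, gives
\[
d(\hat{\mathcal{N}},\mathcal{N}(\tilde{\sigma}_{\infty}))=\lim_k d(\hat{\mathcal{N}},\mathcal{N}(\tilde{\sigma}_{n_k}))=\delta_0,
\]
where the last equality uses Theorem~\ref{mainthm0}. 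This shows simultaneously that a solution to \eqref{minpbm0} exists (namely $\tilde{\sigma}_{\infty}$) and that $\{\tilde{\sigma}_n\}_{n\in\mathbb{N}}$ has a subsequence converging to such a solution.

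There is no real obstacle here: the only delicate point is keeping the two metrics carefully apart. The coercivity hypothesis is stated in terms of the auxiliary metric $d_X$, while sublevel sets of $d_X(\cdot,\overline{\sigma})$ are assumed to be relatively compact for the \emph{norm} topology on $X$, which is precisely the topology in which continuity of $\mathcal{N}$ and closedness of $E$ are available. Matching the two consistently is what makes the two-step argument go through.
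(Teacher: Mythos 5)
Your proof is correct and follows exactly the route the paper intends: the paper's own proof consists of the single line ``Immediate by Theorem~\ref{mainthm0}, since $\lim_n d(\hat{\mathcal{N}},\mathcal{N}(\tilde{\sigma}_n))=\delta_0$,'' and your two-step argument (contradiction via the coercivity bound for boundedness, then compactness plus continuity of $\sigma\mapsto d(\hat{\mathcal{N}},\mathcal{N}(\sigma))$ and closedness of $E$ for the convergent subsequence) is precisely the unpacking of that remark.
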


\begin{proof} Immediate by Theorem~\ref{mainthm0}, since
$\lim_nd(\hat{\mathcal{N}},\mathcal{N}(\tilde{\sigma}_n))=\delta_0$.
\end{proof}

Let us now proceed with the proofs of the results stated in Subsection~\ref{abstractsecondsub}.

\begin{proof}[Proof of Theorem~\textnormal{\ref{mainthmbis0}}]
We need to show that
$\varepsilon_0\leq \delta_0$.
By contradiction, let us assume that $\delta_0<\varepsilon_0$. Hence there exist $0<C_1<C_2<1$ 
such that
$$\delta_0<C_1^{1/\alpha}\varepsilon_0<C_2^{1/\alpha}\varepsilon_0<\varepsilon_0$$
and $\overline{\sigma}\in E$ satisfying \eqref{sigmabardef}. 

Then, recalling that $\sigma_0=\tilde{\sigma}_0$,
$$\left(\lambda_0[d(\hat{\mathcal{N}},\mathcal{N}(\tilde{\sigma}_0))^{\alpha}+a_0|\tilde{\sigma}_0|^{\gamma}]+|\sigma_0|^{\beta}\right)\leq \left(\lambda_0[d(\hat{\mathcal{N}},\mathcal{N}(\overline{\sigma}))^{\alpha}+a_0|\overline{\sigma}|^{\gamma}]+|\overline{\sigma}|^{\beta}\right).$$
Analogously, for any $n\geq 1$, choosing $\sigma=\overline{\sigma}-\tilde{\sigma}_{n-1}$, we have
\begin{multline}\label{cruciale}
\left(\lambda_n[d(\hat{\mathcal{N}},\mathcal{N}(\tilde{\sigma}_n))^{\alpha}+a_n|\tilde{\sigma}_n|^{\gamma}]+|\sigma_n|^{\beta}\right)\leq\\
\left(\lambda_n[d(\hat{\mathcal{N}},\mathcal{N}(\overline{\sigma}))^{\alpha}+a_n|\overline{\sigma}|^{\gamma}]+|\overline{\sigma}-\tilde{\sigma}_{n-1}|^{\beta}\right).
\end{multline}

For some $\overline{n}\geq 1$ and for any $n\geq \overline{n}$, we have that
$$a_n|\overline{\sigma}|^{\gamma}\leq (C_2-C_1)\varepsilon_0^{\alpha}
$$
since $a_n$ goes to zero as $n\to \infty$.

Hence, for any $n\geq \overline{n}$, 
\begin{equation}\label{firstcrucialestimatebis}
\lambda_n(1-C_2)[d(\hat{\mathcal{N}},\mathcal{N}(\tilde{\sigma}_n))^{\alpha}+a_n|\tilde{\sigma}_n|^{\gamma}]+|\sigma_n|^{\beta}\leq
|\overline{\sigma}-\tilde{\sigma}_{n-1}|^{\beta}.
\end{equation}
From \eqref{firstcrucialestimatebis}, we obtain that for any $n\geq\overline{n}$,
$$|\overline{\sigma}-\tilde{\sigma}_n|\leq 
|\overline{\sigma}-\tilde{\sigma}_{n-1}|+|\sigma_n|\leq 2|\overline{\sigma}-\tilde{\sigma}_{n-1}|.$$

Then the proof concludes analogously to the proof of Theorem~\ref{mainthm0}.
\end{proof}

We conclude this subsection with the proof of Theorem~\ref{minimizercor0}.
\begin{proof}[Proof of Theorem~\textnormal{\ref{minimizercor0}}]
First of all we note that $|\sigma_0|=|\tilde{\sigma}_0|\leq |\hat{\sigma}|$.
Then we use \eqref{cruciale} with $\overline{\sigma}$ replaced by $\hat{\sigma}$ for any $n\geq 1$ and we have the following two cases. In the first one we have that $|\tilde{\sigma}_n|\leq|\hat{\sigma}|$, in the second we have that
$|\tilde{\sigma}_n|>|\hat{\sigma}|$, hence $|\sigma_n|\leq |\hat{\sigma}-\tilde{\sigma}_{n-1}|$.

Let us now fix $n\geq 1$. If we have that $|\tilde{\sigma}_n|\leq|\hat{\sigma}|$, then we also have
$|\hat{\sigma}-\tilde{\sigma}_n|\leq 2|\hat{\sigma}|$. Otherwise, let us consider $m$, $0\leq m<n$, the last integer for which the first case happens. We immediately deduce, as before, that 
$$|\hat{\sigma}-\tilde{\sigma}_n|\leq 2^{n-m}(2|\hat{\sigma}|)\leq 2^{n+1}|\hat{\sigma}|.
$$
In any case, we conclude that
for any $n\geq 0$
\begin{equation}\label{growth}
|\hat{\sigma}-\tilde{\sigma}_n|\leq 2^{n+1}|\hat{\sigma}|\quad
\text{and}\quad
|\tilde{\sigma}_n|\leq (2^{n+1}+1)|\hat{\sigma}|.
\end{equation}

Let us assume that there exists $h>0$ such that $\limsup_n|\tilde{\sigma}_n|^\gamma\geq (|\hat{\sigma}|^\gamma+h)$. Then we can find a subsequence $n_k$ such that for any $k\geq 1$ we have
$$|\tilde{\sigma}_{n_k}|^{\gamma}\geq (|\hat{\sigma}|^{\gamma}+h/2).$$
Therefore,
$$\lambda_{n_k}a_{n_k} (h/2)+|\sigma_{n_k}|^{\beta}\leq |\hat{\sigma}-\tilde{\sigma}_{n_k-1}|^{\beta}
\leq 2^{\beta n_k}|\hat{\sigma}|^{\beta}$$
which leads to a contradiction if \eqref{crucialcondition0} holds.

Therefore we may conclude that
\begin{equation}\label{limsupresult}
\limsup_n|\tilde{\sigma}_n|\leq  |\hat{\sigma}|.
\end{equation}
The rest of the proof easily follows.
\end{proof}

\subsection{The $(BV,L^2)$ case: proof of Theorem~\ref{ourhierarchythm} and related results}\label{L2BVsec}

The homogeneous $BV$ space on  $\mathbb{R}^N$, $N\geq 1$, is defined as follows.
We say that $u\in BV(\mathbb{R}^N)$ if $Du$, its gradient in the distributional sense, is a bounded vector valued Radon measure on $\mathbb{R}^N$ and $u$ satisfies a suitable condition at infinity. Namely,
if $N=1$, then $BV(\mathbb{R})\subset L^{\infty}(\mathbb{R})$, with continuous immersion, and the condition at infinity here
is just that (a good representative of) $u\in BV(\mathbb{R})$ satisfies $\lim_{t\to- \infty} u(t)=0$.
If $N\geq 2$, we require that $u\in BV(\mathbb{R}^N)$ vanishes at infinity in a weak sense. We note that
$BV(\mathbb{R}^N)\subset L^{N/(N-1)}(\mathbb{R}^N)$, with continuous immersion, and $u$ belonging to
$L^{N/(N-1)}(\mathbb{R}^N)$ already guarantees that $u$ vanishes at infinity in a weak sense. Finally, we endow the homogeneous
$BV(\mathbb{R}^N)$ space with the norm given by the total variation of $Du$, namely
$$\|u\|_{BV(\mathbb{R}^N)}=|Du|(\mathbb{R}^N).$$
We refer to \cite[Section~1.12]{YMey} for further details.

The multiscale procedure developed in \cite{T-N-V} is the following.
We fix $f\in L^2(\mathbb{R}^N)$, $N\geq 1$, and positive numbers $\lambda_n$ for any $n\in \mathbb{N}$.
Let $u_0$ be the solution to
\begin{equation}\label{a11}
\min\left\{\left(\lambda_0\|f-v\|^2_{L^2(\mathbb{R}^N)}+\|v\|_{BV(\mathbb{R}^N)}\right):\ v\in L^2(\mathbb{R}^N)\right\}.
\end{equation}
Lemma~1 of \cite[Section~1.12]{YMey} guarantees that such a minimization problem admits a unique solution (uniqueness being ensured by strict convexity of the functional to be minimized).
Then, by induction, let $u_n$, $n\geq 1$, be the solution to
\begin{multline}\label{a1n}
\min\Big\{\left(\lambda_n\|f-u_0-u_1-\ldots-u_{n-1}-v)\|^2_{L^2(\mathbb{R}^N)}+\|v\|_{BV(\mathbb{R}^N)}\right):\\ v\in L^2(\mathbb{R}^N)\Big\}.
\end{multline}

Then we have the following hierarchical decomposition.

\begin{teo}\label{hierarchythm}
Let $f\in L^2(\mathbb{R}^N)$, $N\geq 1$, and assume that \eqref{firstcondcoeff} holds.

Then we have the following $(BV,L^2)$ hierarchical decomposition of $f$
\begin{equation}\label{hy}
f=\sum_{j=0}^{+\infty}u_j
\end{equation}
where the convergence is in the strong sense in $L^2(\mathbb{R}^N)$.

Furthermore, if $N=2$, we also have the following energy equality
\begin{equation}\label{energyeq1}
\|f\|_{L^2(\mathbb{R}^N)}^2=\sum_{j=0}^{+\infty}\left[\frac{1}{\lambda_j}\|u_j\|_{BV(\mathbb{R}^N)}+\|u_j\|_{L^2(\mathbb{R}^N)}^2\right].
\end{equation}
\end{teo}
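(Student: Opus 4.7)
The plan is to deduce the theorem as a direct application of Theorem~\ref{mainthm0}. I take $X=E=L^2(\mathbb{R}^N)$, $Y=L^2(\mathbb{R}^N)$ with the distance induced by the $L^2$ norm, $\mathcal{N}=\mathrm{Id}$, $\hat{\mathcal{N}}=f$, $|u|=\|u\|_{BV(\mathbb{R}^N)}=|Du|(\mathbb{R}^N)$ (extended by $+\infty$ off the homogeneous $BV(\mathbb{R}^N)$ space), $\alpha=2$ and $\beta=1$. With these choices the minimization problems \eqref{regularizedpbm} and \eqref{inductiveconstr} coincide exactly with \eqref{a11} and \eqref{a1n}, and the condition \eqref{firstcondcoeff} becomes $\limsup_n 2^{n}/\lambda_n<+\infty$, matching the hypothesis of the theorem. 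Properties \ref{BVass1})--\ref{BVass2}) for $|\cdot|$ are immediate; property \ref{BVass3}) follows from $C^\infty_c(\mathbb{R}^N)\subset BV(\mathbb{R}^N)$ and the density of $C^\infty_c$ in $L^2$. Existence of a minimizer at each stage is granted by the cited Lemma~1 of \cite[Section~1.12]{YMey}, so all hypotheses of Theorem~\ref{mainthm0} are in force.

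Applying Theorem~\ref{mainthm0} I obtain
$$\lim_n \|f-\tilde{u}_n\|_{L^2(\mathbb{R}^N)}=\inf\{\|f-u\|_{L^2(\mathbb{R}^N)}:\ u\in L^2(\mathbb{R}^N)\}=0,$$
since $u=f$ is admissible. This is exactly the strong $L^2(\mathbb{R}^N)$ convergence of the partial sums $\tilde{u}_n=u_0+\cdots+u_n$ to $f$, hence \eqref{hy}.

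For the energy identity \eqref{energyeq1} in the case $N=2$, I would argue separately via the Euler--Lagrange condition of \eqref{a1n}. Writing $v_n=f-\tilde{u}_n$ for the $n$-th residual (so $v_{-1}=f$ and $v_{n-1}=v_n+u_n$), the minimizer $u_n$ satisfies
$$2\lambda_n v_n\in \partial\|\cdot\|_{BV(\mathbb{R}^2)}(u_n).$$
The positive $1$-homogeneity of $\|\cdot\|_{BV(\mathbb{R}^2)}$ combined with the duality pairing between $L^2(\mathbb{R}^2)$ and $BV(\mathbb{R}^2)$ then yields the scalar identity $2\lambda_n\langle v_n,u_n\rangle_{L^2(\mathbb{R}^2)}=\|u_n\|_{BV(\mathbb{R}^2)}$. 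Substituting into
$$\|v_{n-1}\|^2_{L^2(\mathbb{R}^2)}=\|v_n\|^2_{L^2(\mathbb{R}^2)}+2\langle v_n,u_n\rangle_{L^2(\mathbb{R}^2)}+\|u_n\|^2_{L^2(\mathbb{R}^2)}=\|v_n\|^2_{L^2(\mathbb{R}^2)}+\tfrac{1}{\lambda_n}\|u_n\|_{BV(\mathbb{R}^2)}+\|u_n\|^2_{L^2(\mathbb{R}^2)},$$
and telescoping from $n=0$ while using the already established $\|v_n\|_{L^2(\mathbb{R}^2)}\to 0$ gives \eqref{energyeq1}.

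The main obstacle, and the point where the restriction $N=2$ is essential, is the justification of the duality identity $2\lambda_n\langle v_n,u_n\rangle_{L^2}=\|u_n\|_{BV}$. In dimension two the continuous embedding $L^2(\mathbb{R}^2)\hookrightarrow \dot{H}^{-1}(\mathbb{R}^2)\hookrightarrow G(\mathbb{R}^2)$ into Meyer's predual of $BV(\mathbb{R}^2)$ makes the pairing between an $L^2$ residual and a $BV$ function well defined and identifies the relevant subdifferential; in dimensions $N\geq 3$ this compatibility breaks down by Sobolev scaling, which is precisely why the convergence part of the theorem persists for all $N\geq 1$ (it relies only on the abstract Theorem~\ref{mainthm0}) while the energy identity is stated only for $N=2$.
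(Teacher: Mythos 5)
Your proof of the decomposition \eqref{hy} is exactly the paper's: the same choices $X=E=Y=L^2(\mathbb{R}^N)$, $\mathcal{N}=\mathrm{Id}$, $|\cdot|=\|\cdot\|_{BV(\mathbb{R}^N)}$, $\alpha=2$, $\beta=1$, followed by Theorem~\ref{mainthm0} and the observation that $\delta_0=0$. Your proof of \eqref{energyeq1} is also essentially the paper's (which follows \cite{T-N-V} and \cite{YMey}): the key scalar identity $2\lambda_n\langle v_n,u_n\rangle_{L^2}=\|u_n\|_{BV}$ is precisely the relation $\int vu=\frac{1}{2\lambda}\|u\|_{BV}$ in the paper's characterization of the minimizer, and the Pythagorean expansion of $\|v_{n-1}\|^2$ plus telescoping and $\|v_n\|_{L^2}\to0$ is identical; deriving the identity from $2\lambda_n v_n\in\partial\|\cdot\|_{BV}(u_n)$ and positive $1$-homogeneity rather than quoting Meyer's $\|\cdot\|_\ast$-characterization is only a cosmetic difference.

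One step in your closing justification is wrong as stated: $L^2(\mathbb{R}^2)$ does \emph{not} embed into $\dot{H}^{-1}(\mathbb{R}^2)$ (the weight $|\xi|^{-2}$ is not integrable near the origin, so a generic $\hat{v}\in L^2$ with $\hat v(0)\neq 0$ fails the condition). The embedding you actually need — that the $L^2$ pairing against elements of the homogeneous $BV(\mathbb{R}^2)$ space is continuous, i.e.\ $\left|\int vh\right|\leq C\|v\|_{L^2}\|h\|_{BV(\mathbb{R}^2)}$ — follows directly from the critical Sobolev inequality $\|h\|_{L^2(\mathbb{R}^2)}\leq C\,|Dh|(\mathbb{R}^2)$, which is exactly the continuous immersion $BV(\mathbb{R}^2)\subset L^2(\mathbb{R}^2)$ that the paper cites as the reason for the restriction $N=2$ (and which fails for $N\geq 3$, where $BV(\mathbb{R}^N)\subset L^{N/(N-1)}$ with $N/(N-1)<2$). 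Replacing the $\dot H^{-1}$ detour by this one line repairs the argument.
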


\begin{proof}
Let $X=L^2(\mathbb{R}^N)$ with its usual norm, $E=X$, $Y=L^2(\mathbb{R}^N)$ and $\mathcal{N}=Id$. Let $\hat{\mathcal{N}}=f\in L^2(\mathbb{R}^N)$. We set $|\cdot|=\|\cdot\|_{BV(\mathbb{R}^N)}$ and
$\alpha=2$ and $\beta=1$.
Finally we note that $\delta_0=0$. Then the decomposition is an immediate consequence of Theorem~\ref{mainthm0}.

The restriction to $N=2$ for the energy equality is due to the fact that we need that
 $BV(\mathbb{R}^N)\subset L^2(\mathbb{R}^N)$, with continuous immersion. Its proof is already contained in \cite[Theorem~2.2]{T-N-V} and exploits arguments developed in \cite{YMey}, which we will recall in the proof of the next result, Theorem~\ref{hierarchythmbdd}.
\end{proof}

Let us note that we have proved, and actually extended to any space dimension $N\geq1$, Theorem~\ref{ourhierarchythm}.
We can further extend Theorem~\ref{hierarchythm} to bounded domains as follows.

\begin{teo}\label{hierarchythmbdd}
Let $\Omega\subset\mathbb{R}^N$, $N\geq 1$, be a bounded open set with Lipschitz boundary.
Let $f\in L^2(\Omega)$ and assume that \eqref{firstcondcoeff} holds. Let us construct the sequence $\{u_j\}_{j=0}^{+\infty}$
as before, using \eqref{a11} and \eqref{a1n} with $\mathbb{R}^N$ replaced by $\Omega$.

Then we have the following $(BV,L^2)$ hierarchical decomposition of $f$
\begin{equation}\label{hy2}
f=\sum_{j=0}^{+\infty}u_j
\end{equation}
where the convergence is in the strong sense in $L^2(\Omega)$.

Furthermore, if $N=2$, we also have the following energy equality
\begin{equation}\label{energyeq12}
\|f\|_{L^2(\Omega)}^2=\sum_{j=0}^{+\infty}\left[\frac{1}{\lambda_j}\|u_j\|_{BV(\Omega)}+\|u_j\|_{L^2(\Omega)}^2\right].
\end{equation}
\end{teo}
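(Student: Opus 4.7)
The plan is to apply Theorem~\ref{mainthm0} with $X = Y = L^2(\Omega)$ equipped with the $L^2$ norm (as norm and as distance), $E = X$, $\mathcal{N} = \mathrm{Id}$, $\hat{\mathcal{N}} = f$, $|\cdot| = \|\cdot\|_{BV(\Omega)}$, $\alpha = 2$ and $\beta = 1$. Assumptions \ref{BVass1})---\ref{BVass3}) are immediate: the $BV$ norm is a norm, and density in $L^2(\Omega)$ of functions with finite $BV$ norm follows from density of $C^\infty_c(\Omega)$. Continuity of $\sigma \mapsto \|f - \sigma\|_{L^2(\Omega)}$ is trivial. Existence of minimizers for \eqref{existenceminimizereq} follows by the direct method on a bounded Lipschitz domain: along a minimizing sequence, the $L^2$ fidelity controls the $L^2$ norm (hence, since $\Omega$ is bounded, also the $L^1$ norm), the $BV$ penalization controls $|\cdot|_{BV(\Omega)}$, and the compact embedding $BV(\Omega) \hookrightarrow L^1(\Omega)$ combined with weak $L^2$ compactness yields a candidate limit in $L^2 \cap BV$, at which lower semicontinuity of both terms concludes. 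Since $\delta_0 = \inf_{\sigma \in L^2(\Omega)}\|f - \sigma\|_{L^2(\Omega)} = 0$, Theorem~\ref{mainthm0} gives $\|f - \tilde u_n\|_{L^2(\Omega)} \to 0$, where $\tilde u_n := \sum_{j=0}^n u_j$, which is precisely \eqref{hy2}.

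For the energy equality \eqref{energyeq12} in dimension $N = 2$, set $v_n := f - \tilde u_n$, so that $v_{-1} = f$ and $u_n$ minimizes $\lambda_n\|v_{n-1} - u\|_{L^2(\Omega)}^2 + \|u\|_{BV(\Omega)}$. Viewing $J(u) := \|u\|_{BV(\Omega)}$ (extended by $+\infty$ off $BV(\Omega)\cap L^2(\Omega)$) as a proper, convex, lower semicontinuous functional on $L^2(\Omega)$, the Euler--Lagrange condition at $u_n$ reads $2\lambda_n(v_{n-1} - u_n) \in \partial J(u_n)$. Since $J$ is positively $1$-homogeneous, any subgradient $p \in \partial J(u)$ satisfies $\langle p, u\rangle_{L^2(\Omega)} = J(u)$: applying the subgradient inequality to $v = tu$ for $t > 1$ gives $J(u) \ge \langle p, u\rangle$, and for $0 \le t < 1$ the reverse inequality. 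Consequently $\|u_n\|_{BV(\Omega)} = 2\lambda_n\langle v_{n-1} - u_n, u_n\rangle_{L^2(\Omega)}$, which together with the Pythagorean identity $\|v_{n-1}\|_{L^2}^2 = \|u_n\|_{L^2}^2 + 2\langle u_n, v_{n-1} - u_n\rangle_{L^2} + \|v_n\|_{L^2}^2$ yields
\begin{equation*}
\|v_{n-1}\|_{L^2(\Omega)}^2 - \|v_n\|_{L^2(\Omega)}^2 = \|u_n\|_{L^2(\Omega)}^2 + \frac{1}{\lambda_n}\|u_n\|_{BV(\Omega)}.
\end{equation*}
Telescoping from $n = 0$ to $n = M$ and using $\|v_M\|_{L^2(\Omega)} \to 0$ from the first part, then letting $M \to \infty$, gives \eqref{energyeq12}.

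The step I expect to be the main obstacle is verifying cleanly on a bounded Lipschitz domain the Euler--Lagrange/subdifferential identity $\|u_n\|_{BV(\Omega)} = 2\lambda_n\langle v_{n-1} - u_n, u_n\rangle_{L^2(\Omega)}$: while the whole-space TNV argument invokes Meyer's $G$-space and its duality with $BV$, in the bounded setting one can avoid introducing such a dual space and instead work directly with the convex subdifferential of $J$ in $L^2(\Omega)$ as sketched above. The restriction to $N = 2$ in this approach is exactly what guarantees $BV(\Omega) \subset L^2(\Omega)$ with continuous embedding (which fails on bounded Lipschitz domains for $N \ge 3$), ensuring that all inner products in the Pythagorean expansion are finite and the telescoped identity is meaningful.
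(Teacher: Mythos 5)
Your proof is correct. The first half (existence of the minimizers by the direct method, verification of assumptions \ref{BVass1})--\ref{BVass3}), and deduction of \eqref{hy2} from Theorem~\ref{mainthm0} with $\delta_0=0$) is essentially identical to the paper's argument; the paper additionally records uniqueness of each minimizer via strict convexity, which you should mention so that ``the sequence $\{u_j\}$'' is well defined, but this is cosmetic. Where you genuinely diverge is the energy identity. The paper follows Meyer and Tadmor--Nezzar--Vese: it introduces the dual quantity $\|v\|_{\ast}=\sup\{\int_{\Omega}vh:\ \|h\|_{BV(\Omega)}\leq 1\}$, characterizes the minimizer through it ($u=0$ when $\|\tilde f\|_{\ast}\leq\tfrac{1}{2\lambda}$, and otherwise $\|v\|_{\ast}=\tfrac{1}{2\lambda}$ together with $\int_{\Omega}vu=\tfrac{1}{2\lambda}\|u\|_{BV(\Omega)}$), and then telescopes exactly as you do. You instead obtain the single identity that is actually needed, $\int_{\Omega}v_nu_n=\tfrac{1}{2\lambda_n}\|u_n\|_{BV(\Omega)}$, directly from the $L^2$-subdifferential optimality condition plus positive $1$-homogeneity of $J$; from \eqref{normsquare} onward the two proofs coincide. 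Your route is more self-contained (no $G$-space duality) and, as a bonus, it does not actually use the embedding $BV(\Omega)\subset L^2(\Omega)$ anywhere: the inner products in the Pythagorean expansion are automatically finite because $u_n$ and $v_n$ lie in $L^2(\Omega)$ by construction, and the subgradient $2\lambda_n v_n$ lives in $L^2(\Omega)$ by definition of the $L^2$-subdifferential. So your closing remark misattributes the role of $N=2$; in fact your argument would yield \eqref{energyeq12} for every $N\geq 1$ on a bounded Lipschitz domain, which is strictly more than the paper claims. The restriction to $N=2$ is an artifact of the paper's dual-norm route (where $\|\cdot\|_{\ast}$ must be finite on $L^2$, forcing $BV\hookrightarrow L^2$), not of yours.
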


\begin{proof}
We begin by showing that, for any positive $\lambda$ and $\tilde{f}\in L^2(\Omega)$,
there exists a unique minimizer to
\begin{equation}\label{meyermin}
\min \left\{\lambda\|\tilde{f}-w\|^2_{L^2(\Omega)}+\|w\|_{BV(\Omega)}:\ w\in L^2(\Omega)\right\}.
\end{equation}
This guarantees that the sequence $\{u_j\}_{j=0}^{+\infty}$ exists and it is uniquely defined.

The existence and uniqueness of a solution to \eqref{meyermin} is standard and we sketch here the argument for the convenience of the reader.

A minimizing sequence $\{w_n\}_{n\in\mathbb{N}}$ is clearly uniformly bounded in $L^2(\Omega)$, hence in $L^1(\Omega)$, and their $BV$ norms are uniformly bounded as well. Then, up to a subsequence, $w_n$ converges to $w\in L^2(\Omega)$
weakly in $L^2(\Omega)$ and strongly in $L^1(\Omega)$. The existence of the minimum is guaranteed by the lower semicontinuity of the functional with respect to this kind of convergence.
Uniqueness of the solution of the minimum problem follows from the fact that the functional is strictly convex.

The hierarchical decomposition follows again from Theorem~\ref{mainthm0} as in the proof of Theorem~\ref{hierarchythm} by replacing $\mathbb{R}^N$ with $\Omega$.

 Regarding the energy equality, the restriction to $N=2$ is again due to the fact that we need that
$BV(\Omega)\subset L^2(\Omega)$, with continuous immersion, and the proof relies on arguments developed in \cite{YMey}. In fact, we can define, for any $v\in L^2(\Omega)$, the following norm
$$\|v\|_{\ast}=\sup\left\{\int_{\Omega}vh:\ \|h\|_{BV(\Omega)}\leq 1\right\}.$$
Using the reasoning developed in \cite{YMey} for the $\mathbb{R}^2$ case, one can show one has the following characterization of the solution $u$ to \eqref{meyermin} and of $v=\tilde{f}-u$. If $\|\tilde{f}\|_{\ast}\leq\frac{1}{2\lambda}$, then $u=0$ and $v=\tilde{f}$. If 
$\|\tilde{f}\|_{\ast}\geq\frac{1}{2\lambda}$, then
$$\|v\|_{\ast}=\frac{1}{2\lambda}\quad\text{and}\quad\int_{\Omega}vu=\frac{1}{2\lambda}\|u\|_{BV(\Omega)}.$$ 
Note that the second equality is true in both cases. Moreover, it immediately follows that
\begin{equation}\label{normsquare}
\|\tilde{f}\|_{L^2(\Omega)}^2=\|u+v\|_{L^2(\Omega)}^2=\|u\|_{L^2(\Omega)}^2+\frac{1}{\lambda}\|u\|_{BV(\Omega)}+\|v\|_{L^2(\Omega)}^2.
\end{equation}
By induction, we conclude that for any 
$j_0\in \mathbb{N}$ we have
$$\|f\|_{L^2(\Omega)}^2=\left(\sum_{j=0}^{j_0}\|u_j\|_{L^2(\Omega)}^2+\frac{1}{\lambda_j}\|u_j\|_{BV(\Omega)}\right)
+\|v_{j_0}\|_{L^2(\Omega)}^2.$$
By \eqref{hy2}, we infer that $\|v_{j_0}\|_{L^2(\Omega)}^2$ goes to zero as $j_0$ goes to infinity, thus \eqref{energyeq12} is proved.
\end{proof}

\begin{oss}\label{wholespacehierarchy}
Let $\Omega\subset\mathbb{R}^N$, $N\geq 1$, be a bounded open set with Lipschitz boundary.
Then the hierarchical decomposition \eqref{hy2} of Theorem~\ref{hierarchythmbdd} still holds if we replace
the $\|\cdot\|_{BV(\Omega)}$ norm with the seminorm $|\cdot|_{BV(\Omega)}$ in the minimization problem \eqref{meyermin}
and, correspondingly, in those leading to the construction of the sequence $\{u_j\}_{j=0}^{+\infty}$ (which is still uniquely determined).
However, in this case \eqref{energyeq12} is not guaranteed.

In fact, it is enough to use in Theorem~\ref{mainthm0} as $|\cdot|$ the seminorm $|\cdot|_{BV(\Omega)}$ instead of the norm $\|\cdot\|_{BV(\Omega)}$. We note that the
regularization by the $BV$ seminorm is the one usually employed in the Rudin-Osher-Fatemi denoising model.
\end{oss}
 
\section{The multiscale approach in a topological group setting}\label{topgroupsubs}

In this section we extend the abstract results to a different setting, namely the one of topological groups. This extension is essential in order to apply the multiscale procedure to image registration as described in Subsection~\ref{LDDMMintrosub}.

Let $G$ be a group with multiplication denoted by $\cdot$.
The identity element is denoted $e$.
We endow $G$ with a left-invariant distance $d$ such that $G$ with that distance is a complete metric space. By left-invariance we mean
\begin{equation}
d(\psi_0,\psi_1)=d(\psi\cdot\psi_0,\psi\cdot\psi_1)\quad\text{for any }\psi_0,\ \psi_1,\ \psi\in G.
\end{equation}

Let us assume that there exists a notion of convergence on $G$, which for simplicity we call \emph{weak convergence} in $G$ and we denote with $\rightharpoonup$, satisfying the following properties:
\begin{enumerate}[a)]
\item if the weak limit exists, it is unique;\label{a}

\item the weak convergence is left-invariant, that is, if $\psi_n\rightharpoonup\psi_{\infty}$ as $n\to\infty$, then for any $\psi\in G$ we have that $\psi\cdot \psi_n \rightharpoonup\psi\cdot\psi_{\infty}$;\label{c}

\item if, as $n\to\infty$, $\psi_n$ converges to $\psi_{\infty}$ in the distance $d$, then $\psi_n\rightharpoonup\psi_{\infty}$.\label{b}
\item any bounded subset of $G$, with respect to the distance $d$, is relatively sequentially compact with respect to the weak convergence in $G$;\label{d}

\item the distance $d$ is sequentially lower semicontinuous on $G$, with respect to the weak convergence in $G$, in the following sense
$$d(\psi_{\infty},e)\leq \liminf_n d(\psi_n,e)\quad\text{if }\psi_n\rightharpoonup\psi_{\infty}\text{ as }n\to\infty.$$\label{e}
\end{enumerate}

Let us note that, by assumption~\ref{c}), assumption~\ref{e}) is equivalent to the statement that, 
for any $\psi\in G$,
$$d(\psi_{\infty},\psi)\leq \liminf_n d(\psi_n,\psi)\quad\text{if }\psi_n\rightharpoonup\psi_{\infty}\text{ as }n\to\infty.$$

An interesting example of this setting, as anticipated earlier in Subsection~\ref{LDDMMintrosub} and developed in the next Section~\ref{registrationsec}, is the image registration problem. Another simpler example is the following.

\begin{exam}\label{reflexiveexam}
We may consider $G=X$, where $X$ is a reflexive Banach space, with norm $\|\cdot\|=\|\cdot\|_X$. Then $G$ is a group with respect to the sum, that is $x_1\cdot x_2=x_1+x_2$ and, obviously, $e=0$, that is, $d(x,e)=\|x\|$.

As weak convergence in $G$ we consider the weak convergence in the reflexive Banach space $X$. Then all the previously stated properties are immediately satisfied.
\end{exam}

Let us assume that $E\subset G$ is sequentially closed with respect to the weak convergence in $G$. Let $Y$ be a metric space with distance $d_Y$. Given $\mathcal{N}:E\to Y$, and
$\hat{\mathcal{N}}\in Y$, we assume that the function $E\ni \psi\mapsto d(\hat{\mathcal{N}},\mathcal{N}(\psi))$ is sequentially lower semicontinuous with respect to the weak convergence in $G$.

Let us fix positive constants $\alpha$, $\beta$ and $\gamma$ and let $\lambda_n>0$ and $a_n\geq 0$ for any $n\in \mathbb{N}$. 
We assume that 
$a_n\leq a_{n-1}$ for any $n\geq 1$.
We begin with the following proposition.

\begin{prop}\label{existenceprop}
There exists a minimizer $\psi_0$ solving
\begin{equation}\label{00}
\min\left\{\left(\lambda_0[(d_Y(\hat{\mathcal{N}},\mathcal{N}(\psi)))^{\alpha}+a_0d(\psi,e)^{\gamma}]+d(\psi,e)^{\beta}\right):\ \psi\in G\right\}.
\end{equation}

By induction, there exists a minimizer $\psi_n$, $n\geq 1$, solving
\begin{equation}\label{11}
\min_{\psi\in G}\left(\lambda_n[(d_Y(\hat{\mathcal{N}},\mathcal{N}(\tilde{\psi}_{n-1}\cdot \psi)))^{\alpha}+a_nd(\tilde{\psi}_{n-1}\cdot \psi,e)^{\gamma}]+d(\psi,e)^{\beta}\right)
\end{equation}
where $\tilde{\psi}_0=\psi_0$ and
for any $n\geq 1$ we set by induction
$$\tilde{\psi}_n=\tilde{\psi}_{n-1}\cdot\psi_n.$$
\end{prop}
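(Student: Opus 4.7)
The plan is to prove existence for both \eqref{00} and \eqref{11} by the direct method of the calculus of variations, using precisely the five axioms \ref{a})--\ref{e}) imposed on the weak convergence. Since the structure of the two problems is the same (\eqref{00} being the special case $n=0$ once we set $\tilde{\psi}_{-1}=e$), I will describe the induction step and note that the base case is identical, just without the composition factor.

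First I would fix $n\geq 0$ and write $F_n(\psi)$ for the functional to be minimized in \eqref{11}. To see that $\inf F_n<+\infty$, I would test with $\psi=e$: by left invariance $d(\tilde{\psi}_{n-1}\cdot e,e)=d(\tilde{\psi}_{n-1},e)$, which is finite by the inductive construction, and the fidelity term $d_Y(\hat{\mathcal{N}},\mathcal{N}(\tilde{\psi}_{n-1}))^\alpha$ is finite as well. Now take a minimizing sequence $\{\psi_n^k\}_{k\in\mathbb{N}}\subset G$. From $d(\psi_n^k,e)^\beta\leq F_n(\psi_n^k)\leq \inf F_n+1$ eventually, I obtain that $\{\psi_n^k\}$ is bounded with respect to $d$. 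By assumption~\ref{d}) I can extract a subsequence, not relabelled, such that $\psi_n^k\rightharpoonup \psi_n$ weakly in $G$ for some $\psi_n\in G$.

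Next, I would pass to the limit in each of the three summands of $F_n$. For the term $d(\psi,e)^\beta$ the weak lower semicontinuity is assumption~\ref{e}) applied directly at $\psi_n$. For the composed distance term I invoke the left invariance of weak convergence, assumption~\ref{c}), to deduce $\tilde{\psi}_{n-1}\cdot \psi_n^k\rightharpoonup \tilde{\psi}_{n-1}\cdot \psi_n$, and then apply assumption~\ref{e}) (in the equivalent form noted right after the list of axioms, namely that $d(\cdot,e)$ is weakly l.s.c.\ at any point, via left translation) to conclude
\begin{equation*}
d(\tilde{\psi}_{n-1}\cdot \psi_n,e)^\gamma \leq \liminf_{k} d(\tilde{\psi}_{n-1}\cdot \psi_n^k,e)^\gamma.
\end{equation*}
Finally, the fidelity term $d_Y(\hat{\mathcal{N}},\mathcal{N}(\tilde{\psi}_{n-1}\cdot \psi_n^k))^\alpha$ passes to the $\liminf$ thanks to the assumed weak sequential lower semicontinuity of $\psi\mapsto d_Y(\hat{\mathcal{N}},\mathcal{N}(\psi))$ on $E$, provided $\tilde{\psi}_{n-1}\cdot\psi_n\in E$; this last fact comes from the weak sequential closedness of $E$, noting that $\tilde{\psi}_{n-1}\cdot\psi_n^k\in E$ along the minimizing sequence (otherwise $F_n(\psi_n^k)=+\infty$). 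Summing the three inequalities yields $F_n(\psi_n)\leq \liminf_k F_n(\psi_n^k)=\inf F_n$, so $\psi_n$ is a minimizer, and setting $\tilde{\psi}_n=\tilde{\psi}_{n-1}\cdot\psi_n$ closes the induction.

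The only delicate point is the handling of the composition in the fidelity and compound-distance terms: everything else is a textbook direct-method argument. The crucial ingredient making the argument go through is assumption~\ref{c}), which ensures that the left translation by the already-constructed $\tilde{\psi}_{n-1}$ is weakly sequentially continuous, so that a weakly convergent minimizing sequence $\psi_n^k\rightharpoonup \psi_n$ for the ``incremental'' variable produces a weakly convergent sequence $\tilde{\psi}_{n-1}\cdot\psi_n^k\rightharpoonup \tilde{\psi}_{n-1}\cdot\psi_n$ in the argument of $\mathcal{N}$ and of the $\gamma$-penalty. Without left invariance of the weak convergence, the lower semicontinuity of the composed terms would not be available, and the scheme would not close.
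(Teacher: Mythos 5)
Your proposal is correct and follows essentially the same route as the paper: the direct method, with boundedness of a minimizing sequence from the $d(\psi,e)^{\beta}$ term, weak sequential compactness from assumption~\ref{d}), and lower semicontinuity of the three summands via assumptions~\ref{c}) and \ref{e}) (the paper handles the compound distance term by left invariance of $d$ itself, i.e.\ the equivalent form of \ref{e}) noted after the axioms, which is the same computation as yours). Your extra care about the minimizing sequence lying in $E$ and the weak sequential closedness of $E$ is a sound, slightly more explicit treatment of a point the paper leaves implicit.
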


\begin{oss}
We have that the sequence $\{\psi_n\}_{n\in\mathbb{N}}$ exists, however we can not guarantee that it is uniquely determined.
\end{oss}

\begin{proof}
The existence of a minimizer for \eqref{00} is a simple consequence of the direct method of Calculus of Variations.

We show that there exists $\psi_1$, solution to \eqref{11} with $n=1$. Again we use the direct method. We consider $\psi^m\in G$, $m\in\mathbb{N}$, a minimizing sequence. Clearly, $\psi^m$ is bounded in $G$, thus it admits a weakly converging subsequence, which we do not relabel. Let $\psi_1$ be its weak limit. We need to prove that $\psi_1$ is a minimizer for \eqref{11}, with $n=1$.
By assumption~\ref{c}), we have that
$$d_Y(\hat{\mathcal{N}},\mathcal{N}(\psi_0\cdot \psi_1))\leq \liminf_md_Y(\hat{\mathcal{N}},\mathcal{N}(\psi_0\cdot \psi^m)).$$
It remains to show that
$$d(\psi_0\cdot \psi_1,e)\leq \liminf_m d(\psi_0\cdot \psi^m,e)$$
which is obviously true since, by left invariance of the distance and assumption~\ref{e}),
$$d(\psi_0\cdot \psi_1, e)=d(\psi_1, \psi_0^{-1})\leq \liminf_m d(\psi^m,\psi_0^{-1})=\liminf_m d(\psi_0\cdot \psi^m,e).$$

In a similar fashion, by induction, we prove the existence of 
$\psi_n$, $n\geq 1$.
\end{proof}

By taking $\psi=e$, we infer that
\begin{multline}
(d_Y(\hat{\mathcal{N}},\mathcal{N}(\tilde{\psi}_n)))^{\alpha}
\leq
(d_Y(\hat{\mathcal{N}},\mathcal{N}(\tilde{\psi}_n)))^{\alpha}
+a_nd(\tilde{\psi}_n,e)^{\gamma}\leq\\
(d_Y(\hat{\mathcal{N}},\mathcal{N}(\tilde{\psi}_{n-1})))^{\alpha}
+a_{n-1}d(\tilde{\psi}_{n-1},e)^{\gamma}\quad\text{for any }n\geq 1.
\end{multline}
Let us denote
$$\varepsilon_0=\left(\lim_n\left((d_Y(\hat{\mathcal{N}},\mathcal{N}(\tilde{\psi}_n)))^{\alpha}
+a_nd(\tilde{\psi}_n,e)^{\gamma}\right)\right)^{1/\alpha}
$$
and
$$\delta_0=\inf\{d_Y(\hat{\mathcal{N}},\mathcal{N}(\psi))
:\ \psi\in G\}
.$$
Clearly we have that $\varepsilon_0\geq \delta_0$.

In the following theorem we prove convergence in the space of images.

\begin{teo}\label{mainthmter1}
We assume that 
\eqref{secondcondcoeff} holds.
Then $\varepsilon_0= \delta_0$ and we also have
$$\lim_n d_Y(\hat{\mathcal{N}},\mathcal{N}(\tilde{\psi}_n))
=\delta_0.$$
\end{teo}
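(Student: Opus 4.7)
The plan is to adapt the proof of Theorem~\ref{mainthmbis0} to the non-commutative setting, replacing the translate $\sigma = \overline{\sigma}-\tilde{\sigma}_{n-1}$ used there by the left-translate $\psi = \tilde{\psi}_{n-1}^{-1}\cdot\overline{\psi}$, and using left-invariance of $d$ in place of additive identities. Concretely, I would argue by contradiction: assume $\delta_0 < \varepsilon_0$, pick constants $0<C_1<C_2<1$ with $\delta_0 < C_1^{1/\alpha}\varepsilon_0 < C_2^{1/\alpha}\varepsilon_0 < \varepsilon_0$, and use the definition of $\delta_0$ as an infimum together with the continuity (lower semicontinuity suffices) of $\psi \mapsto d_Y(\hat{\mathcal{N}},\mathcal{N}(\psi))$ to choose $\overline{\psi}\in G$ with
\[
d_Y(\hat{\mathcal{N}},\mathcal{N}(\overline{\psi}))\le C_1^{1/\alpha}\varepsilon_0.
\]
Note that $d(\overline{\psi},e)<+\infty$ is automatic here, so the analogue of the density assumption \ref{BVass3}) is free.

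Next, I would insert the competitor $\psi = \tilde{\psi}_{n-1}^{-1}\cdot\overline{\psi}$ into the minimization \eqref{11}, so that $\tilde{\psi}_{n-1}\cdot\psi=\overline{\psi}$ and, by left-invariance, $d(\psi,e) = d(\overline{\psi},\tilde{\psi}_{n-1})$. This yields
\[
\lambda_n\!\left[d_Y(\hat{\mathcal{N}},\mathcal{N}(\tilde{\psi}_n))^{\alpha}+a_n d(\tilde{\psi}_n,e)^{\gamma}\right]+d(\psi_n,e)^{\beta} \le \lambda_n\!\left[d_Y(\hat{\mathcal{N}},\mathcal{N}(\overline{\psi}))^{\alpha}+a_n d(\overline{\psi},e)^{\gamma}\right]+d(\overline{\psi},\tilde{\psi}_{n-1})^{\beta}.
\]
Since $a_n\to 0$, for $n$ large enough $\lambda_n a_n d(\overline{\psi},e)^{\gamma}\le (C_2-C_1)\lambda_n\varepsilon_0^{\alpha}$, and using $d_Y(\hat{\mathcal{N}},\mathcal{N}(\tilde{\psi}_n))\ge \varepsilon_0$ and $d_Y(\hat{\mathcal{N}},\mathcal{N}(\overline{\psi}))^{\alpha}\le C_1\varepsilon_0^{\alpha}$, I obtain the analogue of \eqref{firstcrucialestimatebis}:
\[
\lambda_n(1-C_2)\varepsilon_0^{\alpha}+d(\psi_n,e)^{\beta}\le d(\overline{\psi},\tilde{\psi}_{n-1})^{\beta}.
\]

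The crucial step is then the growth bound. By the triangle inequality and left-invariance,
\[
d(\overline{\psi},\tilde{\psi}_n)\le d(\overline{\psi},\tilde{\psi}_{n-1})+d(\tilde{\psi}_{n-1},\tilde{\psi}_{n-1}\cdot\psi_n)
= d(\overline{\psi},\tilde{\psi}_{n-1})+d(e,\psi_n),
\]
and the previous inequality forces $d(\psi_n,e)\le d(\overline{\psi},\tilde{\psi}_{n-1})$, giving $d(\overline{\psi},\tilde{\psi}_n)\le 2\,d(\overline{\psi},\tilde{\psi}_{n-1})$. Hence $d(\overline{\psi},\tilde{\psi}_n)\le C_0 2^n$ for some constant $C_0$, and inserting back one gets
\[
(1-C_2)\varepsilon_0^{\alpha}\le \tilde C_0\,\frac{2^{\beta n}}{\lambda_n}.
\]
Under $\limsup_n 2^{\beta n}/\lambda_n <+\infty$ this is immediate when the $\limsup$ is zero; in the strictly positive case one reruns the dichotomy argument used at the end of the proof of Theorem~\ref{mainthm0} (comparing $d(\psi_n,e)^{\beta}$ with $a\,d(\overline{\psi},\tilde{\psi}_{n-1})^{\beta}$) to refine the growth to $b_0^n$ for some $b_0<2$, producing the contradiction. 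Finally, $\varepsilon_0=\delta_0$ together with $d_Y(\hat{\mathcal{N}},\mathcal{N}(\tilde{\psi}_n))\le (\,\cdot\,+ a_n d(\tilde{\psi}_n,e)^{\gamma})^{1/\alpha}$ and $a_n\to 0$ yields the stated limit, provided one first notes that the sequence $d(\tilde{\psi}_n,e)$ need not blow up faster than the $a_n$ decay — but this is already contained in the contradiction argument.

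The main obstacle, compared with the Banach proof, is precisely the non-commutativity: one cannot subtract $\tilde{\psi}_{n-1}$ from $\overline{\psi}$, and the competitor $\tilde{\psi}_{n-1}^{-1}\cdot\overline{\psi}$ is the only natural substitute; one must be careful that every step that used the additive triangle inequality in the Banach argument is replaced by the left-invariant triangle inequality, and that the ``distance from the target'' $d(\overline{\psi},\tilde{\psi}_n)$ plays the role of $|\overline{\sigma}-\tilde{\sigma}_n|$. Once this replacement is made consistently, the argument runs in parallel with the Banach case.
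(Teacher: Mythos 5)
Your proposal is correct and follows essentially the same route as the paper: the paper's proof likewise argues by contradiction, inserts the competitor $\psi=\tilde{\psi}_{n-1}^{-1}\cdot\overline{\psi}$ into \eqref{11}, uses left-invariance and $a_n\to 0$ to obtain the analogue of \eqref{firstcrucialestimatebis}, derives the doubling bound $d(\tilde{\psi}_n^{-1}\cdot\overline{\psi},e)\leq 2\,d(\tilde{\psi}_{n-1}^{-1}\cdot\overline{\psi},e)$, and then invokes the dichotomy argument from the proof of Theorem~\ref{mainthm0}. The only nitpick is that the lower bound one actually uses is $d_Y(\hat{\mathcal{N}},\mathcal{N}(\tilde{\psi}_n))^{\alpha}+a_n d(\tilde{\psi}_n,e)^{\gamma}\geq\varepsilon_0^{\alpha}$ (for the monotone sum), not $d_Y(\hat{\mathcal{N}},\mathcal{N}(\tilde{\psi}_n))\geq\varepsilon_0$ alone, but this does not affect the inequality you derive.
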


\begin{proof} The proof follows the one of Theorem~\ref{mainthmbis0}.
The only difference is that we replace the additive structure of the Banach space with the operation of the group. We sketch the proof to show how to handle such a difference.

We need to show that
$\varepsilon_0\leq \delta_0$.
By contradiction, let us assume that $\delta_0<\varepsilon_0$. Hence there exist $0<C_1<C_2<1$ 
such that
$$\delta_0<C_1^{1/\alpha}\varepsilon_0<C_2^{1/\alpha}\varepsilon_0<\varepsilon_0.$$
Therefore, there exists $\overline{\psi}\in G$ such that
$$d_Y(\hat{\mathcal{N}},\mathcal{N}(\overline{\psi}))
\leq C_1^{1/\alpha}\varepsilon_0\quad\text{and}\quad
d(\overline{\psi},e)<+\infty.$$

Then, recalling that $\psi_0=\tilde{\psi}_0$,
\begin{multline*}
\left(\lambda_0[(d_Y(\hat{\mathcal{N}},\mathcal{N}(\tilde{\psi}_0)))^{\alpha}
+a_0d(\tilde{\psi}_0,e)^{\gamma}]+d(\psi_0,e)^{\beta}\right)\leq\\
\left(\lambda_0[(d_Y(\hat{\mathcal{N}},\mathcal{N}(\overline{\psi})))^{\alpha}
+a_0d(\overline{\psi},e)^{\gamma}]+d(\overline{\psi},e)^{\beta}\right).
\end{multline*}
Analogously, for any $n\geq 1$, choosing $\psi=\tilde{\psi}_{n-1}^{-1}\cdot\overline{\psi}$, we have
\begin{multline}\label{crucialenew1}
\left(\lambda_n[
(d_Y(\hat{\mathcal{N}},\mathcal{N}(\tilde{\psi}_n)))^{\alpha}
+a_nd(\tilde{\psi}_n,e)^{\gamma}]+
d(\psi_n,e)^{\beta}\right)\leq\\
\left(\lambda_n[(d_Y(\hat{\mathcal{N}},\mathcal{N}(\overline{\psi})))^{\alpha}
+a_nd(\overline{\psi},e)^{\gamma}]+d(\tilde{\psi}_{n-1}^{-1}\cdot\overline{\psi},e)^{\beta}\right).
\end{multline}

For some $\overline{n}\geq 1$ and for any $n\geq \overline{n}$, we have that
$$a_nd(\overline{\psi},e)^{\gamma}\leq (C_2-C_1)\varepsilon_0^{\alpha}
$$
since $a_n$ goes to zero as $n\to \infty$.
Hence, for any $n\geq \overline{n}$, 
\begin{equation}\label{firstcrucialestimateter1}
\lambda_n(1-C_2)
[(d_Y(\hat{\mathcal{N}},\mathcal{N}(\tilde{\psi}_n)))^{\alpha}
+a_nd(\tilde{\psi}_n,e)^{\gamma}]+
d(\psi_n,e)^{\beta}\leq
d(\tilde{\psi}_{n-1}^{-1}\cdot\overline{\psi},e)^{\beta}.
\end{equation}
Therefore, for any $n\geq\overline{n}$,
we have that
$$d(\tilde{\psi}_n^{-1}\cdot\overline{\psi},e)=
d(\tilde{\psi}_{n-1}^{-1}\cdot\overline{\psi},\psi_n)\leq
d(\tilde{\psi}_{n-1}^{-1}\cdot\overline{\psi},e)+d(\psi_n,e)\leq
2d(\tilde{\psi}_{n-1}^{-1}\cdot\overline{\psi},e).$$

Then the proof may be concluded by adapting the arguments of the proof of Theorem~\ref{mainthm0}.
\end{proof}

Let us now consider the sequence $\{\tilde{\psi}_n\}_{n\in\mathbb{N}}$.
If the sequence $\{\tilde{\psi}_n\}_{n\in\mathbb{N}}$, or one of its subsequences, converges weakly in $G$ to some $\tilde{\psi}_{\infty}$, then
$\tilde{\psi}_{\infty}$ is a solution to the following minimization problem
\begin{equation}\label{minpbm1}
\min\{d_Y(\hat{\mathcal{N}},\mathcal{N}(\psi)):\ \psi\in E\}.
\end{equation}
Therefore, we have the following immediate remark.

\begin{oss}\label{limitproprem}
If $\{\tilde{\psi}_n\}_{n\in\mathbb{N}}$ has a bounded subsequence, then
\eqref{minpbm1} admits a solution. In particular,
if $\{\tilde{\psi}_n\}_{n\in\mathbb{N}}$ has a bounded subsequence, then there exists a further subsequence $\{\tilde{\psi}_{n_k}\}_{k\in\mathbb{N}}$
such that,
as $k\to\infty$, $\tilde{\psi}_{n_k}$ converges weakly to $\tilde{\psi}_{\infty}$ in $G$, where 
$\tilde{\psi}_{\infty}$ solves
\eqref{minpbm1}.
\end{oss}

Let us now investigate which conditions allow boundedness of $\{\tilde{\psi}_n\}_{n\in\mathbb{N}}$ or of one of its subsequences.
Clearly a necessary condition is that a solution to 
\eqref{minpbm1} does exist. We shall show that this is also a sufficient condition,
provided
\eqref{crucialcondition0} holds.

Let us assume that \eqref{minpbm1} admits a solution. This means that 
there exists $\hat{\psi}\in G$ such that
$$d_Y(\hat{\mathcal{N}},\mathcal{N}(\hat{\psi}))
=\delta_0=\min\{d_Y(\hat{\mathcal{N}},\mathcal{N}(\psi)):\ \psi\in G\}.
$$
It is important to note that this is equivalent to say that there exists $\hat{\psi}\in G$ such that
$\hat{\psi}$ solves the following minimization problem
\begin{equation}\label{minimalnew1}
\min\{d(\psi,e):\ \psi\in G\text{ and }
d_Y(\hat{\mathcal{N}},\mathcal{N}(\psi))
=\delta_0\}<+\infty.
\end{equation}

We call $\hat{G}$ the set of solutions to \eqref{minimalnew1}.
We note that $\hat{G}$
is closed and bounded with respect to the topology induced by the distance in $G$
and it is
sequentially compact with respect to the weak convergence in $G$.

Then we can prove the following result.

\begin{teo}\label{mainthmter2}
Let us assume that
\eqref{crucialcondition0} holds 
and that there exists a solution $\hat{\psi}$ of \eqref{minpbm1} or, equivalently, of\eqref{minimalnew1}.

Then, $\{\tilde{\psi}_n\}_{n\in\mathbb{N}}$ is bounded and, up to a subsequence, $\tilde{\psi}_n$ converges weakly to $\tilde{\psi}_{\infty}$ where $\tilde{\psi}_{\infty}$ is a \textnormal{(}possibly different from $\hat{\psi}$\textnormal{)} solution to \eqref{minimalnew1}, that is,
 $$d_Y(\hat{\mathcal{N}},\mathcal{N}(\tilde{\psi}_{\infty}))=\delta_0\quad\text{and}\quad d(\tilde{\psi}_{\infty},e)=d(\hat{\psi},e).$$ Moreover, we have that 
$$\lim_nd(\tilde{\psi}_n,e)= d(\hat{\psi},e).$$
\end{teo}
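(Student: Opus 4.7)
The plan is to adapt the proof of Theorem~\ref{minimizercor0} to the topological group setting, replacing the additive increment $\hat{\sigma}-\tilde{\sigma}_{n-1}$ by the left translate $\tilde{\psi}_{n-1}^{-1}\cdot\hat{\psi}$. The key group-theoretic identity is
$$d(\tilde{\psi}_{n-1}^{-1}\cdot\hat{\psi},e)=d(\hat{\psi},\tilde{\psi}_{n-1}),$$
a direct consequence of the left invariance of $d$; this is the group analogue of ``size of the increment equals distance from the iterate to the target'' and is what lets each estimate from the Banach proof transport to the group setting.

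First, I would test \eqref{00} against $\psi=\hat{\psi}$ and discard the nonnegative term $\lambda_0[d_Y(\hat{\mathcal{N}},\mathcal{N}(\psi_0))^\alpha-\delta_0^\alpha]$, obtaining
$$\lambda_0 a_0 d(\tilde{\psi}_0,e)^\gamma+d(\tilde{\psi}_0,e)^\beta\le\lambda_0 a_0 d(\hat{\psi},e)^\gamma+d(\hat{\psi},e)^\beta,$$
from which the strict monotonicity of $x\mapsto\lambda_0 a_0 x^\gamma+x^\beta$ on $[0,+\infty)$ yields $d(\tilde{\psi}_0,e)\le d(\hat{\psi},e)$. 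For $n\ge 1$, testing \eqref{11} with the admissible choice $\psi=\tilde{\psi}_{n-1}^{-1}\cdot\hat{\psi}$, so that $\tilde{\psi}_{n-1}\cdot\psi=\hat{\psi}$, and again dropping $\lambda_n[d_Y(\hat{\mathcal{N}},\mathcal{N}(\tilde{\psi}_n))^\alpha-\delta_0^\alpha]\ge 0$ gives
$$\lambda_n a_n d(\tilde{\psi}_n,e)^\gamma+d(\psi_n,e)^\beta\le\lambda_n a_n d(\hat{\psi},e)^\gamma+d(\hat{\psi},\tilde{\psi}_{n-1})^\beta.$$
I would then split into two cases. If $d(\tilde{\psi}_n,e)\le d(\hat{\psi},e)$, the triangle inequality directly yields $d(\hat{\psi},\tilde{\psi}_n)\le 2d(\hat{\psi},e)$. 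Otherwise $\lambda_n a_n d(\tilde{\psi}_n,e)^\gamma\ge\lambda_n a_n d(\hat{\psi},e)^\gamma$, so the displayed inequality reduces to $d(\psi_n,e)\le d(\hat{\psi},\tilde{\psi}_{n-1})$, and left invariance gives $d(\hat{\psi},\tilde{\psi}_n)\le d(\hat{\psi},\tilde{\psi}_{n-1})+d(\psi_n,e)\le 2d(\hat{\psi},\tilde{\psi}_{n-1})$. Iterating from the last index $m<n$ at which the first case held (which always exists because the $n=0$ bound gives $d(\hat{\psi},\tilde{\psi}_0)\le 2d(\hat{\psi},e)$) yields the rough geometric estimate $d(\hat{\psi},\tilde{\psi}_n)\le 2^{n+1}d(\hat{\psi},e)$.

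The contradiction step then follows exactly as in the proof of Theorem~\ref{minimizercor0}: if there were $h>0$ and a subsequence $n_k$ with $d(\tilde{\psi}_{n_k},e)^\gamma\ge d(\hat{\psi},e)^\gamma+h/2$, the displayed inequality at $n_k$ would force
$$\lambda_{n_k}a_{n_k}(h/2)\le d(\hat{\psi},\tilde{\psi}_{n_k-1})^\beta\le 2^{\beta n_k}d(\hat{\psi},e)^\beta,$$
contradicting $\limsup_n 2^{\beta n}/(\lambda_n a_n)=0$ from \eqref{crucialcondition0}. Hence $\limsup_n d(\tilde{\psi}_n,e)\le d(\hat{\psi},e)$, which this time is the genuine uniform boundedness of $\{\tilde{\psi}_n\}$ with respect to $d$. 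Assumption~\ref{d}) then extracts a weakly convergent subsequence $\tilde{\psi}_{n_k}\rightharpoonup\tilde{\psi}_\infty$; Theorem~\ref{mainthmter1} together with the lower semicontinuity of $\psi\mapsto d_Y(\hat{\mathcal{N}},\mathcal{N}(\psi))$ gives $d_Y(\hat{\mathcal{N}},\mathcal{N}(\tilde{\psi}_\infty))=\delta_0$, while assumption~\ref{e}) combined with the $\limsup$ bound and the minimality of $\hat{\psi}$ in \eqref{minimalnew1} forces $d(\tilde{\psi}_\infty,e)=d(\hat{\psi},e)$, so $\tilde{\psi}_\infty\in\hat{G}$. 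Running the same extraction inside an arbitrary subsequence of $\{\tilde{\psi}_n\}$ rules out $\liminf_n d(\tilde{\psi}_n,e)<d(\hat{\psi},e)$, upgrading the $\limsup$ inequality to $\lim_n d(\tilde{\psi}_n,e)=d(\hat{\psi},e)$ for the full sequence. The main obstacle is really the dichotomy: neither case alone yields the geometric bound, and the hypothesis $2^{\beta n}/(\lambda_n a_n)\to 0$ in \eqref{crucialcondition0} has precisely the shape needed to close the contradiction between the $a_n$-penalty on $d(\tilde{\psi}_n,e)^\gamma$ and the $\beta$-power growth of $d(\hat{\psi},\tilde{\psi}_{n-1})^\beta$.
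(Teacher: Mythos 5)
Your proposal is correct and follows essentially the same route as the paper: the paper also tests the $n$-th minimization against $\psi=\tilde{\psi}_{n-1}^{-1}\cdot\hat{\psi}$ (its inequality \eqref{crucialenew1} with $\overline{\psi}=\hat{\psi}$) and then invokes verbatim the dichotomy/geometric-growth/contradiction argument of Theorem~\ref{minimizercor0} to get $\limsup_n d(\tilde{\psi}_n,e)\le d(\hat{\psi},e)$, followed by weak compactness, lower semicontinuity and the minimality of $\hat{\psi}$ exactly as you do. Your write-up merely makes explicit the details the paper delegates to the earlier Banach-space proof, including the left-invariance identity $d(\tilde{\psi}_{n-1}^{-1}\cdot\hat{\psi},e)=d(\hat{\psi},\tilde{\psi}_{n-1})$ and the final subsequence argument upgrading the $\limsup$ bound to a full limit.
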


\begin{proof}
Assuming that there exists $\hat{\psi}$, solution to \eqref{minimalnew1},
first of all we note that $d(\psi_0,e)=d(\tilde{\psi}_0,e)\leq d(\hat{\psi},e)$.
Then we use \eqref{crucialenew1} with $\overline{\psi}$ replaced by $\hat{\psi}$ for any $n\geq 1$ and, by using the argument developed in the proof of Theorem~\ref{minimizercor0}, we obtain that
\begin{equation}\label{limsupresultnew1}
\limsup_nd(\tilde{\psi}_n,e)\leq d(\hat{\psi},e).
\end{equation}

By \eqref{limsupresultnew1} we have that, up to a subsequence, $\tilde{\psi}_n$ weakly converges to $\tilde{\psi}_{\infty}$ in $G$.
Then by 
Theorem~\ref{mainthmter1} and the lower semicontinuity properties of $\mathcal{N}$ with respect to weak convergence in $G$,
we infer that
$\tilde{\psi}_{\infty}$ satisfies
$d_Y(\hat{\mathcal{N}},\mathcal{N}(\tilde{\psi}_{\infty}))=\delta_0$. We also have that, by definition of $\hat{\psi}$,
$$d(\hat{\psi},e)\leq d(\tilde{\psi}_{\infty},e)\leq \limsup_nd(\tilde{\psi}_n,e)\leq
d(\hat{\psi},e).$$

By a similar reasoning it is fairly easy to conclude that
$$
\lim_nd(\tilde{\psi}_n,e)=d(\tilde{\psi}_{\infty},e)=d(\hat{\psi},e).
$$
The proof is concluded.
\end{proof}

We conclude this section by developing these results in the simple setting of Example~\ref{reflexiveexam}. The much more interesting application to the image registration problem will be studied in Section~\ref{registrationsec}.

\subsection{The special case of a reflexive Banach space}
We consider the setting of Example~\ref{reflexiveexam}, that is $G=X$, where $X$ is a reflexive Banach space, with norm $\|\cdot\|=\|\cdot\|_X$. Then $G$ is a group with respect to the sum, that is $x_1\cdot x_2=x_1+x_2$ and, obviously, $e=0$, that is, $d(x,e)=\|x\|$.

As weak convergence in $G$ we consider the weak convergence in the reflexive Banach space $X$ and we assume that $E\subset X$ is sequentially closed with respect to the weak topology of $X$.

As before, we consider $Y$ to be a metric space with distance $d_Y$, and we consider $\mathcal{N}:E\to Y$ such that
$E\ni \sigma\mapsto d(\hat{\mathcal{N}},\mathcal{N}(\sigma))$ is sequentially lower semicontinuous with respect to the weak convergence in $X$.

Let $\sigma_0$ be a solution to
$$\min\left\{\left(\lambda_0[d(\hat{\mathcal{N}},\mathcal{N}(\sigma))^{\alpha}+a_0\|\sigma\|^{\gamma}]+\|\sigma\|^{\beta}\right):\ \sigma\in E\right\},$$
and, by induction, let $\sigma_n$, $n\geq 1$, be a solution to
$$
\min\left\{\left(\lambda_n[d(\hat{\mathcal{N}},\mathcal{N}(\tilde{\sigma}_{n-1}+\sigma))^{\alpha}+a_n\|\tilde{\sigma}_{n-1}+\sigma\|^{\gamma}]+\|\sigma\|^{\beta}\right):\ \tilde{\sigma}_{n-1}+\sigma\in E\right\}
$$
where $\tilde{\sigma}_0=\sigma_0$ and for any $n\geq 1$ we denote
$$\tilde{\sigma}_{n}=\sum_{j=0}^n\sigma_j.$$

Let us note that \eqref{minpbm1} has a solution if and only if the following minimization problem also admits a solution 
\begin{equation}\label{minimal0}
\min\{ \|\sigma\|:\ \sigma\in E\text{ and }d(\hat{\mathcal{N}},\mathcal{N}(\sigma))=\delta_0\}<+\infty.
\end{equation}
We call $\hat{E}$ the set of solutions to \eqref{minimal0} and we note that it 
is closed and bounded with respect to the strong topology of $X$ and it
is sequentially compact with respect to the weak convergence in $X$.

Then we have the following result.

\begin{teo}\label{mainthmvar}
We assume that \eqref{secondcondcoeff} holds.
Then we have that
$$\lim_nd(\hat{\mathcal{N}},\mathcal{N}(\tilde{\sigma}_n))=\delta_0.$$

If we further assume that \eqref{crucialcondition0} holds and that
there exists a solution $\hat{\sigma}$ of \eqref{minpbm1} or, equivalently, of \eqref{minimal0}, 
then, up to a subsequence, $\tilde{\sigma}_n$ converges weakly to $\tilde{\sigma}_{\infty}$ where $\tilde{\sigma}_{\infty}$ is a \textnormal{(}possibly different from $\hat{\sigma}$\textnormal{)} solution to \eqref{minimal0}, that is $d(\hat{\mathcal{N}},\mathcal{N}(\tilde{\sigma}_{\infty}))=\delta_0$ and $\|\tilde{\sigma}_{\infty}\|=\|\hat{\sigma}\|$. Moreover, we have that 
$$\lim_n\|\tilde{\sigma}_n\|= \|\hat{\sigma}\|.$$

Finally, if $X$ is such that weak convergence and convergence of the norm imply strong convergence, for instance if $X$ is a Hilbert space, we have a stronger result. In fact,
then $\tilde{\sigma}_n$ converges, up to a subsequence, to $\tilde{\sigma}_{\infty}$ not only weakly but also strongly
and
\begin{equation}\label{distance0}
\lim_n \mathrm{dist}(\tilde{\sigma}_n,\hat{E})=0.
\end{equation}
\end{teo}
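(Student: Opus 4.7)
The plan is to observe that the setting of a reflexive Banach space with its weak topology fits into the abstract topological group framework developed above, and then reduce the three conclusions successively to Theorems~\ref{mainthmter1} and \ref{mainthmter2}, adding one genuine new argument for the last claim.

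First I would verify the abstract hypotheses \ref{a})--\ref{e}) for $G = X$ with the group operation being addition, $d(x,e) = \|x\|$, and weak convergence being the weak topology: uniqueness of weak limits, left-invariance of both the norm-metric and the weak topology under translations, the fact that strong convergence implies weak convergence, sequential weak compactness of norm-bounded sets (this uses reflexivity of $X$), and the sequential weak lower semicontinuity of $\|\cdot\|$. The assumption that $E\subset X$ is sequentially weakly closed and the assumed weak lower semicontinuity of $\sigma\mapsto d(\hat{\mathcal{N}},\mathcal{N}(\sigma))$ are exactly what the abstract setup requires. Once this translation is done, the existence of the sequence $\{\sigma_n\}$ given by the inductive minimization follows from Proposition~\ref{existenceprop}, and the first conclusion $\lim_n d(\hat{\mathcal{N}},\mathcal{N}(\tilde{\sigma}_n)) = \delta_0$ is a direct application of Theorem~\ref{mainthmter1} under hypothesis \eqref{secondcondcoeff}.

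For the second conclusion, I would invoke Theorem~\ref{mainthmter2}: under \eqref{crucialcondition0}, the sequence $\{\tilde{\sigma}_n\}$ is bounded in $X$, up to a subsequence it converges weakly in $X$ to some $\tilde{\sigma}_{\infty}$ satisfying $d(\hat{\mathcal{N}},\mathcal{N}(\tilde{\sigma}_{\infty})) = \delta_0$ and $\|\tilde{\sigma}_{\infty}\| = \|\hat{\sigma}\|$, so $\tilde{\sigma}_\infty\in\hat{E}$; and moreover $\lim_n \|\tilde{\sigma}_n\| = \|\hat{\sigma}\|$. The equivalence between solvability of \eqref{minpbm1} (with $E$ in place of $G$) and solvability of \eqref{minimal0} follows by selecting among solutions of \eqref{minpbm1} one of minimum norm, which exists because $\{\sigma\in E: d(\hat{\mathcal{N}},\mathcal{N}(\sigma))=\delta_0,\ \|\sigma\|\leq\|\hat{\sigma}\|\}$ is nonempty, bounded, and sequentially weakly closed in the reflexive space $X$.

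The only genuinely new step is the strong convergence and the distance statement \eqref{distance0} under the Kadec--Klee-type hypothesis on $X$. Since $\tilde{\sigma}_n \rightharpoonup \tilde{\sigma}_\infty$ weakly along the subsequence while $\|\tilde{\sigma}_n\|\to \|\hat{\sigma}\| = \|\tilde{\sigma}_\infty\|$, the assumed property of $X$ (satisfied in every Hilbert space) upgrades the convergence to strong convergence. For the distance claim I would argue by contradiction: if \eqref{distance0} failed, there would exist $\varepsilon > 0$ and a subsequence $\{\tilde{\sigma}_{n_k}\}$ with $\mathrm{dist}(\tilde{\sigma}_{n_k},\hat{E})\geq \varepsilon$ for all $k$. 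But this subsequence is norm-bounded (by $\lim_n \|\tilde{\sigma}_n\| = \|\hat{\sigma}\|$), so by reflexivity it admits a further weakly convergent subsequence; running the previous argument on this further subsequence yields a strong limit lying in $\hat{E}$, contradicting $\mathrm{dist}(\tilde{\sigma}_{n_k},\hat{E})\geq \varepsilon$. The main conceptual obstacle is simply making sure that the previous theorem indeed applies to every subsequence of $\{\tilde{\sigma}_n\}$ with the same limiting norm $\|\hat{\sigma}\|$, which is immediate from $\lim_n \|\tilde{\sigma}_n\| = \|\hat{\sigma}\|$ since this is a property of the full sequence.
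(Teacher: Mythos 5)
Your proposal is correct and follows exactly the route the paper intends: the paper states Theorem~\ref{mainthmvar} without an explicit proof, as an immediate specialization of Theorems~\ref{mainthmter1} and \ref{mainthmter2} to the setting of Example~\ref{reflexiveexam}, and the final strong-convergence and distance claims are obtained by the same Radon--Riesz plus subsequence-compactness argument you give (mirroring the way \eqref{distance} and \eqref{distconvv0} are handled elsewhere in the paper). No gaps.
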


\section{The multiscale approach for image registration}\label{registrationsec}

Throughout this section we use the notation introduced in Subsection~\ref{LDDMMintrosub}.

We begin by stating and proving the following existence result, which follows from arguments of \cite{B-H}.

\begin{teo}\label{minsolutionthm0}
The minimization problem \eqref{000}, that is,
$$
\min\left\{\left(\lambda\|I_0\circ g^{-1} -I_1\|_{L^2(\Omega)}^{\alpha}+d_{\mathcal{H}}(g,e)^{\beta}\right):\ g\in G_{\mathcal{H}}\right\},
$$
admits a solution.
\end{teo}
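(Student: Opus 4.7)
The plan is to apply the direct method of the calculus of variations. Take a minimizing sequence $\{g_n\}_{n\in\mathbb{N}}\subset G_{\mathcal{H}}$ for the functional in \eqref{000}. Since $g=e$ is admissible and gives finite energy $\lambda\|I_0-I_1\|_{L^2(\Omega)}^{\alpha}$, we may assume $d_{\mathcal{H}}(g_n,e)^{\beta}$ is uniformly bounded. For each $n$ I would first invoke the fact (a standard feature of the LDDMM construction, proved via direct minimization in $L^2([0,1],\mathcal{H})$) that the infimum defining $d_{\mathcal{H}}(g_n,e)$ is attained: there exists $u_n\in L^2([0,1],\mathcal{H})$ with $g_n=\varphi^{u_n}(1)$ and $\|u_n\|_{L^2([0,1],\mathcal{H})}=d_{\mathcal{H}}(g_n,e)$. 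Consequently the sequence $\{u_n\}$ is bounded in the Hilbert space $L^2([0,1],\mathcal{H})$.

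Next, by weak sequential compactness in Hilbert space, extract a subsequence (not relabelled) with $u_n\rightharpoonup u_{\infty}$ weakly in $L^2([0,1],\mathcal{H})$. Set $g_{\infty}=\varphi^{u_{\infty}}(1)\in G_{\mathcal{H}}$. The key analytic step is to show that the flow map $u\mapsto \varphi^u(1)$ is weakly continuous in the sense of Definition~\ref{weakconv}: the continuous embedding $\mathcal{H}\hookrightarrow C^1_0(\Omega,\mathbb{R}^N)$ combined with Gronwall-type estimates for the ODE $\partial_t\varphi=u\circ\varphi$ yields a uniform $C^1$ bound on $\varphi^{u_n}(t)$ and $\varphi^{u_n}(t)^{-1}$, and a passage to the limit (uniformly on compact subsets of $\overline{\Omega}$) for $\varphi^{u_n}(1)\to \varphi^{u_{\infty}}(1)$; the corresponding facts for the inverse follow since $\varphi^{u_n}(1)^{-1}=\varphi^{\bar{u}_n}(1)$ for the reversed field $\bar{u}_n(t,\cdot)=-u_n(1-t,\cdot)$, which also converges weakly. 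These arguments are precisely those used in the proof of Theorem~21 of \cite{B-H}.

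To pass to the limit in the functional, I would argue as follows. For the regularization term, weak lower semicontinuity of the Hilbert norm gives
\[
d_{\mathcal{H}}(g_{\infty},e)\leq \|u_{\infty}\|_{L^2([0,1],\mathcal{H})}\leq \liminf_n\|u_n\|_{L^2([0,1],\mathcal{H})}=\liminf_n d_{\mathcal{H}}(g_n,e).
\]
For the fidelity term, once $g_n^{-1}\to g_{\infty}^{-1}$ uniformly on compact subsets of $\overline{\Omega}$ with uniform $C^1$ bounds, a standard change-of-variables and density argument (first assume $I_0\in C_c(\Omega)$ and then approximate in $L^2$) gives $I_0\circ g_n^{-1}\to I_0\circ g_{\infty}^{-1}$ in $L^2(\Omega)$, hence continuity of $g\mapsto \|I_0\circ g^{-1}-I_1\|_{L^2(\Omega)}$ along the sequence. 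Combining the two inequalities, $g_{\infty}$ realizes the infimum.

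The main obstacle, in my view, is the second paragraph: establishing that weak convergence of $u_n$ in $L^2([0,1],\mathcal{H})$ propagates to the flows in the strong sense needed both for the $L^2$-continuity of $I_0\circ g_n^{-1}$ and for the lower semicontinuity of the geodesic distance. This requires the uniform $C^1$ bounds on $\varphi^{u_n}$ and $(\varphi^{u_n})^{-1}$ coming from the admissibility of $\mathcal{H}$, together with the fact that $\|u_n\|$ controls the minimal length representative rather than an arbitrary generator. Once these ingredients (available from \cite{You,B-H}) are in place, the remainder is essentially routine lower semicontinuity.
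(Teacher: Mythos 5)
Your proposal is correct and follows essentially the same route as the paper: the direct method, selecting minimal-length generators $u_n$ with $\|u_n\|=d_{\mathcal{H}}(g_n,e)$, weak compactness in $L^2([0,1],\mathcal{H})$, weak continuity of the flow map (the paper's Proposition~\ref{contprop} and Lemma~\ref{distanceimpliespointwise}), continuity of the fidelity term under weak convergence (Proposition~\ref{continuityweak}), and lower semicontinuity of the distance (Lemma~\ref{assumptclem}), all drawn from the proof of Theorem~21 in \cite{B-H}. The only cosmetic difference is that the paper carries out the argument for the equivalent problem \eqref{002} in the variable $\psi=g^{-1}$ and then transfers back via $d_{\mathcal{H}}(e,g)=d_{\mathcal{H}}(e,g^{-1})$, whereas you work with $g$ directly.
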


We outline the strategy for proving Theorem~\ref{minsolutionthm0}, which follows the proof of Theorem~21 in 
\cite{B-H} and it is based on the properties of weak convergence as defined in Definition~\ref{weakconv}.
The first one is a continuity property, see for instance \cite{B-H} for a proof.

\begin{prop}\label{continuityweak}
Let $\mathcal{H}$ be an admissible Hilbert space. Let us fix $I_0$ and $I_1$ in $L^2(\Omega)$.

Let $\{g_n\}_{n\in\mathbb{N}}$ be a sequence in $G_{\mathcal{H}}$ and
assume that, as $n\to\infty$, $g_n$ weakly converges to $g_{\infty}$. Then $U_{I_0,I_1}(g_n)\to U_{I_0,I_1}(g)$.

Analogously, let $\{\psi_n\}_{n\in\mathbb{N}}$ be a sequence in $G_{\mathcal{H}}$.
Assume that, as $n\to\infty$, $\psi_n$ weakly converges to $\psi_{\infty}$. Then $\tilde{U}_{I_0,I_1}(\psi_n)\to \tilde{U}_{I_0,I_1}(\psi)$.
\end{prop}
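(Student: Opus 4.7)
The plan is to reduce both statements to showing that $I_0 \circ g_n^{-1} \to I_0 \circ g_\infty^{-1}$ strongly in $L^2(\Omega)$ (respectively $I_0 \circ \psi_n \to I_0 \circ \psi_\infty$); continuity of the $L^2$-norm then gives $U_{I_0,I_1}(g_n) \to U_{I_0,I_1}(g_\infty)$ (respectively $\tilde{U}_{I_0,I_1}(\psi_n) \to \tilde{U}_{I_0,I_1}(\psi_\infty)$). I focus on the first statement; the argument for $\tilde{U}$ is symmetric, replacing $g_n^{-1}$ by $\psi_n$ and the uniform bound $\|Dg_n^{-1}\|_{L^\infty} \leq C$ by $\|D\psi_n\|_{L^\infty} \leq C$, both of which are available from Definition~\ref{weakconv}.

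The first key step is a uniform change-of-variables estimate. Setting $y = g_n^{-1}(x)$ and using that $g_n$ is a $C^1$ diffeomorphism of $\Omega$ onto itself with $\|Dg_n\|_{L^\infty(\Omega)} \leq C$, one has
\begin{equation*}
\|f \circ g_n^{-1}\|_{L^2(\Omega)}^{2} = \int_{\Omega} |f(y)|^{2}\, |\det Dg_n(y)|\, dy \leq C^{N} \|f\|_{L^2(\Omega)}^{2}
\end{equation*}
for every $f \in L^2(\Omega)$, uniformly in $n \in \mathbb{N} \cup \{\infty\}$. This estimate is what makes a density argument possible for a merely $L^2$ datum $I_0$.

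The second step is density: approximate $I_0$ in $L^2(\Omega)$ by a sequence $\{I_0^k\}_{k \in \mathbb{N}} \subset C_c(\Omega)$. Combining with the previous estimate,
\begin{equation*}
\|I_0 \circ g_n^{-1} - I_0^k \circ g_n^{-1}\|_{L^2(\Omega)} \leq C^{N/2} \|I_0 - I_0^k\|_{L^2(\Omega)},
\end{equation*}
uniformly in $n$ (and the same bound holds with $g_n$ replaced by $g_\infty$). The third step handles the smooth piece: for fixed $k$, let $K := \operatorname{supp}(I_0^k) \subset \Omega$. Since each $g_n$ is $C$-Lipschitz (from the uniform bound on $Dg_n$) and $g_n \to g_\infty$ pointwise on $K$, the images $g_n(K)$ are contained in a common compact set $K' \subset \Omega$ independent of $n$. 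On $K'$, the weak convergence gives $g_n^{-1} \to g_\infty^{-1}$ uniformly, and $I_0^k$ is uniformly continuous, so $I_0^k \circ g_n^{-1} \to I_0^k \circ g_\infty^{-1}$ uniformly on $K'$ (both sides vanish outside $K'$), hence in $L^2(\Omega)$. A standard $\varepsilon/3$-argument combining these three ingredients yields $I_0 \circ g_n^{-1} \to I_0 \circ g_\infty^{-1}$ in $L^2(\Omega)$.

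The main obstacle is genuinely the $L^2$-nature of $I_0$: uniform convergence of $g_n^{-1}$ on compacts is not enough to pass to the limit inside $I_0 \circ g_n^{-1}$ without some control of the composition operator. The uniform Jacobian bound coming from $\|Dg_n\|_{L^\infty} \leq C$ (which is part of the definition of weak convergence) is exactly what supplies this control and makes the density reduction to compactly supported continuous approximants work.
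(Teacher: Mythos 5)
Your proof is correct. Note that the paper itself does not prove this proposition but defers to the reference \cite{B-H}; the argument there is essentially the one you give, namely a density reduction to $C_c(\Omega)$ approximants combined with the uniform Jacobian bound $|\det Dg_n|\leq C^N$ coming from $\|Dg_n\|_{L^\infty}\leq C$ in Definition~\ref{weakconv}, plus uniform convergence of $g_n^{-1}$ on a common compact set containing the supports. The only blemish is the parenthetical remark in your reduction for $\tilde{U}$: the change of variables for $\|f\circ\psi_n\|_{L^2}$ produces the Jacobian of $\psi_n^{-1}$, so the relevant bound there is $\|D\psi_n^{-1}\|_{L^\infty}\leq C$ rather than $\|D\psi_n\|_{L^\infty}\leq C$; since both bounds are part of the definition of weak convergence, this does not affect the validity of the argument.
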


The required compactness is provided by the following well-known result, see again \cite{B-H} for a sketch of the proof. 
\begin{prop}\label{contprop}
Let $\mathcal{H}$ be an admissible Hilbert space.

Let $\{u_n\}_{n\in\mathbb{N}}$ be a sequence in $L^2([0,1],\mathcal{H})$ and
$u\in L^2([0,1],\mathcal{H})$.
If, as $n\to\infty$, $u_n$ weakly converges to $u$ in $L^2([0,1],\mathcal{H})$, then 
$\varphi^{u_n}(1)$ weakly converges to $\varphi^u(1)$.
\end{prop}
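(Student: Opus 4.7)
The plan is to argue by the standard Gronwall-based compactness-plus-continuity scheme for the LDDMM flow equation. Since $u_n \rightharpoonup u$ in $L^2([0,1], \mathcal{H})$, the sequence is norm-bounded; by admissibility $\mathcal{H}$ embeds continuously into $C^1_0(\Omega, \mathbb{R}^N)$, so that $\|u_n(t)\|_{C^1} \leq C\|u_n(t)\|_{\mathcal{H}}$ for a.e.\ $t$, yielding in particular a uniform bound on $\int_0^1 \|u_n(t)\|_{C^1}\,dt$ by Cauchy--Schwarz. This is the input that drives all subsequent estimates.

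First I would establish uniform bounds and equicontinuity for the family $\{\varphi^{u_n}\}$ on $[0,1] \times K$ for every compact $K \subset \overline{\Omega}$. The integral form $\varphi^{u_n}(t,x) = x + \int_0^t u_n(s, \varphi^{u_n}(s,x))\,ds$ combined with the uniform bound on $\int_0^1 \|u_n(s)\|_\infty\,ds$ gives uniform boundedness of $\varphi^{u_n}$ on compacts and equicontinuity in $t$. Differentiating the flow equation yields the variational equation $\partial_t D\varphi^{u_n} = (Du_n \circ \varphi^{u_n})\,D\varphi^{u_n}$, and Gronwall gives $\|D\varphi^{u_n}(t)\|_\infty \leq \exp\!\left(\int_0^t \|Du_n(s)\|_\infty\,ds\right)$, bounded uniformly in $n$ and $t$; this supplies spatial equicontinuity in $x$ and the $C$-bound on $\|D\varphi^{u_n}(1)\|_\infty$ required by Definition~\ref{weakconv}. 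For the inverses I would use the reverse-time identity $(\varphi^{u_n}(1))^{-1} = \chi^{w_n}(1)$, where $w_n(s,x) = -u_n(1-s,x)$; since the time-reversal-plus-sign-change is an isometry on $L^2([0,1], \mathcal{H})$, one has $w_n \rightharpoonup w$ with $w(s,x) = -u(1-s,x)$, and the same estimates bound $\|D(\varphi^{u_n}(1))^{-1}\|_\infty$ and produce equicontinuity for the inverse flows.

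By Arzel\`a--Ascoli, a subsequence $\varphi^{u_{n_k}}(t,\cdot)$ converges to some $\Phi(t,\cdot)$ and $(\varphi^{u_{n_k}}(1))^{-1}$ converges to some $\Psi$, uniformly on compact subsets of $\overline{\Omega}$. To identify $\Phi = \varphi^u$ I would pass to the limit in the integral equation by splitting
\[
\int_0^t u_n(s, \varphi^{u_n}(s, x))\,ds = \int_0^t \bigl[u_n(s, \varphi^{u_n}(s,x)) - u_n(s, \Phi(s,x))\bigr]\,ds + \int_0^t u_n(s, \Phi(s, x))\,ds.
\]
The first term is controlled via $\|Du_n(s)\|_\infty \leq C\|u_n(s)\|_{\mathcal{H}}$ and Cauchy--Schwarz by $C\|u_n\|_{L^2([0,1],\mathcal{H})}\,\sup_s |\varphi^{u_n}(s,x) - \Phi(s,x)|$, which vanishes by the uniform convergence just obtained. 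The second term has the form $F_{\Phi,x,t}(u_n)$, where $F_{\Phi,x,t}(v) := \int_0^t v(s, \Phi(s,x))\,ds$ is a bounded linear functional on $L^2([0,1], \mathcal{H})$ (the evaluation $h \mapsto h(\Phi(s,x))$ belongs to $\mathcal{H}^*$ by admissibility), so weak convergence yields the limit $\int_0^t u(s, \Phi(s,x))\,ds$. Hence $\Phi$ solves the flow equation driven by $u$; uniqueness of ODE solutions forces $\Phi = \varphi^u$, and the analogous argument applied to $w_n$ yields $\Psi = (\varphi^u(1))^{-1}$. Uniqueness of weak limits in $G_{\mathcal{H}}$ then promotes the subsequential convergence to full-sequence convergence.

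The main obstacle is the limit passage in the nonlinear composition $u_n \circ \varphi^{u_n}$: two $n$-dependent factors appear simultaneously, one only weakly convergent. The split above works precisely because the uniform $C^1$ bound on $u_n$ (from admissibility combined with the $L^2$-in-time bound) provides Lipschitz control that absorbs the inner $n$-dependence, thereby reducing the limit step to weak convergence tested against a single fixed linear functional on $L^2([0,1], \mathcal{H})$; without the admissibility hypothesis that closes this estimate, the argument would not go through.
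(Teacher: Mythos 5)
Your argument is correct and is essentially the standard proof of this statement: the paper does not prove Proposition~\ref{contprop} itself but defers to \cite{B-H}, and the argument there follows the same scheme of Gronwall estimates, Arzel\`a--Ascoli compactness for the flows and their inverses (via the time-reversal identity), and the splitting of $\int_0^t u_n(s,\varphi^{u_n}(s,x))\,ds$ into a Lipschitz-controlled difference plus a fixed bounded linear functional applied to $u_n$. The only step worth phrasing more carefully is the final upgrade from subsequential to full-sequence convergence, which holds not merely by uniqueness of weak limits but because every subsequence admits a further subsequence whose limit is identified as $\varphi^u(1)$ independently of the subsequence chosen.
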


As a consequence we can show the following.
\begin{lem}\label{distanceimpliespointwise}
Let $\{g_n\}_{n\in\mathbb{N}}$ be a sequence in $G_{\mathcal{H}}$.

If $\{g_n\}_{n\in\mathbb{N}}$ is bounded in $G_{\mathcal{H}}$, then
there exists a constant $C>0$ such that 
$\|Dg_n\|_{L^{\infty}}\leq C$ and $\|Dg^{-1}_n\|_{L^{\infty}}\leq C$ for any $n\in\mathbb{N}$. Furthermore,
there exists a subsequence $\{g_{n_k}\}_{k\in\mathbb{N}}$ and $g\in G_{\mathcal{H}}$ such that
$g_{n_k}$ converges weakly to $g$ as $k\to\infty$.

If either 
$\lim_nd_{\mathcal{H}}(g_n,g)=0$ or $\lim_nd_{\mathcal{H}}(g^{-1}_n,g^{-1})=0$, for some
$g\in G_{\mathcal{H}}$,
then, as $n\to\infty$, $g_n$ converges  weakly to $g$.
\end{lem}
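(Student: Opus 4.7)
The plan is to translate all statements about $G_{\mathcal{H}}$ into statements about the generating time-dependent vector fields in $L^2([0,1],\mathcal{H})$, so that Proposition~\ref{contprop} and the admissibility embedding $\mathcal{H}\hookrightarrow C^1_0(\Omega,\mathbb{R}^N)$ can do the main work.

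For the first assertion, boundedness of $\{g_n\}_{n\in\mathbb{N}}$ in $G_{\mathcal{H}}$ means $d_{\mathcal{H}}(g_n,e)\leq M$ for some $M>0$. By the definition of $d_{\mathcal{H}}(\cdot,e)$ as an infimum, one can select $u_n\in L^2([0,1],\mathcal{H})$ with $g_n=\varphi^{u_n}(1)$ and $\|u_n\|_{L^2([0,1],\mathcal{H})}\leq M+1$. Let $C_{\mathcal{H}}$ denote the continuity constant of $\mathcal{H}\hookrightarrow C^1_0(\Omega,\mathbb{R}^N)$; differentiating the flow equation $\partial_t\varphi^{u_n}=u_n\circ\varphi^{u_n}$ in the spatial variable and applying Grönwall's inequality together with Cauchy--Schwarz yields
\[
\|D\varphi^{u_n}(t)\|_{L^\infty}\leq \exp\!\Bigl(C_{\mathcal{H}}\int_0^1\|u_n(s)\|_{\mathcal{H}}\,\rmd s\Bigr)\leq \exp\!\bigl(C_{\mathcal{H}}(M+1)\bigr),
\]
so $\|Dg_n\|_{L^\infty}$ is uniformly bounded. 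The same estimate applied to the time-reversed field $s\mapsto -u_n(1-s)$, whose flow at time $1$ equals $g_n^{-1}$ and whose $L^2([0,1],\mathcal{H})$ norm equals $\|u_n\|$, produces the same uniform bound for $\|Dg_n^{-1}\|_{L^\infty}$. Finally, since $L^2([0,1],\mathcal{H})$ is a Hilbert space and $\{u_n\}$ is bounded, a subsequence $\{u_{n_k}\}$ converges weakly to some $u_\infty\in L^2([0,1],\mathcal{H})$, and Proposition~\ref{contprop} implies that $g_{n_k}=\varphi^{u_{n_k}}(1)$ converges weakly in $G_{\mathcal{H}}$ to $g:=\varphi^{u_\infty}(1)\in G_{\mathcal{H}}$.

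For the second assertion, assume first $d_{\mathcal{H}}(g_n,g)\to 0$. Pick $v_n\in L^2([0,1],\mathcal{H})$ achieving the minimum in the definition of $d_{\mathcal{H}}(g_n,g)$, so that $g=g_n\circ\varphi^{v_n}(1)$ and $\|v_n\|_{L^2([0,1],\mathcal{H})}=d_{\mathcal{H}}(g_n,g)\to 0$. Then $g_n=g\circ\varphi^{v_n}(1)^{-1}$ and $g_n^{-1}=\varphi^{v_n}(1)\circ g^{-1}$. Since $v_n\to 0$ strongly, hence weakly, in $L^2([0,1],\mathcal{H})$, Proposition~\ref{contprop} gives $\varphi^{v_n}(1)\to e$ weakly in $G_{\mathcal{H}}$, which by Definition~\ref{weakconv} means that $\varphi^{v_n}(1)$ and $\varphi^{v_n}(1)^{-1}$ converge to the identity uniformly on compact subsets of $\overline{\Omega}$ with uniformly bounded first derivatives. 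Composing with the fixed $C^1$ diffeomorphisms $g$ and $g^{-1}$ preserves uniform convergence on compacta, and the chain rule preserves the uniform $C^1$ bounds; this verifies every clause of Definition~\ref{weakconv}, so $g_n\to g$ weakly. The case $d_{\mathcal{H}}(g_n^{-1},g^{-1})\to 0$ reduces to the previous one by the equivalence, noted right after Definition~\ref{weakconv}, between weak convergence of $\{g_n\}$ and weak convergence of $\{g_n^{-1}\}$.

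The main obstacle I anticipate is the quantitative Grönwall book-keeping needed to upgrade mere boundedness in $L^2([0,1],\mathcal{H})$ into uniform $C^1$ bounds on the diffeomorphisms and their inverses, including the correct identification of the inverse flow through the time-reversal trick; once these estimates are in hand, the remainder is routine packaging of the qualitative convergence already supplied by Proposition~\ref{contprop}.
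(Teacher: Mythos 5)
Your proof is correct and follows essentially the same route as the paper: lift everything to the generating vector fields, get the uniform $C^1$ bounds from boundedness in $L^2([0,1],\mathcal{H})$ (the paper cites Theorem~8.9 of \cite{You} where you rederive the Gr\"onwall estimate and the time-reversal identity for the inverse flow), extract a weakly convergent subsequence of fields, and invoke Proposition~\ref{contprop}; for the second claim both arguments apply Proposition~\ref{contprop} to a sequence of fields of vanishing norm and conclude by composing with the fixed diffeomorphism $g$. The only difference is that you spell out the details the paper leaves as ``it is not difficult to prove''.
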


\begin{proof} 
Let $u_n\in L^2([0,1],\mathcal{H})$, $n\in\mathbb{N}$, be such that
$g_n=\varphi^{u_n}(1)$ and $d_{\mathcal{H}}(g_n,e)=\|u_n\|$. If $\{g_n\}_{n\in\mathbb{N}}$ is bounded in $G_{\mathcal{H}}$, then $\{u_n\}_{n\in\mathbb{N}}$ is bounded in $L^2([0,1],\mathcal{H})$. Therefore, the uniform boundedness of $Dg_n$, $n\in\mathbb{N}$, follows from Theorem~8.9 in \cite{You}.
We also have a subsequence $\{u_{n_k}\}_{k\in\mathbb{N}}$
weakly converging in $L^2([0,1],\mathcal{H})$ and the first part of the 
claim follows by 
Proposition~\ref{contprop}.

Regarding the second part of the claim, assume that 
$\lim_nd_{\mathcal{H}}(g_n,g)=0$ and
let $w_n\in L^2([0,1],\mathcal{H})$, $n\in\mathbb{N}$, be such that
$g^{-1}\circ g_n=\varphi^{w_n}(1)$ and $\|w_n\|\to 0$ as $n\to\infty$.
Again by the previous proposition, we have that $g^{-1}\circ g_n$ and $(g_n)^{-1}\circ g$
converges to the identity $e$ uniformly on compact
subsets of $\overline{\Omega}$.
Therefore it is not difficult to prove that the claim follows.\end{proof}

Finally, a lower semicontinuity results is needed and it is included in the following.
\begin{lem}\label{assumptclem}
Let $\{g_n\}_{n\in\mathbb{N}}$ be a sequence in $G_{\mathcal{H}}$.
Assume that, as $n\to\infty$, $g_n$ weakly converges to $g_{\infty}$. Then
$$d_{\mathcal{H}}(g_{\infty},e)\leq\liminf_n d_{\mathcal{H}}(g_n,e).$$
\end{lem}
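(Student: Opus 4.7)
The plan is a standard ``recover an optimal velocity field, pass to a weakly convergent subsequence, use lower semicontinuity of the Hilbert norm'' argument, exploiting the two structural facts already at hand: the definition of $d_{\mathcal{H}}$ as an infimum over $L^2([0,1],\mathcal{H})$ velocity fields, and Proposition~\ref{contprop} relating weak convergence of velocities to weak convergence of the resulting diffeomorphisms.

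First I would dispose of the trivial case $\liminf_n d_{\mathcal{H}}(g_n,e)=+\infty$, in which the inequality is automatic. Otherwise, extract a subsequence, which I will not relabel, realizing the liminf, so that $\{d_{\mathcal{H}}(g_n,e)\}_{n\in\mathbb{N}}$ is bounded. For each $n$, by the very definition of $d_{\mathcal{H}}(\cdot,e)$, choose $u_n\in L^2([0,1],\mathcal{H})$ with $g_n=\varphi^{u_n}(1)$ and $\|u_n\|=d_{\mathcal{H}}(g_n,e)$; the existence of such a minimizing $u_n$ is contained in the definition of $G_{\mathcal{H}}$ and the associated distance (this is recalled in Subsection~\ref{LDDMMintrosub}). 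Then $\{u_n\}_{n\in\mathbb{N}}$ is bounded in the Hilbert space $L^2([0,1],\mathcal{H})$.

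Next, extract a further subsequence $\{u_{n_k}\}_{k\in\mathbb{N}}$ weakly convergent in $L^2([0,1],\mathcal{H})$ to some $u$. By Proposition~\ref{contprop}, $\varphi^{u_{n_k}}(1)=g_{n_k}$ weakly converges in $G_{\mathcal{H}}$ to $\varphi^u(1)$. Since by hypothesis $g_{n_k}\rightharpoonup g_{\infty}$ and the weak limit in $G_{\mathcal{H}}$ is unique (remark following Definition~\ref{weakconv}), we must have $g_{\infty}=\varphi^u(1)$. In particular $g_{\infty}\in G_{\mathcal{H}}$ and, directly from the definition of $d_{\mathcal{H}}$,
\begin{equation*}
d_{\mathcal{H}}(g_{\infty},e)\leq \|u\|_{L^2([0,1],\mathcal{H})}.
\end{equation*}

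Finally, apply weak lower semicontinuity of the norm in the Hilbert space $L^2([0,1],\mathcal{H})$ to conclude
\begin{equation*}
\|u\|\leq \liminf_k \|u_{n_k}\|=\liminf_k d_{\mathcal{H}}(g_{n_k},e)=\liminf_n d_{\mathcal{H}}(g_n,e),
\end{equation*}
where the last equality uses that the original subsequence was chosen to realize the liminf of the full sequence. Combining the two displayed inequalities yields the claim. The only non-routine ingredient is the passage from weak convergence of the velocities $u_{n_k}$ to weak convergence of $\varphi^{u_{n_k}}(1)$, but this is precisely Proposition~\ref{contprop}; everything else is optimality in the definition of $d_{\mathcal{H}}$, uniqueness of the weak limit, and lower semicontinuity of a Hilbert norm.
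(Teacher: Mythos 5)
Your proposal is correct and follows essentially the same route as the paper's proof: pick minimizing velocity fields $u_n$ realizing $d_{\mathcal{H}}(g_n,e)$, pass to a subsequence that attains the liminf and converges weakly in $L^2([0,1],\mathcal{H})$, identify the limit flow with $g_{\infty}$ via Proposition~\ref{contprop} and uniqueness of weak limits, and conclude by weak lower semicontinuity of the Hilbert norm. No gaps.
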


\begin{proof} 
Without loss of generality, we pick
$v_n$ in $L^2([0,1],\mathcal{H})$ such that $g_n=\varphi^{v_n}(1)$ and
$d_{\mathcal{H}}(g_n,e)=\|v_n\|$. Up to a subsequence, we may assume that
$$\liminf_n d_{\mathcal{H}}(g_n,e)=\lim_k d_{\mathcal{H}}(g_{n_k},e)=\lim_k \|v_{n_k}\|$$
and also that the subsequence
$\{v_{n_k}\}_{k\in\mathbb{N}}$  is weakly converging to $v_{\infty}$, as $k\to\infty$.
By Proposition~\ref{contprop}, we immediately infer that $g_{\infty}=\varphi^{v_{\infty}}(1)$
and we have
$$d_{\mathcal{H}}(g_{\infty},e)\leq \|v_{\infty}\|\leq \liminf_k\|v_{n_k}\|$$
thus the result is proved.\end{proof}

\begin{proof}[Proof of Theorem~\textnormal{\ref{minsolutionthm0}}.] It is convenient to prove the existence of a solution to \eqref{002}. It is immediate to verify, by the direct method in the Calculus of Variations, that
Proposition~\ref{continuityweak} and Lemmas~\ref{distanceimpliespointwise} and \ref{assumptclem} imply that the minimization problem \eqref{002} admits a solution. Therefore also \eqref{000} admits a solution and the theorem is proved.\end{proof}

In order to apply the procedure described in Section~\ref{topgroupsubs}, for $G=G_{\mathcal{H}}$ and $d=d_{\mathcal{H}}$, with  $\mathcal{H}$ admissible, we need to show that the weak convergence defined in Definition~\ref{weakconv} satisfies assumptions \ref{a})---\ref{e}) of the abstract weak convergence used in Section~\ref{topgroupsubs}.

It is easy to see that assumptions~\ref{a}) and \ref{c}) are satisfied. Then assumptions~\ref{b}) and \ref{d}) are satisfied as a consequence of Lemma~\ref{distanceimpliespointwise}. Finally, 
assumption~\ref{e}) follows from Lemma~\ref{assumptclem}.
Thus all the assumptions on weak convergence stated in the abstract setting are satisfied.

Before passing to the multiscale procedure, we wish to discuss an important and significant choice for the admissible $\mathcal{H}$.
We follow the results and the notation used in \cite{B-V}. We fix $s>N/2+1$ and we call
$$\mathcal{D}^s(\mathbb{R}^N)=\{g\in e+H^s(\mathbb{R}^N,\mathbb{R}^N):\
g\text{ is bijective and }g^{-1}\in e+H^s(\mathbb{R}^N,\mathbb{R}^N)\},$$
where $H^s$ is the usual Sobolev space. We have that $\mathcal{D}^s(\mathbb{R}^N)-e$
is an open subset of $H^s(\mathbb{R}^N,\mathbb{R}^N)$
and that the inversion operation is continuous, although not smooth, in $\mathcal{D}^s(\mathbb{R}^N)$.
We call $\mathcal{D}^s(\mathbb{R}^N)_0$ the connected component of the identity in $\mathcal{D}^s(\mathbb{R}^N)$. We further note that if we choose $\mathcal{H}=H^s(\mathbb{R}^N,\mathbb{R}^N)=H^s$, then $\mathcal{H}$ is admissible. 
Then we have the following result.

\begin{teo}\label{Hscase}
Let $\mathcal{H}=H^s(\mathbb{R}^N,\mathbb{R}^N)=H^s$, with $s>N/2+1$. Then
we have that
\begin{equation}\label{equivalence}
G_{H^s}=\mathcal{D}^s(\mathbb{R}^N)_0.
\end{equation}

Furthermore, let us consider a sequence $\{ g_n\}_{n\in\mathbb{N}}\subset G_{H^s}$ and $ g\in G_{H^s}$, and corresponding $u_n\in L^2([0,1],H^s)$ such that
$ g_n=\varphi^{u_n}(1)$, for any $n\in\mathbb{N}$, and $u\in L^2([0,1],H^s)$  such that $\varphi^u(1)= g$. The following properties are satisfied. 

If, as $n\to\infty$, $u_n$ converges to $u$ strongly in $L^2([0,1],H^s)$ then
$ g_n$ converges to $ g$ and $ g_n^{-1}$ converges to $ g^{-1}$, in both cases in $H^s$.

We have that $d_{H^s}( g_n, g)$ converges to $0$, as $n\to\infty$, if and only if
$ g_n^{-1}$ converges to $ g^{-1}$ in $H^s$.

Finally, if, as $n\to\infty$, $u_n$ converges to $u$ strongly in $L^2([0,1],H^s)$ then
$ g_n$ converges to $ g$ and $ g_n^{-1}$ converges to $ g^{-1}$ in the distance $d_{H^s}$.
\end{teo}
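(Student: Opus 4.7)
The four assertions of Theorem~\ref{Hscase} will be established in order, with the fourth being an immediate consequence of the second and third. The backbone throughout is the well-posedness of the flow equation $\partial_t\varphi = u(t)\circ\varphi$ in the Banach algebra $H^s$ (valid for $s>N/2+1$), together with the continuity of composition and inversion on $\mathcal{D}^s(\mathbb{R}^N)$, both catalogued in \cite{B-V}.

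For the identification \eqref{equivalence}, the inclusion $G_{H^s}\subseteq \mathcal{D}^s(\mathbb{R}^N)_0$ is immediate since for any $u\in L^2([0,1],H^s)$ the curve $t\mapsto \varphi^u(t)$ is continuous in $\mathcal{D}^s(\mathbb{R}^N)$ and joins $e$ to $\varphi^u(1)$. For the reverse inclusion, I would first prove local surjectivity of the endpoint map near $e$: if $h\in \mathcal{D}^s(\mathbb{R}^N)$ with $\|h-e\|_{H^s}$ sufficiently small, the straight-line path $\gamma_h(s)=e+s(h-e)$ remains in $\mathcal{D}^s(\mathbb{R}^N)$, its generating velocity $v(s)=(h-e)\circ\gamma_h(s)^{-1}$ lies in $L^2([0,1],H^s)$ with $\|v\|\le C\|h-e\|_{H^s}$, and $\varphi^v(1)=h$. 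Then, given $g\in\mathcal{D}^s(\mathbb{R}^N)_0$, pick a continuous path $\gamma\colon[0,1]\to\mathcal{D}^s(\mathbb{R}^N)$ from $e$ to $g$, subdivide $[0,1]$ so finely that each increment $\gamma(t_{j+1})\circ\gamma(t_j)^{-1}$ falls in the local surjectivity neighborhood of $e$, and concatenate the corresponding local flows, using the fact that $G_{H^s}$ is closed under composition, to realize $g=\varphi^u(1)$ for some $u\in L^2([0,1],H^s)$.

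For the convergence statements, standard ODE estimates in $H^s$ give continuous dependence of $\varphi^u(1)$ on $u\in L^2([0,1],H^s)$, yielding $g_n\to g$ in $H^s$ whenever $u_n\to u$ strongly; continuity of inversion on $\mathcal{D}^s(\mathbb{R}^N)$ then yields $g_n^{-1}\to g^{-1}$ in $H^s$. For the equivalence $d_{H^s}(g_n,g)\to 0 \Longleftrightarrow g_n^{-1}\to g^{-1}$ in $H^s$, use left-invariance to rewrite $d_{H^s}(g_n,g)=d_{H^s}(e,g_n^{-1}\circ g)$. For $(\Rightarrow)$, pick near-minimizing velocities $w_n$ with $\|w_n\|\to 0$ and $\varphi^{w_n}(1)=g_n^{-1}\circ g$; continuous dependence gives $g_n^{-1}\circ g\to e$ in $H^s$, and continuity of right composition by the fixed $g^{-1}$ on $H^s$ delivers $g_n^{-1}=(g_n^{-1}\circ g)\circ g^{-1}\to g^{-1}$ in $H^s$. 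For $(\Leftarrow)$, continuity of composition promotes $g_n^{-1}\to g^{-1}$ to $g_n^{-1}\circ g\to e$ in $H^s$; then apply the straight-line construction above to produce velocities $v_n$ with $\|v_n\|_{L^2([0,1],H^s)}\le C\|g_n^{-1}\circ g-e\|_{H^s}\to 0$ and $\varphi^{v_n}(1)=g_n^{-1}\circ g$, hence $d_{H^s}(g_n,g)\le \|v_n\|\to 0$.

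The main obstacle is the straight-line/local surjectivity construction: one must verify for $\|h-e\|_{H^s}$ small that $\gamma_h$ remains in $\mathcal{D}^s(\mathbb{R}^N)$, its pointwise inverse is well-defined and lies in $\mathcal{D}^s$, and the composition $(h-e)\circ\gamma_h(s)^{-1}$ is controlled in $H^s$ uniformly in $s\in[0,1]$. These estimates rest on the Banach-algebra structure of $H^s$ for $s>N/2+1$ together with the continuity (but not smoothness) of composition and inversion in $\mathcal{D}^s(\mathbb{R}^N)$ recorded in \cite{B-V}, so the argument is an assembly of known tools rather than a new analytic estimate. Finally, the fourth assertion follows at once: if $u_n\to u$ in $L^2([0,1],H^s)$, the second assertion gives $g_n^{-1}\to g^{-1}$ in $H^s$, and then the third assertion yields $d_{H^s}(g_n,g)\to 0$.
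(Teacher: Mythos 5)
Your proposal is correct and follows essentially the same route as the paper: both inclusions of \eqref{equivalence} via continuity of the flow and the straight-line path $t\mapsto(1-t)e+tg$ near the identity (the paper then invokes the group property rather than an explicit path subdivision, but this is the same idea), continuous dependence of $\varphi^u(1)$ on $u$ for the second assertion, and the identity $d_{H^s}(g_n,g)=d_{H^s}(g_n^{-1}\circ g,e)$ combined with the straight-line velocity estimate for the equivalence. The paper likewise deduces the final assertion from the previous two, so no substantive difference remains.
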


\begin{proof} The equality in \eqref{equivalence} is proved in \cite[Theorem~8.3]{B-V}.
For the convenience of the reader we sketch the proof.
One inclusion follows immediately by the continuity of the flow proved in \cite[Theorem~4.4]{B-V} that implies that $G_{H^s}\subset \mathcal{D}^s(\mathbb{R}^N)_0$. The reverse inclusion is proved as follows.
Since
$\mathcal{D}^s(\mathbb{R}^N)-e$
is an open subset of $H^s(\mathbb{R}^N,\mathbb{R}^N)$, we can take $U$ a convex neighborhood around $e$ that is contained in $\mathcal{D}^s(\mathbb{R}^N)_0$.
Then, for any $ g\in U$ we consider the path in $U$ connecting $e$ to $ g$ given by $\varphi(t)=(1-t)e+t g$, $0\leq t\leq 1$. We note that $\varphi(t)=\varphi^u(t)$ where
$u(t)=( g-e)\circ\varphi(t)^{-1}$. Using the continuity of the inversion operation in $\mathcal{D}^s(\mathbb{R}^N)$, it is not difficult to show that $[0,1]\ni t\mapsto \varphi(t)^{-1}$ is a continuous map in $H^s(\mathbb{R}^N,\mathbb{R}^N)$, therefore by 
Lemma~2.2 in \cite{B-V} we infer that $u\in L^{\infty}([0,1],H^s)$, therefore $U\subset G_{H^s}$. Since $G_{H^s}$ is a group, and again by \cite[Lemma~2.2]{B-V}, we conclude that 
$\mathcal{D}^s(\mathbb{R}^N)_0\subset G_{H^s}$.

The second part of the thesis follows immediately again by \cite[Theorem~4.4]{B-V}.

It remains to prove the equivalence between convergence of diffeomorphisms in the distance and convergence of their inverses in $H^s$. One implication is the following. Again we use arguments developed in \cite{B-V}. If $ g_n$ converges to $ g$ in the distance $d_{H^s}$, then, by \cite[Lemma~6.6]{B-V},
it is easy to show that $ g_n^{-1}$ converges to $ g^{-1}$ in $H^s$. Here we need to note that we use a left-invariant metric, instead in \cite{B-V} the metric used is the right-invariant $d_R$ defined in Remark~\ref{rightinvariantoss}.
Let us prove the other implication. If $ g_n^{-1}$ converges in $H^s$ to $ g^{-1}$, then,
applying again \cite[Lemma~2.2]{B-V}, we conclude that
$ g_n^{-1}\circ g$ converges to the identity $e$ in $H^s$. Since we have
$d_{H^s}( g_n, g)=d_{H^s}( g_n^{-1}\circ g,e)$, we conclude the proof provided the following claim holds true.
Let $h_n\in H^s(\mathbb{R}^N,\mathbb{R}^N)$, $n\in\mathbb{N}$, be such that $h_n$ converges to zero, as $n\to\infty$, in $H^s$. Then $g_n=e+h_n$ converges to $e$ in the distance $d_{H^s}$. Applying to $g_n$, for any $n\in\mathbb{N}$, the construction used for the diffeomorphism $ g$ in the proof of \eqref{equivalence}, we obtain that 
$g_n(t)=(1-t)e+tg_n=\varphi^{u_n}(t)$ where
$u_n(t)=(g_n-e)\circ g_n(t)^{-1}$. Then it is not difficult to show that
$\|u_n\|$ converges to $0$ as $n\to\infty$ and the proof is concluded.\end{proof}

\begin{oss}
If $\Omega$ is any open set contained in $\mathbb{R}^N$, $N\geq 1$, all properties stated above for $\mathbb{R}^N$ remain true provided we replace everywhere
$H^s(\mathbb{R}^N,\mathbb{R}^N)$ with the following space
$$H^s_{\Omega}=\{h\in H^s(\mathbb{R}^N,\mathbb{R}^N):\  h(x)=0\text{ for any }x\not\in \Omega\}.$$
Adopting a corresponding notation, \eqref{equivalence} now may be written as
$G_{H^s_{\Omega}}=\mathcal{D}^s(\Omega)_0$.

We finally note that $H^s_0(\Omega,\mathbb{R}^N)\subset H^s_{\Omega}$. These two spaces coincide under suitable assumptions, for instance if $s$ is an integer and $\Omega$ is regular enough, see \cite[Theorem~5.29]{Ada}.
\end{oss}

An interesting consequence of Theorem~\ref{Hscase} and of the previous remark is the following compactness result.

\begin{prop}\label{Hscomp}
Let $\Omega$ be a bounded open subset of $\mathbb{R}^N$, $N\geq 1$. Let $s_0$, $s_1$ be such that $N/2+1<s_0<s_1$.

Let us consider a sequence $\{ g_n\}_{n\in\mathbb{N}}\subset G_{H^{s_1}_{\Omega}}$. If $\{ g_n\}_{n\in\mathbb{N}}$ is bounded in $G_{H^{s_1}_{\Omega}}$, that is, there exists a constant $C$ such that
$d_{H^{s_1}_{\Omega}}( g_n,e)\leq C$ for any $n\in\mathbb{N}$, then there exists a subsequence $\{ g_{n_k}\}_{k\in\mathbb{N}}$ converging in $d_{H^{s_0}_{\Omega}}$.
\end{prop}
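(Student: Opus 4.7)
The plan is to lift the boundedness in $G_{H^{s_1}_{\Omega}}$ to the level of velocity fields, then exploit the compactness of the Rellich embedding to produce strong $H^{s_0}$ convergence of a subsequence of inverses, and finally invoke the characterization of distance convergence in Theorem~\ref{Hscase}.

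First, by the very definition of $d_{H^{s_1}_{\Omega}}$, for each $n$ choose $u_n\in L^2([0,1],H^{s_1}_{\Omega})$ with $g_n=\varphi^{u_n}(1)$ and $\|u_n\|_{L^2([0,1],H^{s_1}_{\Omega})}=d_{H^{s_1}_{\Omega}}(g_n,e)\leq C$. Passing to a subsequence, $u_n\rightharpoonup u$ weakly in $L^2([0,1],H^{s_1}_{\Omega})$, and Proposition~\ref{contprop} gives that the corresponding flows $g_n=\varphi^{u_n}(1)$ converge weakly (in the sense of Definition~\ref{weakconv}) to $g=\varphi^u(1)\in G_{H^{s_1}_{\Omega}}$.

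Next, by left-invariance we have $d_{H^{s_1}_{\Omega}}(g_n^{-1},e)=d_{H^{s_1}_{\Omega}}(g_n,e)\leq C$, so the inverses are also bounded in $G_{H^{s_1}_{\Omega}}$. Then standard flow regularity estimates (as used in \cite{B-V} and Chapter~8 of \cite{You}, and implicit in the characterization \eqref{equivalence}) imply that both $g_n-e$ and $g_n^{-1}-e$ form bounded sequences in $H^{s_1}_{\Omega}$. Since $\Omega$ is bounded, functions in $H^{s_1}_{\Omega}$ are supported in the bounded set $\overline{\Omega}$, so the Rellich--Kondrachov theorem yields the compact embedding $H^{s_1}_{\Omega}\hookrightarrow H^{s_0}_{\Omega}$. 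Hence, up to a further subsequence, $g_n^{-1}$ converges strongly in $H^{s_0}_{\Omega}$, and the limit must be $g^{-1}$ by uniqueness of the weak limit identified in the previous step.

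Finally, because of the continuous embedding $H^{s_1}_{\Omega}\hookrightarrow H^{s_0}_{\Omega}$ all the $g_n$ and $g$ also belong to $G_{H^{s_0}_{\Omega}}$, so we may apply Theorem~\ref{Hscase} at regularity level $s_0$: the strong convergence $g_n^{-1}\to g^{-1}$ in $H^{s_0}$ is equivalent to $d_{H^{s_0}_{\Omega}}(g_n,g)\to 0$, yielding the desired convergent subsequence. The main obstacle is the flow regularity step: one must translate an $L^2([0,1],H^{s_1})$ bound on the velocity field into an $H^{s_1}$ bound on the deviation $\varphi^{u}(1)-e$ and on its inverse. This is well-established in the LDDMM literature and is exactly the input that makes \eqref{equivalence} in Theorem~\ref{Hscase} meaningful; once it is invoked, the proof reduces to Rellich compactness combined with the distance-vs-$H^s$ characterization already established.
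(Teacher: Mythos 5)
Your proof is correct and follows essentially the same route as the paper's: the $L^2([0,1],H^{s_1})$ bound on the velocities is converted into $H^{s_1}$ bounds on $g_n-e$ and $g_n^{-1}-e$ (the paper cites Lemma~6.6 of \cite{B-V} for exactly this step you flag as the main obstacle), then Rellich compactness gives strong $H^{s_0}$ convergence of a subsequence, and Theorem~\ref{Hscase} converts that into convergence in $d_{H^{s_0}_{\Omega}}$. The only cosmetic difference is that you identify the limit via the weak convergence of flows from Proposition~\ref{contprop}, whereas the paper extracts strong $H^{s_0}$ limits of both $g_{n_k}$ and $g_{n_k}^{-1}$ and checks directly that the second is the inverse of the first.
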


\begin{proof} By Lemma~6.6 in \cite{B-V}, we immediately infer that, for some constant $C_1$,
$$\| g_n-e\|_{H^{s_1}}\leq C_1\text{ and }\| g_n^{-1}-e\|_{H^{s_1}}\leq C_1\text{ for any }n\in\mathbb{N}.$$

Since $\Omega$ is bounded, we have that $H^{s_1}_{\Omega}$ is compactly embedded in $H^{s_0}_{\Omega}$, therefore there exists a subsequence
$\{ g_{n_k}\}_{k\in\mathbb{N}}$ and $ g$, $\tilde{ g}$ belonging to 
$e+H^{s_0}_{\Omega}$ such that
$$\lim_k\left(\| g_{n_k}- g\|_{H^{s_0}}+\| g_{n_k}^{-1}-\tilde{ g}\|_{H^{s_0}}\right)=0.$$
It follows easily that $ g\in \mathcal{D}^{s_0}(\Omega)_0$ and $ g^{-1}=\tilde{ g}$. Therefore, by Theorem~\ref{Hscase}, it is immediate to conclude that
$\lim_kd_{H^{s_0}_{\Omega}}( g_{n_k}, g)=0$ as well.\end{proof}

We now consider the multiscale procedure. Using the notation of Subsection~\ref{LDDMMintrosub}, 
we fix $I_0$ and $I_1$ in $L^2(\Omega)$ and consider the maps $U_{I_0,I_1}$ and $\tilde{U}_{I_0,I_1}$ defined in \eqref{Udefin}.
The fact that the functions $G_{\mathcal{H}}\ni g\mapsto U_{I_0,I_1}(g)$ and
$G_{\mathcal{H}}\ni \psi\mapsto \tilde{U}_{I_0,I_1}(\psi)$
are sequentially continuous with respect to the weak convergence in $G_{\mathcal{H}}$ is proved in Proposition~\ref{continuityweak}.

We begin by observing that, using Proposition~\ref{existenceprop},
one can show existence of a solution to \eqref{0001} and \eqref{111}, therefore the sequence 
$\{g_n\}_{n\in\mathbb{N}}$ exists, even if it is not uniquely determined. Clearly, 
sequences $\{\psi_n\}_{n\in\mathbb{N}}$, $\{\tilde{g}_n\}_{n\in\mathbb{N}}$ and $\{\tilde{\psi}_n\}_{n\in\mathbb{N}}$ are
determined by the sequence $\{g_n\}_{n\in\mathbb{N}}$.

Next we show how the results of Section~\ref{topgroupsubs} may be applied to our registration problem and what is their rephrasing in terms of the diffeomorphisms space $G_{\mathcal{H}}$. Let us note here that the abstract results apply to the formulation given by \eqref{0001bis} and \eqref{111bis}, that is, using $\psi=g^{-1}$ instead of $g$.

\begin{proof}[Proof of Theorem~\textnormal{\ref{mainthmter}}]
Immediate by Theorem~\ref{mainthmter1}.
\end{proof}

Let us now consider the sequence $\{\tilde{g}_n\}_{n\in\mathbb{N}}$ and let
$\{\tilde{u}_n\}_{n\in\mathbb{N}}$ be a sequence in $L^2([0,1],\mathcal{H})$ such that
$\tilde{g}_n=\varphi^{\tilde{u}_n}(1)$ and
$d_{\mathcal{H}}(\tilde{g}_n,e)=\|\tilde{u}_n\|$.
We recall that we are interested in finding conditions that allow boundedness of $\{\tilde{g}_n\}_{n\in\mathbb{N}}$ or of one of its subsequences.
We shall show that a necessary and, provided
\eqref{crucialcondition0} holds, sufficient condition is that a solution to 
\eqref{minpbmquater1} does exist. We recall that \eqref{minpbmquater1} admits a solution if and only if there exists $\hat{g}\in G_{\mathcal{H}}$ solving the
minimization problem \eqref{minimalnew0}.
We recall that $\hat{G}$ is the set of solutions to \eqref{minimalnew0}.
We call $\hat{E}$ the set of $u\in L^2([0,1],\mathcal{H})$ such that
$\varphi^u(1)\in\hat{G}$ and $\|u\|=d_{\mathcal{H}}(\hat{g},e)$. We note that $\hat{E}$
is closed and bounded with respect to the strong topology of $L^2([0,1],\mathcal{H})$ and it is
sequentially compact with respect to the weak convergence in $L^2([0,1],\mathcal{H})$.
We recall that $\hat{G}$
is closed and bounded with respect to the topology induced by the distance in $G_{\mathcal{H}}$
and it is
sequentially compact with respect to the weak convergence in $G_{\mathcal{H}}$.
The same topological properties are shared by $\hat{G}^{-1}=\{g^{-1}:\ g\in\hat{G}\}$.

We have the following lemma, an extension of Lemma~\ref{limitproprem1}, showing necessity.
\begin{lem}\label{limitproprem2}
If $\{\tilde{g}_n\}_{n\in\mathbb{N}}$
has a bounded subsequence, then there exists a further subsequence $\{\tilde{g}_{n_k}\}_{k\in\mathbb{N}}$ and $\tilde{g}_{\infty}\in G_{\mathcal{H}}$ such that $\{\tilde{g}_{n_k}\}_{k\in\mathbb{N}}$ converges weakly to $\tilde{g}_{\infty}$.

Furthermore, if $\{\tilde{g}_n\}_{n\in\mathbb{N}}$ has a bounded subsequence converging to $\tilde{g}_{\infty}\in G_{\mathcal{H}}$ either weakly or in $d_{\mathcal{H}}$, then 
$\tilde{g}_{\infty}\in \hat{G}$.
\end{lem}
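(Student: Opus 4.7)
The lemma splits into two assertions. The first is immediate from Lemma~\ref{distanceimpliespointwise}: any bounded sequence in $G_{\mathcal{H}}$ admits a subsequence weakly convergent, in the sense of Definition~\ref{weakconv}, to some element of $G_{\mathcal{H}}$.

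For the second assertion, I would first reduce the case ``$\tilde{g}_{n_k}\to\tilde{g}_\infty$ in $d_{\mathcal{H}}$'' to the case ``$\tilde{g}_{n_k}\rightharpoonup\tilde{g}_\infty$ weakly'' by invoking the second half of Lemma~\ref{distanceimpliespointwise}. Continuity (Proposition~\ref{continuityweak}) then yields $U_{I_0,I_1}(\tilde{g}_\infty)=\lim_k U_{I_0,I_1}(\tilde{g}_{n_k})$, and Theorem~\ref{mainthmter} identifies this limit with $\delta_0$. Hence $\tilde{g}_\infty$ solves the greedy matching problem \eqref{minpbmquater1}; in particular $\hat{G}$ is nonempty, and (by the compactness/lower-semicontinuity package used to prove Theorem~\ref{minsolutionthm0}) the minimum in \eqref{minimalnew0} is attained.

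It remains to show that $d_{\mathcal{H}}(\tilde{g}_\infty,e)$ achieves this minimum. Fix any $\hat{g}\in\hat{G}$, write $\tilde{\psi}_n=\tilde{g}_n^{-1}$ and $\hat{\psi}=\hat{g}^{-1}$, and test the $n$-th step problem \eqref{111bis} with the competitor $\psi=\tilde{\psi}_{n-1}^{-1}\cdot\hat{\psi}$. Using left-invariance of $d_{\mathcal{H}}$, this produces a group-theoretic analogue of the key inequality \eqref{crucialenew1}. A dichotomy argument identical to that used in the proof of Theorem~\ref{minimizercor0}, transplanted to the group setting exactly as in the proof of Theorem~\ref{mainthmter2}, then yields, under a suitable decay assumption on $\lambda_n,a_n$ such as \eqref{crucialcondition0},
\begin{equation*}
\limsup_n d_{\mathcal{H}}(\tilde{\psi}_n,e)\le d_{\mathcal{H}}(\hat{\psi},e).
\end{equation*}
Since weak convergence of $\tilde{g}_{n_k}$ to $\tilde{g}_\infty$ is equivalent to weak convergence of $\tilde{\psi}_{n_k}$ to $\tilde{\psi}_\infty:=\tilde{g}_\infty^{-1}$ (noted just after Definition~\ref{weakconv}), Lemma~\ref{assumptclem} gives $d_{\mathcal{H}}(\tilde{\psi}_\infty,e)\le\liminf_k d_{\mathcal{H}}(\tilde{\psi}_{n_k},e)\le d_{\mathcal{H}}(\hat{\psi},e)$. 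Using $d_{\mathcal{H}}(g,e)=d_{\mathcal{H}}(g^{-1},e)$ this translates to $d_{\mathcal{H}}(\tilde{g}_\infty,e)\le d_{\mathcal{H}}(\hat{g},e)$, and the reverse inequality is automatic from the optimality of $\hat{g}$, so $\tilde{g}_\infty\in\hat{G}$.

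The main obstacle is the $\limsup$ estimate. Its Banach-space analogue (Theorem~\ref{minimizercor0}) rests on the additive triangle inequality $|\hat{\sigma}-\tilde{\sigma}_n|\le|\hat{\sigma}-\tilde{\sigma}_{n-1}|+|\sigma_n|$ and the resulting geometric growth $|\hat{\sigma}-\tilde{\sigma}_n|\le 2^{n+1}|\hat{\sigma}|$; in the present setting this must be replaced by the metric version $d_{\mathcal{H}}(\tilde{\psi}_n^{-1}\cdot\hat{\psi},e)\le d_{\mathcal{H}}(\tilde{\psi}_{n-1}^{-1}\cdot\hat{\psi},e)+d_{\mathcal{H}}(\psi_n,e)$, which, as in the proof of Theorem~\ref{mainthmter2}, is precisely what left-invariance of $d_{\mathcal{H}}$ makes available. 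Everything else in the argument is then a routine carry-over.
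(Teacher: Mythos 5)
Your proposal is correct and follows essentially the same route as the paper: the first part and the reduction of $d_{\mathcal{H}}$-convergence to weak convergence via Lemma~\ref{distanceimpliespointwise}, the identification of the weak limit as a solution of \eqref{minpbmquater1} via Proposition~\ref{continuityweak} and Theorem~\ref{mainthmter}, and then the deferred optimality claim $\tilde{g}_{\infty}\in\hat{G}$ obtained exactly as the paper does in Theorem~\ref{diffeoconv}, i.e.\ by running the argument of Theorem~\ref{mainthmter2} (the competitor $\psi=\tilde{\psi}_{n-1}^{-1}\cdot\hat{\psi}$ in \eqref{crucialenew1}, the dichotomy of Theorem~\ref{minimizercor0}, and Lemma~\ref{assumptclem}). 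You also correctly flag that the $\limsup$ estimate, hence the conclusion $\tilde{g}_{\infty}\in\hat{G}$, uses the hypothesis \eqref{crucialcondition0}, which the paper likewise invokes at that stage.
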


\begin{proof}
The first part follows immediately from Lemma~\ref{distanceimpliespointwise}.

If $\{\tilde{g}_{n_k}\}_{k\in\mathbb{N}}$ is a bounded subsequence converging weakly to $\tilde{g}_{\infty}$, the
fact that $\tilde{g}_{\infty}$ is a solution to \eqref{minpbmquater1} follows 
by Theorem~\ref{mainthmter} and Proposition~\ref{continuityweak}.
Again by Lemma~\ref{distanceimpliespointwise}, the same conclusion holds if the convergence is in $d_{\mathcal{H}}$.

The fact that $\tilde{g}_{\infty}\in \hat{G}$ will be proved later on in the proof of Theorem~\ref{diffeoconv}.
\end{proof}

About existence and uniqueness of a solution to \eqref{minimalnew0}, we make the following remarks.
We note that, provided \eqref{minpbmquater1} has a solution, $\hat{G}$ may consists of more than one point, that is, we do not have uniqueness. Let us consider for example the square $Q=[-1,1]\times [-1,1]$ and let $Q_1$ be $Q$ rotated of an angle of $\pi/4$ in the clockwise sense. In order to register $Q$ with $Q_1$ there are two perfectly equivalent strategies, namely we rotate $Q_1$ of 
an angle of $\pi/4$ either again in the clockwise sense or in the counterclockwise sense.
On the other hand, also existence may not be guaranteed in general, that is \eqref{minpbmquater1}, and hence \eqref{minimalnew0}, may not have any solution as
Younes already noted in \cite{You}
and the following simple example shows.

\begin{exam}\label{noexistenceexample}
Fixed $\varepsilon$, $0<\varepsilon<1/2$,
let us define a cutoff function $\chi:\mathbb{R}\to\mathbb{R}$ such that
$\chi\in C^{\infty}(\mathbb{R})$, $\chi$ is nondecreasing, it is equal to $0$ on
$(-\infty,\varepsilon]$ and it is equal to $1$ on $[1-\varepsilon,+\infty)$.
For any $\lambda$, $0\leq \lambda< 1$, let us define $g_{\lambda}:\overline{B}_2\to \overline{B}_2$,
$\overline{B}_2$ denoting the closed ball of radius $2$ and center the origin in $\mathbb{R}^2$,
in the following way. For any $r$, $0\leq r\leq 2$, and any $\theta$, $0\leq\theta\leq 2\pi$,
$$g(r(\cos(\theta),\sin(\theta)))=f_{\lambda}(r,\theta)(\cos(\theta),\sin(\theta))$$
where
$$f_{\lambda}(r,\theta)=(1-\lambda) r+\lambda\frac{1+\cos(2\theta)}{2}\chi(r/(1-\lambda))r+
\lambda\frac{1-\cos(2\theta)}{2}\chi(r-1)r.$$
It is not difficult to show that $g_{\lambda}$ is a $C^{\infty}$ diffeomorphism from $\overline{B}_2$
onto itself. We note that
$g_0$ is the identity, that $g_{\lambda}(x_1,x_2)=(x_1,x_2)$ for any $\|(x_1,x_2)\|\geq 2-\varepsilon$, and that
$g_{\lambda}(x_1,x_2)=(1-\lambda) (x_1,x_2)$ for any $\|(x_1,x_2)\|\leq (1-\lambda) \varepsilon$.

Let $E_0=\overline{B}_1$ and
let $I_0=\chi_{E_0}$ and $I_{\lambda}=I_0\circ g_{\lambda}^{-1}$, $0\leq \lambda<1$. We note that $I_{\lambda}=\chi_{E_{\lambda}}$ where the set $E_{\lambda}=g_{\lambda}(E_0)$. It is not difficult to show that, as $\lambda\to 1^{-}$, $I_{\lambda}=\chi_{E_{\lambda}}$ converges pointwise, hence in $L^2(B_2)$, to $I_1=\chi_{E_1}$ where
$$E_1=\left\{(x_1,x_2)\in B_2:\ r\leq   \frac{1+\cos(2\theta)}{2} \right\}.$$
Therefore,
$$0=\inf\{\|I_0\circ g -I_1\|_{L^2(B_2)}:\ g\text{ is a $C^1$ diffeomorphism}\}.$$
However, $I_1$ is the characteristic function of an eight-shaped figure which may never be obtained from the characteristic function of a ball through a $C^1$ diffeomorphism, hence we may conclude that the minimum is not attained.
\end{exam}

Let us also point out here that also coercivity in the mild sense of 
Proposition~\ref{coerciveprop} may fail as the following example shows.
\begin{exam}\label{nocoerciveexample}
Let $I_0\in L^2(\mathbb{R}^2)$ be any radial symmetric image which is $0$ outside $B_1$. Let $h\in C_0^{\infty}(\mathbb{R})$ be an auxiliary function such that $0\leq h\leq 1$, $h(0)=1$ and $h(r)=0$ for any $|r|\geq 1/4$.

Then, for any $n\geq 1$, we consider the following diffeomorphism $\varphi_n$ such that for any $r$, $r\geq 0$, and any $\theta$, $0\leq\theta\leq 2\pi$,
$$\varphi_n(r(\cos(\theta),\sin(\theta)))=r(\cos(\theta+h_n(r)),\sin(\theta+h_n(r)))$$
where $h_n(r)=h(n(r-1/2))$.

Clearly we have that, for any $n\in\mathbb{N}$, $\varphi_n\in \mathcal{D}^s(\mathbb{R}^N)_0$ for any $s>N/2+1$. Moreover, $\varphi_n=e$ outside $B_1$ for any $n\in\mathbb{N}$.

Note that $I_0\circ \varphi^{-1}_n=I_0$ for any $n\geq 1$. On the other hand, since $\{\|D\varphi_n\|_{L^{\infty}}\}_{n\in\mathbb{N}}$ is unbounded, we may not have that $\{\varphi_n\}_{n\in\mathbb{N}}$ is bounded in $G_{H^s}$, for any $s>N/2+1$.
\end{exam}

We are now ready to prove our main result, which illustrates the convergence in the diffeomorphisms space. We note that this is an extended version of Theorem~\ref{diffeoconv2}.

\begin{teo}\label{diffeoconv}
Let us assume that
\eqref{crucialcondition0}
holds.

Then 
$\{\tilde{g}_n\}_{n\in\mathbb{N}}$ is bounded if and only if a solution to \eqref{minpbmquater1} exists.

In this case,
there exists a subsequence
$\{\tilde{g}_{n_k}\}_{k\in\mathbb{N}}$ and $\tilde{g}_{\infty}\in \hat{G}$ \textnormal{(}that is, $\tilde{g}_{\infty}$ is a solution to \eqref{minimalnew0}, possibly different from $\hat{g}$\textnormal{)}
such that,
as $k\to\infty$, $\tilde{g}_{n_k}$ converges to $\tilde{g}_{\infty}$ weakly, that is, in particular,
$\tilde{g}_{n_k}\to \tilde{g}_{\infty}$ and $(\tilde{g}_{n_k})^{-1}\to (\tilde{g}_{\infty})^{-1}$ uniformly on compact 
subsets of $\overline{\Omega}$.

Moreover, we have that
\begin{equation}\label{normconvv}
\lim_nd_{\mathcal{H}}(\tilde{g}_n,e)=d_{\mathcal{H}}(\tilde{g}_{\infty},e)=d_{\mathcal{H}}(\hat{g},e),
\end{equation}
and for any compact $Q\subset\overline{\Omega}$ we have
\begin{equation}\label{distconvv0}
\lim_n \mathrm{dist}_Q(\tilde{g}_n,\hat{G})=0
\end{equation}
where for any $g\in G_{\mathcal{H}}$ 
$$\mathrm{dist}_Q(g,\hat{G})=\inf\{
\|g-\hat{g}\|_{L^{\infty}(Q)}+\|g^{-1}-\hat{g}^{-1} \|_{L^{\infty}(Q)}:\ \hat{g}\in \hat{G}\}.$$

If we further have $\mathcal{H}=H^s_{\Omega}$, with $s>N/2+1$, then
there exists a subsequence
$\{\tilde{g}_{n_k}\}_{k\in\mathbb{N}}$ and $\tilde{g}_{\infty}\in \hat{G}$
such that,
as $k\to\infty$, $\tilde{g}_{n_k}\to \tilde{g}_{\infty}$ and $(\tilde{g}_{n_k})^{-1}\to (\tilde{g}_{\infty})^{-1}$
in $G_{H^s_{\Omega}}$. Moreover,
we have that
\begin{equation}\label{distconvv}
\lim_n \mathrm{dist}(\tilde{g}_n,\hat{G})=0\quad\text{and}\quad\lim_n \mathrm{dist}(\tilde{g}^{-1}_n,\hat{G}^{-1})=0
\end{equation}
where for any $g\in G_{H^s_{\Omega}}$, $\mathrm{dist}(g,\hat{G})=\inf\{
d_{H^s_{\Omega}}(g,\hat{g}):\ \hat{g}\in \hat{G}\}$.
\end{teo}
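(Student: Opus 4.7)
The plan is to identify the image-registration multiscale construction with the abstract group framework of Section~\ref{topgroupsubs}, through the equivalent formulations \eqref{0001bis}--\eqref{111bis}. Take $G = G_{\mathcal{H}}$ with the left-invariant distance $d_{\mathcal{H}}$ and the weak convergence of Definition~\ref{weakconv}, target metric space $Y = L^2(\Omega)$, data $\hat{\mathcal{N}} = I_1$, and operator $\mathcal{N}(\psi) = I_0 \circ \psi$. Properties \ref{a})--\ref{e}) have been verified immediately before the statement of the theorem via Lemmas~\ref{distanceimpliespointwise} and \ref{assumptclem}, and the required continuity of $\tilde U_{I_0,I_1}$ under weak convergence is Proposition~\ref{continuityweak}. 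Under this dictionary $\psi_n = g_n^{-1}$ and $\tilde\psi_n = \tilde g_n^{-1}$; by left-invariance $d_{\mathcal{H}}(\tilde g_n, e) = d_{\mathcal{H}}(\tilde\psi_n, e)$, so boundedness of the two sequences is equivalent, and $\tilde g_{n_k} \rightharpoonup \tilde g_\infty$ weakly in $G_{\mathcal{H}}$ iff $\tilde\psi_{n_k} = \tilde g_{n_k}^{-1} \rightharpoonup \tilde g_\infty^{-1}$. Note also that \eqref{minpbmquater1} and \eqref{minimalnew0} are simultaneously solvable, and correspond to the abstract \eqref{minpbm1} and \eqref{minimalnew1} with $\hat\psi = \hat g^{-1}$.

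For the necessity half of the equivalence, if $\{\tilde g_n\}$ has a bounded subsequence, Lemma~\ref{distanceimpliespointwise} produces a further subsequence converging weakly to some $\tilde g_\infty \in G_{\mathcal{H}}$. Theorem~\ref{mainthmter} yields $U_{I_0,I_1}(\tilde g_n) \to \delta_0$, and Proposition~\ref{continuityweak} then forces $U_{I_0,I_1}(\tilde g_\infty) = \delta_0$, so \eqref{minpbmquater1} is solvable. For the sufficiency half and the quantitative statements, apply Theorem~\ref{mainthmter2} to $\{\tilde\psi_n\}$: it is bounded, a subsequence $\tilde\psi_{n_k}$ converges weakly to some $\tilde\psi_\infty$ solving the abstract minimal problem, and $\lim_n d_{\mathcal{H}}(\tilde\psi_n, e) = d_{\mathcal{H}}(\hat\psi, e)$. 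Setting $\tilde g_\infty := \tilde\psi_\infty^{-1}$ and using $d_{\mathcal{H}}(\cdot, e) = d_{\mathcal{H}}(\cdot^{-1}, e)$ gives $\tilde g_\infty \in \hat G$, the weak convergence $\tilde g_{n_k} \rightharpoonup \tilde g_\infty$ (which by Definition~\ref{weakconv} already encodes the stated uniform convergence on compact subsets of $\overline\Omega$, both for the maps and for their inverses), and the norm identity \eqref{normconvv}.

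The set-distance conclusion \eqref{distconvv0} then follows by a contradiction argument: if some subsequence satisfied $\mathrm{dist}_Q(\tilde g_{n_k}, \hat G) \geq \varepsilon > 0$, the boundedness just established together with Lemma~\ref{distanceimpliespointwise} would allow extraction of a further weakly convergent subsequence whose limit belongs, by the sufficiency argument above, to $\hat G$---contradicting the uniform convergence on $Q$ enforced by Definition~\ref{weakconv}. Finally, in the case $\mathcal{H} = H^s_\Omega$ with $s > N/2 + 1$, Proposition~\ref{Hscomp} upgrades the subsequential weak convergence to convergence in $d_{H^{s_0}_\Omega}$ for any $s_0 \in (N/2+1, s)$, which by Theorem~\ref{Hscase} is equivalent to convergence of the inverses in $H^{s_0}$; the same contradiction argument in this stronger topology delivers \eqref{distconvv}.

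The main obstacle is concealed inside Theorem~\ref{mainthmter2}: its proof uses \eqref{crucialenew1} applied with $\overline\psi = \hat\psi$, together with the tighter hypothesis \eqref{crucialcondition0}, to establish the delicate bound $\limsup_n d_{\mathcal{H}}(\tilde\psi_n, e) \leq d_{\mathcal{H}}(\hat\psi, e)$. Combined with the lower semicontinuity of $d_{\mathcal{H}}(\cdot, e)$ from Lemma~\ref{assumptclem}, this is exactly what forces every weak subsequential limit to realize the \emph{minimal} distance, and not merely the minimum of $U_{I_0,I_1}$; without this comparison one could only conclude that a weak limit solves \eqref{minpbmquater1}, which is considerably weaker than the membership $\tilde g_\infty \in \hat G$ that is central to the present theorem.
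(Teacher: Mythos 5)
Your overall architecture — the dictionary with the abstract group framework of Section~\ref{topgroupsubs} via $\psi=g^{-1}$, the appeal to Theorem~\ref{mainthmter2} for the equivalence, for membership $\tilde{g}_{\infty}\in\hat{G}$ and for \eqref{normconvv}, and the subsequence/contradiction argument for \eqref{distconvv0} — coincides with the paper's proof and is correct. The gap is in the final part, the case $\mathcal{H}=H^s_{\Omega}$. You invoke Proposition~\ref{Hscomp} to ``upgrade'' the weak subsequential convergence, but that proposition requires two exponents $N/2+1<s_0<s_1$ and only yields convergence in the \emph{weaker} metric $d_{H^{s_0}_{\Omega}}$, because it rests on the compact embedding $H^{s_1}_{\Omega}\hookrightarrow H^{s_0}_{\Omega}$, which fails for $s_0=s_1$. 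The theorem, however, asserts convergence of $\tilde{g}_{n_k}$ and $(\tilde{g}_{n_k})^{-1}$ in $G_{H^s_{\Omega}}$ itself, and \eqref{distconvv} is formulated with $\mathrm{dist}$ built from $d_{H^s_{\Omega}}$, not from $d_{H^{s_0}_{\Omega}}$ with $s_0<s$. So your route proves a strictly weaker statement than the one claimed, and the same loss of derivatives propagates into your contradiction argument for \eqref{distconvv}.

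The missing idea is to work at the level of the controls rather than the diffeomorphisms. Pick $\tilde{u}_n\in L^2([0,1],\mathcal{H})$ with $\tilde{g}_n=\varphi^{\tilde{u}_n}(1)$ and $\|\tilde{u}_n\|=d_{\mathcal{H}}(\tilde{g}_n,e)$; along the weakly convergent subsequence one has $\tilde{u}_{n_k}\rightharpoonup\tilde{u}_{\infty}$ in the Hilbert space $L^2([0,1],\mathcal{H})$, and the chain
$d_{\mathcal{H}}(\tilde{g}_{\infty},e)\leq\|\tilde{u}_{\infty}\|\leq\limsup_n\|\tilde{u}_n\|\leq d_{\mathcal{H}}(\hat{g},e)=d_{\mathcal{H}}(\tilde{g}_{\infty},e)$
(the last inequality from \eqref{limsupresultnew1}, the first from Lemma~\ref{assumptclem}) forces $\|\tilde{u}_{n_k}\|\to\|\tilde{u}_{\infty}\|$. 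Weak convergence plus convergence of norms in a Hilbert space gives strong convergence $\tilde{u}_{n_k}\to\tilde{u}_{\infty}$ in $L^2([0,1],H^s)$, and the last assertion of Theorem~\ref{Hscase} then delivers convergence of $\tilde{g}_{n_k}$ and of $(\tilde{g}_{n_k})^{-1}$ in the distance $d_{H^s_{\Omega}}$ with no loss of regularity; \eqref{distconvv} follows by the same subsequence argument in this metric. This is the paper's route, and it is what makes the full-strength $H^s$ conclusion attainable.
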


\begin{proof} The first part and \eqref{normconvv} follow by Theorem~\ref{mainthmter2},
using $\psi=g^{-1}$. Moreover, the argument in Theorem~\ref{mainthmter2} allows us to complete the proof of Lemma~\ref{limitproprem2} as well.

Since any subsequence of $\{\tilde{g}_n\}_{n\in\mathbb{N}}$ admits a further subsequence weakly converging to an element of $\hat{G}$, \eqref{distconvv0} immediately follows.

Let us also point out the following remark.
With the same notation as before, we have that, up to a subsequence, $\tilde{u}_n$ converges to $\tilde{u}_{\infty}$ not only
weakly in $L^2([0,1],\mathcal{H})$ but also strongly. In fact, we recall that
$$d_{\mathcal{H}}(\tilde{g}_{\infty},e)\leq \|\tilde{u}_{\infty}\|\leq
\limsup_n \|\tilde{u}_n\|=\limsup_n d_{\mathcal{H}}(\tilde{g}_n,e)\leq d_{\mathcal{H}}(\hat{g},e)=d_{\mathcal{H}}(\tilde{g}_{\infty},e).$$
Therefore we can conclude that
$$d_{\mathcal{H}}(\tilde{g}_{\infty},e)= \|\tilde{u}_{\infty}\|=\lim_n \|\tilde{u}_n\|.$$
From this last property we may then conclude that, still up to a subsequence, actually $\tilde{u}_n$ strongly converges to $\tilde{u}_{\infty}$ in $L^2([0,1],\mathcal{H})$.
We may also observe that
\begin{equation}\label{distancenew}
\lim_n \mathrm{dist}(\tilde{u}_n,\hat{E})=0
\end{equation}
where for any $u\in L^2([0,1],\mathcal{H})$, $\mathrm{dist}(u,\hat{E})=\inf\{\|u-\hat{u}\|:\ \hat{u}\in \hat{E}\}$.

For the case in which $\mathcal{H}=H^s_{\Omega}$, with $s>N/2+1$, 
the convergence in the distance $d_{H^s_{\Omega}}$ and \eqref{distconvv} follow by the previous remark and
Theorem~\ref{Hscase}.\end{proof}

It is still possible, however, that two different subsequences of $\{\tilde{g}_n\}_{n\in\mathbb{N}}$
converge to two different limits, that is, to two different
solutions of \eqref{minimalnew0}, as suggested by the counterexample in 
Subsection~\ref{wholesequenceex}.

On the other hand, if \eqref{minimalnew0} has a unique solution $\hat{g}$, then
the whole sequence $\tilde{g}_n$ converges to $\hat{g}$ weakly in $G_{\mathcal{H}}$.
Finally, if $\mathcal{H}=H^s_{\Omega}$, with $s>N/2+1$,
 the whole sequence $\tilde{g}_n$ converges to $\hat{g}$
also in the distance $d_{H^s_{\Omega}}$.

\section{The multiscale approach applied to the inverse conductivity problem}\label{inversesec}

In this section we consider the multiscale procedure applied to the Calder\'on problem. We follow the notation introduced in Subsection~\ref{Calderonsub}.

Let $\Omega\subset \mathbb{R}^N$, $N\geq 2$, be a bounded domain with Lipschitz boundary. 
Throughout this section we shall keep fixed 
positive constants $a,b$ with $a\leq b$.
We recall the classes of conductivity tensors $\mathcal{M}(a,b)$, $\mathcal{M}_{sym}(a,b)$ and $\mathcal{M}_{scal}(a,b)$.
Let us point out that the ellipticity condition \eqref{ell2} is equivalent to the more usual one given by the following. For $\sigma\in \mathbb{M}^{N\times N}(\mathbb{R})$, with $N\geq 2$, and positive constants $a,\tilde{b}$ with $a\leq \tilde{b}$, we require
\begin{equation}\label{ell1}
\left\{\begin{array}{ll}
\sigma\xi\cdot\xi\geq a\|\xi\|^2&\text{ for any }\xi\in\mathbb{R}^N\\
\|\sigma\|\leq \tilde{b}.
\end{array}\right.
\end{equation}
We note that if $\sigma$ satisfies \eqref{ell2} with constants $a$ and $b$, then it also satisfies \eqref{ell1} with constants $a$ and $\tilde{b}=b$. On the other hand, if $\sigma$ satisfies \eqref{ell1} with constants $a$ and $\tilde{b}$, then it also satisfies \eqref{ell2} with constants $a$ and $b=\tilde{b}^2/a$.
If $\sigma$ is symmetric then \eqref{ell1} and \eqref{ell2} are equivalent and both correspond to the condition \eqref{ell2sym}, with $b=\tilde{b}$.
One can also define the class of conductivity tensors $\tilde{\mathcal{M}}(a,\tilde{b})$ as the set of $\sigma\in L^{\infty}(\Omega,\mathbb{M}^{N\times N}(\mathbb{R}))$ such that, for almost any $x\in\Omega$, $\sigma(x)$ satisfies \eqref{ell1},
with constants $a,\tilde{b}$. Obviously we have $\mathcal{M}(a,b)\subset \tilde{\mathcal{M}}(a,b)$
and $\tilde{\mathcal{M}}(a,\tilde{b})\subset \mathcal{M}(a,\tilde{b}^2/a)$.

We note that all these classes are closed with respect to the $L^p$ metric, for any $p$, $1\leq p\leq +\infty$, where for any conductivity tensor $\sigma$ in $\Omega$
$$\|\sigma\|_{L^p(\Omega)}=\|(\|\sigma\|)\|_{L^p(\Omega)}.$$
The following result is needed in order to check that the density condition \ref{BVass3}) of Subsection~\ref{abstract1subsec} is satisfied.
\begin{lem}\label{densitylemma}
For positive constants $a$ and $b$, with $a\leq b$,
let $\mathcal{M}$ be any of the classes $\mathcal{M}(a,b)$, $\tilde{\mathcal{M}}(a,b)$,
$\mathcal{M}_{sym}(a,b)$ or $\mathcal{M}_{scal}(a,b)$. Then $C^{\infty}(\overline{\Omega},\mathbb{M}^{N\times N}(\mathbb{R}))\cap\mathcal{M}$ is dense in $\mathcal{M}$ with respect to the $L^1(\Omega,\mathbb{M}^{N\times N}(\mathbb{R}))$-norm.
\end{lem}

\begin{proof}
The key point is that, in all these cases, $\mathcal{M}$ is convex.
Then the proof follows using a standard approximation of the identity.
\end{proof}

For any $p$, $1<p<+\infty$, let $p'$ be its conjugate exponent, that is $1/p+1/p'=1$. We call
$W^{1-1/p,p}(\partial \Omega)$ the space of traces of
$W^{1,p}(\Omega)$ functions on $\partial \Omega$ and let us recall that $W^{1-1/p,p}(\partial \Omega)\subset L^p(\partial\Omega)$, with continuous immersion.

In Subsection~\ref{Calderonsub}, we have already defined, for any conductivity tensor $\sigma$ in $\Omega$, its corresponding Neumann-to-Dirichlet map
$\mathcal{N}(\sigma)$. In an analogous way we define the Dirichlet-to-Neumann map. 

For a conductivity tensor $\sigma$ in $\Omega$, its corresponding Dirichlet-to-Neumann map is defined by
$$\Lambda(\sigma):W^{1/2,2}(\partial\Omega)\to W^{-1/2,2}(\partial\Omega)$$
where for each $\varphi\in W^{1/2,2}(\partial\Omega)$,
$$\Lambda(\sigma)(\varphi)[\psi]=\int_{\Omega}\sigma\nabla u\cdot\nabla \tilde{\psi} \quad\text{for any }\psi\in W^{1/2,2}(\partial\Omega)$$
with $u$ the solution to
\begin{equation}\label{Dirichletproblem1}
\left\{\begin{array}{ll}
-\mathrm{div}(\sigma\nabla u)=0&\text{in }\Omega\\
u=\varphi&\text{on }\partial\Omega
\end{array}\right. 
\end{equation}
and $\tilde{\psi}\in W^{1,2}(\Omega)$ such that
$\tilde{\psi}=\psi$ on $\partial\Omega$ in the trace sense.
Then $\Lambda(\sigma)$ is a well-defined bounded linear operator. Moreover, provided $\sigma\in\mathcal{M}(a,b)$, its norm is bounded by a constant depending on $N$, $\Omega$, $a$ and $b$ only. 
Let us note that, actually, we have $\Lambda(\sigma):W^{1/2,2}(\partial\Omega)\to W^{-1/2,2}_{\ast}(\partial\Omega)$
and that $\mathcal{N}(\sigma)$ is the inverse of $\Lambda(\sigma)|_{W^{1/2,2}_{\ast}(\partial\Omega)}$.

Our forward operators are 
$$
\Lambda:\mathcal{M}(a,b)
\to \mathcal{L}(W^{1/2,2}(\partial \Omega),W^{-1/2,2}_{\ast}(\partial \Omega))$$ or
$$\mathcal{N}:
\mathcal{M}(a,b)\to \mathcal{L}(W^{-1/2,2}_{\ast}(\partial \Omega),W^{1/2,2}_{\ast}(\partial \Omega)).$$

We recall that the inverse conductivity problem consists in determining an unknown conductivity $\sigma$ by performing (all possible) electrostatic measurements at the boundary of voltage and current type, that is, by measuring either its corresponding Dirichlet-to-Neumann map $\Lambda(\sigma)$ or its corresponding Neumann-to-Dirichlet map $\mathcal{N}(\sigma)$.

In order to apply our multiscale results to the inverse conductivity problem, we pick
$X=L^1(\Omega,\mathbb{M}^{N\times N}(\mathbb{R}))$, with its natural norm. We may take as the subset $E$ any of the following classes $\mathcal{M}(a,b)$,
$\mathcal{M}_{sym}(a,b)$ or
$\mathcal{M}_{scal}(a,b)$.

We need to check the conditions that allow us to use our abstract results. First of all we investigate the continuity properties of our forward operators. Let $B_1$ and $B_2$ be two Banach spaces such that $B_1\subset W^{1/2,2}(\partial\Omega)$
and $W^{-1/2,2}_{\ast}(\partial\Omega)\subset B_2$, with continuous immersions.
Moreover, let $\tilde{B}_1$ and $\tilde{B}_2$ be two Banach spaces such that $\tilde{B}_1\subset W^{-1/2,2}_{\ast}(\partial\Omega)$
and $W^{1/2,2}_{\ast}(\partial\Omega)\subset \tilde{B}_2$, with continuous immersions.

In the Dirichlet-to-Neumann case we let $Y=\mathcal{L}(B_1,B_2)$, with the distance $d$ induced by its norm, and let $\Lambda:E\to Y$. Furthermore, $\hat{\Lambda}\in Y$ is the measured Dirichlet-to-Neumann map.

In the Neumann-to-Dirichlet case we let $Y=\mathcal{L}(\tilde{B}_1,\tilde{B}_2)$, with the distance $d$ induced by its norm, and let $\mathcal{N}:E\to Y$. Furthermore, $\hat{\mathcal{N}}\in Y$ is the measured 
Neumann-to-Dirichlet map.

In \cite{Ron15} the following  lower semicontinuity result is proved.

\begin{prop}\label{Hconv}
Under the previous assumptions, let us consider a sequence of conductivity tensors $\{\sigma_n\}_{n\in\mathbb{N}}\subset E$ and a conductivity tensor $\sigma$ in the same set.

If, as $n\to\infty$, $\sigma_n$ converges to $\sigma$ strongly in $X$ or in the $H$-convergence sense, then
$$\|\hat{\mathcal{N}}-\mathcal{N}(\sigma)\|_Y\leq\liminf_n \|\hat{\mathcal{N}}-\mathcal{N}(\sigma_n)\|_Y.$$

Furthermore, if $E$ is $\mathcal{M}(a,b)$ or
$\mathcal{M}_{sym}(a,b)$, then
the following minimum problem admits a solution
\begin{equation}\label{minpbmcond}
\min\{\|\hat{\mathcal{N}}-\mathcal{N}(\sigma)\|_Y:\ \sigma\in E\}.
\end{equation}

The same result holds for $\Lambda$ and $\hat{\Lambda}$.
\end{prop}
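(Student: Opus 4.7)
The plan is to prove both statements using the theory of $H$-convergence. The two key ingredients are (a) the classical Murat--Tartar compactness result, namely that $\mathcal{M}(a,b)$ and $\mathcal{M}_{sym}(a,b)$ are sequentially compact with respect to $H$-convergence, and (b) the fact that $H$-convergence of conductivity tensors gives pointwise (weak) convergence of the associated Neumann-to-Dirichlet (or Dirichlet-to-Neumann) maps on each fixed boundary datum. Together these will yield the lower semicontinuity estimate, and the existence part then follows from the direct method.

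First I would reduce strong $L^1$ convergence to $H$-convergence, so that both hypotheses can be handled uniformly. Given $\sigma_n\in E$ with $\sigma_n\to\sigma$ in $X=L^1(\Omega,\mathbb{M}^{N\times N}(\mathbb{R}))$, the sequence is uniformly bounded in $L^\infty$ by the ellipticity conditions defining $E$, hence (up to a subsequence, and therefore in fact along the whole sequence by uniqueness of the limit) $\sigma_n\to\sigma$ strongly in every $L^p(\Omega)$ with $1\le p<\infty$. By the $H$-compactness of $E$, any subsequence admits a further subsequence $H$-converging to some $\sigma^\ast\in E$; passing to the limit in the weak formulation of the Neumann problem and using strong $L^2$-convergence of $\sigma_n\nabla w$ for fixed $w\in W^{1,2}(\Omega)$, one identifies $\sigma^\ast=\sigma$. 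Consequently the whole sequence $H$-converges to $\sigma$, and it suffices to establish lower semicontinuity in the $H$-convergence case.

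Next I would treat the Neumann-to-Dirichlet case (the Dirichlet-to-Neumann case being analogous). Fix $g\in\tilde{B}_1$, and let $v_n$, $v$ be the solutions of \eqref{Neumannproblem1} with conductivities $\sigma_n$ and $\sigma$, respectively. By the defining property of $H$-convergence for Neumann data, $v_n\rightharpoonup v$ weakly in $W^{1,2}(\Omega)$, so by continuity of the trace operator
\[
\mathcal{N}(\sigma_n)(g)=v_n|_{\partial\Omega}\rightharpoonup v|_{\partial\Omega}=\mathcal{N}(\sigma)(g)\quad\text{weakly in }W^{1/2,2}_\ast(\partial\Omega),
\]
and \emph{a fortiori} weakly in $\tilde{B}_2$, thanks to the continuous immersion $W^{1/2,2}_\ast(\partial\Omega)\subset\tilde{B}_2$. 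Weak lower semicontinuity of $\|\cdot\|_{\tilde{B}_2}$ then gives, for every $g$ with $\|g\|_{\tilde{B}_1}\le 1$,
\[
\|(\hat{\mathcal{N}}-\mathcal{N}(\sigma))(g)\|_{\tilde{B}_2}\le\liminf_n\|(\hat{\mathcal{N}}-\mathcal{N}(\sigma_n))(g)\|_{\tilde{B}_2}\le\liminf_n\|\hat{\mathcal{N}}-\mathcal{N}(\sigma_n)\|_Y,
\]
and taking the supremum over such $g$ yields the desired inequality. The same scheme, with Dirichlet boundary data and the $H$-convergence of the fluxes $\sigma_n\nabla u_n$ to $\sigma\nabla u$ in the dual pairing, handles $\Lambda$.

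Finally, for existence of a minimizer in \eqref{minpbmcond} when $E=\mathcal{M}(a,b)$ or $\mathcal{M}_{sym}(a,b)$, I would apply the direct method: take a minimizing sequence $\{\sigma_n\}\subset E$, extract via $H$-compactness a subsequence $H$-converging to some $\sigma_\infty\in E$, and conclude from the lower semicontinuity just established that $\sigma_\infty$ realizes the infimum. The main obstacle, and the reason the proposition excludes $\mathcal{M}_{scal}(a,b)$, is precisely the failure of $H$-compactness within the scalar class: $H$-limits of scalar conductivities are in general genuinely anisotropic, so the direct method breaks down. A secondary technical point is to guarantee that the pointwise weak convergence of boundary traces is compatible with the norm on the target spaces $\tilde{B}_2$ (respectively $B_2$); this is ensured by the continuous embeddings assumed of $\tilde{B}_i$ and $B_i$ into the natural fractional Sobolev spaces on $\partial\Omega$.
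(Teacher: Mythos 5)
Your argument is correct and is essentially the proof the paper relies on: the paper defers the lower semicontinuity statement to \cite{Ron15} and obtains existence from the sequential $H$-compactness of $\mathcal{M}(a,b)$ and $\mathcal{M}_{sym}(a,b)$ via the direct method, exactly as you do. The only steps you assert rather than prove --- that uniform ellipticity plus strong $L^1$ convergence implies $H$-convergence to the same limit, and that $H$-convergence yields weak $W^{1,2}(\Omega)$ convergence of the solutions of the Neumann problems (not just the Dirichlet problems used in the usual definition) --- are classical facts of the Murat--Tartar theory and constitute precisely the content of the cited reference.
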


The existence of a solution to \eqref{minpbmcond} is due to the fact that
$\mathcal{M}(a,b)$ and
$\mathcal{M}_{sym}(a,b)$ are (sequentially) compact with respect to $H$-convergence.
In order to have continuity, we need to consider suitable choices of the spaces $B_1$, $B_2$ and $\tilde{B}_1$, $\tilde{B}_2$. Namely we have the following result, see \cite{Ron15}.

\begin{prop}\label{Hconvcont}
Under the previous assumptions and notation, let $E=\mathcal{M}(a,b)$ and consider the distance $d$ on $Y$, induced by its norm.

There exists $Q_1>2$, depending on $N$, $\Omega$, $a$ and $b$ only,
such that
the following holds for any 
$2<p<Q_1$.

In the Dirichlet-to-Neumann case, we assume that
$B_1\subset W^{1-1/p,p}(\partial \Omega)$,
with continuous immersion. Then $\Lambda$ is continuous with respect to the strong convergence in $X$ and the distance $d$ on $Y$.

In the Neumann-to-Dirichlet case, we assume that
$\tilde{B}_1$ is contained, with continuous immersion, in the subspace of $g$ belonging to the
dual of $W^{1-1/p',p'}(\partial \Omega)$ such that $\langle g,1\rangle =1$.
Then $\mathcal{N}$ is continuous with respect to the strong convergence in $X$ and the distance $d$ on $Y$.
\end{prop}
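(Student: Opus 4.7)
The plan is to combine a Meyers-type higher integrability theorem for weak solutions of divergence form elliptic equations with a direct energy estimate for the difference of two such solutions, and then pass to an operator norm bound by duality. A preliminary remark is that $\sigma_n,\sigma\in\mathcal{M}(a,b)$ are uniformly bounded in $L^{\infty}(\Omega)$, so strong convergence $\sigma_n\to\sigma$ in $X=L^{1}(\Omega)$, coupled with interpolation, yields $\sigma_n\to\sigma$ in every $L^{r}(\Omega)$ with $1\le r<\infty$. This is the quantity that will be driven to zero at the end of the argument.

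Meyers' theorem, applied uniformly over $\mathcal{M}(a,b)$ in the Lipschitz domain $\Omega$, produces an exponent $Q_{1}=Q_{1}(N,\Omega,a,b)>2$ such that for every $2<p<Q_{1}$, every $\sigma\in\mathcal{M}(a,b)$ and every $\varphi\in W^{1-1/p,p}(\partial\Omega)$, the Dirichlet problem \eqref{Dirichletproblem1} has a unique weak solution $u\in W^{1,p}(\Omega)$ with $\|u\|_{W^{1,p}(\Omega)}\le C\|\varphi\|_{W^{1-1/p,p}(\partial\Omega)}$, $C=C(N,\Omega,a,b,p)$; the analogous Meyers estimate holds for the Neumann problem \eqref{Neumannproblem1} with data in the zero-mean subspace of $(W^{1-1/p',p'}(\partial\Omega))^{*}$. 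This selects exactly the range of $p$ in the statement. For the Dirichlet case, given $\varphi\in B_{1}$, let $u_{n},u\in W^{1,p}(\Omega)$ denote the solutions with coefficients $\sigma_{n},\sigma$ and set $w_{n}=u_{n}-u\in W^{1,2}_{0}(\Omega)$. Then $w_{n}$ weakly solves $-\operatorname{div}(\sigma_{n}\nabla w_{n})=\operatorname{div}((\sigma_{n}-\sigma)\nabla u)$. Testing with $w_{n}$, invoking ellipticity, and applying H\"older with $\tfrac{1}{2}=\tfrac{1}{p}+\tfrac{1}{r}$ (that is, $r=2p/(p-2)$) yields
$$\|\nabla w_{n}\|_{L^{2}(\Omega)}\le \tfrac{1}{a}\,\|\sigma_{n}-\sigma\|_{L^{r}(\Omega)}\|\nabla u\|_{L^{p}(\Omega)}\le C\|\sigma_{n}-\sigma\|_{L^{r}(\Omega)}\|\varphi\|_{W^{1-1/p,p}(\partial\Omega)}.$$
For any $\psi\in W^{1/2,2}(\partial\Omega)$ pick a bounded lifting $\tilde\psi\in W^{1,2}(\Omega)$ and split
$$\bigl(\Lambda(\sigma_{n})-\Lambda(\sigma)\bigr)(\varphi)[\psi]=\int_{\Omega}\sigma_{n}\nabla w_{n}\cdot\nabla\tilde\psi+\int_{\Omega}(\sigma_{n}-\sigma)\nabla u\cdot\nabla\tilde\psi.$$
Cauchy--Schwarz controls the first summand using the above estimate, while the same H\"older triple controls the second; both carry the factor $\|\sigma_{n}-\sigma\|_{L^{r}(\Omega)}$. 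Since the embedding $B_{1}\hookrightarrow W^{1-1/p,p}(\partial\Omega)$ is continuous, the estimate is uniform in $\varphi$, and therefore
$$\|\Lambda(\sigma_{n})-\Lambda(\sigma)\|_{\mathcal{L}(B_{1},B_{2})}\le C\|\sigma_{n}-\sigma\|_{L^{r}(\Omega)}\longrightarrow 0.$$

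The Neumann-to-Dirichlet case runs entirely parallel: the hypothesis on $\tilde B_{1}$ places the Neumann data in the correct dual space for Meyers regularity to give $v_{n},v\in W^{1,p}(\Omega)$ with a uniform bound in terms of the $\tilde B_{1}$-norm, after which the difference $v_{n}-v$ is handled by the same energy-plus-H\"older argument, with the roles of trace data and flux data exchanged. The main obstacle is precisely the passage from pointwise convergence $\Lambda(\sigma_{n})(\varphi)\to\Lambda(\sigma)(\varphi)$ (which would follow from ordinary $W^{1,2}$ theory) to convergence in the operator norm: standard $W^{1,2}$ estimates give a factor like $\|(\sigma_{n}-\sigma)\nabla u\|_{L^{2}}$ that need not be uniform in $\varphi$, and only the Meyers improvement $\nabla u\in L^{p}$ with $p>2$ permits one to extract such a uniform rate. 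This is the sole reason for restricting $p$ to the range $(2,Q_{1})$; every other ingredient is routine.
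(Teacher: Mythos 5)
Your proof is correct. The paper itself gives no argument for this proposition, deferring entirely to the reference \cite{Ron15}, and the route you take --- a uniform Meyers-type $W^{1,p}$ estimate over $\mathcal{M}(a,b)$ fixing the exponent $Q_1$, the energy identity for $w_n=u_n-u$ tested against itself, H\"older with $\tfrac12=\tfrac1p+\tfrac1r$ to isolate the factor $\|\sigma_n-\sigma\|_{L^r}$ (which tends to zero by $L^1$--$L^\infty$ interpolation), and the final duality splitting to pass to the operator norm --- is exactly the standard argument used there, so nothing further is needed.
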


A particularly interesting case for Neumann-to-Dirichlet maps is provided in the following.

\begin{oss}\label{L2L2rem}
We can choose $\tilde{B}_1=\tilde{B}_2=L^2_{\ast}(\partial\Omega)$ since 
$L^2(\partial\Omega)$ is contained in the dual of $W^{1-1/p',p'}(\partial \Omega)$ for some $p$, $2<p<Q_1$, with $p$ close enough to $2$,
and $W^{1/2,2}_{\ast}(\partial\Omega)\subset L^2_{\ast}(\partial\Omega)$, with continuous immersions.
\end{oss}
We illustrate the applicability of our abstract results for the inverse conductivity problem.
First of all, we need to consider as $Y$ and $d=d_Y$ those satisfying the assumptions of Proposition~\ref{Hconvcont}.
As noted in Subsection~\ref{Calderonsub},
there are several possible choices for $|\cdot|$. Here we choose as $|\cdot|$ either $|\cdot|_{BV(\Omega)}$ or $\|\cdot\|_{BV(\Omega)}$.

Then the results of Subsections~\ref{abstract1subsec} and \ref{abstractsecondsub} yield the following, with exactly the same notation.

\begin{teo}\label{mainthmcond}
Assume that $Y$ satisfies the hypotheses of Proposition~\textnormal{\ref{Hconvcont}} 
and that $|\cdot|$ is either $|\cdot|_{BV(\Omega)}$ or $\|\cdot\|_{BV(\Omega)}$.

If \eqref{secondcondcoeff} is satisfied, then for the multiscale sequence
$\{\tilde{\sigma}_n\}_{n\in\mathbb{N}}$ defined by \eqref{tildetight} from the sequence $\{\sigma_n\}_{n\in\mathbb{N}}$ obtained from \eqref{regularizedpbmbis} and \eqref{tightconstr} we have that
$$\lim_n\|\hat{\mathcal{N}}-\mathcal{N}(\tilde{\sigma}_n)\|_Y=\delta_0=\inf\{\|\hat{\mathcal{N}}-\mathcal{N}(\sigma)\|_Y:\ \sigma\in E\}.$$

Furthermore, if $E=\mathcal{M}(a,b)$ or 
$E=\mathcal{M}_{sym}(a,b)$, then, up to a subsequence, $\tilde{\sigma}_n$ $H$-converges to $\tilde{\sigma}_{\infty}\in E$, where $\tilde{\sigma}_{\infty}$ solves \eqref{minpbmcond}.

The same result holds for $\Lambda$ and $\hat{\Lambda}$.
\end{teo}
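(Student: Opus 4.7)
The plan is to deduce Theorem~\ref{mainthmcond} as a direct application of Theorem~\ref{mainthmbis0}, once I verify that the concrete Calder\'on setting (with $X=L^1(\Omega,\mathbb{M}^{N\times N}(\mathbb{R}))$, $E\in\{\mathcal{M}(a,b),\mathcal{M}_{sym}(a,b),\mathcal{M}_{scal}(a,b)\}$, the chosen $Y$ and $|\cdot|\in\{|\cdot|_{BV(\Omega)},\|\cdot\|_{BV(\Omega)}\}$) fits the abstract framework of Subsection~\ref{abstractsecondsub}. Conditions \ref{BVass1}) and \ref{BVass2}) are immediate from the definition of $BV$; \ref{BVass3}) holds because smooth matrix fields in $E$ (obtained by component-wise mollification combined with truncation of eigenvalues to keep ellipticity) are $L^1$-dense in $E$ and have finite $BV$; \ref{BVass4}) is the compact embedding $BV(\Omega)\hookrightarrow L^1(\Omega)$ applied component-wise on a Lipschitz domain; \ref{BVass5}) is the classical lower semicontinuity of the total variation under $L^1$-convergence. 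Continuity of $\mathcal{N}:E\to Y$ with respect to $L^1$-convergence in $X$ and the operator-norm distance in $Y$ is precisely Proposition~\ref{Hconvcont} (for $\Lambda$ the same proposition is used).

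Next I would check that the regularized minimization problems \eqref{regularizedpbmbis} and \eqref{tightconstr} admit solutions in this concrete setting. For a minimizing sequence $\{\sigma^k\}\subset E$ with $\tilde\sigma_{n-1}+\sigma^k\in E$, the $BV$ seminorm of $\sigma^k$ is uniformly bounded by the bound on the functional, while the $L^1$-norm is controlled since $\tilde\sigma_{n-1}+\sigma^k$ lies in the bounded class $E$. Thus $\sigma^k$ is bounded in $BV$ and, by \ref{BVass4}), admits a subsequence converging in $L^1$ to some $\sigma^\star$. The limit $\tilde\sigma_{n-1}+\sigma^\star$ stays in $E$ by $L^1$-closedness of the matrix classes $\mathcal{M}(a,b)$, $\mathcal{M}_{sym}(a,b)$, $\mathcal{M}_{scal}(a,b)$, and the lower semicontinuity of $d(\hat{\mathcal{N}},\mathcal{N}(\cdot))^\alpha+a_n|\cdot|^\gamma$ combined with \ref{BVass5}) shows $\sigma^\star$ is a minimizer.

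With these assumptions verified, Theorem~\ref{mainthmbis0} applies and directly yields $\varepsilon_0=\delta_0$, in particular
\[
\lim_n\|\hat{\mathcal{N}}-\mathcal{N}(\tilde\sigma_n)\|_Y=\delta_0,
\]
which is the first conclusion. For the second conclusion, assume $E=\mathcal{M}(a,b)$ or $\mathcal{M}_{sym}(a,b)$. The crucial property is that these classes are sequentially compact with respect to $H$-convergence (see \cite{All,Mur e Tar1,Mur e Tar2}); hence the sequence $\{\tilde\sigma_n\}\subset E$ admits a subsequence $\tilde\sigma_{n_k}$ $H$-converging to some $\tilde\sigma_\infty\in E$. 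The lower semicontinuity of $\sigma\mapsto\|\hat{\mathcal{N}}-\mathcal{N}(\sigma)\|_Y$ under $H$-convergence (Proposition~\ref{Hconv}) then gives
\[
\|\hat{\mathcal{N}}-\mathcal{N}(\tilde\sigma_\infty)\|_Y\leq\liminf_k\|\hat{\mathcal{N}}-\mathcal{N}(\tilde\sigma_{n_k})\|_Y=\delta_0,
\]
and since $\delta_0$ is the infimum over $E$ this forces equality, so $\tilde\sigma_\infty$ solves \eqref{minpbmcond}.

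The Dirichlet-to-Neumann case is entirely analogous: Propositions~\ref{Hconv} and \ref{Hconvcont} provide the same lower semicontinuity and continuity statements for $\Lambda$ under the hypotheses on $B_1$, $B_2$, so the identical argument applies verbatim with $\Lambda$ and $\hat{\Lambda}$ in place of $\mathcal{N}$ and $\hat{\mathcal{N}}$. I expect no real obstacle here beyond bookkeeping: the main work has already been done in the abstract Theorem~\ref{mainthmbis0} and in the $H$-convergence analysis of \cite{Ron15}. The one step that deserves care is the density approximation needed to invoke the abstract proof: one must ensure the approximating $\overline{\sigma}$ with $|\overline{\sigma}|<\infty$ can be chosen \emph{inside} $E$, which is why I emphasized the ellipticity-preserving mollification in the verification of \ref{BVass3}).
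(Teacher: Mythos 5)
Your proposal is correct and follows essentially the same route as the paper: the paper likewise obtains the first conclusion by verifying that the Calder\'on setting satisfies the abstract hypotheses (with continuity of $\mathcal{N}$ supplied by Proposition~\ref{Hconvcont}) and invoking Theorem~\ref{mainthmbis0}, and the second by combining sequential compactness of $\mathcal{M}(a,b)$ and $\mathcal{M}_{sym}(a,b)$ under $H$-convergence with the lower semicontinuity of Proposition~\ref{Hconv}. Your added care about choosing the finite-$BV$ approximant inside $E$ and about controlling the $L^1$-norm via the boundedness of $E$ when $|\cdot|=|\cdot|_{BV(\Omega)}$ is a sound filling-in of details the paper leaves implicit.
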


Let us observe that $\delta_0\geq 0$ corresponds to the noise level of our measurements and that Theorem~\ref{mainthmcond} contains as special cases, namely taking $a_n=0$ for any $n\in\mathbb{N}$, Theorem~\ref{mainthmcond0} and Corollary~\ref{corollary1}.

If we wish to have a stronger convergence than $H$-convergence, we need further assumptions, and apply the tighter multiscale construction. We assume that there exists $\hat{\sigma}\in E$ such that
\begin{equation}\label{anothermincond}
\|\hat{\mathcal{N}}-\mathcal{N}(\hat{\sigma})\|_Y
=\delta_0=\min\{\|\hat{\mathcal{N}}-\mathcal{N}(\sigma)\|_Y:\ \sigma\in E\}\quad\text{and}\quad |\hat{\sigma}|<+\infty.
\end{equation} 
As before, we may assume that $\hat{\sigma}$ solves the following minimization problem
\begin{equation}\label{minimalcond}
\min\{ |\sigma|:\ \sigma\in E\text{ and }\|\hat{\mathcal{N}}-\mathcal{N}(\sigma)\|_Y=\delta_0\}<+\infty.
\end{equation}
We call $\hat{E}$ the set of solutions of \eqref{minimalcond} and we note that $\hat{E}$ is compact in $X$.

\begin{teo}\label{mainthmcond2}
Assume that $Y$ satisfies the hypotheses of Proposition~\textnormal{\ref{Hconvcont}} 
and that $|\cdot|$ is either $|\cdot|_{BV(\Omega)}$ or $\|\cdot\|_{BV(\Omega)}$. We further assume that \eqref{crucialcondition0} holds and 
that there exists a solution $\hat{\sigma}$ of\eqref{minimalcond}.

 Consider the sequence $\{\tilde{\sigma}_n\}_{n\in\mathbb{N}}$ defined by \eqref{tildetight} from the sequence $\{\sigma_n\}_{n\in\mathbb{N}}$ obtained from \eqref{regularizedpbmbis} and \eqref{tightconstr}.

Then, up to a subsequence, $\tilde{\sigma}_n$ converges to $\tilde{\sigma}_{\infty}$ strongly in $X$, where $\tilde{\sigma}_{\infty}$ is a \textnormal{(}possibly different from $\hat{\sigma}$\textnormal{)} solution to \eqref{minimalcond}, that is $\|\hat{\mathcal{N}}-\mathcal{N}(\tilde{\sigma}_{\infty})\|_Y=\delta_0$ and $|\tilde{\sigma}_{\infty}|=|\hat{\sigma}|$. Moreover, we have that 
$$\lim_n|\tilde{\sigma}_n|= |\hat{\sigma}|$$
and
\begin{equation}\label{distancecond1}
\lim_n \mathrm{dist}(\tilde{\sigma}_n,\hat{E})=0.
\end{equation}

The same result holds for $\Lambda$ and $\hat{\Lambda}$, with the obvious changes, for example $\hat{E}$ in this case is the set of solutions of
$$\min\{ |\sigma|:\ \sigma\in E\text{ and }\|\hat{\Lambda}-\Lambda(\sigma)\|_Y=\delta_0\}<+\infty.$$
\end{teo}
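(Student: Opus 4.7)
The plan is to derive Theorem~\ref{mainthmcond2} directly as a special case of the abstract Theorem~\ref{minimizercor0}, by verifying that all hypotheses of the latter are fulfilled in the Calder\'on setting with $X=L^1(\Omega,\mathbb{M}^{N\times N}(\mathbb{R}))$, $E$ any of $\mathcal{M}(a,b)$, $\mathcal{M}_{sym}(a,b)$, $\mathcal{M}_{scal}(a,b)$, and $|\cdot|$ either $|\cdot|_{BV(\Omega)}$ or $\|\cdot\|_{BV(\Omega)}$.

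First I would verify the abstract conditions \ref{BVass1})--\ref{BVass5}) of Subsection~\ref{abstract1subsec}. Properties \ref{BVass1}) and \ref{BVass2}) are immediate since $|\cdot|_{BV(\Omega)}$ is a seminorm and $\|\cdot\|_{BV(\Omega)}$ a norm, both extended componentwise to matrix-valued fields. Property \ref{BVass3}) follows from the standard density of smooth compactly supported functions (and hence of $BV(\Omega)$) in $L^1(\Omega)$, applied componentwise. Property \ref{BVass4})---relative sequential compactness of sublevel sets of $|\cdot|$ in $L^1$---reduces componentwise to the classical Rellich-type compactness in $BV(\Omega)$; here one uses that on the bounded domain $\Omega$ the uniform $L^\infty$-bound $\|\sigma\|\leq b$ valid on $E$ provides automatic control of the $L^1$-norm, so that bounded $|\cdot|_{BV}$-sublevel sets in $E$ are in fact bounded in $\|\cdot\|_{BV(\Omega)}$. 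Property \ref{BVass5}) is the well-known lower semicontinuity of the total variation with respect to $L^1$-convergence, again applied componentwise. Finally, each of the classes $\mathcal{M}(a,b)$, $\mathcal{M}_{sym}(a,b)$, $\mathcal{M}_{scal}(a,b)$ is easily shown to be closed in $L^1$: an $L^1$-convergent sequence has a pointwise a.e.\ subsequence, and the ellipticity, symmetry and scalar constraints pass to the limit pointwise a.e.

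Second, I would invoke Proposition~\ref{Hconvcont} to obtain the continuity of $\mathcal{N}:E\to Y$ (respectively of $\Lambda:E\to Y$) with respect to strong convergence in $X$ and the distance $d=d_Y$, which is precisely what the hypothesis on $Y$ in Theorem~\ref{mainthmcond2} is designed to ensure. This continuity, together with \ref{BVass4}) and \ref{BVass5}) and the closedness of $E$ in $L^1$, also guarantees existence of minimizers for \eqref{regularizedpbmbis} and \eqref{tightconstr} by the direct method: a minimizing sequence has bounded $|\cdot|$, so admits an $L^1$-convergent subsequence whose limit lies in $E$; lower semicontinuity of $|\cdot|$ and continuity of $\mathcal{N}$ then allow passage to the limit in the functional. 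Thus the sequence $\{\sigma_n\}_{n\in\mathbb{N}}$ and the partial sums $\{\tilde{\sigma}_n\}_{n\in\mathbb{N}}$ are well defined, and condition \eqref{minimal00} of the abstract theorem translates literally into \eqref{minimalcond} here; by the same direct-method argument $\hat{E}$ is sequentially compact in $X$.

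Third, with every hypothesis of Theorem~\ref{minimizercor0} in place, I would invoke it directly: it delivers a subsequence of $\{\tilde{\sigma}_n\}$ converging strongly in $X=L^1$ to a $\tilde{\sigma}_\infty$ which solves \eqref{minimalcond}, together with $|\tilde{\sigma}_n|\to|\hat{\sigma}|$ and $\mathrm{dist}(\tilde{\sigma}_n,\hat{E})\to 0$, which are exactly the conclusions of Theorem~\ref{mainthmcond2}. The Dirichlet-to-Neumann version is identical, with $\Lambda$ and $\hat{\Lambda}$ in place of $\mathcal{N}$ and $\hat{\mathcal{N}}$, thanks to the corresponding statement in Proposition~\ref{Hconvcont}. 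The main conceptual (rather than technical) obstacle is the continuity input from Proposition~\ref{Hconvcont}: having continuity of the forward operator in the strong $L^1$ topology---not only under $H$-convergence as in Corollary~\ref{corollary1}---is what upgrades the very weak conclusion of Corollary~\ref{corollary1} to strong $L^1$ convergence and, in turn, makes the tighter construction of Subsection~\ref{abstractsecondsub} effective for scalar conductivities as well.
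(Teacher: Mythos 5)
Your proposal is correct and follows essentially the same route as the paper: the paper derives Theorem~\ref{mainthmcond2} by noting that, in the Calder\'on setting of Subsection~\ref{Calderonsub} with the continuity supplied by Proposition~\ref{Hconvcont}, all the hypotheses of the abstract Theorem~\ref{minimizercor0} are satisfied, and then invoking that theorem verbatim (and likewise for $\Lambda$). Your verification of assumptions \ref{BVass1})--\ref{BVass5}), including the observation that the $L^\infty$ ellipticity bound on $E$ compensates for using the $BV$ seminorm, matches the paper's intended argument.
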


Theorem~\ref{mainthmcond2} contains as a special case Theorem~\ref{mainthmcond20}.

\appendix

\section{Optimality of the abstract results}\label{counterexsub}

In this appendix we shall present arguments and examples that show the optimality of our abstract results.
Throughout we use the assumptions and the notation of Subsection~\ref{abstractsecondsub}.
 
\subsection{The single-step regularization}
\label{abstractonestep}
 
For any $\lambda>0$, let $\sigma_{\lambda}$ be a solution to
$$\min\left\{\left(\lambda[d(\hat{\mathcal{N}},\mathcal{N}(\sigma))^{\alpha}]+|\sigma|^{\beta}\right):\ \sigma\in E\right\}.$$
Our assumptions guarantee that at least one minimizer $\sigma_{\lambda}$ does exist.
Then we have the following result.

\begin{prop}\label{onestepcase}
We have that
$$\lim_{\lambda\to+\infty}d(\hat{\mathcal{N}},\mathcal{N}(\sigma_{\lambda}))=\delta_0.$$
\end{prop}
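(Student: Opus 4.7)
The plan is to mirror, in a much simpler setting, the opening moves of the proof of Theorem~\ref{mainthm0}. Since $\sigma_\lambda \in E$, the inequality $d(\hat{\mathcal{N}},\mathcal{N}(\sigma_\lambda)) \geq \delta_0$ is automatic from the definition of $\delta_0$, so it remains to show $\limsup_{\lambda\to+\infty} d(\hat{\mathcal{N}},\mathcal{N}(\sigma_\lambda)) \leq \delta_0$.

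To this end, I would fix an arbitrary $\varepsilon > 0$ and produce a competitor $\overline{\sigma}\in E$ with $|\overline{\sigma}|<+\infty$ and
\[
d(\hat{\mathcal{N}},\mathcal{N}(\overline{\sigma})) \leq \delta_0 + \varepsilon.
\]
The existence of such an $\overline{\sigma}$ uses exactly the same ingredients invoked in the proof of Theorem~\ref{mainthm0}: the definition of $\delta_0$ as an infimum, the density assumption~\ref{BVass3}) on the finite-energy elements, and the continuity of $\sigma\mapsto d(\hat{\mathcal{N}},\mathcal{N}(\sigma))$ with respect to strong convergence in $X$.

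The key estimate then comes from testing the minimality of $\sigma_\lambda$ against $\overline{\sigma}$:
\[
\lambda\, d(\hat{\mathcal{N}},\mathcal{N}(\sigma_\lambda))^{\alpha} + |\sigma_\lambda|^{\beta} \leq \lambda\, d(\hat{\mathcal{N}},\mathcal{N}(\overline{\sigma}))^{\alpha} + |\overline{\sigma}|^{\beta}.
\]
Discarding the nonnegative term $|\sigma_\lambda|^{\beta}$ and dividing by $\lambda$ gives
\[
d(\hat{\mathcal{N}},\mathcal{N}(\sigma_\lambda))^{\alpha} \leq (\delta_0 + \varepsilon)^{\alpha} + \frac{|\overline{\sigma}|^{\beta}}{\lambda}.
\]
Taking $\limsup_{\lambda\to+\infty}$ and using that $|\overline{\sigma}|<+\infty$ yields $\limsup_{\lambda\to+\infty} d(\hat{\mathcal{N}},\mathcal{N}(\sigma_\lambda))^{\alpha} \leq (\delta_0 + \varepsilon)^{\alpha}$, and since $\varepsilon>0$ was arbitrary the claim follows.

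I do not expect any genuine obstacle here: the statement is the classical convergence of Tikhonov regularizers as $\lambda \to +\infty$, and the proof requires only the density/continuity bookkeeping that has already been carried out in the proof of Theorem~\ref{mainthm0}. The only delicate point worth flagging is ensuring that the approximating $\overline{\sigma}$ can be chosen inside $E$; this is handled exactly as in that earlier proof.
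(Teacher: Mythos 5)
Your proof is correct and follows essentially the same route as the paper's: both test the minimality of $\sigma_{\lambda}$ against a finite-energy competitor $\overline{\sigma}\in E$ obtained from the density assumption~\ref{BVass3}) and the continuity of the fidelity term, then let $\lambda\to+\infty$ so that the contribution $|\overline{\sigma}|^{\beta}/\lambda$ vanishes. The paper phrases the argument as a contradiction along a sequence $\lambda_n\to+\infty$ with a constant $C_1<1$ in place of your additive $\varepsilon$, but this is a purely cosmetic difference.
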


\begin{proof}
By contradiction, assume that there exists a sequence $\{\lambda_n\}_{n\in\mathbb{N}}$ of positive numbers such that $\lim_n\lambda_n=+\infty$ and such that
$\lim_{n}d(\hat{\mathcal{N}},\mathcal{N}(\sigma_{\lambda_n}))=\varepsilon_0>\delta_0$.

Hence there exist $0<C_1<1$ and $\overline{\sigma}\in E$ such that \eqref{C1def} and \eqref{sigmabardef} hold.
We have that for any $n\in\mathbb{N}$
$$d(\hat{\mathcal{N}},\mathcal{N}(\sigma_{\lambda_n}))^{\alpha}\leq
d(\hat{\mathcal{N}},\mathcal{N}(\sigma_{\lambda_n}))^{\alpha}+\frac{1}{\lambda_n}|\sigma_{\lambda_n}|^{\beta}\leq
d(\hat{\mathcal{N}},\mathcal{N}(\overline{\sigma}))^{\alpha}+\frac{1}{\lambda_n}|\overline{\sigma}|^{\beta}.$$
Since, as $n\to\infty$, the left hand side converges to $\varepsilon_0^{\alpha}$ and the right hand side converges to 
$d(\hat{\mathcal{N}},\mathcal{N}(\overline{\sigma}))^{\alpha}\leq C_1\varepsilon_0^{\alpha}<\varepsilon_0^{\alpha}$, we have a contradiction.\end{proof}

It is clear that for any sequence $\{\lambda_n\}_{n\in\mathbb{N}}$ of positive numbers such that $\lim_n\lambda_n=+\infty$ and such that
$\sigma_{\lambda_n}$ converges to some $\sigma_{\infty}$, then
$\sigma_{\infty}$ is a solution to \eqref{minpbm0}.

Let us now investigate which conditions allow convergence of $\{\sigma_{\lambda}\}_{\lambda>0}$, as $\lambda\to+\infty$, at least up to subsequences. Namely, we consider a sequence $\{\lambda_n\}_{n\in\mathbb{N}}$ of positive numbers such that
$\lim_n\lambda_n=+\infty$ and we ask whether $\{\sigma_{\lambda_n}\}_{n\in\mathbb{N}}$, or one of its subsequences, converges.
 Clearly a necessary condition is that a solution to \eqref{minpbm0} does exist.

Let us make the following stronger assumption. We assume that there exists $\hat{\sigma}\in E$ solving 
\eqref{minimal00}.
Let us recall that $\hat{E}$ is the set of solutions of \eqref{minimal00} and that
$\hat{E}$ is sequentially compact in $X$.

First of all we note that $|\sigma_{\lambda}|\leq |\hat{\sigma}|$ for any $\lambda>0$.
Hence, by the lower semicontinuity properties of $|\cdot|$, the following result is immediate.

\begin{prop}\label{singlestepprop}
Let us assume that there exists a solution $\hat{\sigma}$ of \eqref{minimal00}.

Let us consider a sequence $\{\lambda_n\}_{n\in\mathbb{N}}$ of positive numbers such that
$\lim_n\lambda_n=+\infty$.

Then, up to a subsequence, $\{\sigma_{\lambda_n}\}_{n\in\mathbb{N}}$ converges to $\sigma_{\infty}$ where $\sigma_{\infty}$ is a \textnormal{(}possibly different from $\hat{\sigma}$\textnormal{)} solution to \eqref{minimal00}, that is, $d(\hat{\mathcal{N}},\mathcal{N}(\sigma_{\infty}))=\delta_0$ and $|\sigma_{\infty}|=|\hat{\sigma}|$. Furthermore, we have that
$$\lim_{\lambda\to+\infty}|\sigma_{\lambda}|= |\hat{\sigma}|$$
and 
\begin{equation}\label{distance00}
\lim_{\lambda\to+\infty} \mathrm{dist}(\sigma_{\lambda},\hat{E})=0.
\end{equation}
\end{prop}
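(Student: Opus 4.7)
The plan is to exploit the optimality of $\sigma_\lambda$ against the competitor $\hat{\sigma}\in\hat{E}$, combine it with the compactness and semicontinuity properties \ref{BVass4})--\ref{BVass5}) of $|\cdot|$, and then invoke Proposition~\ref{onestepcase} to identify the limit.

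First I would plug $\sigma=\hat{\sigma}$ into the minimization problem defining $\sigma_\lambda$, obtaining
$$\lambda\,d(\hat{\mathcal{N}},\mathcal{N}(\sigma_\lambda))^\alpha+|\sigma_\lambda|^\beta\leq \lambda\,d(\hat{\mathcal{N}},\mathcal{N}(\hat{\sigma}))^\alpha+|\hat{\sigma}|^\beta=\lambda\delta_0^\alpha+|\hat{\sigma}|^\beta.$$
Since $d(\hat{\mathcal{N}},\mathcal{N}(\sigma_\lambda))\geq\delta_0$, the $\lambda$-terms can be dropped from both sides, giving the uniform bound $|\sigma_\lambda|\leq |\hat{\sigma}|<+\infty$ for every $\lambda>0$. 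This is the crucial a priori estimate.

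Next, given a sequence $\lambda_n\to+\infty$, the bound $|\sigma_{\lambda_n}|\leq|\hat{\sigma}|$ together with assumption~\ref{BVass4}) yields a subsequence (not relabelled) converging strongly in $X$ to some $\sigma_\infty\in E$ (using that $E$ is closed). By Proposition~\ref{onestepcase}, $d(\hat{\mathcal{N}},\mathcal{N}(\sigma_{\lambda_n}))\to\delta_0$, so by continuity of $\sigma\mapsto d(\hat{\mathcal{N}},\mathcal{N}(\sigma))$ we get $d(\hat{\mathcal{N}},\mathcal{N}(\sigma_\infty))=\delta_0$. The lower semicontinuity assumption~\ref{BVass5}) then gives
$$|\sigma_\infty|\leq\liminf_n|\sigma_{\lambda_n}|\leq\limsup_n|\sigma_{\lambda_n}|\leq |\hat{\sigma}|,$$
so $\sigma_\infty$ is admissible in \eqref{minimal00} with value at most $|\hat{\sigma}|$, hence $\sigma_\infty\in\hat{E}$. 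The squeeze then forces $|\sigma_{\lambda_n}|\to|\hat{\sigma}|$ along the subsequence.

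Finally, since the argument applies to every sequence $\lambda_n\to+\infty$, a standard subsequence-of-subsequence argument upgrades the conclusions to hold for the full limit $\lambda\to+\infty$: both $|\sigma_\lambda|\to|\hat{\sigma}|$ and $\mathrm{dist}(\sigma_\lambda,\hat{E})\to 0$ follow by contradiction (if either failed, one could extract a sequence $\lambda_n\to\infty$ violating the limit, then by the argument above extract a further subsequence converging to some point of $\hat{E}$ with norm $|\hat{\sigma}|$, contradiction). There is no real obstacle here: the argument is essentially a one-step version of the proof of Theorem~\ref{minimizercor0}, bypassing the multiscale induction entirely because the single minimizer $\sigma_\lambda$ is directly comparable to $\hat{\sigma}$ without needing a geometrically growing bound like \eqref{growth}.
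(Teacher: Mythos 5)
Your proof is correct and follows essentially the same route as the paper: the comparison with the competitor $\hat{\sigma}$ yields the uniform bound $|\sigma_\lambda|\leq|\hat{\sigma}|$, after which compactness \ref{BVass4}), lower semicontinuity \ref{BVass5}), continuity of $\sigma\mapsto d(\hat{\mathcal{N}},\mathcal{N}(\sigma))$ and Proposition~\ref{onestepcase} identify every subsequential limit as an element of $\hat{E}$ with $|\sigma_\infty|=|\hat{\sigma}|$. The paper states only the bound and calls the rest ``immediate''; your write-up supplies exactly the intended details, including the subsequence-of-subsequences step for \eqref{distance00}.
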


It can be immediately noted that, using our multiscale procedure, we do not lose any of the convergence properties that hold for the single-step regularization when we let the regularization parameter go to $+\infty$.

In the next example we wish to prove the following remarks. Let us assume that a solution to \eqref{minpbm0} does exist but 
\eqref{anothermin00} does not hold, that is, \eqref{minimal00} does not have a solution.
Then we may have that $\sigma_{\lambda}$ does not converge, not even up to subsequences, to a solution to \eqref{minpbm0}. This suggest that even for the multiscale scale, the validity of \eqref{anothermin00}, although a strong requirement, is a necessary assumption to guarantee convergence of the sequence $\{\tilde{\sigma}_n\}_{n\in\mathbb{N}}$ or of one of its subsequences.

\begin{exam}\label{L2minexample}
We shall show two different cases. In the first one we obtain that $\sigma_{\lambda}$
is bounded but does not converge, not even up to subsequences. In the second one we have that $\sigma_{\lambda}$ is such that $\lim_{\lambda\to+\infty}\|\sigma_{\lambda}\|=+\infty$.

Let us consider the following common framework for these two examples. Let
$$l_2=\left\{a=\{a_n\}_{n\geq 1}:\ a_n\in\mathbb{R}\text{ for any }n\geq 1\text{ and }\sum_{n=1}^{+\infty}a_n^2<+\infty\right\}$$
which is a Hilbert space with the scalar product
$$\langle a,b\rangle=\sum_{n=1}^{+\infty}a_nb_n\quad\text{for any }a,b\in l_2.$$
Therefore we define, for any $a\in l_2$,
$$\|a\|=\left(\sum_{n=1}^{+\infty}a_n^2\right)^{1/2}\quad\text{and}\quad |a|=\left(\sum_{n=1}^{+\infty}(na_n)^2\right)^{1/2}.$$
It is easy to show that $|\cdot|$ satisfies the assumptions stated above.

Let us fix $b\in l_2$ such that $|b|=+\infty$.
Then we shall define two different versions of a continuous function $\mathcal{N}:l_2\to \mathbb{R}$
such that $\mathcal{N}(b)=0$ and $\mathcal{N}(a)>0$ for any $a\in l_2$, $a\neq b$. 
We fix the data $\alpha=2$, $\beta=2$, $X=E=l_2$ and $\hat{\mathcal{N}}=0=\mathcal{N}(b)$.

In both cases we need the following construction.
For any $r>0$ let us consider the following minimization problem
$$f(r)=\min\{|a|:\ \|a-b\|=r\}.$$
It is easy to show that such a minimization problem has a solution.
It is a straightforward, even if long, computation to show that  
$f:(0,\|b\|]\to\mathbb{R}$ is a nonnegative, continuous, strictly decreasing function such that $f(\|b\|)=0$ and
$\lim_{r\to 0^+}f(r)=+\infty$.

Then we call $A_1=\left\{a\in l_2\text{ such that }\|a-b\|\leq \|b\|/2\right\}$ and we set
$$\mathcal{N}(a)=1/f(r)\quad\text{for any }a\in A_1\text{ such that }\|a-b\|=r,$$ 
thus meaning also that $\mathcal{N}(b)=0$.
We can show that, fixed $\lambda>0$,
$$\min\left\{\left(|\mathcal{N}(a)|^2+|a|^2/\lambda\right):\ a\in A_1\right\}$$
is equivalent to solve
$$\min\left\{\left(\frac{1}{x} +\frac{x}{\lambda}\right) :\ x\geq C\right\}=
\left\{\begin{array}{ll}
2/\sqrt {\lambda} & \text{if }\sqrt{\lambda}\geq C\\
1/C+C/\lambda & \text{if }\sqrt{\lambda}\leq C\\
\end{array}\right.
$$
where $C=(f(\|b\|/2))^2$.

The definition of $\mathcal{N}$ outside $A_1$ is different for the two examples. Let us begin with the first one.

\smallskip

\noindent
\textbf{First version.}
Let us define an auxiliary function $\tilde{\mathcal{N}}:l_2\to \mathbb{R}$ as follows
$$\tilde{\mathcal{N}}(a)=\sum_{n=1}^{+\infty}\frac{|a_n|}{n^2}\quad\text{for any }a=\{a_n\}_{n\geq 1}\in l_2.$$
Clearly $\tilde{\mathcal{N}}$ is continuous and it is positive except for $a=0$. Fixed a positive constant $r$, let us consider 
$$c(\lambda,r)=\min\left\{\left(|\tilde{\mathcal{N}}(a)|^2+|a|^2/\lambda\right):\ a\in l_2\text{ such that }\|a\|=r\right\}.$$
Let us assume that for some $n\geq 3$, we have $n^4\leq \lambda <(n+1)^4$, that is
$\sqrt{\lambda}=c_1^2n^2+c_2^2(n+1)^2$ with $c_1$ and $c_2$ nonnegative numbers such that $c_1^2+c_2^2=1$.
Take $a\in l^2$ such that $a_i=0$ for any $i$ different from $n$ and $n+1$, $a_n=rc_1$, and $a_{n+1}=rc_2$.
We obtain that 
$$
c(\lambda,r)\leq r^2\left(\left(\frac{c_1}{n^2}+\frac{c_2}{(n+1)^2}\right)^2
+ \frac{1}{\sqrt{\lambda}}\right)\leq
r^2\left(\frac{2}{n^4}+ \frac{1}{\sqrt{\lambda}}\right)\leq r^2\left(\frac{2}{\sqrt{\lambda}}\right).
$$

Therefore there exists $\overline{\lambda}\geq 3^4>0$ such that for any $\lambda\geq \overline{\lambda}$ we have $\sqrt{\lambda}\geq 4C$.
Then we have that
$$\frac{2}{\sqrt{\lambda}}\leq \frac{1}{C}=\mathcal{N}(a)^2\quad\text{for any }a\in\partial A_1.$$

We fix $r_0$ such that $0<r_0<\max\{1/2,\|b\|/4\}$.
Then we call
$$A_2=\left\{a\in l_2\text{ such that }r_0/2\leq \|a\|\leq 3r_0/2\right\}$$
 and we define
$$\mathcal{N}(a)=\tilde{\mathcal{N}}\left(r_0\frac{a}{\|a\|}\right)
+ C_1\left|\|a\|-r_0\right|\quad\text{for any }a\in A_2$$
where $C_1$ is a positive constant such that
$|\mathcal{N}(a)|^2>1/C$ for any $a\in \partial A_2$.

Then we can extend $\mathcal{N}$ in a continuous way outside $A_1\cup A_2$ in such a way that $\mathcal{N}(a)>0$ and $|\mathcal{N}(a)|^2\geq 1/C$ for any $a\in l_2\backslash (A_1\cup A_2)$. We obtain that $\mathcal{N}$ is continuous, nonnegative, and it is $0$ only at $b$. Nevertheless, for any $\lambda\geq \overline{\lambda}$ we have that
$\sigma_{\lambda}\in A_2$, actually we easily deduce that $
r_0/2\leq \|\sigma_{\lambda}\|\leq r_0$.

Therefore for any sequence $\{\lambda_n\}_{n\in\mathbb{N}}$ of positive numbers such that
$\lim_n\lambda_n=+\infty$ we have that
$\{\sigma_{\lambda_n}\}_{n\in\mathbb{N}}$ can not converge. In fact, if we had convergence of $\sigma_{\lambda_n}$ to $\sigma_{\infty}$ we would obtain that 
$r_0/2\leq \|\sigma_{\infty}\|\leq r_0\leq \|b\|/4$ and $\mathcal{N}(\sigma_{\infty})=0$ and this is a contradiction.

\smallskip

\noindent
\textbf{Second version.} We consider the following auxiliary function $g:[0,+\infty)\to\mathbb{R}$ such that $g$ is continuous, strictly positive, nonincreasing and satisfies the following assumptions. First, 
$g(r)=1/f(\|b\|/2)$ for any $r$ such that $0\leq r\leq 2\|b\|$. Then, for any $r$ such that $r>2\|b\|$, we set
 $g(r)=C_1\rme^{-r^2}$ with $C_1$ a constant such that $C_1\rme^{-(2\|b\|)^2}=1/f(\|b\|/2).$ It is easy to show that there exists $\overline{\lambda}>0$  such that
 for any $\lambda\geq\overline{\lambda}$ we have
$$\min_{r\geq 0}\{(g(r))^2+r^2/\lambda\}=\frac{1}{2\lambda}\left(1+\log(2C_1^2\lambda)\right).$$
Such a minimum is reached, uniquely, at $r=r_{\lambda}=\sqrt{(1/2)\log(2C_1^2\lambda)}$ for $\lambda\geq \overline{\lambda}$.

Then we define $\mathcal{N}(a)=g(\|a\|)$ for any $a\in l_2\backslash A_1$. Clearly we have that $\mathcal{N}$ is again continuous, nonnegative, and it is $0$ only at $b$.
Moreover, for any $\lambda\geq\overline{\lambda}$ we have
$$\min\{|\mathcal{N}(a)|^2+|a|^2/\lambda:\ a\in l_2\backslash A_1\}=\frac{1}{2\lambda}(1+\log(2C_1^2\lambda))$$
and it is reached, uniquely, in
$a=a^{\lambda}$ where $a^{\lambda}_1=r_{\lambda}$ and $a^{\lambda}_n=0$ for any $n\geq 2$.

It is immediate to note that for some $\overline{\lambda}_1\geq \max\{\overline{\lambda},C^2\}$ we have that for any $\lambda\geq \overline{\lambda}_1$
it holds
$$\frac{1}{2\lambda}(1+\log(2C_1^2\lambda))<\frac{2}{\sqrt{\lambda}}$$
therefore there exists a unique solution to the minimization problem
$$\min\{|\mathcal{N}(a)|^2+|a|^2/\lambda:\ a\in l_2\}$$
given by $\sigma_{\lambda}=a^{\lambda}$. We conclude that
$$\lim_{\lambda\to+\infty}\|\sigma_{\lambda}\|=+\infty.$$
\end{exam}

\subsection{A counterexample}\label{wholesequenceex}

In this example we consider the abstract tighter multiscale construction presented in 
Subsection~\ref{abstractsecondsub}.
We wish to show that, in general, the sequence $\{\tilde{\sigma}_n\}$ may not converge and that it  may have different subsequences converging to different limits.

Le us consider the space $X$ to be $\mathbb{R}^2$ with the usual Euclidean norm and $Y$ to be $\mathbb{R}$ with the usual distance. We also set $E=\mathbb{R}^2$.

Let us consider two continuous functions $\mathcal{N},\ \Xi:\mathbb{R}^2\to\mathbb{R}$ such that
for any $x\in\mathbb{R}^2$ we have
$$\mathcal{N}(x),\ \Xi(x)\text{ is greater than, equal or lesser than }1\text{ if and only if }\|x\|\text{ is}.$$
Therefore, in both cases, if we fix $\hat{\mathcal{N}}=1$ we have that $\hat{E}=\partial B_1$.

We assume that, for any $n\geq 0$ we have
$$\lambda_n=b^n\quad \text{and}\quad a_n=1/c^n$$
where
\begin{equation}
c\geq 9\quad\text{and}\quad b/c>2.
\end{equation}
note that $a_n\leq a_{n-1}$ for any $n\geq 1$ and that $\lim_n a_n=0$.
Finally, we set
$$\alpha=\beta=\gamma=1.$$
We conclude that \eqref{crucialcondition0} holds.

Let us consider the sequences $\{\tilde{\sigma}_n\}_{n\in\mathbb{N}}$ corresponding to $\mathcal{N}$ and $\{\tilde{\tau}_n\}_{n\in\mathbb{N}}$ corresponding to $\Xi$. It is not difficult to show that Theorem~\ref{minimizercor0} and \eqref{distance}  hold for both sequences.

We shall assume that $\Xi$ is radial and that $\mathcal{N}$ is lesser than or equal to $\Xi$, namely there exists a continuous nondecreasing function $\tilde{\Xi}:[0,+\infty)\to \mathbb{R}$ such that
for any $x\in\mathbb{R}^2$
$$\mathcal{N}(x)\leq \Xi(x)=\tilde{\Xi}(\|x\|).$$

We consider the following two sequences
$$r_n=\sum_{j=0}^{n}\left(\frac{1}{2}\right)^{j+1}\text{ and }s_n=r_n+\frac{r_{n+1}-r_n}{2}\quad\text{for any }n\geq 0.$$
Clearly, $0<r_n<s_n<r_{n+1}<1$ for any $n\geq 0$ and $\lim_nr_n=\lim_n s_n=1$. To simplify the notation sometimes we may use $r_{-1}=0$.

We also need to define this further sequence
$$h_n=1-\frac{9}{8}\left(\frac{1}{c^{n+1}}+\frac{1}{b^{n+1}}\right)\frac{1}{2^{n+2}}
\quad\text{for any }n\geq 0.$$
By our assumptions on $c$ we have $0<3/4\leq h_n<h_{n+1}<1$ for any $n\geq 0$ and $\lim_nh_n=1$.

We define $\tilde{\Xi}$ as follows, for a given $\varepsilon>0$,
\begin{equation}\label{Xidefinition}
\tilde{\Xi}(r)=\left\{\begin{array}{ll}
h_0+2(r-r_0)-\varepsilon(r-r_0)^2 &\text{if } 0\leq r\leq r_0\\
h_n &\text{if } r_n\leq r\leq s_n,\ n\geq 0\\
\displaystyle{h_n+\frac{h_{n+1}-h_n}{r_{n+1}-s_n}(r-s_n)}
&\text{if }s_n\leq r\leq r_{n+1},\ n\geq 0\\
1+2(r-1) &\text{if }1\leq r
\end{array}
\right.
\end{equation}
whereas, concerning $\mathcal{N}$, we assume
that
\begin{equation}\label{Lambdaprop}
\mathcal{N}(x)=\left\{\begin{array}{ll}
\Xi(x)
&\text{if } 0\leq \|x\|\leq r_0\\
\Xi(x)=h_{2n} &\text{if } r_{2n}\leq \|x\|\leq s_{2n},\ n\geq 0\\
\Xi(x)
&\text{if }s_{2n+1}\leq \|x\|\leq r_{2n+2},\ n\geq 0\\
\Xi(x) &\text{if }1\leq \|x\|.
\end{array}
\right.
\end{equation}
and that $h_{2n}\leq \mathcal{N}(x)\leq \Xi(x)$ for any $x$ such that $s_{2n}\leq \|x\|\leq s_{2n+1}$, $n\geq 0$.

We note that $\tilde{\Xi}$ is a Lipschitz function over $[0,+\infty)$, therefore also $\Xi $ is a Lipschitz function over $\mathbb{R}^2$.

We call
$$f_0(x)=|1-\mathcal{N}(x)|+2\|x\|\quad\text{and}\quad
g_0(x)=|1-\Xi(x)|+2\|x\|.$$
We begin by stating that
$$\partial B_{r_0}=\argmin_{x\in \mathbb{R}^2} f_0(x)=\argmin_{x\in \mathbb{R}^2} g_0(x).$$
Therefore, without loss of generality we may assume that
$\tilde{\sigma}_0=\tilde{\tau}_0=(r_0,0).$

In fact, first of all we note that
\begin{multline*}
g_0(r_0,0)=(1-h_0)+2r_0<g_0(x)=(1-(h_0+2(\|x\|-r_0)-\varepsilon(\|x\|-r_0)^2)+2\|x\|=\\
(1-h_0)+2r_0+\varepsilon(\|x\|-r_0)^2\quad \text{for any }0\leq \|x\|< r_0.
\end{multline*}
Then we have that, since $c\geq 9$,
$$g_0(r_0,0)=
\frac{9}{8}\left(\frac{1}{c}+\frac{1}{b}\right)\frac{1}{2^{2}}+1
<2r_1=3/2\leq g_0(x)\quad \text{for any }\|x\|\geq r_1.$$
It remains to consider the case $r_0<\|x\|<r_1$ where we have that
$g_0(r_0,0)<g_0(x)$ is equivalent to
$$\frac{9}{8}\left(\frac{1}{c}+\frac{1}{b}\right)\frac{1}{2^{2}}+1<
(1-\Xi(x))+2\|x\|\quad \text{for any }r_0< \|x\|< r_1.$$
This is obviously true if $r_0<\|x\|\leq s_0$ since $\Xi(x)=h_0$ there.
If $s_0<\|x\|\leq r_1$, then we need to show that
$$1<-\frac{h_1-h_0}{r_1-s_0}(r-s_0)
+2r\quad \text{for any }s_0<r<r_1$$
that follows from easy computations.

The result for $f_0$ follows by noticing that $f_0(r_0,0)=g_0(r_0,0)$ and
$f_0(x)\geq g_0(x)$ for any $x\in\mathbb{R}^2$.

The first  important result is the following.

\begin{prop}\label{Xisequence}
Under the previous notation and assumptions, let us assume that
$\tilde{\tau}_0=(r_0,0)$. Then for any $n\geq 0$ we have
$$\tilde{\tau}_n=(r_n,0).$$
\end{prop}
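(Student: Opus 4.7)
The plan is induction on $n$. The base case $n=0$ is the content of the discussion immediately preceding the proposition, so fix $n\geq 1$ and assume $\tilde{\tau}_{n-1}=(r_{n-1},0)$. Writing $y:=\tilde{\tau}_{n-1}+\tau$, so that $\|\tau\|=\|y-(r_{n-1},0)\|$, the $n$-th step \eqref{tightconstr} of the multiscale construction for $\Xi$ reads
$$\min_{y\in\mathbb{R}^2}\bigl\{\lambda_n|1-\Xi(y)|+\lambda_n a_n\|y\|+\|y-(r_{n-1},0)\|\bigr\}.$$
Since $\Xi(y)=\tilde{\Xi}(\|y\|)$ is radial, for each fixed $r:=\|y\|\geq 0$ the first two terms depend only on $r$ while the third, restricted to the circle $\|y\|=r$, attains its minimum $|r-r_{n-1}|$ uniquely at $y=(r,0)$ (using $r_{n-1}>0$). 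The $2$-dimensional problem thus reduces to the $1$-dimensional minimization
$$g_n(r):=\lambda_n|1-\tilde{\Xi}(r)|+\lambda_n a_n\,r+|r-r_{n-1}|,\qquad r\geq 0,$$
and the inductive step is equivalent to showing that $g_n$ attains its unique minimum on $[0,+\infty)$ at $r=r_n$.

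To establish this I would exploit the piecewise structure of $\tilde{\Xi}$ from \eqref{Xidefinition}. On every plateau $[r_j,s_j]$ the slope of $g_n$ equals $\lambda_n a_n\pm 1=(b/c)^n\pm 1>0$ (using $b/c>2$ and $n\geq 1$), so $g_n$ is strictly increasing on every plateau on both sides of $r_{n-1}$. On a linear piece $[s_j,r_{j+1}]$, using $\lambda_n=b^n$, $a_n=c^{-n}$, $r_{j+1}-s_j=2^{-(j+3)}$ and
$$h_{j+1}-h_j=\frac{9}{8\cdot 2^{j+3}}\Bigl[\frac{2-1/c}{c^{j+1}}+\frac{2-1/b}{b^{j+1}}\Bigr],$$
the slope of $g_n$ on $[s_j,r_{j+1}]$ for $r\geq r_{n-1}$ equals
$$(b/c)^n\Bigl[1-\tfrac{9}{8}c^{n-j-1}(2-1/c)\Bigr]+\Bigl[1-\tfrac{9}{8}b^{n-j-1}(2-1/b)\Bigr].$$
Under $c\geq 9$ and $b/c>2$ (hence $b\geq 18$), both brackets are strictly negative iff $j=n-1$ and strictly positive for $j\geq n$. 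An analogous computation for $r\leq r_{n-1}$ (where the derivative of $|r-r_{n-1}|$ is $-1$) shows the slope is strictly negative on each $[s_j,r_{j+1}]$ with $j\leq n-2$, while on $[0,r_0]$ the quadratic form of $\tilde{\Xi}$ again yields a negative derivative and on $[1,+\infty)$ the slope $2\lambda_n+\lambda_n a_n+1$ is trivially positive. Consequently the minimum of $g_n$ on each subinterval is attained at a plateau left-endpoint $r_j$.

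It then suffices to compare the values $\{g_n(r_j)\}_{j\geq 0}$. A direct computation gives
$$g_n(r_j)-g_n(r_{j+1})=\lambda_n(h_{j+1}-h_j)-(\lambda_n a_n+\varepsilon_j)(r_{j+1}-r_j),$$
with $\varepsilon_j=1$ if $r_{j+1}\leq r_{n-1}$ or $r_j\geq r_{n-1}$ (modulo the kink at $r_{n-1}$), which after substitution reduces each comparison to checking the sign of an expression of the form
$$(b/c)^n\Bigl(\tfrac{9c^{n-j-1}(2-1/c)}{8}\pm 2\Bigr)+\Bigl(\tfrac{9b^{n-j-1}(2-1/b)}{8}\pm 2\Bigr);$$
under $c\geq 9$ and $b/c>2$ these sign-checks show that $\{g_n(r_j)\}$ strictly decreases for $j\leq n-1$, strictly increases for $j\geq n$, and satisfies $g_n(r_n)<g_n(r_{n-1})$. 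Thus $r=r_n$ is the unique global minimum of $g_n$ and $\tilde{\tau}_n=(r_n,0)$, closing the induction. The main obstacle is the bookkeeping of these slope and value sign-estimates on each of the several kinds of pieces; the definitions of $h_n$, $r_n$, $s_n$ in \eqref{Xidefinition} are precisely calibrated so that $[s_{n-1},r_n]$ is the \emph{unique} descent interval of $g_n$ and $r_n$ the \emph{unique} plateau-endpoint improving on $r_{n-1}$, which is what pins the iterate exactly to $(r_n,0)$.
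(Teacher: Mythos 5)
Your plan is correct and follows essentially the same route as the paper: induction, reduction to a one\-/dimensional minimization using that $\Xi$ is radial (so the distance term is minimized on each circle at the point on the positive $x_1$-axis), and then a piecewise slope-and-value analysis of the resulting function of $r$, with the inequalities $c\geq 9$ and $b/c>2$ entering in exactly the same places. The only organizational difference is that the paper first confines the minimizer to the annulus $r_{n-1}\leq\|x\|<s_{n}$ via Lemma~\ref{lemma1} (the integrated inequalities \eqref{outside} and \eqref{inside2}) and then compares just the two remaining plateau endpoints, whereas you analyze every piece of $g_n$ on $[0,+\infty)$ and compare all the values $g_n(r_j)$; this is merely a different bookkeeping of the same estimates (note only that your $\varepsilon_j$ should be $-1$ when $r_{j+1}\leq r_{n-1}$ and $+1$ when $r_j\geq r_{n-1}$, a slip already absorbed by the $\pm$ in your final sign-check).
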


The proof of this proposition follows essentially by the next lemma. First let us recall the following notation.
For a fixed $n\geq 0$ let us suppose that we have computed $\tilde{\tau}_n$ and
$\tilde{\sigma}_n$.
Then $\tilde{\tau}_{n+1}$ is the (unique) minimizer over $\mathbb{R}^2$ of the functional
$g_{n+1}(x)=b^{n+1}|1-\Xi(x)|+(b/c)^{n+1}\|x\|+\|x-\tilde{\tau}_n\|$, whereas 
$\tilde{\sigma}_{n+1}$ is a minimizer over $\mathbb{R}^2$ of the functional
$f_{n+1}(x)=b^{n+1}|1-\mathcal{N}(x)|+(b/c)^{n+1}\|x\|+\|x-\tilde{\sigma}_n\|$.

\begin{lem}\label{lemma1}
For a fixed $n\geq 0$ let us assume that $\tilde{\tau}_n=(r_n,0)$.
Then
$$r_n\leq \|\tilde{\tau}_{n+1}\|< s_{n+1}.$$
\end{lem}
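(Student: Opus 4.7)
The plan is to first reduce the two-dimensional minimization to a scalar one by symmetry, then use the piecewise monotonicity of the resulting scalar objective to pinpoint its minimum. Since $\Xi(x)=\tilde\Xi(\|x\|)$ is radial and $\tilde\tau_n=(r_n,0)$, for any $x=(x_1,x_2)$ the point $(\|x\|,0)$ has the same value of $b^{n+1}|1-\Xi(x)|+(b/c)^{n+1}\|x\|$ and a value of $\|x-(r_n,0)\|$ no larger, with equality only when $x_2=0$ and $x_1\ge 0$. Hence $\tilde\tau_{n+1}=(r^*,0)$ for some $r^*\ge 0$ minimizing
\[
\phi(r):=b^{n+1}\bigl|1-\tilde\Xi(r)\bigr|+\left(\tfrac{b}{c}\right)^{n+1}\!r+|r-r_n|.
\]
Because $\tilde\Xi(r)\le 1$ on $[0,1]$ and $\tilde\Xi(r)\ge 1$ on $[1,+\infty)$, the first term is smooth wherever $\tilde\Xi$ is.

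Next I compute $\phi'$ on each subinterval of the partition of $[0,+\infty)$ determined by the $r_k$'s, $s_k$'s and the point $r=1$, keeping track of the sign $\pm 1$ of $(|r-r_n|)'$. Using $b/c>2$ and $c\ge 9$ the following pattern emerges: $\phi$ is strictly decreasing on $[0,r_0]$ (since there $\tilde\Xi'\ge 2$); strictly increasing on each plateau $[r_k,s_k]$ with $k<n$, where $\phi'=(b/c)^{n+1}-1>0$; and strictly decreasing on each ramp $[s_k,r_{k+1}]$ with $k\le n$, where the slope $\mu_k=\tfrac{9}{8}\bigl[(2-1/c)c^{-(k+1)}+(2-1/b)b^{-(k+1)}\bigr]$ of $\tilde\Xi$ gives $b^{n+1}\mu_k-(b/c)^{n+1}\pm 1>0$ (using $(9/8)(2-1/c)\ge 17/8$ and $(9/8)(2-1/b)\ge 17/8$). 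Symmetrically, $\phi$ is strictly increasing on $[r_n,s_n]$, on $[r_{n+1},s_{n+1}]$, and on every plateau or ramp with index $k\ge n+1$; for the latter ramps this uses $(9/8)(2-1/c)/c^{k-n}\le 1/4<1$ once $k\ge n+1$. Finally on $[1,+\infty)$ one has $\phi'=2b^{n+1}+(b/c)^{n+1}+1>0$. In particular $\phi$ is strictly increasing on all of $[r_{n+1},+\infty)$ and its only candidate local minima are the corners $r_0,r_1,\dots,r_{n+1}$.

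A direct telescoping computation then yields, for $0\le k\le n-1$,
\[
\phi(r_{k+1})-\phi(r_k)=2^{-(k+3)}\Bigl\{(b/c)^{k+1}b^{n-k}\bigl[2c^{-(n-k)}-\tfrac{9}{8}(2-1/c)\bigr]-\tfrac{9}{8}(2-1/b)b^{n-k}-2\Bigr\},
\]
which is negative since $2c^{-(n-k)}-\tfrac{9}{8}(2-1/c)\le \tfrac{2}{9}-\tfrac{17}{8}<0$; and for $k=n$,
\[
\phi(r_{n+1})-\phi(r_n)=2^{-(n+3)}\Bigl\{(b/c)^{n+1}\bigl[2-\tfrac{9}{8}(2-1/c)\bigr]+\bigl[2-\tfrac{9}{8}(2-1/b)\bigr]\Bigr\},
\]
which is also negative because both bracketed coefficients are at most $-\tfrac{1}{8}$ under $c\ge 9$ and $b\ge c$. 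Combining this with the monotonicity pattern, the values $\phi(r_0),\phi(r_1),\dots,\phi(r_{n+1})$ form a strictly decreasing sequence and $\phi$ is strictly increasing on $[r_{n+1},+\infty)$; consequently the unique global minimizer is $r^*=r_{n+1}$, and since $r_n<r_{n+1}<s_{n+1}$ the lemma follows.

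The main obstacle is the careful bookkeeping in the derivative computations: $(|r-r_n|)'$ flips sign at $r=r_n$, so the plateaus and ramps with index $k<n$ and $k\ge n$ require separate treatment, and the quantitative inequalities that produce the sign pattern of $\phi'$ depend sensitively on the assumption $c\ge 9$, which is precisely what makes $\tfrac{9}{8}(2-1/c)>1$ while $\tfrac{9}{8}(2-1/c)/c<1$ — the two reversals responsible for the alternating monotonicity on either side of $r_n$.
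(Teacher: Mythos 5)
Your proof is correct, and it takes a genuinely different route from the paper's. The paper does not reduce to a scalar problem or classify local minima; instead it compares $g_{n+1}$ directly against its value $b^{n+1}(1-h_n)+(b/c)^{n+1}r_n$ at the previous iterate $(r_n,0)$, proving two targeted lower bounds: for $\|x\|\geq s_{n+1}$ it drops the $|1-\Xi|$ term and uses $\|x-\tilde{\tau}_n\|\geq s_{n+1}-r_n=(5/4)2^{-(n+2)}>(9/8)2^{-(n+2)}$, while for $\|x\|<r_n$ it reduces the inequality to the graph comparison $\tilde{\Xi}(r)<G_n(r)=h_n+c^{-(n+1)}(r-r_n)$, proved by an induction on the affine pieces. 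That argument only locates the minimizer in the annulus $r_n\leq\|x\|<s_{n+1}$, which is all the lemma asserts; the identification $\tilde{\tau}_{n+1}=(r_{n+1},0)$ is deferred to the remark following the lemma. Your approach — symmetrizing to the positive axis, computing the sign of $\phi'$ on every plateau and ramp on both sides of $r_n$, and telescoping $\phi(r_{k+1})-\phi(r_k)$ — is longer and requires more careful bookkeeping (the sign flip of $(|r-r_n|)'$ at $r_n$ and the two roles of $c\geq 9$ that you correctly isolate), but it buys more: it proves in one pass that $r_{n+1}$ is the \emph{unique} global minimizer, i.e.\ it delivers the induction step of Proposition~\ref{Xisequence} as well, whereas the paper handles that separately. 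Your slope formula $\mu_k$, the bound $\tfrac{9}{8}(2-1/c)\geq\tfrac{17}{8}$ versus $\tfrac{9}{8}(2-1/c)/c^{k-n}\leq\tfrac14$ for $k\geq n+1$, and both telescoping identities check out against $r_{k+1}-s_k=2^{-(k+3)}$ and the definition of $h_n$.
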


\begin{proof} It is enough to show that
\begin{multline}\label{outside}
g_{n+1}(x)\geq (b/c)^{n+1}s_{n+1}+s_{n+1}-r_n> \\
b^{n+1}(1-h_n)+(b/c)^{n+1}r_n
=g_{n+1}(\tilde{\tau}_{n+1})\quad\text{for any }x\text{ s.t. }\|x\|\geq s_{n+1}
\end{multline}
and that
\begin{multline}\label{inside}
g_{n+1}(x)\geq b^{n+1}(1-\Xi(x))+(b/c)^{n+1}\|x\|
+r_n-\|x\|> \\
b^{n+1}(1-h_n)+(b/c)^{n+1}r_n
=g_{n+1}(\tilde{\tau}_{n+1})
\quad\text{for any }x\text{ s.t. }\|x\|< r_n.
\end{multline}

We begin by noticing that \eqref{outside} is equivalent to
$$\left(\frac{1}{c^{n+1}}+\frac{1}{b^{n+1}}\right)(s_{n+1}-r_n)>
1-h_n=\frac{9}{8}\left(\frac{1}{c^{n+1}}+\frac{1}{b^{n+1}}\right)\frac{1}{2^{n+2}}
$$
which is true since $c\geq 9$ and
$$s_{n+1}-r_n=(s_{n+1}-r_{n+1})+(r_{n+1}-r_n)=(1/2)\frac{1}{2^{n+3}}+
\frac{1}{2^{n+2}}=(5/4)\frac{1}{2^{n+2}}.$$

For what concerns \eqref{inside}, we argue in the following way.
We note that \eqref{inside} is equivalent to
$$\tilde{\Xi}(r)< h_n+\left(\frac{1}{c^{n+1}}-\frac{1}{b^{n+1}}\right)(r-r_n),\quad0\leq r< r_n$$
which is implied by
\begin{equation}\label{inside2}
\tilde{\Xi}(r)<G_n(r)= h_n+\left(\frac{1}{c^{n+1}}\right)(r-r_n),\quad0\leq r< r_n.
\end{equation}
Therefore it remains to prove \eqref{inside2}.
The case $n=0$ is trivial. Fixed $n\geq 1$, we begin by proving \eqref{inside2} for $r$ satisfying $r_{n-1}\leq r <r_n$. First of all we observe that $h_{n-1}<G_n(r_{n-1})$ therefore
\eqref{inside2} holds for any $r$, $r_{n-1}\leq r\leq s_{n-1}$. In particular,
$\tilde{\Xi}(s_{n-1})<G_n(s_{n-1})$. We have that for any $r$, $s_{n-1}\leq r\leq r_n$,
$\tilde{\Xi}(r)=\tilde{\Xi}(s_{n-1})+l_n(r-s_{n-1})$ whereas
$G_n(r)=G_n(s_{n-1})+(1/c^{n+1})(r-s_{n-1})$. Since
$\tilde{\Xi}(s_{n-1})<G_n(s_{n-1})$ and $\tilde{\Xi}(r_n)=G_n(r_n)=h_n$ it is easy to conclude that
\eqref{inside2} holds for any $r$, $r_{n-1}\leq r<r_n$ and any $n\geq 0$, where $r_{-1}=0$.
In order to conclude the proof we note that, for any $n\geq 1$,
$G_{n-1}(r_{n-1})=h_{n-1}<G_n(r_{n-1})$ and that $G_n$ is affine with the coefficient of the linear part given by $(1/c^{n+1})$ which is decreasing with respect to $n$. We obtain that
$G_{n-1}(r)<G_n(r)$ for any $r$, $0\leq r\leq r_{n-1}$. By induction we conclude that for any $0\leq m<n$ we have
$$\tilde{\Xi}(r)\leq G_m(r)<G_n(r)\quad\text{for any }r_{m-1}\leq r\leq r_m.$$
Therefore \eqref{inside2} holds true and the proof is concluded.\end{proof}

By using the fact the $\Xi$ is radial, thus reducing the problem to a minimization on the interval
$[r_n, s_{n+1}]$, it is easy to show that if, for a fixed $n\geq 0$, we have $\tilde{\tau}_n=(r_n,0)$,
then $\tilde{\tau}_{n+1}=(r_{n+1},0)$. Therefore, Proposition~\ref{Xisequence} follows by an elementary induction argument.

An interesting corollary of Lemma~\ref{lemma1} is the following. Let us assume that, for a fixed $n\geq 0$,
$\|\tilde{\sigma}_n\|=\|\tilde{\tau}_n\|=r_n$ and $\mathcal{N}(\tilde{\sigma}_n)=\Xi(\tilde{\sigma}_n)=
\Xi(\tilde{\tau}_n)=h_n$. Then
$$r_n\leq \|\tilde{\sigma}_{n+1}\|< s_{n+1}.$$
Furthermore, for any $n\geq 0$, if we have 
$\tilde{\sigma}_{2n+1}=r_{2n+1}(\cos(\theta), \sin(\theta))$, for some $\theta\in [0,2\pi)$,  and $\mathcal{N}(\tilde{\sigma}_{2n+1})=h_{2n+1}$, then there exists a unique minimizer
$$\tilde{\sigma}_{2n+2}=r_{2n+2}(\cos(\theta), \sin(\theta)).$$

The counterexample is based on the following lemma.

\begin{lem}\label{constructionlemma}
There exists an integer $\overline{n}\geq 1$ such that for any $n\geq \overline{n}$ we have the following.
We may construct a Lipschitz continuous function $\mathcal{N}$, a modification of the function $\Xi$ on the annulus $\overline{B}_{s_{2n+1}}\backslash B_{s_{2n}}$, satisfying the previous assumptions and the following properties.
For any $x=(x_1,x_2)\in \overline{B}_{s_{2n+1}}\backslash B_{s_{2n}}$ we have, first, that
$h_{2n}\leq \mathcal{N}(x)\leq \Xi(x)$.
Second, 
$\mathcal{N}(x_1,x_2)=\mathcal{N}(x_1,-x_2)$.
Third,  $\mathcal{N}((0,r_{2n+1}))=h_{2n+1}$ and, assuming that
$\tilde{\sigma}_{2n}=(r_{2n},0)$,
$$\{(0,r_{2n+1}),(0,-r_{2n+1})\}=\argmin_{x\in\mathbb{R}^2}f_{2n+1}(x).$$
Therefore
$\tilde{\sigma}_{2n+1}=(0,r_{2n+1})$ satisfies
$\mathcal{N}(\tilde{\sigma}_{2n+1})=h_{2n+1}$ and is a minimizer to $f_{2n+1}$.
\end{lem}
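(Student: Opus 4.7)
The plan is to construct $\mathcal{N}$ by perturbing $\Xi$ downward in a wedge around the positive $x$-axis while leaving it unchanged on the $y$-axis, so that the two points $(0,\pm r_{2n+1})$ become competitive minimizers through symmetry. Fix an even Lipschitz function $\eta\colon\mathbb{R}/2\pi\mathbb{Z}\to[0,1]$ supported in $[-\pi/2,\pi/2]$ with $\eta(0)=1$, $\eta(\pm\pi/2)=0$ and $\eta'((\pi/2)^-)<0$ (for instance $\eta(\theta)=\max(\cos\theta,0)$), together with a Lipschitz function $\chi\colon[s_{2n},s_{2n+1}]\to[0,1]$ satisfying $\chi(s_{2n})=\chi(s_{2n+1})=0$ and $\chi(r_{2n+1})=1$. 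Define, in polar coordinates on the annulus,
\[
\mathcal{N}(r\cos\theta,r\sin\theta) = \tilde\Xi(r) - \eta(\theta)\,\chi(r)\,\bigl(\tilde\Xi(r)-h_{2n}\bigr),
\]
and set $\mathcal{N}=\Xi$ outside. Lipschitz continuity, the pointwise bound $h_{2n}\leq\mathcal{N}\leq\Xi$, matching with $\Xi$ on $\partial B_{s_{2n}}\cup\partial B_{s_{2n+1}}$ (since $\tilde\Xi(s_{2n})-h_{2n}=0$ and $\chi(s_{2n+1})=0$), the symmetry $\mathcal{N}(x_1,x_2)=\mathcal{N}(x_1,-x_2)$ (from evenness of $\eta$), and the value $\mathcal{N}((0,r_{2n+1}))=\tilde\Xi(r_{2n+1})=h_{2n+1}$ (from $\eta(\pi/2)=0$) are then all immediate from the construction.

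The substance of the proof is to show $\{(0,\pm r_{2n+1})\}=\argmin_{\mathbb{R}^2}f_{2n+1}$ for $n\geq\bar n$. The $x_2$-symmetry gives $f_{2n+1}((0,r_{2n+1}))=f_{2n+1}((0,-r_{2n+1}))$, so it suffices to prove $f_{2n+1}(x)>f_{2n+1}((0,r_{2n+1}))$ for every other $x$. For $x$ outside the annulus, $\mathcal{N}=\Xi$ and hence $f_{2n+1}=g_{2n+1}$; I would adapt the inside/outside estimates from the proof of Lemma~\ref{lemma1} (the bound $g_{n+1}(x)\geq(b/c)^{n+1}s_{n+1}+s_{n+1}-r_n$ for $\|x\|\geq s_{n+1}$ and the analogue of \eqref{inside2} for $\|x\|<s_n$) to obtain a gap that behaves like $(b/(2c))^{2n+1}$ and hence diverges as $n\to\infty$, dominating the bounded geometric correction $\sqrt{r_{2n+1}^2+r_{2n}^2}-(r_{2n+1}-r_{2n})\leq\sqrt 2$. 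On the half-plane $\cos\theta\leq 0$ one has $\mathcal{N}=\Xi$ and $\|x-(r_{2n},0)\|\geq\sqrt{\|x\|^2+r_{2n}^2}$, reducing this region to a radial problem on the $y$-axis handled by the wedge analysis below.

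The main obstacle is the wedge $W=[s_{2n},s_{2n+1}]\times[-\pi/2,\pi/2]$. I would decompose
\[
f_{2n+1}(r,\theta)-f_{2n+1}(r_{2n+1},\pi/2) = b^{2n+1}\bigl[h_{2n+1}-\mathcal{N}(r,\theta)\bigr]+(b/c)^{2n+1}(r-r_{2n+1})+\bigl[D(r,\theta)-D(r_{2n+1},\pi/2)\bigr],
\]
with $D(r,\theta)=\sqrt{r^2-2rr_{2n}\cos\theta+r_{2n}^2}$, and split into two subregimes. In the subregion where $\eta(\theta)\chi(r)$ is bounded below by some $c_0>0$, the fidelity penalty $b^{2n+1}\eta(\theta)\chi(r)(\tilde\Xi(r)-h_{2n})$ grows like $(b/(2c))^{2n+1}$ (using the explicit formula for $h_{2n+1}-h_{2n}$ and the hypothesis $b/c>2$), and swamps the bounded geometric loss. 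In the complementary subregion where $\eta(\theta)\chi(r)$ is small, so either $\theta$ is near $\pm\pi/2$ or $r$ is near $\{s_{2n},s_{2n+1}\}$, the function is close to the unmodified radial problem; using the kink of $\tilde\Xi$ at $r_{2n+1}$ and $r_{2n+1}-s_{2n}=2^{-(2n+2)}$, one finds one-sided radial derivatives $F'_{\pi/2}((r_{2n+1})^-)\sim -(b/c)^{2n+1}$ and $F'_{\pi/2}((r_{2n+1})^+)>0$, so $(r_{2n+1},\pi/2)$ is a strict radial corner minimum on the $y$-axis, and a comparison with $F_{\pi/2}(r_{2k\pm 1})$ for $k\neq n$ rules out the other radial local minima of $\tilde\Xi$. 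Finally, the angular derivative
\[
\partial_\theta f_{2n+1}((r_{2n+1},(\pi/2)^-)) = b^{2n+1}\eta'((\pi/2)^-)(h_{2n+1}-h_{2n}) + \frac{r_{2n+1}r_{2n}}{D(r_{2n+1},\pi/2)}
\]
has dominant (negative) first term of order $-(b/(2c))^{2n+1}$ that swamps the bounded second term, forcing $(r_{2n+1},\pi/2)$ to be a strict minimum in $\theta$ as well. Choosing $\bar n$ large enough that every exponentially growing estimate above overwhelms its $O(1)$ correction completes the proof.
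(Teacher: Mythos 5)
Your construction is genuinely different from the paper's and, after the estimates are completed, it does work. The paper never writes $\mathcal{N}$ as an explicit angular interpolation; instead it prescribes the level sets of $\mathcal{N}$ directly, as curves $\gamma_P$ built from two circular arcs $\partial B_{r(P)}$, $\partial B_{t(P)}$ joined by vertical segments, indexed by points $P$ on the segment from $Q\in\partial B_{s_{2n}}$ to $(0,r_{2n+1})$, with $\mathcal{N}\equiv\tilde{\Xi}(r(P))$ on $\gamma_P$. The payoff of that design is \eqref{argmingammaP}: on each level set the fidelity term is constant, so the minimization over $\gamma_P$ is elementary and the whole two\--dimensional problem collapses to a one\--dimensional minimization along the segment $[Q,(0,r_{2n+1})]$, settled by \eqref{minimosegment}. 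Your ansatz $\mathcal{N}=\tilde\Xi-\eta(\theta)\chi(r)(\tilde\Xi-h_{2n})$ buys a cleaner, more explicit formula (Lipschitz continuity, the bounds $h_{2n}\leq\mathcal{N}\leq\Xi$, the symmetry and the boundary matching are indeed immediate), at the price of having to do an honest two\--variable comparison on the wedge instead of a one\--variable one. Both routes ultimately consume the same quantitative inputs: $c\geq 9$, $b/c>2$, the factor $9/8$ in $h_n$, and $r_{2n+1}-s_{2n}=2^{-(2n+3)}$.

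The one step you should not wave at is the first subregime. The term $(b/c)^{2n+1}(r-r_{2n+1})$ in your decomposition is \emph{not} part of a ``bounded geometric loss'': for $r\in[s_{2n},r_{2n+1})$ it equals $-(1-s)\,(b/(2c))^{2n+1}/4$ with $s=(r-s_{2n})/(r_{2n+1}-s_{2n})$, i.e.\ it is itself exponentially large and negative, of exactly the same order as the fidelity gain you are invoking. What saves you is the inequality $b^{2n+1}(h_{2n+1}-h_{2n})>(b/c)^{2n+1}(r_{2n+1}-s_{2n})$ (the ratio is about $9/4$, coming from the $9/8$ in $h_n$ and from $s_n$ being the midpoint of $[r_n,r_{n+1}]$), which lets you absorb the radial regularization loss into the $(1-s)$\--part of $h_{2n+1}-\mathcal{N}(r,\theta)=(h_{2n+1}-h_{2n})\bigl[(1-s)+\eta\chi s\bigr]$ and then use the $\eta\chi s$\--part against the angular loss $D(r_{2n+1},\pi/2)-D(r,\theta)\leq C\cos\theta+ (1-s)2^{-(2n+3)}$. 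This is precisely the computation the paper carries out in \eqref{inside}--\eqref{inside2} and \eqref{minimosegment}; your one-sided derivative $F'_{\pi/2}((r_{2n+1})^-)\sim-(b/c)^{2n+1}$ contains it implicitly but only along the $y$-axis, and a derivative at a single point (likewise your $\partial_\theta f$ at $(\pi/2)^-$) does not by itself give the global strict inequality over the whole wedge that the $\argmin$ identity requires. Spell out the uniform comparison $b^{2n+1}(h_{2n+1}-h_{2n})\eta(\theta)\chi(r)s>C\cos\theta$ on the region where $(1-s)$ is small, and the proof is complete.
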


\begin{proof} We already know that
$$\argmin_{x\in\mathbb{R}^2}f_{2n+1}(x)=\argmin_{x\in\mathbb{R}^2:\ r_{2n}\leq\|x\|\leq s_{2n+1}}f_{2n+1}(x).$$

We modify $\Xi$ in $\overline{B}_{r_{2n+2}}\backslash B_{r_{2n}}$. Let $x=(x_1,x_2)\in\overline{B}_{r_{2n+2}}\backslash B_{r_{2n}}$.
We set
$\mathcal{N}(x)=\Xi(x)$ if $r_{2n}\leq \|x\|\leq s_{2n}$ and if $s_{2n+1}\leq \|x\|\leq r_{2n}$.
We set $\mathcal{N}(x)=\Xi(x)$ also if $x_1\leq 0$. Furthermore, we assume that
$\mathcal{N}(x_1,x_2)=\mathcal{N}(x_1,-x_2)$. Therefore, we limit ourselves to consider points
$x\in\overline{B}_{r_{2n+2}}\backslash B_{r_{2n}}$ such that $x_1,\ x_2\geq 0$.

In $\overline{B}_{s_{2n+1}}\backslash B_{s_{2n}}$ we perform the following geometric construction, which is summarized in Figure~\ref{figure1}.

\begin{figure}[htb]
\centering
\includegraphics[width = .6\textwidth]{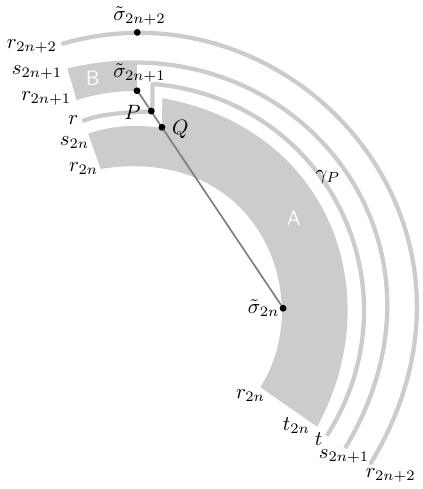}
\caption{Illustration of the geometric construction underlying the counterexample in Theorem~\ref{counterexampleteo}.}
\label{figure1}
\end{figure}

We consider the segment connecting $\tilde{\sigma}_{2n}=(r_{2n},0)$ and our candidate to be a minimizer $\tilde{\sigma}_{2n+1}=(0,r_{2n+1})$. We call $Q$ the point which is the intersection of 
this segment with $\partial B_{s_{2n}}$. Finally we consider a point $P$ belonging to the segment connecting $Q$ to $\tilde{\sigma}_{2n+1}$. We call $r=r(P)=\|P\|$ and we denote 
$t=t(P)=r(P)+(s_{2n+1}-r_{2n+1})$. When $P=Q$ we denote $t_{2n}=t(Q)=r(Q)+(s_{2n+1}-r_{2n+1})
=s_{2n}+(s_{2n+1}-r_{2n+1})<r_{2n+1}$ and we observe that, when $P=\tilde{\sigma}_{2n+1}$,
$t(\tilde{\sigma}_{2n+1})=s_{2n+1}$.

Let us assume that $P=(x_1^0,x_2^0)$, we call $P'=(x_1^0,-x_2^0)$.
Then we consider the points $\tilde{P}=(x_1^0,x_2^1)$, with $x_2^1>0$, and $\tilde{P}'=(x_1^0,-x_2^1)$ that are
the intersections of the vertical line $\{x_1=x_1^0\}$ with $\partial B_{t(P)}$.
We call $\gamma_P$ the following curve that is formed by four parts. The first part, $\gamma_P^1$ consists of the points $x$ belonging  to $\partial B_{r(P)}$ such that $x_1\leq x_1^0$. Two other parts, $\gamma_P^2$ and $\gamma_P^3$, are the vertical segments connecting $P$ to $\tilde{P}$ and $P'$ to $\tilde{P}$, respectively. Finally, the fourth part $\gamma_P^4$
consists of the points $x$ belonging  to $\partial B_{t(P)}$ such that $x_1\geq x_1^0$.

Then we define $\mathcal{N}$ in the following way. We set $\mathcal{N}(x)=h_{2n}$ if $x$ belongs to $\partial B_{r_{2n}}$, if $x$ belongs to $\gamma_Q$ and if $x$ lies between these two curves (region A in Figure~\ref{figure1}). We set $\mathcal{N}(x)=h_{2n+1}$
 if $x$ belongs to $\partial B_{s_{2n+1}}$, if $x$ belongs to $\gamma_{\tilde{\sigma}_{2n+1}}$ and if $x$ lies between these two curves (region B in Figure~\ref{figure1}).
We note, in particular, that $\mathcal{N}(\tilde{\sigma}_{2n+1})=h_{2n+1}$.
Finally for any $x\in \gamma_P$, $P$ belonging to the segment connecting $Q$ to $\tilde{\sigma}_{2n+1}$, we set $\mathcal{N}(x)=\tilde{\Xi}(r(P))$.

We observe that for any $P$ belonging to the segment connecting $Q$ to $\tilde{\sigma}_{2n+1}$,
\begin{equation}\label{argmingammaP}
\argmin_{x\in \gamma_P}f_{2n+1}(x)=\{P,P'\}.
\end{equation}
In fact, for any $x\in \gamma_P$ we have $\|x\|\geq \|P\|$ and
$\mathcal{N}(x)=\mathcal{N}(P)$. For any $x$ belonging to $\gamma_P^1$, $\gamma_P^2$ or $\gamma_P^3$ we have that $\|x-\tilde{\sigma}_{2n}\|\geq \|P-\tilde{\sigma}_{2n}\|$ with equality holding only if $x=P$ or $x=P'$, therefore 
$$\argmin_{x\in (\gamma_P^1\cup \gamma_P^2\cup \gamma_P^3)}f_{2n+1}(x)=\{P,P'\}.$$
Finally, it is easy to remark that
$$\argmin_{x\in \gamma_P^4}f_{2n+1}(x)=\{(t(P),0)\}.$$
Therefore, the candidates to be a minimizer of $f_{2n+1}$ on $\gamma(P)$ are either the couple $P$ and $P'$ or the point $(t(P),0)$.
 We have that $f_{2n+1}(P)<f_{2n+1}(t(P),0)$ if and only if
 $$\left(\frac{b}{c}\right)^{2n+1}r(P)+ \|P-\tilde{\sigma}_{2n}\|<\left(\frac{b}{c}\right)^{2n+1}t(P)+(t(P)-r_{2n}).$$
On the other hand, $t(P)-r(P)=s_{2n+1}-r_{2n+1}=(1/2)(1/2^{2n+3})=1/2^{2n+4}$, therefore since $b>2c$, there exists $\overline{n}_1\geq 1$ such that for any $n\geq \overline{n}_1$ we have
$$16<\left(\frac{b}{2c}\right)^{2n+1},$$
hence
$$\|P-\tilde{\sigma}_{2n}\|\leq 2< \left(\frac{b}{c}\right)^{2n+1}[t(P)-r(P)]<
\left(\frac{b}{c}\right)^{2n+1}[t(P)-r(P)]+(t(P)-r_{2n})$$
and \eqref{argmingammaP} is proved.

Next we show that
\begin{equation}\label{minimosegment}
f_{2n+1}(\tilde{\sigma}_{2n+1})<f_{2n+1}(\tilde{\sigma}_{2n})=
b^{2n+1}(1-h_{2n})+
\left(\frac{b}{c}\right)^{2n+1}r_{2n}
\end{equation}
that is
\begin{multline*}
\left(\frac{1}{c}\right)^{2n+1}(r_{2n+1}-r_{2n})+\left(\frac{1}{b}\right)^{2n+1}\|\tilde{\sigma}_{2n+1}-\tilde{\sigma}_{2n}\|<h_{2n+1}-h_{2n}=\\
\frac{9}{8}\left[\left(\frac{1}{c^{2n+1}}+\frac{1}{b^{2n+1}}\right)\frac{1}{2^{2n+2}}
-
\left(\frac{1}{c^{2n+2}}+\frac{1}{b^{2n+2}}\right)\frac{1}{2^{2n+3}}\right].
\end{multline*}
Since $r_{2n+1}-r_{2n}=1/2^{2n+2}$ and $\|\tilde{\sigma}_{2n+1}-\tilde{\sigma}_{2n}\|\leq 2$, it is enough to prove that
$$2\left(\frac{1}{b}\right)^{2n+1}<
\frac{1}{8}\left(\frac{1}{c^{2n+1}}+\frac{1}{b^{2n+1}}\right)\frac{1}{2^{2n+2}}
-\frac{9}{8}\left(\frac{1}{c^{2n+2}}+\frac{1}{b^{2n+2}}\right)\frac{1}{2^{2n+3}}.$$
We note that, since $c\geq 9$, we have that
$$\frac{9}{2}\left(\frac{1}{c^{2n+2}}+\frac{1}{b^{2n+2}}\right)\leq 
\frac{1}{2}\left(\frac{1}{c^{2n+1}}+\frac{1}{b^{2n+1}}\right)$$
hence it is enough to prove that
$$2\left(\frac{1}{b}\right)^{2n+1}<
\frac{1}{16}\left(\frac{1}{c^{2n+1}}+\frac{1}{b^{2n+1}}\right)\frac{1}{2^{2n+2}}
$$
or even
$$2\left(\frac{1}{b}\right)^{2n+1}<
\frac{1}{32}\left(\frac{1}{2c}\right)^{2n+1}
$$
which is true if
$$64<\left(\frac{b}{2c}\right)^{2n+1}.$$
Therefore there exists $\overline{n}\geq\overline{n}_1$ such that for any $n\geq\overline{n}$
\eqref{minimosegment} holds true.

It remains to show that
$$f_{2n+1}(\tilde{\sigma}_{2n+1})<f_{2n+1}(P)$$
for any $P$ belonging to the segment connecting $Q$ to $\tilde{\sigma}_{2n+1}$, $P$ clearly different from $\tilde{\sigma}_{2n+1}$. We begin by noticing that
$f_{2n+1}(\tilde{\sigma}_{2n+1})<f_{2n+1}(Q)$ since $f_{2n+1}(Q)>f_{2n+1}(\tilde{\sigma}_{2n})$.
Then the problem reduces to a minimization over a real interval and the conclusion follows by elementary computations.

We finally remark that, by construction, $\mathcal{N}$ is Lipschitz on $\overline{B}_{s_{2n+1}}\backslash B_{s_{2n}}$.\end{proof}

We have the following theorem, which contains our counterexample.

\begin{teo}\label{counterexampleteo}
Under the previous notation and assumptions, there exists a Lipschitz function $\mathcal{N}$ and an integer $\overline{n}\geq 1$ 
such that for any $n\geq 0$ we have $\tilde{\sigma}_n=r_n(cos(\theta_n),\sin(\theta_n))$,
with $\theta_n\in [0,+\infty)$ satisfying the following property
$$\theta_n=0\text{ for any }n,\ 0\leq n\leq 2\overline{n}$$
and, for any $m\geq 1$,
$$\theta_n=m\pi/2 \text{ for any }n=2\overline{n}+2m-1,\ 2\overline{n}+2m.$$
\end{teo}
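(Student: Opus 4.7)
The plan is to iterate the construction of Lemma~\ref{constructionlemma} on the family of pairwise disjoint annuli $A_n = \overline{B}_{s_{2n+1}} \setminus B_{s_{2n}}$, for $n \geq \overline{n}$, choosing each time a rotated copy of the construction so that the direction of the minimizer advances by $\pi/2$ at each odd step.

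First, let $\overline{n}$ be the integer provided by Lemma~\ref{constructionlemma}. On the complement in $\mathbb{R}^2$ of $\bigcup_{n \geq \overline{n}} A_n$ we simply set $\mathcal{N} = \Xi$. In particular, on all of $\overline{B}_{s_{2\overline{n}}}$ we have $\mathcal{N} = \Xi$, so by Proposition~\ref{Xisequence} and the uniqueness already used to deduce $\tilde{\sigma}_0 = \tilde{\tau}_0 = (r_0,0)$, the first $2\overline{n}+1$ iterates of our multiscale algorithm coincide with those built from $\Xi$: $\tilde{\sigma}_n = (r_n,0)$ for $0 \leq n \leq 2\overline{n}$. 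This gives the base case $\theta_n = 0$ for $0 \leq n \leq 2\overline{n}$.

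Next I define $\mathcal{N}$ on each $A_n$, $n \geq \overline{n}$, by rotating the construction in Lemma~\ref{constructionlemma}. Let $R_\theta$ denote rotation by $\theta$ about the origin, and for each $m \geq 1$ set $\Theta_m = m\pi/2$. On $A_{\overline{n}+m-1}$ define $\mathcal{N}(x) = \mathcal{N}_0(R_{-\Theta_{m-1}} x)$, where $\mathcal{N}_0$ is the function produced by Lemma~\ref{constructionlemma} on $A_{\overline{n}+m-1}$ for the reference orientation $\tilde{\sigma}_{2(\overline{n}+m-1)} = (r_{2(\overline{n}+m-1)},0)$. Since $\Xi$ is radial, this rotated definition agrees with $\Xi$ on the boundary circles $\partial B_{s_{2(\overline{n}+m-1)}}$ and $\partial B_{s_{2(\overline{n}+m-1)+1}}$, so the resulting $\mathcal{N}$ is Lipschitz globally (being Lipschitz on each annulus and on the complement, and continuous across the interfaces). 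By construction, $\mathcal{N}$ still satisfies the hypotheses $h_{2k} \leq \mathcal{N}(x) \leq \Xi(x)$ used in Proposition~\ref{Xisequence}, together with the additional symmetry.

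The iterative step is now driven by an induction on $m \geq 1$: assuming $\tilde{\sigma}_{2(\overline{n}+m-1)} = R_{\Theta_{m-1}}(r_{2(\overline{n}+m-1)},0)$, I apply the rotated version of Lemma~\ref{constructionlemma} on $A_{\overline{n}+m-1}$. The lemma, pulled back by $R_{\Theta_{m-1}}$, identifies the set of minimizers of $f_{2(\overline{n}+m-1)+1}$ with a pair of points at angles $\Theta_{m-1} \pm \pi/2$ on $\partial B_{r_{2(\overline{n}+m-1)+1}}$; selecting the one at angle $\Theta_m = \Theta_{m-1} + \pi/2$ gives $\tilde{\sigma}_{2\overline{n}+2m-1} = r_{2\overline{n}+2m-1}(\cos\Theta_m,\sin\Theta_m)$ and $\mathcal{N}(\tilde{\sigma}_{2\overline{n}+2m-1}) = h_{2\overline{n}+2m-1}$. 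The corollary of Lemma~\ref{lemma1} then forces $\tilde{\sigma}_{2\overline{n}+2m} = r_{2\overline{n}+2m}(\cos\Theta_m,\sin\Theta_m)$, which closes the induction.

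The subtle point—and the one that I expect to require the most care—is verifying that applying the rotations on disjoint annuli does not interfere: the annuli are indeed disjoint by the strict monotonicity $s_{2n} < r_{2n+1} < s_{2n+1} < r_{2n+2} < s_{2n+2}$, and the proof of Lemma~\ref{constructionlemma} only uses the behavior of $\mathcal{N}$ on the single annulus $A_n$ plus the fact that the radial bound $\mathcal{N} \leq \Xi$ holds everywhere. The estimate in Lemma~\ref{lemma1} (showing that the minimizer of $f_{n+1}$ can only lie in $\overline{B}_{s_{n+1}} \setminus B_{r_n}$, not further out) is rotationally invariant, so the same argument restricts the minimization at every stage to the annulus where we have just redefined $\mathcal{N}$. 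Checking this localization at each step of the induction, and ensuring the Lipschitz constant of the glued function remains finite, is the only step that requires more than bookkeeping.
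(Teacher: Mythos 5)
Your proposal is correct and follows essentially the same route as the paper: establish the base case $\tilde{\sigma}_n=(r_n,0)$ for $0\leq n\leq 2\overline{n}$ by keeping $\mathcal{N}=\Xi$ on the inner ball, then iterate the construction of Lemma~\ref{constructionlemma} on the successive disjoint annuli, rotated by an additional $\pi/2$ each time, using the corollary after Lemma~\ref{lemma1} to propagate from the odd to the even step. The paper compresses the inductive step into ``by repeating the same construction, up to a rotation''; your write-up merely makes explicit the gluing, the rotational invariance of the localization estimate, and the selection of one of the two symmetric minimizers, all of which are consistent with the paper's argument.
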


\begin{proof} We assume that $\mathcal{N}$ and $\Xi$ coincide for any $x$ such that $\|x\|\leq r_{2\overline{n}}$. Therefore, assuming $\tilde{\sigma}_0=(r_0,0)$, it is easy to conclude that
$\tilde{\sigma}_n=(r_n,0)$ for any $n$, $0\leq n\leq 2\overline{n}$.

Then, on $\overline{B}_{s_{2\overline{n}+1}}\backslash B_{s_{2\overline{n}}}$, we use the function $\mathcal{N}$ defined in Lemma~\ref{constructionlemma}. Therefore, we infer that
$\tilde{\sigma}_{2\overline{n}+1}=(0,r_{2\overline{n}+1})$ and, by the remark made after Lemma~\ref{lemma1}, we also have that $\tilde{\sigma}_{2\overline{n}+2}=(0,r_{2\overline{n}+2})$. Then by repeating the same construction, up to a rotation, we easily conclude the proof by an induction argument.\end{proof}\bigskip

\noindent\textbf{Acknowledgements.} We wish to thank the Mittag-Leffler Institute (Program on Inverse Problems and Applications), the  Erwin Schr\"odinger International Institute (Program on Infinite-dimensional Riemannian Geometry with Applications to Image Matching and Shape Analysis) and the Poincar\'e Institute (Program on Inverse Problems) for their hospitality and for the excellent working conditions which have made our collaboration possible. We also wish to thank 
Martin Bauer for encouraging discussions during the early stages of this project. KM is supported by the EU Horizon 2020 MSC grant No 661482, by the Swedish Foundation for Strategic Research grant ICA12-0052, and by the STINT grant PT2014-5823.
AN is partially supported by the NSERC Discovery Grant RGPIN-06329. 
LR is partially supported by GNAMPA--INdAM through 2017 projects and by the Universit\`a di Trieste through FRA~2016.

\bigskip

\end{document}